\documentclass{amsart}
\usepackage{amssymb,latexsym,bbm}
\usepackage{graphicx}
\usepackage{color}
\usepackage{subfigure}
\usepackage{multirow}
\usepackage{mathptmx}
\usepackage{url,bm}
\usepackage{enumerate}
\usepackage{float}
\usepackage[ruled,vlined,linesnumbered,noresetcount]{algorithm2e}

\makeatletter
\newcommand{\AlgoResetCount}{\renewcommand{\@ResetCounterIfNeeded}{\setcounter{AlgoLine}{0}}}
\newcommand{\AlgoNoResetCount}{\renewcommand{\@ResetCounterIfNeeded}{}}
\newcounter{AlgoSavedLineCount}

\SetKwInOut{Parameter}{Parameters}
\makeatother

\newtheorem{theorem}{Theorem}
\newtheorem{assumption}{Assumption}
\newtheorem{definition}{Definition}
\newtheorem{lemma}{Lemma}
\newtheorem{corollary}{Corollary}
\newtheorem{proposition}{Proposition}
\newtheorem{remark}{Remark}

\title[Heat kernel reconstruction]{Spectral convergence of graph Laplacian and Heat kernel reconstruction in $L^\infty$ from random samples}

\author{David B Dunson}
\address{David B Dunson\\
Department of Statistical Science\\
Duke University}
\email{dunson@duke.edu}

\author{Hau-Tieng~Wu}
\address{Hau-Tieng Wu\\
Departments of Mathematics and Department of Statistical Science\\
Duke University}
\email{hauwu@math.duke.edu}

\author{Nan~WU}
\address{Nan Wu\\
Department of Mathematics\\
Duke University}
\email{nan.wu@duke.edu}

\begin{document}

\keywords{Graph Laplacian, Heat kernel, Laplace-Beltrami operator, Manifold learning}
\subjclass[2010]{Primary: 62G08}

\maketitle

\begin{abstract}	
In the manifold setting, we provide a series of spectral convergence results quantifying how the eigenvectors and eigenvalues of the graph Laplacian converge to the eigenfunctions and eigenvalues of the Laplace-Beltrami operator in the $L^\infty$ sense. Based on these results, convergence of the proposed heat kernel approximation algorithm, as well as the convergence rate, to the exact heat kernel is guaranteed. To our knowledge, this is the first work exploring the spectral convergence in the $L^\infty$ sense and providing a numerical heat kernel reconstruction from the point cloud with theoretical guarantees.
\end{abstract}   

\section{Introduction}
The graph Laplacian (GL) is a tool developed in the {spectral} graph theory community \cite{Fan:1996}, and it has been widely applied in the machine learning society to design various algorithms. It has a natural interpretation as the infinitesimal generator of a random walk on a point cloud, or the associated graph. 

Given data points sampled from a manifold, the GL is defined based on the affinity matrix constructed from determining how similar each pair of points are. It has been well {known} that under the manifold setup, the GL asymptotically converges to the Laplace-Beltrami operator. In the past decades, in addition to a rich methodological literature on the GL and its application, there is a lot of theoretical literature describing how the GL converges to the Laplace-Beltrami operator. {We briefly review these results.
The pointwise convergence of the GL to the Laplace-Beltrami operator has been discussed in several places, like \cite{coifman2006diffusion,singer2006graph,hein2006uniform}, where the convergence rate is provided. The pointwise convergence result shows that when the GL is applied to the discretization of a test function over the data points, it converges to the value of the Laplacian of the test function at each point. In \cite{coifman2006diffusion}, the pointwise convergence is carried out when the sampling is non-uniform from a compact manifold with boundary, where the affinity is constructed by the $\alpha$-normalization with the Gaussian kernel (see \eqref{W matrix} below);
In \cite{singer2006graph}, the convergence is carried out when the data points are uniformly i.i.d. sampled from a compact manifold without boundary, where the affinity matrix is constructed from the Gaussian kernel;
In \cite{hein2006uniform}, the convergence is carried out when the i.i.d. sampling might be non-uniform from a compact manifold with boundary, where the affinity matrix is constructed from a compactly supported kernel.
However, a pointwise convergence result might not guarantee a spectral convergence.
To this end, there have been several results about the $L^2$ spectral convergence of GL on a compact manifold without boundary, with and without convergence rate. 
To our knowledge, the first result in this direction should be \cite{belkin2007convergence}, where the authors prove the $L^2$ spectral convergence  to the Laplace-Beltrami operator when the point cloud is i.i.d. uniformly sampled from a compact manifold without boundary, where the affinity matrix is constructed from the Gaussian kernel. In this work, the convergence rate is not provided.
By using the variational method over the Dirichlet form, an $L^2$ spectral convergence to the Laplace-Beltrami operator is shown in \cite{burago2015graph} with a convergence rate, where the GL is constructed under a deterministic setting and the manifold is compact without boundary. 
This variational method is further extended in \cite{trillos2018error, calder2019improved} to study the same problem under a probabilistic setting, where the optimal transport technique is applied. In these papers, the authors prove the $L^2$ spectral convergence to a weighted Laplace-Beltrami operator that depends on the density function with a rate when the point cloud is sampled i.i.d. (non-)uniformly from a compact manifold without boundary and the affinity matrix is constructed from a compactly supported kernel.  
When the affinity matrix is constructed by renormalizing the Gaussian kernel, the authors in \cite{wormell2020spectral} prove an $L^\infty$ spectral convergence with a rate to the  Laplace-Beltrami operator when the point cloud is sampled (non-)uniformly i.i.d. on flat tori of any dimensions.  
Based on the results in \cite{trillos2018error, calder2019improved}, the authors in \cite{calder2020lipschitz} show an $L^\infty$ spectral convergence with a rate to a weighted Laplace-Beltrami operator that depends on the density function when the point cloud is sampled i.i.d. (non-)uniformly from a compact manifold without boundary and the affinity matrix is constructed from a compactly supported kernel. 
We mention that the spectral convergence of the graph connection Laplacian is proved in \cite{singer2016spectral}, while the rate is not provided.}

Recall that the Laplace-Beltrami operator is directly related to the heat kernel. Specifically, the heat kernel can be expressed as a series sum in terms of the eigenpairs of the Laplace-Beltrami operator. Therefore, if we can reconstruct the eigenvalues and eigenfunctions of the Laplace-Beltrami operator, we may recover the heat kernel. While there is a rich theoretical literature on the GL, however, to the best of our knowledge, our work is the first to use GL in approximating the heat kernel over an unknown manifold from a point cloud of data points. To achieve this purpose, we are also the first team to provide the spectral convergence of the eigenvectors of the GL to the eigenfunctions of the Laplace-Beltrami operator in the $L^\infty$ sense with the convergence rate.  

In Section 2, we provide background on the heat kernel on manifolds. In Section 3.1, we recall the GL. Section 3.2 contains our main theoretical results.  We show the spectral convergence rate of the GL to the Laplace-Beltrami operator of the manifold in the $L^\infty$ sense. We also provide the convergence rate from the heat kernel we recovered to the actual heat kernel.  Section 4 consists of some fundamental facts to prove the main theorems. The proofs of the main theorems are organized in the Section 5 and Section 6. {The proofs of some technical results are postponed to the Appendix. In the whole paper, we consider a compact and smooth $d$-dim manifold $M$, and use $\| \cdot \|_2:=\|\cdot\|_{L^2(M)}$ be the $L^2$ norm, $\|\cdot \|_\infty:=\|\cdot\|_{L^\infty(M)}$ be the $L^\infty$ norm, and let $\| \cdot \|$ be the operator norm in the relevant Banach space.}

\section{Background knowledge about heat kernel}
In this section, we recall the heat kernel of a manifold. 
Let $M$ be a $d$ dimensional smooth closed Riemannian manifold with the Riemannian metric $g$.
The heat kernel, $\mathsf{H}:M\times M\times[0,\infty)\to \mathbb{R}$, is the fundamental solution of the heat equation; namely, for any $x, x' \in M$,
\begin{align}
\frac{\partial \mathsf{H}(x,x',t)}{\partial t} =\Delta_{x} \mathsf{H}(x,x',t), \nonumber  
\end{align}
where $\Delta_{x}$ is the Laplace-Beltrami operator of the manifold $M$ acting on the first parameter of $\mathsf{H}$ and $t>0$ is the diffusion time. The heat kernel satisfies the initial condition 
$\lim_{t \rightarrow 0} \mathsf{H}(x,x',t) = {\delta_{x'}(x)},$
where {$\delta_{x'}$} is a delta measure {supported at $x'\in M$}, and the limit is understood in the distributional sense. 
When the manifold is closed, the heat kernel on the manifold can be related to the geodesic distance by Varadhan's formula \cite{varadhan1967behavior}:
\begin{align}
\lim_{t \rightarrow 0}-4t \log \mathsf{H}(x,x',t)=d(x,x')^2, \nonumber 
\end{align}
where $d(x,x')$ is the geodesic distance between $x$ and $x'$. 
Take the special case when $M=\mathbb{R}^d$ as an example. In this case, it is well known that the heat kernel is the Gaussian kernel: 
\begin{align}
\mathsf{H}(x,x',t)=(4\pi t)^{-d/2} e^{-\frac{|x-x'|^2}{4t}}. \nonumber 
\end{align}
If we understand $|x-x'|$ as the geodesic distance between $x$ and $x'$ in $\mathbb{R}^d$, then based on the $\mathbb{R}^d$ case and Varadhan's formula, one may expect to use $(4\pi t)^{-d/2} e^{\frac{d(x,x')^2}{4t}}$ to {approximate the heat kernel on a manifold and use the methods like \cite{meng2008improving,singer2012vector,li2019geodesic,malik2019connecting} to estimate the geodesic distance.} Unfortunately, such an approximation can be quite inaccurate.
{When $d(x,x')$ and $t$ are small,  the heat kernel has the following expansion \cite{rosenberg1997laplacian},
\begin{align}
\mathsf{H}(x,y,t)=(4\pi t)^{-d/2} e^{-\frac{d(x,y)^2}{4t}}(u_0+t u_1+O(t^2))\,, \nonumber 
\end{align}}
{where $u_0(x,x')=\frac{1}{\sqrt{\det\big(d(\exp_x)_{\exp^{-1}_x(x')}\big)} }$ and $\exp_x$ is the exponential map at $x$. We mention that $u_0(x,x')=1+O(d(x,x')^2)$}, where the implied constant in $O(d(x,x')^2)$ depends on the {Ricci} curvature of the manifold. Also, $u_1$ is a continuous function of $x$ and $x'$ that only depends on the manifold. {Such an expansion shows that using $(4\pi t)^{-d/2} e^{\frac{d(x,x')^2}{4t}}$ to approximate the heat kernel leads to a large error when $x$ and $x'$ are close, even if we know the geodesic distance.}

\section{Graph Laplacian and heat kernel approximation}

The graph Laplacian (GL) is a fundamental tool in {the spectral} graph theory \cite{Fan:1996}. {In this section, we start with defining the GL. We then show how to apply the kernel normalized GL to approximate the heat kernel of the manifold.} Moreover, we provide a theoretical justification of the convergence rate of the approximation.

\subsection{Graph Laplacian}

Take a dataset $\mathcal{X}:=\{x_i\}_{i=1}^n \subset \mathbb{R}^D$. {Consider} a kernel function
$k_{\epsilon}(x,x')=\exp\Big(-\frac{\|{x-x'}\|^2_{\mathbb{R}^D}}{4\epsilon^2}\Big)$, where $\epsilon>0$ is the bandwidth. We mention that {in practice} we can {consider} a more general kernel, but to simplify the discussion, we focus on this Gaussian-like kernel.

{Take $\alpha\in [0,1]$.} We construct an $n \times n$  affinity matrix $W$ as
\begin{align}\label{W matrix}
W^{(\alpha)}_{ij}&\,:=\frac{k_{\epsilon}(x_i,x_j)}{q_{\epsilon}(x_i)^\alpha q_{\epsilon}(x_j)^\alpha}\,,
\end{align}
where $q_{\epsilon}(x):=\sum_{i=1}^{n} k_{\epsilon}(x,x_i)$ {for $x\in \mathbb{R}^D$}. This affinity matrix {contains affinities among pairs of points after} {\em $\alpha$-normalization} \cite{coifman2006diffusion}.
Note that the term $q_{\epsilon}(x)$ is {related to the kernel density estimation (KDE)} evaluated at $x$. The {normalization of the kernel $k_{\epsilon}(x_i,x_j)$ by $q_{\epsilon}(x_i)q_\epsilon(x_j)$ is aiming to} remove the impact of the non-uniform sampling density. {By defining} an $n \times n$ diagonal matrix $D$ as 
\begin{align}\label{D matrix}
D^{(\alpha)}_{ii}=\sum_{j=1}^{n} W^{(\alpha)}_{ij},
\end{align}
{where $i=1,\ldots,n$,}
we define the row stochastic transition matrix as 
\begin{align}\label{L matrix}
A^{(\alpha)}=D^{(\alpha)-1}W^{(\alpha)}\,.
\end{align}
Our main quantity of interest is the kernel normalized GL matrix, which is defined as 
\begin{equation}
L^{(\alpha)}:=\frac{A^{(\alpha)}-I}{\epsilon^2}\,.
\end{equation}
 {In this work, we have interest in two special cases; $\alpha=0$ and $\alpha=1$.  When $\alpha=1$, we call $L:=L^{(1)}$ the {\em kernel normalized GL}, and $W:=W^{(1)}$, $D:=D^{(1)}$ and $A:=A^{(1)}$; when $\alpha=0$, we call $L^{un}:=L^{(0)}$ the {\em unnormalized} GL, and $W^{un}:=W^{(0)}$, $D^{un}:=D^{(0)}$ and $A^{un}:=A^{(0)}$.}

{\begin{remark}
We shall comment on the nomination of $L^{un}=L^{(0)}$ as the unnormalized GL.  Sometimes, the name ``unnormalized GL'' refers to $D^{(0)}-W^{(0)}$, while $I-A^{(0)}$ is called the random walk GL. However, since $A^{(1)}$ is also a transition matrix associated with a random walk, we decided to call $L^{un}$ the unnormalized GL for the sake of comparing with $L=L^{(1)}$ in which the kernel is normalized as in \eqref{W matrix}. 
\end{remark}
}
Note that $L$ is a linear {translation} and scaling of $A$, so on the linear algebra level, we only need to study $A$ {if we are interested in the spectral structure of $L$}. 
Since $A$ is similar to $\tilde{A} = D^{-1/2}W D^{-1/2}$, $\tilde{A}$ is diagonalizable. The eigenvalues of $A$ and $\tilde{A}$ are the same, with the smallest eigenvalue {bounded below by} zero and all the values falling in $[0,1]$ since the chosen Gaussian kernel is positive definite {by the Bochner's theorem}. { Hence, if $\mu$ is an eigenvalue of $-L$, then $0 \leq \mu \leq \frac{1}{{\epsilon^2}}$ and the smallest eigenvalue of $-L$ {is bounded from below by $0$}. The same {discussion holds} for $L^{un}$.}

\subsection{{Spectral convergence of the} kernel normalized Graph Laplacian and the heat kernel reconstruction}
Now, we discuss how to estimate the heat kernel by the {kernel normalized GL} when the data are sampled from a closed Riemannian manifold $M$ embedded in $\mathbb{R}^D$. 

\begin{assumption}\label{assumption DM}
Let $M$ be a $d$-dimensional smooth, closed and connected Riemannian manifold isometrically embedded in $\mathbb{R}^D$ through $\iota:M\to \mathbb{R}^D$. Suppose that $\mathsf{p}$ is a smooth probability density function {(p.d.f.)} on the manifold $M$. We assume that $\mathsf{p}$ is bounded from below by $\mathsf{p}_{m}>0$. Suppose the point cloud $\mathcal{X}:=\{x_1 \cdots, x_n\}$ is independently and identically (i.i.d.) sampled following the density function $\mathsf{p}$; that is, we have $\mathcal{X}\subset M$.
{When $\mathsf{p}$ is a constant function, that is, $\mathsf{p}=\frac{1}{\texttt{Vol}(M)}$, we say that the sampling is {\em uniform}; otherwise it is {\em non-uniform}.}

\end{assumption}

Let $\Delta$ be the Laplace-Beltrami operator of $M$.  Let ${\sigma(-\Delta):=}\{\lambda_i\}_{i=0}^\infty$ be the spectrum of $-\Delta$. By standard elliptic theory, we have $0=\lambda_0 < \lambda_1\leq \lambda_2 \leq  \cdots$ and each eigenvalue has a finite multiplicity. Denote $\phi_i$ {to be} the corresponding eigenfunction normalized in $L^2(M)$; that is, for each $i\in \mathbb{N}$, we have $\Delta \phi_i =-\lambda_i \phi_i$.
Denote $\mu_{i,n,\epsilon}$ to be the $i$-th eigenvalue of $-L$ with the associated eigenvector $\tilde{v}_{i,n,\epsilon}$ normalized in the $l^2$ norm, where $i=1,\ldots,n$. We order $\mu_{i,n,\epsilon}$ so that {$0=\mu_{0,n,\epsilon}\leq \mu_{1,n,\epsilon}\leq \ldots\leq \mu_{n-1,n,\epsilon}\leq \frac{1}{\epsilon^2}$.}

{Recall that} the heat kernel has the following expression:
\begin{align}
\mathsf{H}(x,x',t)=\sum_{i=0}^{\infty}e^{-\lambda_i t} \phi_i(x)\phi_i(x'). \nonumber
\end{align}
Therefore, intuitively, {if} we are able to recover the eigenfunctions and eigenvalues of the Laplace-Beltrami operator through $L$, we may be able to recover the heat kernel via the same formula, like $\sum_{i=0}^{\infty}e^{-\mu_{i,n,\epsilon} t} \tilde{v}_{i,n,\epsilon}\tilde{v}_{i,n,\epsilon}^\top$. Below, we show that this intuition is correct, if we carefully carry out the estimation.
{We mention} that the idea of using the kernel normalized GL to gain benefit from the heat kernel has been applied {in several algorithms, like} the nonlinear dimension reduction algorithm diffusion map (DM) \cite{coifman2006diffusion}.

It {has been} well known that the matrix $L$ converges ``pointwisely'' to the Laplace-Beltrami operator of the manifold, which we summarize below. 
\begin{theorem}[e.g. \cite{coifman2006diffusion,singer2012vector}]\label{pointwise convergence}
Suppose $f \in C^4(M)$, if $\epsilon=\epsilon(n)$ so that $\frac{\sqrt{\log n}}{n^{1/2}\epsilon^{d/2+2}} \rightarrow 0$ and $\epsilon \rightarrow 0$ as $n \rightarrow \infty$, then with probability greater than $1-n^{-2}$, for all $x_i$, we have
\begin{align}
\sum_{j=1}^n L_{ij}f(x_j)=\Delta f(x_i)+O(\epsilon^2)+O\bigg(\frac{\sqrt{\log n}}{n^{1/2}\epsilon^{d/2+2}}\bigg)\,. \nonumber 
\end{align}
\end{theorem}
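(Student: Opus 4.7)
My plan is to split $\sum_j L_{ij}f(x_j)$ into a deterministic bias (computed via an asymptotic expansion of a smoothing integral in geodesic normal coordinates) and a stochastic variance (controlled by a Bernstein-type concentration plus a union bound), and to tune $\epsilon$ so that the two error sources are balanced.

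\textbf{Reduction.} Writing $L_{ij}=\epsilon^{-2}(D_{ii}^{-1}W_{ij}-\delta_{ij})$ with $W_{ij}=k_\epsilon(x_i,x_j)/(q_\epsilon(x_i)q_\epsilon(x_j))$, the common factor $q_\epsilon(x_i)$ cancels from numerator and denominator of the ratio, giving
\[
\sum_{j=1}^n L_{ij}f(x_j)=\frac{1}{\epsilon^2}\left[\frac{\frac{1}{n}\sum_j k_\epsilon(x_i,x_j)\,f(x_j)/\hat p_\epsilon(x_j)}{\frac{1}{n}\sum_j k_\epsilon(x_i,x_j)/\hat p_\epsilon(x_j)}-f(x_i)\right],
\]
where $\hat p_\epsilon(y):=q_\epsilon(y)/n$ is the kernel density estimate of $p_\epsilon(y):=\int_M k_\epsilon(y,z)\mathsf{p}(z)\,dV(z)$. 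Condition on $x_i$ throughout the analysis.

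\textbf{Bias.} A scaling argument in geodesic normal coordinates at $y$, together with Taylor-expanding the smooth functions involved (using $f\in C^4(M)$), yields the standard expansion $p_\epsilon(y)=\epsilon^d[m_0\mathsf{p}(y)+\epsilon^2 R(y)]+O(\epsilon^{d+4})$ and, for any $h\in C^4(M)$,
\[
\int_M k_\epsilon(x_i,y)\,h(y)\,dV(y)=\epsilon^d\Bigl[m_0 h(x_i)+\tfrac{m_2}{2}\epsilon^2\Delta h(x_i)\Bigr]+O(\epsilon^{d+4}).
\]
Applied with $h=f\mathsf{p}/p_\epsilon$ and $h=\mathsf{p}/p_\epsilon$, the $\mathsf{p}$ factor cancels against the leading $\epsilon^d m_0\mathsf{p}$ in $p_\epsilon$ (this is precisely where the $\alpha=1$ normalization is used), and the ratio of the two expectations becomes $f(x_i)+c\epsilon^2\Delta f(x_i)+O(\epsilon^4)$ for an explicit $c$. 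After dividing by $\epsilon^2$ and absorbing constants, this is $\Delta f(x_i)+O(\epsilon^2)$.

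\textbf{Variance and uniformization.} Since $\mathsf{p}\geq\mathsf{p}_m>0$, $p_\epsilon(y)\gtrsim\epsilon^d$ uniformly; an elementary Hoeffding bound for $\hat p_\epsilon(y)-p_\epsilon(y)$ lets me replace $\hat p_\epsilon$ in the denominators by $p_\epsilon$ up to a multiplicative error $O(\sqrt{\log n/(n\epsilon^d)})$ with probability $1-O(n^{-3})$ at any given $y=x_j$. After this replacement, each term $k_\epsilon(x_i,x_j)f(x_j)/p_\epsilon(x_j)$ is bounded by $C/\epsilon^d$ and has second moment $O(\epsilon^{-d})$ (using $\int k_\epsilon^2\,\mathsf{p}\,dV=O(\epsilon^d)$). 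Bernstein's inequality then gives
\[
\Bigl|\tfrac{1}{n}\sum_j k_\epsilon(x_i,x_j)f(x_j)/p_\epsilon(x_j)-\mathbb{E}[\cdot]\Bigr|=O\!\left(\sqrt{\tfrac{\log n}{n\epsilon^d}}\right)
\]
with probability $1-O(n^{-3})$, and similarly for the denominator. Dividing by $\epsilon^2$ produces the claimed stochastic term $O(\sqrt{\log n}/(n^{1/2}\epsilon^{d/2+2}))$. A union bound over $i=1,\dots,n$ turns the per-point $1-O(n^{-3})$ estimate into the global $1-n^{-2}$ statement.

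\textbf{Main obstacle.} The delicate point is the subtle dependence structure: the quantities $\hat p_\epsilon(x_j)$ that appear in the denominators inside the $i$-th sum are themselves functions of all the samples, so the $n$ summands indexed by $j$ are not independent. I would handle this by first establishing a uniform-in-$y$ concentration inequality for $\hat p_\epsilon(y)$ (covering $M$ by an $\epsilon$-net and using the Lipschitzness of $y\mapsto p_\epsilon(y)$ together with pointwise Hoeffding), thereby freezing $\hat p_\epsilon$ in the denominators at $p_\epsilon$ up to an $O(\sqrt{\log n/(n\epsilon^d)})$ factor on a high-probability event. On that event the remaining randomness in the numerator is an independent sum (conditioned on $x_i$), which is exactly what the Bernstein step needs. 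The asymptotic expansion step is classical; the accounting of $\epsilon$-powers is the only other place where care is required, and the stated exponent $d/2+2$ comes out of the combination (variance scaling $\epsilon^{-d}$)$^{1/2}\cdot\epsilon^{-2}$.
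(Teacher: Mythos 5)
Your proposal is correct and follows essentially the argument the paper relies on: the paper omits the proof and cites \cite{coifman2006diffusion,singer2012vector}, whose treatment is precisely your decomposition into a bias term (Taylor expansion of the kernel integrals in normal coordinates, with the $\alpha=1$ normalization cancelling the density so that the Gaussian moments give coefficient $1$ in front of $\Delta f$) plus a stochastic term (uniform concentration of the kernel density estimate to freeze the denominators, then Bernstein and a union bound over the $n$ points), which yields exactly the stated $O(\epsilon^2)+O\big(\sqrt{\log n}/(n^{1/2}\epsilon^{d/2+2})\big)$ error. Your handling of the dependence through the denominators via a uniform-in-$y$ bound is also how such terms are treated elsewhere in this paper (cf.\ the Glivenko--Cantelli-type Proposition \ref{GC class}), so no gap remains beyond routine bookkeeping of the moment constants.
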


Since {this fact} has been proved in several places, we omit the proof.
Note that Theorem \ref{pointwise convergence} does not directly link the eigenvectors of $L$ and the eigenfunctions of the Laplace-Beltrami operator. Moreover, Theorem \ref{pointwise convergence} is only a pointwise convergence result, which is not strong enough to guarantee the spectral convergence. 

\begin{remark}\label{coeffecient 1}
Denote the $k$-th moment {of the kernel} as $m_k:=\int_{\mathbb{R}^d}\|u\|^ke^{-\|u\|^2/4\epsilon^2}dx$, where $k=0,1,\ldots$. Note that we have $m_0=(4\pi \epsilon^2)^{d/2}$ and  $m_2=2 (4\pi \epsilon^2)^{d/2} \epsilon^2$, so a straightforward Taylor expansion in the normal coordinates shows that the coefficient is $1$ in front of $\Delta f(x)$. 
\end{remark}

\begin{remark}
Note that the result shown in Theorem \ref{pointwise convergence} is associated with the $1$-normalization; that is, $\alpha=1$ in the $\alpha$-normalization {proposed in} \cite{coifman2006diffusion}. 
{On the other hand, $L^{un}$} converges to a second order differential operator that {is} {influenced} by the p.d.f. if the p.d.f. is not uniform. In this case, the pointwise convergence rate is $O\Big(\frac{\sqrt{\log n}}{\sqrt{n}\epsilon^{d/2+1}}\Big)$, which is faster than that shown in Theorem \ref{pointwise convergence}. The difference of the convergence rate comes from the {KDE,} $q_\epsilon$.
Hence, if the dataset is uniformly sampled, one can simply consider {$L^{un}$} to achieve a faster pointwise convergence rate. 
\end{remark}

Next we describe our main results about recovering the eigenvalues and eigenfunctions of the Laplace-Beltrami operator from {the kernel normalized GL} in the $L^\infty$ sense. This will be the foundation of reconstructing the heat kernel in the $L^\infty$ sense. %
To state our main contribution, we need the following normalization. 
Since the eigenfunction $\phi_i$ of the Laplace-Beltrami operator is normalized in the $L^2(M)$ norm, in order to compare the $i$-th eigenvector of $L$ and $\phi_i$, we have to make sure that $\tilde{v}_{i,n,\epsilon}$ is properly normalized.

\begin{definition}\label{weighted l2 norm}
Under Assumption \ref{assumption DM}, suppose $\tilde{v}$ is an eigenvector of $L$ which is normalized in the $l^2$ norm. Let 
$\mathbb{N}(i)=|B^{\mathbb{R}^p}_{\epsilon}(\iota(x_i)) \cap \{\iota(x_1), \cdots, \iota(x_n)\}|$, {which is} the number of points in the $\epsilon$ ball in the ambient space.
Then, we define the $l^2$ norm of $\tilde{v}$ with respect to inverse estimated probability density $1/\hat{\mathsf{p}}$ as: 
\begin{align}
\|\tilde{v}\|_{l^2(1/\hat{\mathsf{p}})}:=\sqrt{\frac{|S^{d-1}|\epsilon^d}{d} \sum_{i=1}^n \frac{\tilde{v}^2(i)}{\mathbb{N}(i)}}. \nonumber 
\end{align}
\end{definition}
Note that in this definition, {the term $\frac{d\mathbb{N}(i)}{|S^{d-1}|\epsilon^d}$ is an estimation of the probability density $\mathsf{p}$ at $x_i$ by the $0-1$ kernel over the Euclidean ball centered at $\iota(x_i)$ with radius $\epsilon$. The reader may refer to Lemma \ref{KDE} for details. } A more sophisticated {KDE} can be applied here, but we are not concerned with it to simplify the discussion.
Then, define
\begin{align} \label{normalized V i n epsilon}
v_{i,n,\epsilon}=\frac{\tilde{v}_{i,n,\epsilon}}{\|\tilde{v}_{i,n,\epsilon}\|_{l^2(1/\hat{\mathsf{p}})}}.  
\end{align}
Intuitively, $v_{i,n,\epsilon}$ can be regarded as a discretization of some function that is normalized in the $L^2(M)$ norm. A rigorous discussion can be found in Proposition \ref{Convergence of normalized eigenvectors} in Section \ref{proof of theorem 2}. In the following theorem, we show that $v_{i,n,\epsilon}$ is an approximation of $\phi_i$ over the $n$ data points with high probability.

The following theorem says that, on a closed manifold $M$, if we fix $K$ and we choose the bandwidth $\epsilon$ based on the number of data points $n$,  then for $i < K$,  with high probability, $\mu_{i,n,\epsilon}$ is  an approximation of the $i$-th eigenvalue $\lambda_i$ of $-\Delta$.
 The proof of the theorem is in Section \ref{proof of theorem 2}.

\begin{theorem} \label{spectral convergence of L on closed manifold}(Spectral convergence of {the kernel normalized} GL) Under Assumption \ref{assumption DM}, suppose all the eigenvalues of $\Delta$ are simple. {Fix $K \in \mathbb{N}$, and denote $\mathsf \Gamma_K:=\min_{1 \leq i \leq K}\textup{dist}(\lambda_i, \sigma(-\Delta)\setminus \{\lambda_i\})$.}
{Suppose 
\begin{equation}\label{epsilon less than some constant}
\epsilon \leq \mathcal{K}_1 \min \left(\left(\frac{\min(\mathsf\Gamma_K,1)}{\mathcal{K}_2+\lambda_K^{d/2+5}}\right)^2,\, \frac{1}{(\mathcal{K}_3+\lambda_K^{(5d+7)/4})^2}\right)\,,
\end{equation}
where $\mathcal{K}_1$ and $\mathcal{K}_2, \mathcal{K}_3>1$  are constants depending on $d$, $\mathsf p_m$, the $C^2$ norm of $\mathsf p$, and the volume, the injectivity radius, {the curvature} and the second fundamental form of the manifold.
Then, {when $n$ is sufficiently large so that $\epsilon=\epsilon(n) \geq (\frac{\log n}{n})^{\frac{1}{4d+13}}$} {, with} probability greater than $1-n^{-2}$, for all $0 \leq i < K$, 
\begin{align}\label{Theorem 2 eigenvalue bound}
|\mu_{i,n,\epsilon}-\lambda_{i}|\leq \Omega_1 \epsilon^{3/2}\,. 
\end{align}
Suppose \eqref{epsilon less than some constant} is satisfied. {Then, when $n$ is sufficiently large so that $\epsilon=\epsilon(n)\geq (\frac{\log n}{n})^{\frac{1}{4d+8}} $,} with probability greater than $1-n^{-2}$, there are $a_i \in \{1,-1\}$ such that for all $0 \leq i < K$, 
\begin{align}
\max_{x_j \in \mathcal{X}}|a_i v_{i,n,\epsilon}(j)-\phi_{i}(x_j)|\leq  \Omega_2 \epsilon^{1/2}\,.\label{Theorem 2 eigenvector bound} 
\end{align}
}
{$\Omega_1$ depends on $d$, the diameter of $M$, $\mathsf p_m$, and the $C^2$ norm of $\mathsf p$, and $\Omega_2$ depends on $d$, the diameter and the volume of $M$, $\mathsf p_m$, and the $C^2$ norm of $\mathsf p$.}
\end{theorem}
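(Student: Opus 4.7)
My plan is to introduce a continuous integral operator $T_\epsilon$ that is the deterministic ``population'' analogue of the row-stochastic matrix $A$, and to pass through it by a chain of three spectral comparisons: $\Delta \leftrightarrow \mathcal L_\epsilon := (T_\epsilon-I)/\epsilon^2 \leftrightarrow L$. Concretely, set
\[
T_\epsilon f(x) := \frac{\int_M k_\epsilon(x,y)\mathsf p(y)/q_\epsilon(y)\,f(y)\,dV(y)}{\int_M k_\epsilon(x,y)\mathsf p(y)/q_\epsilon(y)\,dV(y)}\,,\qquad q_\epsilon(x) := \int_M k_\epsilon(x,y)\mathsf p(y)\, dV(y)\,.
\]
The Taylor expansion in normal coordinates underlying Theorem~\ref{pointwise convergence} and Remark~\ref{coeffecient 1} gives $\mathcal L_\epsilon f(x) = \Delta f(x) + O(\epsilon^2)$ uniformly in $x$, with constants controlled by $\|f\|_{C^4}$ and the geometry of $M$. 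Since $T_\epsilon$ is compact and self-adjoint with respect to a suitable weighted measure, a Kato-type resolvent perturbation argument then yields $|\nu_{i,\epsilon} - \lambda_i| = O(\epsilon^2)$ with eigenfunctions $\psi_{i,\epsilon}$ converging to $\phi_i$ in $L^\infty$ at rate $O(\epsilon^2)$, the $L^\infty$ upgrade for this continuous problem being obtained from heat-semigroup smoothing $e^{s\Delta}$ together with standard heat-kernel bounds that convert $L^2$ closeness into $L^\infty$ closeness at the cost of a polynomial factor in $\lambda_K$.

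\textbf{From the continuous operator to the discrete GL.} I would then control $(L-\mathcal L_\epsilon)f$ at each sample point by writing $A f(x_i)$ as a ratio of empirical sums, Taylor-expanding the ratio about its population value, and applying Bernstein's inequality separately to numerator and denominator; a union bound over the $n$ sample points produces the uniform error $\sqrt{\log n}/(\sqrt n\,\epsilon^{d/2+2})$ already familiar from Theorem~\ref{pointwise convergence}. Applied with $f = \psi_{i,\epsilon}$, this gives a residual bound on $(L + \nu_{i,\epsilon})R_n\psi_{i,\epsilon}$ in the weighted $l^2(1/\hat{\mathsf p})$ norm of Definition~\ref{weighted l2 norm}, where $R_n$ denotes restriction to $\mathcal X$. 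A Davis-Kahan perturbation theorem adapted to this self-adjoint structure then converts the residual into (i) $|\mu_{i,n,\epsilon}-\nu_{i,\epsilon}| = O(\mathrm{residual})$, which combines with Step~1 through the triangle inequality to yield \eqref{Theorem 2 eigenvalue bound}, and (ii) $l^2(1/\hat{\mathsf p})$-closeness of $a_i v_{i,n,\epsilon}$ to $R_n\psi_{i,\epsilon}$ with the usual inverse-gap factor $\mathsf\Gamma_K^{-1}$.

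\textbf{$L^\infty$ upgrade and main obstacle.} The last step is to boost the $l^2$ eigenvector closeness into the pointwise bound \eqref{Theorem 2 eigenvector bound}. For this I would exploit the fixed-point identity $v_{i,n,\epsilon} = (1-\epsilon^2\mu_{i,n,\epsilon})^{-1} A\,v_{i,n,\epsilon}$: one application of the kernel-averaging operator $A$ converts an $l^2(1/\hat{\mathsf p})$ bound on $a_i v_{i,n,\epsilon} - R_n\psi_{i,\epsilon}$ into a pointwise bound by Cauchy-Schwarz, at the cost of a factor of order $\epsilon^{-d/2}$ coming from $\|k_\epsilon(x,\cdot)\|_{L^2}$; combining with the $O(\epsilon^2)$ closeness $\|\psi_{i,\epsilon}-\phi_i\|_\infty$ from Step~1 and balancing against the probabilistic error produces the $\epsilon^{1/2}$ rate. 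The hardest part will be the bookkeeping: the polynomial dependence on $\lambda_K$ of every constant produced by the continuous perturbation, the spectral-gap factor $\mathsf\Gamma_K^{-1}$, and the $\epsilon^{-d/2}$ loss in the $l^2$-to-$l^\infty$ upgrade must all be absorbed by the bandwidth restriction \eqref{epsilon less than some constant}, and the lower bound $\epsilon \geq (\log n/n)^{1/(4d+8)}$ must simultaneously be shown sufficient for the Bernstein error to beat the spectral gap at every link of the chain; getting the specific exponents $d/2+5$, $(5d+7)/4$, and $1/(4d+8)$ to come out correctly is where the argument is most delicate.
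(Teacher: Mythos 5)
Your overall architecture ($-\Delta \leftrightarrow (I-T_\epsilon)/\epsilon^2 \leftrightarrow L$) is the same as the paper's, but the middle, discrete link as you describe it has a genuine gap: applying the pointwise Bernstein/union-bound estimate to the \emph{fixed} function $\psi_{i,\epsilon}$ and then invoking Davis--Kahan only tells you that \emph{some} eigenvalue of $-L$ lies within the residual of $\nu_{i,\epsilon}$, and the eigenvector part of Davis--Kahan needs a spectral gap of the matrix $L$ itself around that eigenvalue. Neither gives you that the $i$-th \emph{ordered} eigenvalue $\mu_{i,n,\epsilon}$ is the one near $\lambda_i$: with only $K$ approximate eigenvectors you cannot exclude spurious eigenvalues of $-L$ interlacing below $\lambda_{K}$, which would scramble the ordering and destroy the very gap your perturbation step requires. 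This matching/counting problem is precisely why the paper does not argue through approximate eigenvectors at the matrix level. It instead proves that $T_{n,\epsilon}$ converges to $T_\epsilon$ \emph{collectively compactly} on $(C(M),\|\cdot\|_\infty)$ (note the paper's warning that $\|T_{n,\epsilon}-T_\epsilon\|$ does \emph{not} tend to $0$, so one must control the product $\|(T_\epsilon-T_{n,\epsilon})T_{n,\epsilon}\|$ via the Glivenko--Cantelli bounds of Proposition \ref{GC class}), then uses Theorem \ref{atkinson1967} to localize exactly one eigenvalue of $T_{n,\epsilon}$ near each simple eigenvalue of $T_\epsilon$ together with an $L^\infty$ bound on the spectral projection, and finally transfers everything to $A$ with \emph{no} error through the exact Nystr\"om bijection of Proposition \ref{relation between W and Q}. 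This route also delivers the eigenvector bound directly in $L^\infty$, so the $\epsilon^{-d/2}$ Cauchy--Schwarz loss in your third step never appears; your budget for that loss is not obviously compatible with the rest of your bookkeeping, and in any case it is not how the stated rate is obtained.

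The continuous link also needs repair. A ``Kato-type resolvent perturbation'' from the pointwise expansion $(T_\epsilon-I)/\epsilon^2 f=\Delta f+O(\epsilon^2)$ is not available as stated: $\Delta$ is unbounded, $(I-T_\epsilon)/\epsilon^2$ is bounded by $\epsilon^{-2}$, and the $O(\epsilon^2)$ consistency holds only on functions with controlled $C^4$ norm, so there is no operator-norm or resolvent closeness to perturb. The paper's Proposition \ref{T epsilon and Delta} instead compares $T_\epsilon$ with the heat operator $\mathsf H_\epsilon$ (which shares the eigenfunctions of $\Delta$), splits an arbitrary $f$ into low- and high-frequency parts, controls the high part with Lemma \ref{property 1 of R epsilon} and the low part with Lemma \ref{property 2 of R epsilon}, and uses the quadratic-form ratio inequality of Lemma \ref{ratio of eigenvalues}(a) --- valid for \emph{all} indices, which is what pins down the continuous eigenvalue ordering --- before bootstrapping with parts (b) and (c) and a Sobolev-embedding upgrade to $L^\infty$. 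Smaller but real omissions in your sketch: you must justify that the discrete $l^2(1/\hat{\mathsf p})$ normalization approximates the $L^2(M)$ normalization of the Nystr\"om-extended eigenfunction (the paper's Proposition \ref{Convergence of normalized eigenvectors} and the KDE/Bernstein lemmas feeding it), and the case $i=0$ must be treated separately since Lemma \ref{lemma hormander} does not cover $\lambda_0=0$.
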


{
Note that $\mathsf \Gamma_K$ is the smallest spectral gap between the first $K$ eigenvalues. Hence, $\mathsf \Gamma_K$ is a non-increasing function of $K$. When $d=1$, $\mathsf \Gamma_K=\lambda_1$ which is a constant only depending on the arclength of the curve. However, when 
$d \geq 2$, as $K$ increases, it is possible that $\mathsf \Gamma_K=|\lambda_{l+1}-\lambda_{l}|$ for some $l$ close to $K$. No matter in which case, since we consider a fixed number of eigenvalues and eigenvectors, this eigengap dependence is fixed when $K$ is fixed.  

Since $\lambda_K$ grows {polynomially}, the theorem emphasizes that if we want to recover more eigenvalues and eigenvectors, the $\epsilon$ needs to get polynomially smaller. In practice, we may only want to recover a fixed number of eigenvalues and eigenvectors, and we may not need to focus on this dependence.  See Section \ref{Section: spectral convergence independent of K} for a discussion in this direction.
}

We want to point it out that we assume that the eigenvalues of $\Delta$ are simple to simplify the discussion. In the case when the eigenvalues are not simple, the same proof still works by introducing the eigenprojection \cite{chatelin2011spectral}.

{
\begin{remark}\label{consistent relation}
We shall mention that while the error of the eigenvalue and eigenvector estimation listed in \eqref{Theorem 2 eigenvalue bound} and \eqref{Theorem 2 eigenvector bound} seems to simply depend on $\epsilon$, it is actually an interplay of $\epsilon$ and $n$.  In the above theorem, we have two relationships between $\epsilon$ and $n$, one for eigenvalue and one for eigenvector. If we choose $\epsilon=\epsilon(n)=(\frac{\log n}{n})^{\frac{1}{4d+13}}$, asymptotically, the eigenvector converges at the rate $n^{-1/(8d+16)}$ up to a log factor, which is slow compared with the eigenvalue convergence rate at $n^{-3/(8d+26)}$ up to a log factor. 
Usually, when we apply GL to approximate the Laplace-Beltrami operator in the spectral sense, we use one $\epsilon$ instead of two. Based on the above result, if we choose any $\epsilon$ such that \eqref{epsilon less than some constant} is satisfied and $\epsilon\geq (\frac{\log n}{n})^{\frac{1}{4d+13}}$, since $(\frac{\log n}{n})^{\frac{1}{4d+8}}\leq (\frac{\log n}{n})^{\frac{1}{4d+13}}$, then the above theorem implies that
$$
\max_{x_j \in \mathcal{X}}|a_i v_{i,n,\epsilon}(j)-\phi_{i}(x_j)|\leq  \Omega_2 \epsilon^{1/2}\,.
$$
As we need to recover both the eigenvalues and the eigenfunctions of the Laplace-Beltrami operator in oder to recover the heat kernel, we choose this relationship $\epsilon \geq (\frac{\log n}{n})^{\frac{1}{4d+13}}$ later when we estimate the heat kernel.
\end{remark}
}

{
Based on the above theorem,  if we combine the relations $\epsilon \geq (\frac{\log n}{n})^{\frac{1}{4d+13}}$ and \eqref{epsilon less than some constant}, then we have a sufficient condition on the number $n$ to recover the first $K$ eigenvalues of $-\Delta$; that is, if
\begin{align}\label{sufficient condition K and n 1}
n>\max \Bigg(\bigg(\frac{\mathcal{K}_2+\lambda_K^{d/2+5}}{\min(\mathsf\Gamma_K,1)}\bigg)^{8d+26},\,\,  (\mathcal{K}_3+\lambda_K^{(5d+7)/4})^{8d+26}\Bigg),
\end{align}
then for $0 \leq i \leq K$,
$$|\mu_{i,n,\epsilon}-\lambda_{i}|\leq \Omega_1 \left(\frac{\log n}{n}\right)^{\frac{3}{8d+26}}.$$
Similarly, if 
\begin{align}\label{sufficient condition K and n 2}
n>\max \Bigg(\bigg(\frac{\mathcal{K}_2+\lambda_K^{d/2+5}}{\min(\mathsf\Gamma_K,1)}\bigg)^{8d+16},\,\,  (\mathcal{K}_3+\lambda_K^{(5d+7)/4})^{8d+16}\Bigg),
\end{align}
then for $0 \leq i \leq K$,
$$\max_{x_j \in \mathcal{X}}|a_i v_{i,n,\epsilon}(j)-\phi_{i}(x_j)|\leq  \Omega_2  \left(\frac{\log n}{n}\right)^{\frac{1}{8d+16}}.$$
\begin{remark}\label{consistent relation2}
Another remark is about how to determine $\epsilon$ and $K$ in practice. We shall emphasize that the asymptotic relationship between $\epsilon$ and $n$, like $\epsilon= (\frac{\log n}{n})^{\frac{1}{4d+13}}$ or $\epsilon= (\frac{\log n}{n})^{\frac{1}{4d+8}}$, could be a guidance, but it is not practical to choose $\epsilon$ directly from this relationship. It is because the constant $\Omega_2$ is usually unknown for a generic manifold. To our knowledge, despite some practical rules of thumb, in general a good choice of $\epsilon$ depends on some try-and-errors. See \cite{ding2020phase} for a recent theoretically solid choice that aims to choose an $\epsilon$ that is robust to ``big'' high dimensional noise. 
However, when there is no noise or when noise is not big, how to choose an optimal $\epsilon$ for the spectral convergence purpose is still open.
We face the similar challenge in determining the relation between $n$ and $K$. Indeed, while we could obtain an asymptotic relationship between $K$ and $n$ via applying Lemma \ref{laplace eigenvalue lower bound} to \eqref{sufficient condition K and n 1} or \eqref{sufficient condition K and n 2}, the relationship is not practical since the implied constant is in general difficult to control. To our knowledge, there does not exist a universal practical way to determine what $K$ should be, and in practice it depends on try-and-errors. We will report our work in these practical challenges in our future work. 
\end{remark}

}

{Now we introduce our heat kernel estimator.} Fix $K\in \mathbb{N}$ and $t>0$. Consider the matrix
\begin{align}\label{Construction heat kernel approximation}
\mathsf H^{(K)}_{\epsilon,t}=\sum_{i=0}^{K-1} e^{-\mu_{i,n,\epsilon} t} v_{i,n,\epsilon} v^\top_{i,n,\epsilon}\in \mathbb{R}^{n\times n}\,,
\end{align}
which will be our estimation of the heat kernel.
{Note that we} {sum $i$ from $0$} to $K-1$, {which is based on the statement for} $i<K$ in the previous theorem.

To {study} how $\mathsf H^{(K)}_{\epsilon,t}$ approximates the heat kernel $\mathsf H(\cdot,\cdot,t)$, below we list {assumptions about the relationship} among $n$, $\epsilon$, $K$ and $t$.
\begin{assumption}\label{assumption nekt}
The parameters $n$, $\epsilon$, $K$ and $t$ satisfy the  following conditions. 
\begin{enumerate}
{
\item Set $\mathsf \Gamma_K:=\min_{1 \leq i \leq K}\textup{dist}(\lambda_i, \sigma(-\Delta)\setminus \{\lambda_i\})$. Suppose 
\begin{align}
\epsilon \leq  \min \Bigg(\mathcal{K}_1 \bigg(\frac{\min(\mathsf\Gamma_K,1)}{\mathcal{K}_2+\lambda_K^{d/2+5}}\bigg)^2,\,\, \frac{1}{(\mathcal{K}_3+\lambda_K^{(5d+7)/4})^2}, \,\,\frac{\mathcal{K}}{K^4 \lambda_K^{(d-1)/2}} \Bigg), \nonumber 
\end{align}
where  $\mathcal{K}_1$ and $\mathcal{K}_2, \mathcal{K}_3>1$  are the constants in Theorem \ref{spectral convergence of L on closed manifold} and $\mathcal{K}$ depends on $d$ and the Ricci curvature and the diameter of $M$. 
\item $n$ is large enough so that $(\frac{\log n}{n})^{\frac{1}{4d+13}} \leq \epsilon$.
\item  $\frac{C \log K}{ K^{2/d}} \leq t$, where $C$ is a constant depending on $d$ and the Ricci curvature and the diameter of $M$.
}
\end{enumerate}
\end{assumption}

Based on the above {assumption}, we have the following theorem.  The proof of the theorem is in Section \ref{proof of theorem 3}.
\begin{theorem}  \label{heat kernel reconstruction 1}
Suppose Assumptions \ref{assumption DM} and \ref{assumption nekt} hold. Then, with probability greater than $1-n^{-2}$ 
{$$\sup_{x_i,x_j \in \mathcal{X}}|\mathsf{H}^{(K)}_{\epsilon,t}(i,j)-\mathsf{H}(x_i, x_j, t)| < \frac{\Omega_3}{K}(t+1)e^{-\Omega_4 t} +\Omega_5 \epsilon,$$
where $\Omega_3$ depends on $d$, $\mathsf p_m$, the $C^2$ norm of $\mathsf p$, and the injectivity radius, the diameter, the volume and the curvature of $M$, $\Omega_4$ depends on $d$, and the Ricci curvature and the diameter of $M$, and $\Omega_5$ depends on the volume of $M$ and the $C^2$ norm of $\mathsf p$.}
\end{theorem}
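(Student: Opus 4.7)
The plan is to decompose the error by the triangle inequality into a discretization error on the first $K$ modes and a spectral truncation tail of the exact heat kernel: for $x_i, x_j \in \mathcal{X}$, write
\begin{align*}
\mathsf{H}^{(K)}_{\epsilon,t}(i,j) - \mathsf{H}(x_i,x_j,t) = E_1(i,j) - E_2(x_i,x_j),
\end{align*}
where $E_1 := \sum_{\ell=0}^{K-1}\bigl[e^{-\mu_{\ell,n,\epsilon} t} v_{\ell,n,\epsilon}(i) v_{\ell,n,\epsilon}(j) - e^{-\lambda_\ell t}\phi_\ell(x_i)\phi_\ell(x_j)\bigr]$ and $E_2 := \sum_{\ell=K}^\infty e^{-\lambda_\ell t}\phi_\ell(x_i)\phi_\ell(x_j)$. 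The target is to bound $|E_2|$ by $\frac{\Omega_3}{K}(t+1)e^{-\Omega_4 t}$ and $|E_1|$ by a combination of that term and $\Omega_5\epsilon$.

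For $E_2$, I will invoke the H\"ormander uniform bound $\|\phi_\ell\|_\infty \le C_M \lambda_\ell^{(d-1)/4}$ for eigenfunctions of a closed Riemannian manifold, together with Weyl's asymptotic $\lambda_\ell \gtrsim \ell^{2/d}$, to obtain
\begin{align*}
|E_2| \le \sum_{\ell \ge K} e^{-\lambda_\ell t}\lambda_\ell^{(d-1)/2}.
\end{align*}
Comparing this sum with an integral against the Weyl counting measure (for which $dN(s) \lesssim s^{d/2-1}ds$) and integrating by parts reduces the tail to an expression governed by $\lambda_K^{(d-1)/2}e^{-\lambda_K t}$ plus a correction of order $t\lambda_K^{(d-1)/2}e^{-\lambda_K t}$; the requirement $t \ge C\log K/K^{2/d}$ in Assumption \ref{assumption nekt}(3), combined with $\lambda_K \gtrsim K^{2/d}$, lets the $e^{-\lambda_K t}$ factor absorb the polynomial $\lambda_K^{(d-1)/2}$ and extract a $1/K$ prefactor, giving the desired first summand.

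For $E_1$, I expand each summand as
\begin{align*}
(e^{-\mu_{\ell,n,\epsilon} t}-e^{-\lambda_\ell t}) v_{\ell,n,\epsilon}(i) v_{\ell,n,\epsilon}(j) + e^{-\lambda_\ell t}\bigl(v_{\ell,n,\epsilon}(i)v_{\ell,n,\epsilon}(j) - \phi_\ell(x_i)\phi_\ell(x_j)\bigr),
\end{align*}
using that the rank-one matrix $v_{\ell,n,\epsilon} v_{\ell,n,\epsilon}^\top$ is insensitive to the sign $a_\ell\in\{\pm 1\}$ provided by Theorem \ref{spectral convergence of L on closed manifold}. That theorem then gives $|e^{-\mu_{\ell,n,\epsilon} t}-e^{-\lambda_\ell t}| \le t\,\Omega_1\epsilon^{3/2} e^{-\min(\mu_\ell,\lambda_\ell)t}$ from $|\mu_{\ell,n,\epsilon}-\lambda_\ell| \le \Omega_1 \epsilon^{3/2}$, and the eigenvector bound $\max_j|a_\ell v_{\ell,n,\epsilon}(j)-\phi_\ell(x_j)| \le \Omega_2\epsilon^{1/2}$, coupled with $\|\phi_\ell\|_\infty \le C_M\lambda_\ell^{(d-1)/4}$, controls the bilinear difference by a constant times $\epsilon^{1/2}\lambda_\ell^{(d-1)/4}$. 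Summing both contributions over $\ell=0,\ldots,K-1$ and invoking the bandwidth restriction $\epsilon \le \mathcal{K}/(K^4\lambda_K^{(d-1)/2})$ from Assumption \ref{assumption nekt}(1) lets the eigenvector part be absorbed into the $\Omega_3/K$ summand, while the extra $\epsilon^{3/2}$ factor in the eigenvalue part yields the $\Omega_5\epsilon$ summand.

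The main obstacle is producing the truncation estimate in the precise product form $\frac{\Omega_3}{K}(t+1)e^{-\Omega_4 t}$: the polynomial factor $\lambda_\ell^{(d-1)/2}$ must be absorbed against $e^{-\lambda_\ell t}$ while simultaneously extracting a clean $1/K$ prefactor from Weyl's counting and retaining an exponential in $t$. This is precisely what forces the hypothesis $t \ge C\log K/K^{2/d}$: it ensures the maximizer of $s\mapsto s^{(d-1)/2}e^{-st}$ lies at or below $\lambda_K$, so that the tail integral sits entirely on the decreasing branch where the exponential dominates the polynomial and the integration-by-parts yields a tight bound.
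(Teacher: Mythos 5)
Your decomposition into the first $K$ modes plus the spectral tail is the same as the paper's, and your tail argument (H\"ormander's bound $\|\phi_\ell\|_\infty\lesssim\lambda_\ell^{(d-1)/4}$ plus the Weyl-type lower bound of Lemma \ref{laplace eigenvalue lower bound}) is a viable, if slightly different, route from the paper's, which instead invokes the pointwise heat-trace estimate $\sup_x\sum_{l\ge K}e^{-\lambda_l t}\phi_l(x)^2\le \tilde c_1 t^{-d/2}\int_{\lambda_K t}^\infty s^{d/2}e^{-s}\,ds$ from \cite{berard1994embedding}; your version would at worst force a different constant $C$ in Assumption \ref{assumption nekt}(3). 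The genuine gap is in your treatment of the low modes, specifically $\ell=0$. Since $\mu_{0,n,\epsilon}=\lambda_0=0$, both $e^{-\mu_{0,n,\epsilon}t}$ and $e^{-\lambda_0 t}$ equal $1$, so the $\ell=0$ eigenvector term $|v_{0,n,\epsilon}(i)v_{0,n,\epsilon}(j)-\phi_0(x_i)\phi_0(x_j)|$ carries \emph{no} decay in $t$ whatsoever. It therefore cannot be ``absorbed into the $\Omega_3/K$ summand,'' because $\frac{\Omega_3}{K}(t+1)e^{-\Omega_4 t}\to 0$ as $t\to\infty$ while this error does not; it must be bounded by a $t$-independent quantity, and that quantity must be $O(\epsilon)$ to match the statement. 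Theorem \ref{spectral convergence of L on closed manifold} as stated only gives $\Omega_2\epsilon^{1/2}$ for $i=0$, which is neither $O(\epsilon)$ nor absorbable into the decaying term. The paper closes exactly this point with the sharper zero-mode estimate \eqref{0 case in the proof}, $\max_j|a\,v_{0,n,\epsilon}(j)-\phi_0(x_j)|\le\Omega_2\epsilon$, which exploits that $\tilde v_{0,n,\epsilon}$ is exactly constant and that the $l^2(1/\hat{\mathsf p})$ normalization amounts to estimating the volume of $M$; this is precisely the origin of the $\Omega_5\epsilon$ term. Your proposal never isolates the zero mode, so as written it cannot deliver the claimed bound.

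Relatedly, your attribution of $\Omega_5\epsilon$ to the eigenvalue part is off. For $\ell=0$ the eigenvalue difference vanishes identically ($\mu_{0,n,\epsilon}=\lambda_0=0$), and for $\ell\ge 1$ that part comes with the factor $t\,e^{-ct}$ (via $|e^{-\mu_{\ell,n,\epsilon}t}-e^{-\lambda_\ell t}|\le t\,\Omega_1\epsilon^{3/2}e^{-ct}$ with $c$ of order $\lambda_1$), so it naturally belongs to the first summand after invoking $\epsilon\le\mathcal{K}/(K^4\lambda_K^{(d-1)/2})$, as in the paper. If instead you discard the exponential to produce a $t$-independent bound, Assumption \ref{assumption nekt}(1) only yields $K\lambda_K^{(d-1)/2}\epsilon^{3/2}\lesssim K^{-3}\epsilon^{1/2}$, which is $O(\epsilon^{1/2})$, not $O(\epsilon)$, since $\epsilon$ may be far smaller than any fixed power of $1/K$. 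So both the source and the size of the $\Omega_5\epsilon$ term in your plan are incorrect; the rest of the $\ell\ge 1$ analysis is essentially the paper's argument with the roles of $e^{-\mu t}$ and $e^{-\lambda t}$ swapped, which is harmless.
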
 

{The above assumption and theorem deserve a discussion. In the above theorem, the error between {the heat kernel estimator} and the actual heat kernel is characterized by $K$, $\epsilon$ and $t$. The error {is composed} of two parts, the bound of the remainder term $\sum_{i=K}^{\infty}e^{-\lambda_i t} \phi_i(x)\phi_i(x')$ in the series expression of the heat kernel and the error between $\mathsf{H}^{(K)}_{\epsilon,t}(i,j)$ and $\sum_{i=0}^{K-1}e^{-\lambda_i t} \phi_i(x_i)\phi_i(x_j)$. 
In fact, in the proof of the above theorem, we show that if $t$ is not too small, i.e.  $\frac{C \log K}{ K^{2/d}} \leq t$, then $e^{-\lambda_i t}$ is small and $|\sum_{i=K}^{\infty}e^{-\lambda_i t} \phi_i(x)\phi_i(x')|$ is bounded by $\frac{1}{K}(t+1)e^{-\Omega_4 t}$ up to a constant for all $x, x' \in M$. On the other hand, based on Theorem \ref{spectral convergence of L on closed manifold}, {(1) and (2) in Assumption \ref{assumption nekt}} enable us to have a good control between the first $K$ eigenpairs of the kernel normalized GL and the corresponding eigenpairs of $-\Delta$. Together with the condition $\epsilon \leq \frac{\mathcal{K}_3}{K^4 \lambda_K^{(d-1)/2}}$ in (1) {in Assumption \ref{assumption nekt}}, we can conclude that the error between $\mathsf{H}^{(K)}_{\epsilon,t}(i,j)$ and $\sum_{i=0}^{K-1}e^{-\lambda_i t} \phi_i(x_i)\phi_i(x_j)$ is also bounded by $\frac{1}{K}(t+1)e^{-\Omega_4 t}$ up to a constant plus $\Omega_5 \epsilon$. Since we do not know the volume of $M$ and $\mu_{0,n, \epsilon}=\lambda_0=0$, the term $\Omega_5 \epsilon$ comes from the error generated when we renormalize  {$\tilde{v}_{0,n,\epsilon}$} as in \eqref{normalized V i n epsilon} to approximate $\phi_0$. Next, we discuss the dependence  on the diffusion time $t$ in the error of the approximation. Note that  $\frac{C \log K}{ K^{2/d}}$  decreases as $K$ increases; hence, the relationship (3) says that we can choose a small $t$, whenever $K$ is large enough. $(t+1)e^{-\Omega_4 t}$ is bounded up by a constant depending on $\Omega_4$. Hence, for {a} small diffusion time $t$, we need to choose a large $K$ in order to have a good approximation. Since $\epsilon<\frac{1}{K}$ when $K$ is a large enough, the error in the approximation is dominated by $\frac{1}{K}$ for {a} small diffusion time $t$. For {a} large diffusion time $t$,  since $(t+1)e^{-\Omega_4 t}$ goes to $0$ as $t$ goes to infinity, the {first term in the} error bound is {smaller when $t$ increases}. }

{\subsection{The spectral convergence {under a special setup}}\label{Section: spectral convergence independent of K}

In the previous subsection, we show the spectral convergence of the kernel normalized GL to the Laplace-Beltrami operator regardless of the distribution of the data points on the manifold in the $L^\infty$ sense so that the spectral geometry tool \cite{berard1994embedding} can be applied for further data analysis.
In this subsection, for the sake of comparing our results with other relevant result, we discuss the spectral convergence under a special setup. Specifically, we focus on the case when the p.d.f. is uniform so that the $\alpha$ normalization is not needed for the recovery of the Laplace-Beltrami operator, and the dependence on $\lambda_K$ is hidden in the implied constant but not shown in the relationship between $n$ and $\epsilon$.

Denote $\mu^{un}_{i,n,\epsilon}$ to be the $i$-th eigenvalue of $-L^{un}$ with the associated eigenvector $\tilde{v}^{un}_{i,n,\epsilon}$ normalized in the $l^2$ norm, where $i=1,\ldots,n$. We order $\mu^{un}_{i,n,\epsilon}$ so that $\mu^{un}_{0,n,\epsilon}\leq \mu^{un}_{1,n,\epsilon}\leq \ldots\leq \mu^{un}_{n-1,n,\epsilon}$. Let $\|\cdot\|_{l^2(1/\hat{\mathsf{p}})}$ be the normalized $l^2$ norm in Definition \ref{weighted l2 norm}. Let 
\begin{align} 
v^{un}_{i,n,\epsilon}=\frac{\tilde{v}^{un}_{i,n,\epsilon}}{\|\tilde{v}^{un}_{i,n,\epsilon}\|_{l^2(1/\hat{\mathsf{p}})}}.  
\end{align}

In the following theorem, we treat the eigengaps of $-\Delta$ and the eigenvalue $\lambda_K$ as constants and disregard the relation between $\epsilon$ and $K$. The proof of the theorem is {almost the same} the proof of Theorem \ref{spectral convergence of L on closed manifold} {except simplification of several steps, so its proof is sketched and postponed to} Appendix \ref{proof of un GL rate}.

\begin{theorem}\label{spectral convergence of un L on closed manifold}(Spectral convergence of the unnormalized GL) 
{ Suppose the sampling is uniform and} all the eigenvalues of $\Delta$ are simple. Fix $K\in \mathbb{N}$. Let $\mathsf \Gamma_K:=\min_{1 \leq i \leq K}\textup{dist}(\lambda_i, \sigma(-\Delta)\setminus \{\lambda_i\})$. { For a sufficiently small $\epsilon>0$,  if $n$ is sufficiently large so that $\epsilon=\epsilon(n)\geq\big(\frac{\log n}{n}\big)^{\frac{1}{2d+12}}$,  then}
 with probability greater than $1-n^{-2}$,  for all $0 \leq i < K$
\begin{align}
& |\mu^{un}_{i,n,\epsilon}-\lambda_{i}|\leq \Omega^{un}_1 \epsilon^{2}\,,\nonumber 
\end{align}
where $\Omega^{un}_1>0$ depends on $\mathsf \Gamma_K$, $\lambda_K$, $d$ and the diameter, the volume, the injectivity radius, the curvature and the second fundamental form of the manifold. 
For a sufficiently small $\epsilon>0$,  if $n$ is sufficiently large {so that $\epsilon=\epsilon(n)\geq \big(\frac{\log n}{n}\big)^{\frac{1}{2d+8}}$,  then} with probability greater than $1-n^{-2}$,   there are $a_i \in \{1,-1\}$  such that for all $0 \leq i < K$
\begin{align}
\max_{x_j \in \mathcal{X}}|a_i v^{un}_{i,n,\epsilon}(j)-\phi_{i}(x_j)|\leq  \Omega^{un}_2 \epsilon^{2}\,,\nonumber 
\end{align}
where $\Omega^{un}_2>0$ depends on $\mathsf \Gamma_K$, $\lambda_K$, $d$ and the diameter, the volume, the injectivity radius, the curvature and the second fundamental form of the manifold. 
\end{theorem}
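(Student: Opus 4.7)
The plan is to mirror the proof of Theorem \ref{spectral convergence of L on closed manifold}, exploiting three simplifications that arise because the sampling is uniform and because $\lambda_K$ and $\mathsf\Gamma_K$ are absorbed into the implied constants. First, since $\mathsf p$ is constant, the KDE $q_\epsilon$ is itself essentially constant up to a concentration fluctuation, so the $\alpha$-normalization producing $L=L^{(1)}$ is unnecessary: $L^{un}$ directly approximates $\Delta$ rather than a density-weighted Laplacian. Second, because one no longer needs to estimate $q_\epsilon$ in order to debias the kernel, the variance contribution in the concentration step loses a factor of $1/\epsilon$, upgrading the pointwise rate from $\sqrt{\log n}/(\sqrt{n}\,\epsilon^{d/2+2})$ to $\sqrt{\log n}/(\sqrt{n}\,\epsilon^{d/2+1})$; this is the source of the improved $\epsilon$--$n$ exponents $1/(2d+12)$ and $1/(2d+8)$. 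Third, since $\lambda_K$ and $\mathsf\Gamma_K$ are treated as fixed constants, many intermediate estimates in the proof of Theorem \ref{spectral convergence of L on closed manifold} that depend polynomially on $\lambda_K$ collapse into a single implied constant $\Omega^{un}_1$ or $\Omega^{un}_2$.

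Concretely, the argument proceeds in four steps. Step (i): introduce the associated continuous integral operator $T^{un}_\epsilon$ on $L^2(M)$, of which $L^{un}$ is a Monte Carlo discretization at the sample points, and apply a normal-coordinate Taylor expansion of $k_\epsilon$ (as in Remark \ref{coeffecient 1}) to obtain $\|T^{un}_\epsilon\phi_i + \lambda_i\phi_i\|_\infty = O(\epsilon^2)$ for each fixed $i<K$, with implied constant controlled by the $C^4$ norm of $\phi_i$ and the geometry of $M$. Step (ii): use Bernstein-type concentration together with a covering argument on $M$ to bound $\max_{1\le j\le n}|(L^{un}g - T^{un}_\epsilon g)(x_j)|$ for $g$ ranging over the first $K$ eigenfunctions; the concentration contribution is $O(\sqrt{\log n}/(\sqrt{n}\,\epsilon^{d/2+1}))$, which is dominated by $\epsilon^2$ whenever $\epsilon\ge(\log n/n)^{1/(2d+12)}$. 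Step (iii): a Davis--Kahan / Bauer--Fike perturbation argument, using simplicity of each $\lambda_i$ and the fixed eigengap $\mathsf\Gamma_K$, yields both $|\mu^{un}_{i,n,\epsilon}-\lambda_i|\le\Omega^{un}_1\epsilon^2$ and a discrete $L^2$-type bound of order $\epsilon^2$ on the eigenvectors after choosing the sign $a_i\in\{1,-1\}$ that minimizes the discrete $L^2$ distance. Step (iv): upgrade the discrete $L^2$ bound to the pointwise bound in the statement by substituting back into $L^{un}v^{un}_{i,n,\epsilon} = -\mu^{un}_{i,n,\epsilon}v^{un}_{i,n,\epsilon}$ and comparing against $\Delta\phi_i = -\lambda_i\phi_i$, so that the near-identity smoothing action of $L^{un}$ transfers $L^2$ closeness into pointwise closeness on the sample. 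The normalization $\|\cdot\|_{l^2(1/\hat{\mathsf p})}$ asymptotically matches the $L^2(M)$ normalization of $\phi_i$ because $\hat{\mathsf p}$ is a valid estimator of the constant density $\mathsf p$, as recorded in Lemma \ref{KDE}.

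The main obstacle is the $L^\infty$ upgrade in step (iv). A priori the discrete eigenvector $v^{un}_{i,n,\epsilon}$ carries no regularity on $M$, so pointwise control can only be recovered by exploiting the smoothing property of $L^{un}$, which trades $L^2$ closeness for pointwise closeness at the cost of additional bias; this mechanism is what forces the different $\epsilon$--$n$ exponent $1/(2d+8)$ in the eigenvector statement compared with $1/(2d+12)$ in the eigenvalue statement. The sign ambiguity $a_i\in\{1,-1\}$ also requires some care, but the simplicity of $\lambda_i$ for $i<K$ guarantees that a single sign choice realizes both the discrete $L^2$ bound and, after the smoothing step, the discrete $L^\infty$ bound. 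Apart from these modifications, the remainder of the proof is a routine specialization of the proof of Theorem \ref{spectral convergence of L on closed manifold} to the uniform-sampling setting with $\lambda_K$ and $\mathsf\Gamma_K$ absorbed into the implied constants.
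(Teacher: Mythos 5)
There is a genuine gap, and it sits exactly at your step (iii). A residual bound $\max_j|(L^{un}\vec\phi_i+\lambda_i\vec\phi_i)(x_j)|=O(\epsilon^2)$ for the finitely many test functions $\phi_0,\dots,\phi_{K-1}$ is a Bauer--Fike-type input: it only tells you that \emph{some} eigenvalue of $L^{un}$ lies near each $\lambda_i$. It does not identify that eigenvalue as the \emph{ordered} $i$-th eigenvalue $\mu^{un}_{i,n,\epsilon}$, and it does not let you run Davis--Kahan, because Davis--Kahan needs a lower bound on the eigengaps of the \emph{discrete} operator itself, not of $\Delta$. In particular nothing in your argument excludes spurious low-lying eigenvalues of $L^{un}$ produced by the high-frequency part of the spectrum, where the $O(\epsilon^2)$ bias estimate is useless (its constant grows like $\lambda_j^{d/2+5}$ through the Sobolev embedding, cf. Lemma \ref{property 2 of R epsilon}), so the identification $\mu^{un}_{i,n,\epsilon}\approx\lambda_i$ and the alignment of the $i$-th eigenvector with $\phi_i$ are unjustified. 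The paper's proof (Appendix \ref{proof of un GL rate}) is the unnormalized specialization of the full two-tier machinery of Theorem \ref{spectral convergence of L on closed manifold}: first, spectral convergence of the \emph{continuum} averaging operator $T^{un}_\epsilon$ to $-\Delta$, proved by comparing with the heat operator $\mathsf H_\epsilon$ through $R_\epsilon$ and a quadratic-form bound valid for \emph{all} $f\in L^2$ (splitting $f$ into low- and high-frequency parts, Lemma \ref{ratio of eigenvalues}(a)--(c)), which also yields the eigengap control $\gamma_i(\frac{I-T^{un}_\epsilon}{\epsilon^2})\gtrsim\mathsf\Gamma_K$; second, the passage from $T^{un}_\epsilon$ to the empirical operator $T^{un}_{n,\epsilon}$, where $\|T^{un}_\epsilon-T^{un}_{n,\epsilon}\|$ does \emph{not} vanish, so one needs collective compact convergence, the product bound $\|(T^{un}_\epsilon-T^{un}_{n,\epsilon})T^{un}_{n,\epsilon}\|$, resolvent estimates and Atkinson's theorem (Theorem \ref{atkinson1967}), Glivenko--Cantelli bounds for the kernel classes, and the Nystr\"om extension (Proposition \ref{relation between W and Q}) identifying the matrix eigenvectors with eigenfunctions of $T^{un}_{n,\epsilon}$; only then does the renormalization by $\|\cdot\|_{l^2(1/\hat{\mathsf p})}$ enter. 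Your proposal skips all of this, and without it step (iii) does not go through.

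Two smaller inaccuracies: the exponents $\frac{1}{2d+12}$ and $\frac{1}{2d+8}$ do not come from the pointwise rate $\sqrt{\log n}/(\sqrt n\,\epsilon^{d/2+1})$ that you cite (that would only require $\epsilon\gtrsim(\log n/n)^{1/(d+6)}$); they come from the perturbation rates $\sqrt{\log n/n}\,\epsilon^{-(d+4)}$ and $\sqrt{\log n/n}\,\epsilon^{-(d+2)}$ in the $T^{un}_\epsilon$-versus-$T^{un}_{n,\epsilon}$ step, the unnormalized analogue of Proposition \ref{T epsilon and T n espilon 1}. Relatedly, your attribution of the discrepancy between the two exponents to the $L^\infty$ upgrade of eigenvectors is backwards: the \emph{eigenvalue} condition is the more stringent one, because the eigenvalue bound carries an extra factor $\epsilon^{-2}$ from the crude operator-norm bound on $\frac{I-T^{un}_{n,\epsilon}}{\epsilon^2}$, not because of any smoothing cost on the eigenvector side.
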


Note that remarks similar to Remarks \ref{consistent relation} and \ref{consistent relation2} hold for Theorem \ref{spectral convergence of un L on closed manifold}. Also, we see that the relationship between $n$ and $\epsilon$ in Theorem \ref{spectral convergence of un L on closed manifold} is different from that in Theorem \ref{spectral convergence of L on closed manifold}. The difference mainly comes from the kernel normalization for the purpose of density estimation. By this theorem, when the point cloud is sampled uniformly, if we take $\epsilon=\big(\frac{\log n}{n}\big)^{\frac{1}{2d+12}}$, we have 
\begin{align}
|\mu^{un}_{i,n,\epsilon}-\lambda_{i}|=O\left(\left(\frac{\log n}{n}\right)^{\frac{1}{d+6}}\right)\,; \nonumber 
\end{align}
if we take $\epsilon=\big(\frac{\log n}{n}\big)^{\frac{1}{2d+8}}$, we have for $a_i\in\{1,-1\}$,
\begin{align}
\max_{x_j \in \mathcal{X}}|a_i v^{un}_{i,n,\epsilon}(j)-\phi_{i}(x_j)|=O\left(\left(\frac{\log n}{n}\right)^{\frac{1}{d+4}}\right). \nonumber 
\end{align}
We shall compare it with \cite{calder2019improved,calder2020lipschitz}. In these papers, the authors also consider a $d$ dimensional compact manifold $M$ without boundary that is isometrically embedded in $\mathbb{R}^D$. 
In these papers,  to construct the unnormalized GL, they use a compactly supported kernel of the form $\eta(\frac{\|x-y\|_{\mathbb{R}^D}}{\epsilon})$, where $x,y\in M$ and $\eta:[0, \infty) \rightarrow [0,\infty)$ is a non-increasing Lipschitz continuous function with the compact support on $[0,1]$. 
For $n$ points i.i.d. sampled from $M$ based on a density function $\rho$, under this setup, in \cite{calder2019improved}, the authors prove the $L^2$ spectral convergence of the unnormalized GL to the weighted Laplacian $\frac{1}{\rho}\nabla \cdot (\rho^2 \nabla)$ using the variational method for Dirichlet form and the techniques in optimal transport. 
Moreover, the authors show that the eigenvalue and eigenvector convergence is at a rate $O\left(\left(\frac{\log n}{n}\right)^{\frac{1}{d+4}}\right)$. 
Based on the result in \cite{calder2019improved}, the authors in \cite{calder2020lipschitz} improve the $L^2$ convergence of the eigenvectors to the $L^\infty$ convergence. 
They show that the $L^\infty$ convergence rate of the eigenvectors matches the $L^2$ rate; that is, $O\left(\left(\frac{\log n}{n}\right)^{\frac{1}{d+4}}\right)$. Note that in the case when the sampling density is uniform, that is, $\rho$ is constant, then $\frac{1}{\rho}\nabla \cdot (\rho^2 \nabla)$ gives the Laplace-Beltrami operator.
Hence, in this setup, our eigenvector convergence rate of unnormalized  GL in Theorem \ref{spectral convergence of un L on closed manifold} matches the $L^\infty$ result of  \cite{calder2020lipschitz}. However, our eigenvalue convergence rate is slower.  
It is also worth mentioning that our results in Theorem \ref{spectral convergence of L on closed manifold} and Theorem \ref{spectral convergence of un L on closed manifold} rely on the fact that the kernel function is Gaussian, while the kernel considered in \cite{calder2020lipschitz} is compactly supported and Lipschitz, which does not include the widely used Gaussian kernel. See Remark \ref{rely on Gaussian} for more details. 
We emphasize that the techniques used in the current paper and those used in \cite{calder2020lipschitz} are different, and it is potentially interesting to combine these techniques to obtain a more general result. We would expect that the optimal spectral convergence rate is better than what we have reported, in both kernel normalized and unnormalized cases.
}

\section{Some generic facts}

In this section, we summarize some fundamental facts we need in the whole proof. 
The first two lemmas are the facts about the eigenvalues and eigenfunctions of the Laplace-Beltrami operator.

\begin{lemma}[\cite{hormander1968spectral,donnelly2006eigenfunctions}]\label{lemma hormander}
For a compact Riemannian manifold $(M,g)$ and $l>0$, we have the following bound for the $l$-th pair of eigenvalue $\lambda_l$ and normalized eigenfunction $\phi_l$ of the Laplace-Beltrami operator:  
\begin{align}
\|\phi_l\|_\infty \leq C_1 \lambda_l^{\frac{d-1}{4}} \|\phi_l\|_2= C_1 \lambda_l^{\frac{d-1}{4}}\,,   \nonumber 
\end{align}
where $C_1$ is a constant depending on the injectivity radius and sectional curvature of the manifold $M$.
\end{lemma}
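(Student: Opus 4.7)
The plan is to deduce this pointwise bound from the sharp asymptotics of the spectral function
\[
e(x,y,\lambda):=\sum_{\lambda_j\leq\lambda}\phi_j(x)\phi_j(y)\,,
\]
which is the integral kernel of the spectral projector $\mathbf{1}_{[0,\lambda]}(-\Delta)$. The first step is the observation that, by $L^2$-orthonormality of the $\phi_j$,
\[
|\phi_l(x)|^2 \;\leq\; \sum_{\lambda_j=\lambda_l}|\phi_j(x)|^2 \;\leq\; e(x,x,\lambda_l)-\lim_{\lambda\uparrow\lambda_l}e(x,x,\lambda)\,,
\]
so the lemma reduces to the one-sided jump bound $e(x,x,\lambda_l)-e(x,x,\lambda_l^-)\leq C\lambda_l^{(d-1)/2}$ uniformly in $x\in M$, with $C$ depending only on the injectivity radius and sectional curvature.

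To obtain this jump bound I would follow H\"ormander's classical argument through the half-wave group $U(t)=e^{it\sqrt{-\Delta}}$. Pick a Schwartz function $\chi$ whose Fourier transform $\hat\chi$ is smooth, even, equals $1$ near $0$, and is supported in $(-\delta,\delta)$ for some $\delta$ strictly smaller than the injectivity radius of $M$. Introducing the on-diagonal counting function $n(x,\nu):=\sum_{\sqrt{\lambda_j}\leq\nu}|\phi_j(x)|^2$, the smoothed version can be written as
\[
\int \chi(\mu-\nu)\,d_\nu n(x,\nu)=\frac{1}{2\pi}\int\hat\chi(t)\,e^{-it\mu}\,U(t)(x,x)\,dt\,.
\]
By finite propagation speed only $|t|<\delta$ contributes, so within this range I would construct a Fourier integral parametrix for $U(t)$ in normal coordinates at $x$, with phase $\langle \exp_x^{-1}(y),\xi\rangle$ and amplitude controlled through Jacobi field growth. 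Stationary phase in $\xi$ yields the smoothed asymptotic $c_d\mu^{d-1}+O(\mu^{d-2})$ uniformly in $x$, and H\"ormander's Tauberian theorem converts this into the unsmoothed one-sided bound $n(x,\mu+1)-n(x,\mu)\leq C\mu^{d-1}$. Setting $\mu=\sqrt{\lambda_l}$ and taking square roots finishes the argument.

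The hard part will be tracking the dependence of the constants, which is what Donnelly's refinement sharpens in the original H\"ormander statement. The parametrix amplitude and its derivatives must be bounded purely in terms of sectional curvature via Jacobi field comparison, and the validity of the normal chart in a ball of radius $\delta$ forces a lower bound on the injectivity radius; these are precisely the two geometric quantities that $C_1$ is allowed to depend on. Because the full derivation is classical and is the substance of the cited papers of H\"ormander and Donnelly, in the present article the authors are entitled to invoke the result by citation rather than reproducing the proof, and I would do the same.
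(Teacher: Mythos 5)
The paper offers no proof of this lemma at all: it is invoked as a known result with citations to H\"ormander and Donnelly, exactly as you conclude one is entitled to do. Your sketch correctly reproduces the substance of those references --- bounding $|\phi_l(x)|^2$ by the jump of the spectral function $e(x,x,\lambda)$ at $\lambda_l$, establishing the band estimate $n(x,\mu+1)-n(x,\mu)\leq C\mu^{d-1}$ via a half-wave parametrix localized inside the injectivity radius together with a Tauberian argument, and tracking the constant's dependence on sectional curvature (through Jacobi field comparison) and on a lower bound for the injectivity radius --- so it is consistent with, and a faithful outline of, the argument the paper relies on by citation.
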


{\begin{lemma}[\cite{hassannezhad2016eigenvalue}]\label{laplace eigenvalue lower bound}
For a $d$ dimensional compact and connected Riemannian manifold $(M,g)$, the eigenvalues of the Laplace-Beltrami operator, $0=\lambda_0< \lambda_1\leq\ldots$, satisfy
{
\begin{align}
C_2^{1+d\sqrt{\kappa}}\text{diam}(M)^{-2} l^{2/d} \leq \lambda_l \leq \frac{(d-1)^2}{4}\kappa + \check C_2 V(M)^{-2/d}l^{2/d}\nonumber 
\end{align}
for all $l \geq 1$, where the $\text{Ric}_g\geq -(d-1)\kappa g$ for $\kappa\geq 0$, and $C_2>0$ and $\check C_2>0$ are constants depending on $d$ only. }
\end{lemma}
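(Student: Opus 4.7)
The statement is a two-sided Weyl-type bound that is attributed to \cite{hassannezhad2016eigenvalue}, so my plan is to follow that paper's strategy, splitting the argument into an upper bound and a lower bound, each handled by very different tools. Both directions use the Ricci lower bound $\text{Ric}_g\geq -(d-1)\kappa g$ only through Bishop--Gromov volume comparison.

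For the upper bound I would use a Korevaar--Grigor'yan--Yau-style disjoint-ball packing combined with the Courant--Fischer min--max principle. First, Bishop--Gromov comparison with the constant curvature $-\kappa$ model allows one to cover $M$ economically by balls and, by a standard decomposition lemma, to extract $l$ pairwise disjoint geodesic balls $B(x_j,r_j)$ whose inner radii satisfy $r_j\gtrsim_d V(M)^{1/d}l^{-1/d}$. On each such ball I build a Lipschitz test function equal to $1$ on $B(x_j,r_j/2)$ and decaying linearly to $0$ on the annulus; these give $l+1$ functions with disjoint supports. The Rayleigh quotient of each test function splits into a ``kinetic'' term of order $r_j^{-2}$, producing the $\check C_2 V(M)^{-2/d}l^{2/d}$ contribution, plus a curvature correction bounded by the bottom of the $L^2$-spectrum of the model hyperbolic space, which accounts for the additive $\tfrac{(d-1)^2}{4}\kappa$. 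Applying min--max to this family gives the upper estimate.

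For the lower bound I would combine Cheeger's inequality with a Buser-type estimate and a localization trick. The Buser inequality under $\text{Ric}\geq -(d-1)\kappa g$ controls $\lambda_1$ from below by $h(M)^2$ up to a dimensional constant that degrades polynomially in $\sqrt{\kappa}$. To pass to $\lambda_l$, I would invoke the Grigor'yan--Netrusov--Yau style partition of $M$ into $l$ disjoint ``annular'' regions of comparable capacity and apply the weak-type eigenvalue comparison: on a region of diameter $\lesssim \mathrm{diam}(M)/l^{1/d}$, the first Dirichlet eigenvalue is at least $\mathrm{const}\cdot (\mathrm{diam}(M)/l^{1/d})^{-2}$, where the constant has the form $C_2^{1+d\sqrt{\kappa}}$ because each application of volume doubling introduces a factor $e^{O(d\sqrt{\kappa}\,r)}$ from Bishop--Gromov, and these factors accumulate in the iteration. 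A max--min argument over such partitions yields $\lambda_l\geq C_2^{1+d\sqrt{\kappa}}\mathrm{diam}(M)^{-2}l^{2/d}$.

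The main obstacle is tracking the sharp $\kappa$-dependence in the lower bound. Isolating the factor $C_2^{1+d\sqrt{\kappa}}$ (as opposed to something worse, like $e^{\mathrm{diam}(M)\sqrt{\kappa}}$) requires the refined version of the Grigor'yan--Netrusov--Yau decomposition used in \cite{hassannezhad2016eigenvalue}, in which the multiplicative overlap constant of the covering is dimensional rather than metric. The upper bound is comparatively routine once the packing lemma is in place; the real work, and the place where I would largely cite \cite{hassannezhad2016eigenvalue}, is in the lower bound.
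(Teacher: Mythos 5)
The paper does not prove this lemma at all: it is imported verbatim from \cite{hassannezhad2016eigenvalue} (Hassannezhad--Kokarev--Polterovich) as a quoted fact, so there is no in-paper argument to compare yours against. Judged on its own terms, your upper-bound sketch (Bishop--Gromov packing, GNY/Korevaar-type families of disjointly supported test functions, min--max, with the additive $\tfrac{(d-1)^2}{4}\kappa$ coming from the constant-curvature model) is the standard and essentially correct route, and your instinct to defer the sharp $\kappa$-dependence of the lower bound to the cited paper is consistent with how the authors themselves treat it.

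However, your lower-bound outline has two genuine problems. First, you have the Buser/Cheeger directions reversed: under $\mathrm{Ric}\geq -(d-1)\kappa g$, Buser's inequality bounds $\lambda_1$ \emph{from above} by $C(\sqrt{\kappa}\,h+h^2)$, while the lower bound $\lambda_1\geq h^2/4$ is Cheeger's inequality and needs no curvature hypothesis; so ``Buser controls $\lambda_1$ from below'' is not a usable step. Second, the mechanism ``partition $M$ into $l$ regions of small diameter, bound the first Dirichlet eigenvalue of each region below, and conclude by a max--min over partitions'' does not by itself produce a lower bound on $\lambda_l(M)$: disjointly supported functions feed into min--max to give \emph{upper} bounds, and to convert per-piece spectral information into a lower bound for the global $\lambda_l$ you need Neumann-type bracketing together with Poincar\'e inequalities on the pieces with constants controlled through volume doubling --- or, as is actually done in \cite{hassannezhad2016eigenvalue}, heat-kernel (Li--Yau) upper bounds and a counting-function/heat-trace argument, which is where the factor of the form $C_2^{1+\cdot\sqrt{\kappa}}$ really comes from. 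As written, the lower-bound half of your proposal would not go through without replacing that step.
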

}

The following lemma describes the behavior of the heat kernel. The proof of can be found in \cite{rosenberg1997laplacian}, \cite{davies1989pointwise} and \cite{grigor1997gaussian}.
\begin{lemma}\label{bounds on heat kernel}
Suppose $M$ is a $d$ dimensional closed smooth manifold. For $x,y \in M$, $d(x,y)$ is the geodesic distance between $x$ and $y$.  
\begin{enumerate}[(a)]
\item
Fix $\gamma>0$ small enough. For $d(x,y)<\gamma$, we have when $t\to 0$:
\begin{align}
\mathsf{H}(x,y,t)=(4\pi t)^{-d/2} e^{-\frac{d(x,y)^2}{4t}}(u_0+t u_1+O(t^2))\,.\nonumber 
\end{align}
$u_0(x,y)=\frac{1}{\sqrt{\det\big(d(\exp_x)_{\exp^{-1}_x(y)}\big)}}$, where $\exp_x$ is the exponential map at $x$.  $u_1$ is a continuous function of $x$ and $y$ that only depends on the manifold. Moreover, $u_0(x,y)=1+O(d(x,y)^2)$, where the constant in $O(d(x,y)^2)$ depends on the curvature of the manifold. 
 {
\item
{There exists a} constant $C_H>0$ depending on $d$ and the volume, the Ricci curvature and the diameter of $M$ {so that the following holds.
First,} for all $x,y \in M$ and $t>0$,
\begin{align}
|\partial_t \mathsf{H}(x,y,t)| \leq C_H t^{-d/2-1} e^{-\frac{d(x,y)^2}{8t}}.\nonumber
\end{align}
For all $x,y \in M$ and {$t>0$} small enough,
\begin{align}
|\nabla_x \mathsf{H}(x,y,t)| \leq C_H t^{-d/2-1/2} e^{-\frac{d(x,y)^2}{8t}}\,.\nonumber
\end{align}
{Moreover, for $\gamma>0$, when $t>0$ is} small enough, if $d(x,y) \geq \gamma$, then
\begin{align}
& \mathsf{H}(x,y,t) \leq C_H  t^{2}, \nonumber \\
& |\partial_t \mathsf{H}(x,y,t)| \leq C_H  t. 
\end{align}
}
\end{enumerate}
\end{lemma}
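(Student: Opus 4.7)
The three assertions are classical facts about the heat kernel on a compact Riemannian manifold, and the plan is to derive them from a single tool, namely the Minakshisundaram--Pleijel parametrix, supplemented by a Li--Yau style gradient estimate for part (b). For part (a), I would fix a geodesic ball of radius smaller than the injectivity radius around $x$ and look for an approximate solution of the form
\begin{equation}
K_N(x,y,t)=(4\pi t)^{-d/2}e^{-d(x,y)^2/(4t)}\sum_{k=0}^N u_k(x,y)t^k\,. \nonumber
\end{equation}
Plugging this ansatz into $(\partial_t-\Delta_y)K_N=0$ and matching powers of $t$ produces a family of transport equations along radial geodesics from $x$. The first transport equation can be integrated explicitly and forces $u_0(x,y)=\det(d(\exp_x)_{\exp_x^{-1}(y)})^{-1/2}$, while the Jacobi field comparison theorem gives $u_0(x,y)=1+O(d(x,y)^2)$ with a constant controlled by the curvature. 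The remaining $u_k$ are determined recursively and are smooth in $(x,y)$. To pass from the parametrix to the true heat kernel, I would cut off $K_N$ near the cut locus, compute the heat error $(\partial_t-\Delta_y)K_N$, and apply Duhamel's formula to show $\mathsf H-K_N=O(t^{N-d/2})$ uniformly on the ball. Taking $N$ large enough yields part (a).

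For the pointwise upper bound on $|\partial_t\mathsf H|$ in part (b), the cleanest route is to invoke the Li--Yau differential Harnack inequality on a manifold with Ricci curvature bounded from below by $-(d-1)\kappa g$. That inequality controls $|\nabla\log\mathsf H|^2-\partial_t\log\mathsf H$, which combined with the classical Gaussian upper bound $\mathsf H(x,y,t)\le C t^{-d/2}e^{-d(x,y)^2/(5t)}$ of Davies (or Li--Yau) yields both $|\nabla_x\mathsf H|\le C_H t^{-d/2-1/2}e^{-d(x,y)^2/(8t)}$ and $|\partial_t\mathsf H|\le C_H t^{-d/2-1}e^{-d(x,y)^2/(8t)}$, where the mild loss from $1/(4t)$ to $1/(8t)$ in the Gaussian exponent absorbs the polynomial prefactor coming from the Harnack quotient. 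All constants track through the Ricci lower bound, diameter and volume, as asserted.

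For the ``far field'' estimates when $d(x,y)\ge\gamma$ and $t$ is small, I would simply specialize the bounds just obtained. Indeed, for any $k\ge 0$ and any $\gamma>0$ there is $t_0(\gamma)>0$ such that $t^{-d/2-k}e^{-\gamma^2/(8t)}\le t^{\ell}$ for all $t<t_0$ and any prescribed $\ell$, because the Gaussian factor is super-polynomially small in $t$. Applying this with $k=0,\ell=2$ and $k=1,\ell=1$ respectively gives $\mathsf H(x,y,t)\le C_H t^2$ and $|\partial_t\mathsf H(x,y,t)|\le C_H t$, as claimed.

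The main obstacle is the gradient bound in part (b): establishing the Gaussian decay rate of $|\nabla_x\mathsf H|$ without losing more than a factor of $t^{-1/2}$ is delicate, since naively differentiating the parametrix away from the diagonal introduces a factor $d(x,y)/t$ which must be reabsorbed into the Gaussian, and the Li--Yau inequality works cleanly only on manifolds with Ricci bounded below. Once that estimate is secured, the remaining claims in the lemma follow in a straightforward manner from the parametrix construction and a routine size comparison between Gaussian and polynomial decay in $t$.
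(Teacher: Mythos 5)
Your overall plan matches the route the paper itself relies on: the paper gives no proof of this lemma and simply defers to the parametrix construction in \cite{rosenberg1997laplacian} for part (a) and to \cite{davies1989pointwise} and \cite{grigor1997gaussian} for the derivative and Gaussian bounds in part (b); your parametrix argument for (a), including the Duhamel correction and the Jacobi-field expansion $u_0=1+O(d(x,y)^2)$, and your observation that the far-field estimates follow from Gaussian decay beating any power of $t$, are exactly these standard arguments and are fine.

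The one genuine gap is the mechanism you propose for part (b). The Li--Yau differential Harnack inequality is one-sided: with $\mathrm{Ric}\ge -(d-1)\kappa g$ it gives, for $\alpha>1$, an upper bound on $\frac{|\nabla \mathsf H|^2}{\mathsf H^2}-\alpha\frac{\partial_t \mathsf H}{\mathsf H}$, i.e.\ a \emph{lower} bound on $\partial_t\log \mathsf H$ and an upper bound on $|\nabla\log \mathsf H|^2$ only \emph{in terms of} $\partial_t\log\mathsf H$. Combined with the Gaussian upper bound on $\mathsf H$ this yields $\partial_t\mathsf H\ge -C t^{-d/2-1}e^{-d(x,y)^2/(8t)}$, but it does not give the upper bound on $\partial_t\mathsf H$, and consequently it does not close the gradient bound either, since that route needs an upper bound on $\partial_t\log\mathsf H$ which Li--Yau does not supply. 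The missing ingredient is precisely what the cited reference \cite{davies1989pointwise} provides: the heat semigroup is analytic in a sector, so Gaussian bounds for complex time plus the Cauchy integral formula give $|\partial_t^k\mathsf H|\le C_k t^{-d/2-k}e^{-d(x,y)^2/((4+\delta)t)}$ for every $k$, which is the stated $\partial_t$ bound after relaxing the exponent to $8t$. For $|\nabla_x\mathsf H|$ one then uses either Hamilton's gradient estimate $t|\nabla\log u|^2\le (1+2(d-1)\kappa t)\log(A/u)$ for bounded positive solutions together with the two-sided Gaussian bounds (the extra factor $d(x,y)/t$ is absorbed by weakening the exponent from $4t$ to $8t$, as you anticipate), or interior parabolic derivative estimates applied on the scale $\sqrt{t}$; either argument keeps the constants depending only on $d$, the Ricci lower bound, the diameter and the volume. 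With that substitution for the Li--Yau step, your proposal is complete and consistent with the lemma as stated.
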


We need the following projection lemma. The proof can be found in \cite[Proposition 18]{von2008consistency}, so we omit it.

\begin{lemma}\label{projection lemma}
For any two vectors $v$ and $w$ in a Banach space $(E, \|\cdot \|_E)$ with $\|w\|_E=\|v\|_E=1$, let $\textup{Pr}_v$ be the projection operator onto the subspace generated by $v$. Then we can take $a=1$ or $-1$ so that
\begin{align}
\|aw-v\|_E \leq 2\|w-\textup{Pr}_v w\|_E\,. \nonumber
\end{align}
\end{lemma}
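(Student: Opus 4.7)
The plan is to write $\textup{Pr}_v w = c v$ for the unique scalar $c \in \mathbb{R}$ representing this vector in $\textup{span}(v)$, and then choose $a = \textup{sign}(c)$ (with the convention $a = 1$ when $c = 0$), so that $ac = |c|$. Adding and subtracting $acv$ inside the norm and invoking the triangle inequality gives
$$\|aw - v\|_E \;=\; \bigl\|\,a(w - cv) + (ac - 1)v\,\bigr\|_E \;\leq\; \|w - cv\|_E + \bigl|\,|c|-1\,\bigr|,$$
which reduces the problem to controlling $\bigl|\,|c|-1\,\bigr|$ by the residual $\|w - cv\|_E$.

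For that control I would apply the reverse triangle inequality to the identity $cv = w - (w - cv)$, using both unit-norm hypotheses $\|w\|_E = \|v\|_E = 1$:
$$\bigl|\,|c| - 1\,\bigr| \;=\; \bigl|\,\|cv\|_E - \|w\|_E\,\bigr| \;\leq\; \|cv - w\|_E \;=\; \|w - cv\|_E.$$
Substituting back into the previous display yields exactly
$$\|aw - v\|_E \;\leq\; 2\,\|w - cv\|_E \;=\; 2\,\|w - \textup{Pr}_v w\|_E,$$
which is the claimed bound.

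The whole argument is purely metric, so no inner-product structure, uniform convexity, or even optimality of the projection is invoked; it works in an arbitrary real Banach space and in fact holds with any scalar multiple of $v$ in place of $\textup{Pr}_v w$. The only genuinely delicate feature is the sign choice: because $v$ and $-v$ span the same one-dimensional subspace, $\textup{Pr}_v$ cannot distinguish them, so one cannot hope to fix $a$ in advance. The degenerate case $c = 0$ is harmless, since then both sides collapse to universal constants ($\|aw - v\|_E \leq 2 = 2\|w\|_E$). I do not foresee any real obstacle here—this is an elementary sharpening of the triangle inequality, and the main point of flagging it as a lemma is simply to isolate the factor of $2$ for later use in propagating $L^\infty$ eigenvector estimates.
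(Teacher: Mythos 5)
Your proof is correct and is essentially the standard argument behind this lemma: the paper itself omits the proof and defers to \cite[Proposition 18]{von2008consistency}, whose proof proceeds by the same decomposition $aw-v=a(w-cv)+(ac-1)v$ with $a=\textup{sign}(c)$, the triangle inequality, and the reverse triangle inequality $\bigl||c|-1\bigr|\leq\|w-cv\|_E$. Your observations that no inner-product structure is needed and that any scalar multiple of $v$ could replace $\textup{Pr}_v w$ are accurate and consistent with how the lemma is used in the paper.
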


The following lemma discusses some basic facts about the spectral convergence of compact self-adjoint operators. { Moreover, we refer readers to \cite{reed2012methods,conway2019course} for the basic spectral properties of the compact self-adjoint operators on a Hilbert space.}

\begin{lemma}\label{ratio of eigenvalues}
Let $A$ and $B$ be compact self-adjoint operators from $L^2(M)$ to $L^2(M)$. Let $(\cdot, \cdot)$ be the inner product of $L^2(M)$. Assume the eigenvalues of $A$, $\lambda_l(A)$, $l=0, 1,\ldots$, are simple and positive and the eigenvalues of $B$, $\lambda_l(B)$, $l=0, 1,\ldots$, are simple and bounded from below so that $1=\lambda_0(A)>\lambda_1(A) > \lambda_2(A) > \cdots\geq 0$ and $\lambda_0(B) >\lambda_1(B) >  \lambda_2(B) > \cdots $. Also denote $\{u_i\}$ to be the orthonormal eigenfunctions of $A$ and $\{w_i\}$ to be orthonormal eigenfunctions of $B$. {For $i=1,2,\ldots$,} denote 
\begin{equation}
\gamma_i(B):=\min(\lambda_i(B)-\lambda_{i-1}(B), \,\lambda_{i+1}(B)-\lambda_i(B))\,.\label{Proof proposition 1 Definition gamma i} 
\end{equation}
Let $E: = A-B$. Then, for $\epsilon>0$ we have the following statements:
\begin{enumerate}[(a)]
\item
If $\big|\frac{(E f,f)}{(Af,f)}\big| \leq \epsilon$ for all $f \in L^2$, then for all $i >0$, we have 
\begin{equation}
\left|\frac{1-\lambda_i(B)}{1-\lambda_i(A)}-1\right|\leq \epsilon\,.\nonumber
\end{equation}
\item
If $\|Bu_i-\lambda_i(B)u_i\|_2 \leq \epsilon$, {where $i\geq 1$,} then we can find $a=1$ or $-1$ so that
\begin{equation}
\|aw_i- u_i\|_2 \leq \frac{2\epsilon}{\gamma_i(B)}. \nonumber
\end{equation}
Moreover, 
\begin{equation}
|(u_i,w_i)| \geq 1-\frac{\epsilon}{\gamma_i(B)}\,.\nonumber
\end{equation}
\item {For $i, j\geq 0$, when $(u_i, w_j)\neq 0$,} the {associated} eigenvalues satisfy
\begin{equation}
|\lambda_i(A)-\lambda_j(B)|\leq \frac{\|Ew_j\|_2}{|(u_i,w_j)|}\,.\nonumber
\end{equation}
\end{enumerate}
\end{lemma}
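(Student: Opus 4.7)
I would handle the three parts separately, each via a standard tool of self-adjoint perturbation theory.

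For (a), I would first recast the hypothesis $|(Ef,f)| \leq \epsilon(Af,f)$ as the quadratic-form sandwich $-\epsilon A \leq E \leq \epsilon A$, which combined with $B = A - E$ gives the operator inequalities $(1-\epsilon)A \leq B \leq (1+\epsilon)A$. I would then invoke the Courant--Fischer min--max principle for the $(i+1)$-th smallest eigenvalue of the nonnegative operator $I - B$, testing on the finite-dimensional subspace $V = \mathrm{span}(u_0,\ldots,u_i)$ spanned by the top eigenfunctions of $A$. On $V$, for $f = \sum_{j \leq i} c_j u_j$ the sandwich bound yields $((I - B)f,f) \leq \sum_j c_j^2 [1 - (1-\epsilon)\lambda_j(A)]$, and since $\lambda_j(A)$ is strictly decreasing in $j$ the maximum over $V$ is attained at $f = u_i$, producing $1 - \lambda_i(B) \leq 1 - (1-\epsilon)\lambda_i(A)$; a symmetric argument gives $1 - \lambda_i(B) \geq 1 - (1+\epsilon)\lambda_i(A)$. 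Dividing through by $1 - \lambda_i(A) > 0$, which is legitimate for $i \geq 1$ because $\lambda_0(A) = 1$, produces the claimed ratio estimate.

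For (b), I would expand the unit vector $u_i$ in the orthonormal eigenbasis of $B$ as $u_i = \sum_j a_j w_j$, observing that
\begin{equation*}
\epsilon^2 \geq \|Bu_i - \lambda_i(B)u_i\|_2^2 = \sum_j a_j^2 (\lambda_j(B) - \lambda_i(B))^2 \geq \gamma_i(B)^2 \sum_{j \neq i} a_j^2\,,
\end{equation*}
so $\sum_{j \neq i} a_j^2 \leq \epsilon^2/\gamma_i(B)^2$. Together with $\sum_j a_j^2 = 1$ and the elementary inequality $\sqrt{1 - x^2} \geq 1 - x$ on $[0,1]$ this yields $|(u_i,w_i)| = |a_i| \geq 1 - \epsilon/\gamma_i(B)$. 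For the eigenvector bound itself, the decomposition $u_i - \mathrm{Pr}_{w_i} u_i = \sum_{j \neq i} a_j w_j$ has $L^2$ norm at most $\epsilon/\gamma_i(B)$, so applying the projection lemma (Lemma \ref{projection lemma}) with $v = w_i$ and $w = u_i$ selects a sign $a \in \{\pm 1\}$ for which $\|a u_i - w_i\|_2 \leq 2\epsilon/\gamma_i(B)$.

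For (c), I would apply $A - B = E$ to $w_j$ to obtain $A w_j = \lambda_j(B) w_j + E w_j$, take the $L^2$ inner product with $u_i$, and use self-adjointness $(A w_j, u_i) = (w_j, A u_i) = \lambda_i(A)(u_i,w_j)$ to produce the identity
\begin{equation*}
(\lambda_i(A) - \lambda_j(B))(u_i, w_j) = (E w_j, u_i)\,.
\end{equation*}
Cauchy--Schwarz with $\|u_i\|_2 = 1$ and the nondegeneracy assumption $(u_i, w_j) \neq 0$ then close the estimate. The main obstacle is part (a): the hypothesis is stated multiplicatively in $A$ while the target is a relative bound for the eigenvalues of $I - B$ and $I - A$, and the min--max test subspace must be chosen so that the cross term $\epsilon \lambda_i(A)$ cancels against the factor $1 - \lambda_i(A)$ in the denominator; parts (b) and (c) are essentially algebraic once the right identities are in place, using only orthonormality, the eigenvalue gap, and self-adjointness.
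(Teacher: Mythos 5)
Your treatments of (b) and (c) are correct and essentially coincide with the paper's own proof: the paper also expands $u_i$ in the $\{w_j\}$ basis, extracts $\|u_i-(u_i,w_i)w_i\|_2\leq \epsilon/\gamma_i(B)$, and invokes Lemma \ref{projection lemma}, and it proves (c) by exactly your self-adjointness identity plus Cauchy--Schwarz. The gap is in (a), which is the one part the paper does not prove (it cites \cite[Proposition 4.4]{belkin2007convergence}). From the sandwich $(1-\epsilon)A\leq B\leq(1+\epsilon)A$, Courant--Fischer gives $(1-\epsilon)\lambda_i(A)\leq\lambda_i(B)\leq(1+\epsilon)\lambda_i(A)$, i.e. $|\lambda_i(A)-\lambda_i(B)|\leq\epsilon\,\lambda_i(A)$, and your own intermediate bound $1-\lambda_i(B)\leq 1-(1-\epsilon)\lambda_i(A)$ is of this form. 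Dividing by $1-\lambda_i(A)$ then yields only $\left|\frac{1-\lambda_i(B)}{1-\lambda_i(A)}-1\right|\leq \epsilon\,\frac{\lambda_i(A)}{1-\lambda_i(A)}$. The cancellation you invoke in your closing sentence does not occur for any choice of test subspace, because the error permitted by the hypothesis is proportional to $\lambda_i(A)$, not to $1-\lambda_i(A)$; the bound exceeds $\epsilon$ whenever $\lambda_i(A)>1/2$, and in the intended application $\lambda_i(A)=e^{-\epsilon^2\lambda_i}$ is close to $1$, so the loss is of order $\epsilon^{-2}$ and destroys the rate.

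In fact, part (a) with $(Af,f)$ literally in the denominator is false: take $E=\delta\,\textup{Pr}_{u_1}$ with $\delta$ small, so $|(Ef,f)|\leq(\delta/\lambda_1(A))(Af,f)$ while $\frac{1-\lambda_1(B)}{1-\lambda_1(A)}-1=\frac{\delta}{1-\lambda_1(A)}$ can be made arbitrarily larger than $\delta/\lambda_1(A)$. The hypothesis that is actually verified and used in the paper (see \eqref{Proof proposition 1 difference ratio bound} in Step 2 of the proof of Proposition \ref{T epsilon and Delta}, where the denominator is the quadratic form of $\frac{I-\mathsf H_\epsilon}{\epsilon^2}$), and the hypothesis in \cite{belkin2007convergence}, is the relative bound with respect to $I-A$, namely $|(Ef,f)|\leq\epsilon\,((I-A)f,f)$. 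With that reading your strategy works and becomes immediate: since $I-B=(I-A)+E$, one has $(1-\epsilon)(I-A)\leq I-B\leq(1+\epsilon)(I-A)$ as quadratic forms, and the min--max principle applied directly to $I-A$ and $I-B$ (whose eigenvalues below the essential spectrum are $1-\lambda_i(A)$ and $1-\lambda_i(B)$, in the same order) gives $(1-\epsilon)(1-\lambda_i(A))\leq 1-\lambda_i(B)\leq(1+\epsilon)(1-\lambda_i(A))$, which is the assertion. So the fix is to run your sandwich on $I-A$ and $I-B$ rather than on $A$ and $B$; as written, your proof of (a) does not establish the stated inequality.
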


\begin{remark}
Note that (b) can be understood as a variation of the Davis-Kahan theorem. 
Later we will apply the lemma to operators of the form $I-C$ when $C$ is compact and self-adjoint. Clearly, $I-C$ is not compact but shares the same eigenfunctions with $C$, and the eigenvalues of $I-C$ are simply those of $1-\lambda$ when $\lambda$ is an eigenvalue of $C$. Specifically, later we will set $A$ to be the integral operator with the heat kernel, and $B$ to be the integral operator with a diffusion kernel. We mention that (c) is also used in \cite{calder2019improved} to prove the spectral convergence rate in the $L^2$ sense.
\end{remark}

\begin{proof}
The first statement can be found in \cite[Proposition 4.4]{belkin2007convergence}, so we omit it. 

For the second statement, we express $u_i=b_iw_i+\sum_{j={0}, j\not=i}^\infty b_j w_j$. By a direct expansion {and the assumption}, we have
\begin{align}
\|Bu_i-\lambda_i(B)u_i\|_2=&\, \Big\|B\Big(b_iw_i+\sum_{j={0}, j\not=i}^\infty b_j w_j\Big)-\lambda_i(B)\Big(b_iw_i+\sum_{j={0}, j\not=i}^\infty b_j w_j\Big)\Big\|_2 \nonumber \\
= &\, \Big\|B\Big(\sum_{j={0}, j\not=i}^\infty b_j w_j\Big)-\lambda_i(B)\Big(\sum_{j={0}, j\not=i}^\infty b_j w_j\Big)\Big\|_2 \nonumber \\
= &\,  \Big\|\sum_{j=0, j\not=i}^\infty(\lambda_j(B)-\lambda_i(B)) b_jw_j\Big\|_2\nonumber\\
=&\,\sqrt{\sum_{j=0, j\not=i}^\infty|\lambda_j(B)-\lambda_i(B)|^2 |b_j|^2 } \leq \epsilon\nonumber\,.
\end{align}
Therefore, we have
\begin{align}
\|u_i-(u_i,w_i)w_i\|_2=&\,\Big\|\sum_{j=0, j\not=i}^\infty b_jw_j\Big\|_2=\sqrt{\sum_{j={0}, j\not=i}^\infty |b_j|^2 }\nonumber\\
\leq&\,\frac{1}{\gamma_i(B)}\sqrt{\sum_{j=0, j\not=i}^\infty|\lambda_j(B)-\lambda_i(B)|^2 |b_j|^2 } \leq \frac{\epsilon}{\gamma_i(B)}\,. \nonumber
\end{align}
By Lemma \ref{projection lemma}, we have
\begin{equation}
\|aw_i-u_i\|_2 \leq 2\|u_i-(u_i,w_i)w_i\|_2 \leq \frac{2\epsilon}{\gamma_i(B)}\,,
\end{equation} 
which leads to the conclusion of $|(u_i,w_i)| \geq 1-\frac{\epsilon}{\gamma_i(B)}$ from a direct expansion via the polarization identity.

For the third statement, the self-adjoint assumption leads to
\begin{align}
\lambda_i(A) (u_i, w_j)&\,=(Au_i,w_j)=(u_i, Aw_j)=(u_i, Bw_j)+(u_i, (A-B)w_j)\nonumber\\
&\,=\lambda_j(B)(u_i, w_j)+(u_i, Ew_j)\,. \nonumber
\end{align}
Hence, {when $(u_i, w_j)\neq 0$,} by Cauchy-Schwartz, we have
$|\lambda_i(A)-\lambda_j(B)| =\frac{|(u_i, Ew_j)|}{|(u_i, w_j)|} \leq \frac{\|Ew_j\|_2}{|(u_i,w_j)|}$
\end{proof}

Recall the definition of collective compact convergence.

\begin{definition}
Let $(E, \|\cdot\|_{E})$ be an arbitrary Banach space, $B\subset E$ be the unit ball centered at $0$, and $\{T_n\}_{n=1}^\infty$ be a set of bounded linear operators on $E$. The set $\{T_n\}_{n=1}^\infty$ is called collectively compact if the set $ \cup_n T_n (B)$ is relatively compact in $(E, \|\cdot\|_{E})$. The sequence $\{T_n\}_{n=1}^\infty$ is said to converge collectively compactly to an operator $T$, if it converges pointwise and there exists some $N\in \mathbb{N}$ such that the sequence of operators $\{T_n-T\}_{n=N}^\infty$ is collectively compact.
\end{definition}

Also recall the definition of a resolvent. 

\begin{definition}
Let $T$ be a compact linear operator on an arbitrary Banach space $(E, \|\cdot\|_{E})$ and denote $\sigma(T)\subset \mathbb{C}$ to be the associated spectrum. Then for {$z\in \mathbb{C}\backslash \sigma(T)$}, the resolvent operator is defined as $R_z(T):=(zI-T)^{-1}$. For an eigenvalue $\lambda \in \sigma(T)$, let $\Gamma_r(\lambda)\subset \mathbb{C}$ be a circle centered at $\lambda$ with the radius $r>0$.
\end{definition}

The following generic theorem is the key toward {the spectral convergence}. Specifically, it quantifies the convergence of the corresponding eigenfunctions for a sequence of operators $\{T_n\}_{n=1}^\infty$ that collectively compactly converges to an operator $T$. 

\begin{theorem}\label{atkinson1967}
Let $(E, \|\cdot\|_{E})$ be an arbitrary Banach space. Let $\{T_n\}_{n=1}^\infty$ and $T$ be compact linear operators on $E$ such that $\{T_n\}_{n=1}^\infty$ converges to $T$ collectively compactly. 
For an eigenvalue $\lambda \in \sigma(T)$, denote the corresponding spectral projection by $\textup{Pr}_{\lambda}$. Let $D\subset \mathbb{C}$ be an open neighborhood of $\lambda$ such that $\sigma(T)\cap D$ =$\{ \lambda\}$. There exists some $N\in \mathbb{N}$ such that for all $n > N$, $\sigma(T_n)\cap D=\{\lambda_n\}$. Let $\textup{Pr}_{\lambda_n}$ be the corresponding spectral projection {associated with $\lambda_n$ of $T_n$}. Let $r <|\lambda|$ and $r<\textup{dist}(\{\lambda\}, \sigma(T)\setminus \{\lambda\})$. Then for every $x \in \textup{Pr}_\lambda(E)$, we have
\begin{equation}
\|x-\textup{Pr}_{\lambda_n} x\|_{E} \leq \max_{z\in \Gamma_r(\lambda)} \frac{2r\|R_{z}(T)\|}{\min_{z\in \Gamma_r(\lambda)}|z|}\left(\|(T_n-T)x\|_{E} +\|R_{z}(T)x\|_{E}\|(T-T_n)T_n\|\right)\,.
\nonumber 
\end{equation}
\end{theorem}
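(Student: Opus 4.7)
The plan is to invoke the Riesz projection formula $\textup{Pr}_\lambda = \frac{1}{2\pi i}\oint_{\Gamma_r(\lambda)} R_z(T)\,dz$, and the analogous representation $\textup{Pr}_{\lambda_n} = \frac{1}{2\pi i}\oint_{\Gamma_r(\lambda)} R_z(T_n)\,dz$, which is valid for $n$ large enough that $\Gamma_r(\lambda)$ avoids $\sigma(T_n)$---a standard qualitative output of collectively compact convergence. Subtracting the two formulas and applying the second resolvent identity $R_z(T) - R_z(T_n) = R_z(T)(T-T_n)R_z(T_n)$ yields, for any $x\in\textup{Pr}_\lambda(E)$,
\begin{equation*}
x - \textup{Pr}_{\lambda_n} x = \frac{1}{2\pi i}\oint_{\Gamma_r(\lambda)} R_z(T)(T-T_n) R_z(T_n) x\,dz.
\end{equation*}
Bounding the integrand by $\|R_z(T)\|\,\|(T-T_n)R_z(T_n)x\|_E$ and estimating the contour integral by its arc length $2\pi r$ reduces everything to controlling $\|(T-T_n)R_z(T_n)x\|_E$ uniformly on $\Gamma_r(\lambda)$.

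The key algebraic move is to expand $R_z(T_n) = z^{-1}(I + T_n R_z(T_n))$, which brings the two quantities that appear in the statement to the surface:
\begin{equation*}
(T-T_n) R_z(T_n) x = \frac{(T-T_n) x + (T-T_n) T_n R_z(T_n) x}{z}.
\end{equation*}
Taking norms gives the partial estimate $\|(T-T_n)R_z(T_n)x\|_E \leq |z|^{-1}\bigl[\|(T-T_n)x\|_E + \|(T-T_n)T_n\|\,\|R_z(T_n)x\|_E\bigr]$. To get rid of the unwanted $\|R_z(T_n)x\|_E$ on the right, I would apply the resolvent identity once more in the form $R_z(T_n) x = R_z(T) x + R_z(T)(T_n-T) R_z(T_n) x$, yielding $\|R_z(T_n) x\|_E \leq \|R_z(T) x\|_E + \|R_z(T)\|\,\|(T-T_n)R_z(T_n)x\|_E$. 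Substituting back produces a scalar inequality of the shape $\alpha \leq |z|^{-1}(a + e(b + f\alpha))$ with $\alpha := \|(T-T_n)R_z(T_n)x\|_E$, $a := \|(T-T_n)x\|_E$, $b := \|R_z(T)x\|_E$, $e := \|(T-T_n)T_n\|$, and $f := \|R_z(T)\|$. Whenever $ef/|z| \leq 1/2$ uniformly on $\Gamma_r(\lambda)$, solving this inequality gives
\begin{equation*}
\|(T-T_n)R_z(T_n)x\|_E \leq \frac{2}{|z|}\bigl(\|(T-T_n)x\|_E + \|R_z(T)x\|_E\,\|(T-T_n)T_n\|\bigr),
\end{equation*}
and feeding this back into the contour estimate, with the factor $1/|z|$ absorbed into $1/\min_{z\in\Gamma_r(\lambda)}|z|$, reproduces exactly the asserted inequality.

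The main obstacle is closing the self-referential estimate for $\alpha$: one needs the denominator $|z|-ef$ to stay positive and bounded away from zero uniformly on $\Gamma_r(\lambda)$, which is precisely the ``for $n > N$'' hypothesis built into the statement via collective compactness. The quantitative bound is shaped so that the potentially unbounded $\|R_z(T_n)\|$ never appears on the right-hand side; the cost of working with $T_n$ is absorbed into the factor $\|(T-T_n)T_n\|$, which tends to zero by the classical Anselone argument even though $\|T_n - T\|$ itself need not. A minor bookkeeping step is to note that the $\|R_z(T)\|$ inside the integrand and the $\|R_z(T)x\|_E$ inside the bracket both depend on $z$, while $\min_{z}|z|$ is a single scalar independent of the integration variable; this is why the final supremum is taken over the whole $z$-dependent expression outside rather than distributed.
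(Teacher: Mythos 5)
Your argument is correct, but it is worth noting that the paper does not actually prove this theorem: it simply cites Equation (5) of Theorem 3 in Atkinson's 1967 paper and identifies the parameters ($F=\Gamma_r(\lambda)$, $M=\max_z\|R_z(T)\|$, $c_0=\min_z|z|$, $\epsilon=r$). What you have written is a self-contained reconstruction of essentially the Anselone--Atkinson argument itself: the Riesz-projection contour representation, the second resolvent identity, the expansion $R_z(T_n)=z^{-1}(I+T_nR_z(T_n))$ that surfaces $\|(T-T_n)T_n\|$ in place of the useless $\|T-T_n\|$, and the self-referential absorption giving the factor $2$. So the route is the same as the cited source, but your version makes the mechanism explicit where the paper only points to the literature. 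Two small points would need to be made rigorous, and you correctly flag both as consequences of collective compactness: (i) the representation $\textup{Pr}_{\lambda_n}=\frac{1}{2\pi i}\oint_{\Gamma_r(\lambda)}R_z(T_n)\,dz$ requires that, for $n$ large, $\Gamma_r(\lambda)$ and the region between it and $D$ carry no spectrum of $T_n$ (the statement's hypothesis only controls $\sigma(T_n)\cap D$, and $\Gamma_r(\lambda)$ need not lie in $D$); this follows from the uniform resolvent bounds on compact subsets of $\rho(T)$ that collectively compact convergence provides; (ii) the absorption step needs $\|(T-T_n)T_n\|\max_{z\in\Gamma_r(\lambda)}\|R_z(T)\|\leq\tfrac12\min_{z\in\Gamma_r(\lambda)}|z|$, which holds for $n$ large because $\|(T-T_n)T_n\|\to 0$, so the threshold $N$ may have to be taken larger than the one defined purely by the spectral-separation property --- exactly as in Atkinson's original formulation.
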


This theorem is a restatement of Equation (5) in Theorem 3 in \cite{atkinson1967numerical}. Specifically, we let the set $F=\Gamma_r(\lambda)$, $M=\max_{z\in \Gamma_r(\lambda)} \|R_{z}(T)\|$, and $c_0=\min_{z\in \Gamma_r(\lambda)}|z|$ in \cite[lemma on page 460]{atkinson1967numerical}, set $\epsilon=r$ in \cite[Theorem 3]{atkinson1967numerical}, and note that $\|R_{z}(T)x\|_{E}\leq M\|x\|_E$.

\section{Proof of Theorem \ref{spectral convergence of L on closed manifold}}\label{proof of theorem 2}

\subsection{Some quantities needed in the proof and their properties}\label{Section:basic quantities for proof}
Following Assumption \ref{assumption DM}, for the point cloud $\mathcal{X}:=\{x_i\}_{i=1}^n$ i.i.d. sampled from the random vector with the density function $\mathsf p$ supported on the manifold $M$, we denote the empirical measure associated with the measure $d\mathsf P:=\mathsf pdV_M$, where $dV_M$ is the Riemannian volume measure, {as}
\begin{equation}
\mathsf P_n:=\frac{1}{n}\sum_{i=1}^n\delta_{x_i}\,,
\end{equation}
where $\delta_{x_i}$ is the delta measure supported on $x_i$. 
We start by introducing the following quantities. 
\begin{definition}
For any {continuous} function $f(x,y)$ on $M \times M$, we define two operators, $P$ and $P_n$, with respect to the density measure and empirical measure as 
\begin{align}
Pf (x) &:=\int_{M}f(x, y)d\mathsf P(y)\,, \nonumber \\
P_n f (x)&:= \int_{M}f(x, y)d\mathsf P_n(y)=\frac{1}{n}\sum_{i=1}^{n}f(x, x_i). \nonumber 
\end{align}
For the kernel
\begin{align}
& k_{\epsilon}(x,y)=\exp\Big(-\frac{\|\iota(x)-\iota(y)\|^2_{\mathbb{R}^D}}{4\epsilon^2}\Big), \nonumber
\end{align}
we have the following quantities:
\begin{align}
d_{n,\epsilon}(x):=&\, P_{n} k_{\epsilon}(x) \,,  \quad
d_{\epsilon}(x):=Pk_{\epsilon}(x)\,, \nonumber \\
Q_{n,\epsilon}(x,y):=&\, \frac{k_{\epsilon}(x,y)}{d_{n,\epsilon}(x)d_{n,\epsilon}(y)}\,, \quad
Q_{\epsilon}(x,y):= \frac{k_{\epsilon}(x,y)}{d_{\epsilon}(x)d_{\epsilon}(y)}\,. \nonumber 
\end{align}
\end{definition}
Note that we have $nd_{n,\epsilon}(x)= q_{\epsilon}(x)$ {defined in \eqref{W matrix}}.
We also introduce the following three operators.
\begin{definition}\label{intro of operators}
For any $f(x) \in C(M) $, we define two operators from $C(M)$ to $C(M)$: 
$$T_{n,\epsilon}f(x)= \frac{P_{n}Q_{n,\epsilon}f (x)}{P_{n}Q_{n,\epsilon}(x)},\quad 
T_{\epsilon}f(x)= \frac{PQ_{\epsilon}f (x)}{PQ_{\epsilon}(x)}.$$
Moreover, we define an intermediate operator {connecting $T_{n,\epsilon}$ and $T_{\epsilon}$} that is also from $C(M)$ to $C(M)$: 
\begin{equation}
\hat{T}_{n,\epsilon}f(x)= \frac{P_nQ_{\epsilon}f (x)}{PQ_{\epsilon}(x)}. \nonumber 
\end{equation}

\end{definition}

We first state some known facts about the operator $T_{\epsilon}$. The proof can be found in \cite[Lemma 8]{coifman2006diffusion}. In  \cite{coifman2006diffusion}, the Laplace-Beltrami operator differs from ours by a negative sign, and the bandwidth is {$\sqrt{\epsilon}$} rather than {$\epsilon$}; otherwise the proof is the same, so we omit the proof.
\begin{lemma}\label{properties of T epsilon}
We have the following results.
\begin{enumerate}[(a)]
\item $d_{\epsilon}(x)=m_0 \mathsf p(x) \epsilon^d+O(\epsilon^{d+2})$, 
where $m_0:=\int_{\mathbb{R}^d} e^{-\frac{\|u\|^2}{4}}du$ and the implied constant in $O(\epsilon^{d+2})$ depends on $\mathsf p_m$ and the $C^2$ norm of $\mathsf p$. Hence, 
\begin{align}
Q_{\epsilon}(x,y)= \frac{k_{\epsilon}(x,y)}{m^2_0\mathsf p(x)\mathsf p(y) \epsilon^{2d}}(1+O(\epsilon^{2})), \nonumber 
\end{align}
where the implied constant in $O(\epsilon^{2})$ depends on $\mathsf p_m$ and the $C^2$ norm of $\mathsf p$.
\item
$T_{\epsilon}$ is a self-adjoint operator on $L^2(M)$.
\item
For any $f \in L^2(M)$, $T_{\epsilon}f(x)$ is a smooth function on $M$. In particular, the eigenfunctions of $T_{\epsilon}$ are smooth. Moreover, 
\begin{align}
T_{\epsilon}f(x)=\frac{\int_{M}k_{\epsilon}(x,y)f(y)dV_M}{\int_{M}k_{\epsilon}(x,y)dV_M}+O(\epsilon^2)\,, \nonumber 
\end{align}

where the implied constant in $O(\epsilon^2)$ depends on $\mathsf p_m$,  the $C^2$ norm of $\mathsf p$ and $\|f\|_2$.
\item
For any function $f \in C^4(M)$, 
\begin{align}
\frac{T_{\epsilon}f(x)-f(x)}{\epsilon^2}=\Delta f(x)+O(\epsilon^2)\,, \nonumber 
\end{align}
where the implied constant in $O(\epsilon^2)$ depends on the $L^{\infty}$ norm of $\Delta^2 f$, $\mathsf p_m$, the $C^2$ norm of $\mathsf p$ and the Ricci curvature and the second fundamental form of the manifold. 
\end{enumerate}
\end{lemma}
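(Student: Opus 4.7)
The plan is to prove the four parts in order, with (a) serving as the computational kernel for the others and (d) requiring an additional density-cancellation argument that is the essence of the $\alpha=1$ normalization. Throughout I would work in normal coordinates at $x$, writing $y=\exp_x(u)$ with $u\in T_xM\simeq\mathbb{R}^d$, and use the isometric embedding to translate the ambient chord length into $\|\iota(x)-\iota(y)\|^2_{\mathbb{R}^D}=\|u\|^2+O(\|u\|^4)$, with the implied constant controlled by the second fundamental form.

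For part (a), I would insert this expansion together with $dV_M=(1+O(\|u\|^2))du$ and Taylor-expand $\mathsf p(y)$ about $u=0$. After rescaling $u=\epsilon v$, the integrand defining $d_\epsilon(x)$ becomes $\exp(-\|v\|^2/4)$ times a power series in $\epsilon$ whose odd-order terms in $v$ vanish by Gaussian symmetry. The leading term is $\mathsf p(x)m_0\epsilon^d$ and the next nontrivial contribution, of order $\epsilon^{d+2}$, is an explicit combination of $\Delta\mathsf p(x)$, $\mathsf p(x)$, and scalar curvature. The bound on the implied constant in terms of $\mathsf p_m$ and the $C^2$ norm of $\mathsf p$ follows from making the Taylor remainder uniform in $x$, using compactness of $M$. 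The stated expansion of $Q_\epsilon$ is then immediate from $(1+O(\epsilon^2))^{-2}=1+O(\epsilon^2)$.

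For part (b), the symmetry of $Q_\epsilon(x,y)$ is transferred to $T_\epsilon$ via a weight. Setting $\pi(x):=PQ_\epsilon(x)\mathsf p(x)$, which by (a) is bounded above and below uniformly for small $\epsilon$, a direct computation gives
$$\int_M T_\epsilon f(x)\,g(x)\,\pi(x)\,dV_M(x)=\iint_{M\times M}Q_\epsilon(x,y)f(y)g(x)\mathsf p(x)\mathsf p(y)\,dV_M(y)\,dV_M(x),$$
which is symmetric in $(f,g)$. Hence $T_\epsilon$ is self-adjoint in the weighted inner product $\langle\cdot,\cdot\rangle_\pi$, and since this is equivalent to the $L^2(M)$ inner product, the operator has real spectrum with an $L^2$-orthogonal basis of eigenfunctions. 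For part (c), smoothness follows from differentiation under the integral sign, since $k_\epsilon(x,y)$ is $C^\infty$ in $x$ and the denominator $PQ_\epsilon(x)$ is bounded below by (a). The $O(\epsilon^2)$ identity is obtained by cancelling the common factor $d_\epsilon(x)$ in the ratio defining $T_\epsilon f$ and substituting $\mathsf p(y)/d_\epsilon(y)=(m_0\epsilon^d)^{-1}(1+O(\epsilon^2))$ from (a) in both numerator and denominator.

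The main obstacle is (d), because naively combining (c) with the standard Belkin--Niyogi-type expansion $\int k_\epsilon(x,y)f(y)dV_M/\int k_\epsilon(x,y)dV_M=f(x)+\epsilon^2\Delta f(x)+O(\epsilon^4)$ only yields $(T_\epsilon f-f)/\epsilon^2=\Delta f+O(1)$: the multiplicative $O(\epsilon^2)$ error from (c) contaminates the $\epsilon^2\Delta f$ term. To close the gap I would bypass (c) and expand $T_\epsilon f=PQ_\epsilon f/PQ_\epsilon$ directly to fourth order in $\epsilon$. The $O(\epsilon^{d+2})$ correction in $d_\epsilon(y)$ from (a) enters as a multiplicative factor $1-\epsilon^2\omega(y)$, where $\omega$ is a smooth function built from $\mathsf p$, its derivatives, and curvature. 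Because $k_\epsilon(x,y)$ concentrates in an $\epsilon$-neighbourhood of $x$, replacing $\omega(y)$ by $\omega(x)$ costs only $O(\epsilon^4)$, after which the factor $(1-\epsilon^2\omega(x))$ appears identically in numerator and denominator and cancels; this cancellation is precisely the raison d'être of the $\alpha=1$ normalization. What remains is the ratio $\int k_\epsilon(x,y)f(y)dV_M/\int k_\epsilon(x,y)dV_M$, which I would expand in normal coordinates to fourth order using the $C^4$ regularity of $f$. Odd moments of the Gaussian vanish, and the coefficient of $\epsilon^2\Delta f(x)$ is $1$ by the moment identity $m_2/m_0=2\epsilon^2$ recorded in Remark \ref{coeffecient 1}. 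Combining these steps yields $T_\epsilon f(x)=f(x)+\epsilon^2\Delta f(x)+O(\epsilon^4)$, which is (d).
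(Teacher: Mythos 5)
Your sketch of (a), (c), (d) follows essentially the route the paper relies on (the paper omits the proof and cites \cite{coifman2006diffusion}): normal coordinates, the chordal-versus-geodesic correction of order $\|u\|^4$, Gaussian moments for (a), and for (d) you correctly diagnose that the multiplicative $O(\epsilon^2)$ error in (c) is too crude and that the density/curvature correction $\omega$ must cancel between numerator and denominator after cancelling $d_\epsilon(x)$ --- this is exactly the mechanism behind the $\alpha=1$ normalization, and your claim that replacing $\omega(y)$ by $\omega(x)$ costs only $O(\epsilon^4)$ is right (it uses the next-order expansion of $d_\epsilon$, available since $\mathsf p$ and $M$ are smooth, though the constants then involve more than the $C^2$ norm of $\mathsf p$). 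One point in (c) to make explicit: for $f$ merely in $L^2(M)$ the error term you produce is $O(\epsilon^2)\,\frac{\int k_\epsilon|f|\,dV_M}{\int k_\epsilon\,dV_M}$, which pointwise is only $O(\epsilon^{2-d/2}\|f\|_2)$; the stated bound with constant depending on $\|f\|_2$ should be read (and proved, via a Schur/Young bound on the row-normalized kernel) as an $L^2$-norm estimate, which is how it is used in Lemma \ref{property 1 of R epsilon}.

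The genuine gap is in (b). Your identity shows that $T_\epsilon$ is self-adjoint with respect to the weighted inner product $\langle f,g\rangle_\pi$ with $\pi=(PQ_\epsilon)\,\mathsf p$, and that computation is correct; but equivalence of the weighted norm with the $L^2(M)$ norm transfers neither self-adjointness nor orthogonality. Indeed, the kernel of $T_\epsilon$ with respect to $dV_M$ is $Q_\epsilon(x,y)\mathsf p(y)/PQ_\epsilon(x)$, which is symmetric only if $\mathsf p\cdot PQ_\epsilon$ is constant --- true only up to a factor $1+O(\epsilon^2)$ --- so your argument does not yield the literal statement ``self-adjoint on $L^2(M)$,'' and the deduced ``$L^2$-orthogonal basis of eigenfunctions'' does not follow: the eigenfunctions your argument produces are $\pi$-orthogonal, not $L^2$-orthogonal. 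Since the paper later applies Lemma \ref{ratio of eigenvalues} with genuine $L^2(M)$ inner products, this cannot be waved away by norm equivalence: you would need either to pass to the symmetric conjugate (conjugation by the square root of the weight, as in the cited reference) and track how that conjugation affects eigenfunctions and orthogonality, or to reformulate and use (b) with the weighted inner product consistently downstream. As written, the inference ``equivalent norm $\Rightarrow$ self-adjoint on $L^2(M)$'' is the step that fails.
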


Define $\mathsf H_{\epsilon}$ to be the integral operator associated with the heat kernel on the manifold $M$ with the diffusion time $\epsilon^2$; that is, for a suitable function $f$, we have
\begin{equation}
\mathsf H_{\epsilon}f(x):=\int_M \mathsf{H}(x,y,\epsilon^2)f(y)dV_M(y)\,.
\end{equation} 
Let
\begin{align}
R_{\epsilon}:=\frac{I-\mathsf H_{\epsilon}}{\epsilon^2}-\frac{I-T_{\epsilon}}{\epsilon^2}. \nonumber 
\end{align}
We discuss the properties of $R_{\epsilon}$ in the next two lemmas.

\begin{lemma}\label{property 1 of R epsilon}
Let $f \in L^2(M)$. There exists a constant $C_3$ depending on  $\mathsf p_m$,  the $C^2$ norm of $\mathsf p$ and the curvature of the manifold $M$, such that when $\epsilon>0$ is sufficiently small, $\|R_{\epsilon}f\|_2 \leq C_3\|f\|_2$.
\end{lemma}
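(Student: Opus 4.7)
The strategy is to recast the claim as the $L^2$ operator-norm bound $\|T_\epsilon-\mathsf H_\epsilon\|_{L^2\to L^2}\le C_3\epsilon^2$, which I will establish in three steps: reduce $T_\epsilon$ to a pure integral operator, compare the two kernels pointwise via their small-scale asymptotics, and finally apply the Schur test. The point is that both $T_\epsilon$ and $\mathsf H_\epsilon$ are Gaussian-like smoothing operators at scale $\epsilon$ that approximate $I+\epsilon^2\Delta$; the unbounded $\Delta$-contributions cancel, and what survives is integrable at order $\epsilon^2$.

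\textbf{Step 1.} By Lemma \ref{properties of T epsilon}(c), $T_\epsilon f(x)=\bar T_\epsilon f(x)+O(\epsilon^2)\|f\|_2$ uniformly in $x$, where
\begin{equation*}
\bar T_\epsilon f(x):=\int_M \bar k_\epsilon(x,y)f(y)\,dV_M(y),\qquad \bar k_\epsilon(x,y):=\frac{k_\epsilon(x,y)}{\int_M k_\epsilon(x,z)\,dV_M(z)}.
\end{equation*}
Squaring and integrating over $M$ yields $\|T_\epsilon f-\bar T_\epsilon f\|_2\le C\epsilon^2\|f\|_2$, with a constant depending on $\mathsf p_m$, the $C^2$ norm of $\mathsf p$, and $V(M)$.

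\textbf{Step 2.} Fix $\gamma$ smaller than the injectivity radius and split $M\times M$ into $d(x,y)\le\gamma$ and $d(x,y)>\gamma$. In normal coordinates at $x$ the standard extrinsic-geometry expansion of the isometric embedding gives $\|\iota(x)-\iota(y)\|^2=d(x,y)^2(1+O(d(x,y)^2))$ with implicit constant controlled by the second fundamental form, so $k_\epsilon(x,y)=e^{-d(x,y)^2/(4\epsilon^2)}(1+O(d(x,y)^4/\epsilon^2))$. Together with the denominator expansion $\int_M k_\epsilon(x,z)\,dV_M(z)=m_0\epsilon^d(1+O(\epsilon^2))$ (from Lemma \ref{properties of T epsilon}(a) applied with constant density), we obtain
\begin{equation*}
\bar k_\epsilon(x,y)=(4\pi\epsilon^2)^{-d/2}e^{-d(x,y)^2/(4\epsilon^2)}\bigl(1+O(d(x,y)^4/\epsilon^2+\epsilon^2)\bigr).
\end{equation*}
Lemma \ref{bounds on heat kernel}(a) gives the matching intrinsic expansion $\mathsf H(x,y,\epsilon^2)=(4\pi\epsilon^2)^{-d/2}e^{-d(x,y)^2/(4\epsilon^2)}(1+O(d(x,y)^2)+O(\epsilon^2))$, so subtracting,
\begin{equation*}
|\bar k_\epsilon(x,y)-\mathsf H(x,y,\epsilon^2)|\le C(4\pi\epsilon^2)^{-d/2}e^{-d(x,y)^2/(4\epsilon^2)}\bigl(d(x,y)^4/\epsilon^2+d(x,y)^2+\epsilon^2\bigr).
\end{equation*}
For $d(x,y)>\gamma$, both kernels are super-polynomially small in $\epsilon$ (using Lemma \ref{bounds on heat kernel}(b) for $\mathsf H$ and direct Gaussian decay for $\bar k_\epsilon$), and their difference is easily absorbed into a Gaussian majorant at scale $\epsilon$.

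\textbf{Step 3.} Change variables $s=d(x,y)/\epsilon$ in normal coordinates and apply the Schur test:
\begin{equation*}
\sup_x\int_M |\bar k_\epsilon(x,y)-\mathsf H(x,y,\epsilon^2)|\,dV_M(y)\le C\epsilon^2\int_0^{\gamma/\epsilon}(s^4+s^2+1)e^{-s^2/4}s^{d-1}\,ds\le C'\epsilon^2.
\end{equation*}
The corresponding supremum over $y$ follows by writing $\bar k_\epsilon(x,y)=k_\epsilon(x,y)/(m_0\epsilon^d)\cdot(1+O(\epsilon^2))$; the symmetric piece matches the $y$-integral bound and the asymmetric piece contributes only another $O(\epsilon^2)$. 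Hence $\|\bar T_\epsilon-\mathsf H_\epsilon\|_{L^2\to L^2}\le C\epsilon^2$, and combining with Step 1 yields $\|T_\epsilon-\mathsf H_\epsilon\|_{L^2\to L^2}\le C_3\epsilon^2$, i.e., $\|R_\epsilon f\|_2\le C_3\|f\|_2$. The main obstacle is the pointwise kernel comparison in Step 2: one must carefully match the extrinsic-embedding correction in $\bar k_\epsilon$ (governed by the second fundamental form) against the intrinsic heat-kernel correction (governed by curvature and captured by $u_0,u_1$). The saving grace is that only the size of the difference matters---both corrections yield Gaussian-weighted $L^1$ mass of order $\epsilon^2$---so no cancellation between them is needed.
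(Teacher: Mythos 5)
Your proof is correct and follows essentially the same route as the paper: replace $T_\epsilon$ by the normalized Gaussian-kernel integral operator via Lemma \ref{properties of T epsilon}(c), compare both this kernel and $\mathsf H(\cdot,\cdot,\epsilon^2)$ with the intrinsic Gaussian $(4\pi\epsilon^2)^{-d/2}e^{-d(x,y)^2/4\epsilon^2}$ using Lemma \ref{bounds on heat kernel}(a) and the chordal-versus-geodesic distance expansion, and conclude an $O(\epsilon^2)$ operator bound (your Schur test merely makes explicit the kernel-to-$L^2$ step the paper leaves implicit). One cosmetic point: since the multiplicative correction $e^{O(d^4/\epsilon^2)}-1$ is not uniformly $O(d^4/\epsilon^2)$ for $d(x,y)$ up to $\gamma$, the pointwise comparison in your Step 2 should absorb the extrinsic correction into a slightly widened Gaussian (e.g. $e^{-d(x,y)^2/8\epsilon^2}$, after choosing $\gamma$ small relative to the second fundamental form), which leaves the Schur integral and the $O(\epsilon^2)$ conclusion unchanged.
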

\begin{proof}
By  Lemma \ref{properties of T epsilon} (c),
\begin{align}
T_{\epsilon}f(x)=& \frac{\int_{M}k_{\epsilon}(x,y)f(y)dV_M}{\int_{M}k_{\epsilon}(x,y)dV_M}+O(\epsilon^2) \nonumber  \\
=& \frac{\int_{M} (4\pi \epsilon^2)^{-d/2} k_{\epsilon}(x,y)f(y)dV_M}{\int_{M}(4\pi \epsilon^2)^{-d/2} k_{\epsilon}(x,y)dV_M} +O(\epsilon^2)\,.\nonumber 
\end{align}
Note that $\int_{M}(4\pi \epsilon^2)^{-d/2} k_{\epsilon}(x,y)dV_M=1+O(\epsilon^2)$ by Lemma \ref{properties of T epsilon} (a). Hence,
\begin{align}
T_{\epsilon}f(x)=& \int_{M}(4\pi \epsilon^2)^{-d/2}k_{\epsilon}(x,y)f(y)dV_M+O(\epsilon^2)\,. \nonumber 
\end{align}
{where the implied} constant in $O(\epsilon^2)$ depends on $\mathsf p_m$,  the $C^2$ norm of $\mathsf p$ and $\|f\|_2$. Thus,
\begin{align}
\|R_{\epsilon}f\|_2 \leq &\, \Big\|\frac{(I-\mathsf H_{\epsilon})f(x)}{\epsilon^2}-\frac{f(x)-\int_{M}(4\pi \epsilon^2)^{-d/2}k_{\epsilon}(x,y)f(y)dV_M}{\epsilon^2}\Big\|_2+\tilde{C}_1 \|f\|_2 \\ \nonumber
=&\, \Big\|\frac{\mathsf H_{\epsilon}f(x)-\int_{M}(4\pi \epsilon^2)^{-d/2}k_{\epsilon}(x,y)f(y)dV_M}{\epsilon^2}\Big\|_2+\tilde{C}_1 \|f\|_2\,,
\end{align}
where $\tilde{C}_1$ depends on $\mathsf p_m$ {and} the $C^2$ norm of $\mathsf p$. 
To control $\|\mathsf H_{\epsilon}f(x)-\int_{M}(4\pi \epsilon^2)^{-d/2}k_{\epsilon}(x,y)f(y)dV_M\|_2$, note that
\begin{align}\label{similiarity between kernels}
\Big\|\mathsf H_{\epsilon}f(x)-\int_{M}&(4\pi \epsilon^2)^{-d/2}k_{\epsilon}(x,y)f(y)dV_M\Big\|_2 \leq \Big\|\mathsf H_{\epsilon}f(x)-\int_{M}(4\pi \epsilon^2)^{-d/2} e^{-\frac{d(x,y)^2}{4\epsilon^2}}f(y)dV_M\Big\| \nonumber \\ 
& +\Big\|\int_{M}(4\pi \epsilon^2)^{-d/2} e^{-\frac{d(x,y)^2}{4\epsilon^2}}f(y)dV_M-\int_{M}(4\pi \epsilon^2)^{-d/2}k_{\epsilon}(x,y)f(y)dV_M\Big\|\,.
\end{align}
The former term on the right hand side can be bounded by Lemma \ref{bounds on heat kernel} (a), while the later term can be bounded by using the expansion of $d^2(x,y)$ in terms of $\|\iota(x)-\iota(y)\|^2_{\mathbb{R}^D}$.  In conclusion, we have
\begin{align}
\Big\|\frac{\mathsf H_{\epsilon}f(x)-\int_{M}(4\pi \epsilon^2)^{-d/2}k_{\epsilon}(x,y)f(y)dV_M}{\epsilon^2}\Big\|_2 \leq \tilde{C}_2 \|f\|_2,
\end{align}
where $\tilde{C}_2$ depends on the curvature of the manifold. {We finish the proof by setting} $C_3=\tilde{C}_1+\tilde{C}_2$.
\end{proof}

{
\begin{remark}\label{rely on Gaussian}
Note that the proof of the above lemma relies on the fact that $k_{\epsilon}(x,y)$ is Gaussian. In particular, we use the expansion of $\mathsf{H}(x,y,\epsilon^2)$ and $(4\pi \epsilon^2)^{-d/2}k_{\epsilon}(x,y)$ in terms of $(4\pi \epsilon^2)^{-d/2} e^{-\frac{d(x,y)^2}{4\epsilon^2}}$ in \eqref{similiarity between kernels}, which is related to the parametrix method \cite{rosenberg1997laplacian}.
\end{remark}}

\begin{lemma}\label{property 2 of R epsilon}
Fix $K\in \mathbb{N}$. Suppose $\phi_K \in C^{\infty}(M)$ is the $K$-th eigenfunction of $\Delta$ and  $\|\phi_K\|_2=1$. Then, there is a constant $C_4$ depending on  $\mathsf p_m$, the $C^2$ norm of $\mathsf p$, the Ricci curvature {and the second fundamental form of the manifold,} so that when $\epsilon>0$ is sufficiently small,
\begin{align}
\|R_\epsilon \phi_K\|_2 \leq C_4 \epsilon^2 \big(1+\lambda_K^{d/2+5}\big)\,. \nonumber 
\end{align}
\end{lemma}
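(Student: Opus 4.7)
The strategy is to exploit the fact that $\phi_K$ is an \emph{exact} eigenfunction of the heat semigroup but only an \emph{approximate} eigenfunction of $T_\epsilon$; the discrepancy is then controlled by comparing the two Taylor expansions of $\epsilon^2\Delta$. Since $R_\epsilon = \frac{T_\epsilon - \mathsf H_\epsilon}{\epsilon^2}$, we have $R_\epsilon \phi_K = \frac{T_\epsilon \phi_K - \mathsf H_\epsilon \phi_K}{\epsilon^2}$, so it suffices to write out each term explicitly and subtract.

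First, since $\phi_K$ is an eigenfunction of $-\Delta$ with eigenvalue $\lambda_K$, it is an eigenfunction of the heat semigroup and
\[
\mathsf H_\epsilon \phi_K = e^{-\lambda_K \epsilon^2} \phi_K
\]
holds exactly. Second, I apply Lemma \ref{properties of T epsilon}(d) with $f=\phi_K$ to obtain pointwise
\[
T_\epsilon \phi_K = (1-\lambda_K \epsilon^2)\phi_K + \epsilon^4 E_K,
\]
where $\|E_K\|_\infty \leq C(M,\mathsf p)\,\|\Delta^2\phi_K\|_\infty$. Using $\Delta^2 \phi_K = \lambda_K^2 \phi_K$ together with Lemma \ref{lemma hormander} gives $\|\Delta^2\phi_K\|_\infty = \lambda_K^2 \|\phi_K\|_\infty \leq C_1 \lambda_K^{2+(d-1)/4}$.

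Third, I subtract and estimate the scalar coefficient of $\phi_K$ by a second-order Taylor bound for the exponential, $|(1-x)-e^{-x}|\leq x^2/2$ for $x\geq 0$, applied with $x = \lambda_K\epsilon^2$:
\[
T_\epsilon \phi_K - \mathsf H_\epsilon \phi_K = \bigl[(1-\lambda_K\epsilon^2) - e^{-\lambda_K\epsilon^2}\bigr]\phi_K + \epsilon^4 E_K,
\]
so that the bracketed scalar is bounded in absolute value by $\lambda_K^2\epsilon^4/2$. Taking $L^2$ norms, using $\|\phi_K\|_2=1$ and $\|E_K\|_2 \leq V(M)^{1/2}\|E_K\|_\infty$, and dividing by $\epsilon^2$ yields
\[
\|R_\epsilon \phi_K\|_2 \;\leq\; \frac{\lambda_K^2 \epsilon^2}{2} + C\,V(M)^{1/2}\,\lambda_K^{(d+7)/4}\, \epsilon^2 \;\leq\; C_4\,\epsilon^2\bigl(1+\lambda_K^{d/2+5}\bigr),
\]
since $\max(2,(d+7)/4) \leq d/2+5$ for every $d\geq 1$, with plenty of slack. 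This absorbs all the prefactors (depending on $\mathsf p_m$, the $C^2$ norm of $\mathsf p$, the Ricci curvature, the second fundamental form, $C_1$ and $V(M)$) into the constant $C_4$.

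The main obstacle is really only careful bookkeeping: tracking how the implied constant in Lemma \ref{properties of T epsilon}(d) inherits the size of $\phi_K$ through $\|\Delta^2\phi_K\|_\infty$, and then converting that to a power of $\lambda_K$ via the Hörmander-type bound. The sub-optimal exponent $d/2+5$ in the statement leaves considerable room, so no sharper analysis of the Taylor remainder is needed; the argument proceeds directly from the two lemmas already established.
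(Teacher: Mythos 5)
Your overall decomposition is the same as the paper's: write $R_\epsilon\phi_K=\frac{(T_\epsilon-\mathsf H_\epsilon)\phi_K}{\epsilon^2}$, use the exact semigroup identity $\mathsf H_\epsilon\phi_K=e^{-\lambda_K\epsilon^2}\phi_K$ together with $|(1-x)-e^{-x}|\leq x^2/2$, and control the rest by the pointwise expansion of $T_\epsilon$. The place where your argument breaks is the single inequality $\|E_K\|_\infty\leq C(M,\mathsf p)\,\|\Delta^2\phi_K\|_\infty$. Lemma \ref{properties of T epsilon}(d) only says that the implied constant in the $O(\epsilon^2)$ remainder ``depends on'' the $L^\infty$ norm of $\Delta^2 f$ (together with $\mathsf p$ and curvature data); it does not assert that the remainder is bounded \emph{linearly} by $\|\Delta^2 f\|_\infty$, and in fact it is not: the $\epsilon^4$-order remainder of the kernel expansion in normal coordinates contains the fourth-order Taylor remainder of $f$ evaluated at intermediate points (so the full tensor $\nabla^4 f$, not its trace $\Delta^2 f$), as well as third- and second-derivative terms of $f$ multiplied by curvature and density corrections. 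None of these is controlled by $\|\Delta^2\phi_K\|_\infty=\lambda_K^2\|\phi_K\|_\infty$ alone. This is precisely why the paper's proof bounds the remainder by $\|\phi_K\|_{C^4(M)}$ and then invokes the Sobolev embedding $\|\phi_K\|_{C^4(M)}\leq\tilde C_2(1+\lambda_K^{d/2+5})$ — that embedding step is the sole source of the exponent $d/2+5$ in the statement. Your route, if it worked, would give the strictly sharper exponent $2+\frac{d-1}{4}$, which should itself have been a warning sign that an unproven strengthening of Lemma \ref{properties of T epsilon}(d) is being used.

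Everything else in your write-up is fine (the identity for $R_\epsilon$, the eigenrelation for $\mathsf H_\epsilon$, the Taylor bound for the exponential, passing from $L^\infty$ to $L^2$ via $\texttt{Vol}(M)^{1/2}$, and the exponent comparison $\max(2,\frac{d+7}{4})\leq \frac d2+5$). To repair the proof you need to replace the step above by a remainder bound of the form $\|E_K\|_\infty\leq C\,\|\phi_K\|_{C^4(M)}$ (which is what the expansion actually gives, uniformly in $x$) and then convert $\|\phi_K\|_{C^4(M)}$ into a power of $\lambda_K$ via the Sobolev embedding and $\|\Delta^m\phi_K\|_2=\lambda_K^m$, exactly as in the paper; combining with your $\lambda_K^2\epsilon^2/2$ term then yields $\|R_\epsilon\phi_K\|_2\leq C_4\epsilon^2(1+\lambda_K^{d/2+5})$ with a constant of the stated type, and Lemma \ref{lemma hormander} is then not needed at all.
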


We mention that this is the main step controlling the $L^2$ convergence rate due to the fundamental Sobolev embedding theorem. {Also note how the bound depends on the associated eigenvalue. By the lower bound of the eigenvalues shown in Lemma \ref{laplace eigenvalue lower bound}, the bound exponentially increases as $K$ increases.}

\begin{proof}
By Lemma  \ref{properties of T epsilon} (d), {$R_\epsilon  \phi_K(x)\leq \tilde C_1\|\phi_K\|_{C^4(M)}$, where} $\tilde{C}_1$ is a constant depending on $\mathsf p_m$, the $C^2$ norm of $\mathsf p$ and the Ricci curvature and {the second fundamental form of the manifold.} Moreover, by the Sobolev Embedding theorem \cite[Theorem 9.2]{palais1968foundations}, there is a constant $\tilde{C}_2>0$ depending on the Ricci curvature of the manifold so that 
\begin{align}
\| \phi_K\|_{C^4(M)} \leq \tilde{C}_2 \| \phi_K\|_{H^{d/2+5}} \leq  \tilde{C}_2 (1+ \|\Delta^{d/2+5}  \phi_K\|_{2}) \leq \tilde{C}_2 \big(1+\lambda_K^{d/2+5}\big)\,. \nonumber 
\end{align}
By taking $C_4=\tilde{C}_1 \tilde{C}_2$, the conclusion follows. 
\end{proof}

\subsection{Spectral convergence of $\frac{I-T_{\epsilon}}{\epsilon^2}$ to $-\Delta$}

In this subsection we show the spectral convergence of $\frac{I-T_{\epsilon}}{\epsilon^2}$ to $-\Delta$. 
We prove the main results in this subsection by extending the argument in \cite{coifman2006diffusion,belkin2007convergence} with several new arguments and results. The proof is long and contains results of independent interest, so we divide it into several parts.

Recall that the Laplace-Beltrami operator $\Delta$ is not a compact operator. 
To show the convergence of the eigenvalues and the corresponding eigenvectors, we compare $\frac{I-T_{\epsilon}}{\epsilon^2}$ and $\frac{I-\mathsf H_{\epsilon}}{\epsilon^2}$. For the $i$-th eigenvalue  of $-\Delta$, $\lambda_i$, we know that the $i$-th eigenvalue of $\frac{I-\mathsf H_{\epsilon}}{\epsilon^2}$ is $\frac{1-e^{-\lambda_i\epsilon^2}}{\epsilon^2}$, which converges to $\lambda_i$ when $\epsilon$ goes to $0$. We will show the convergence of the $i$-th eigenvalue of $\frac{I-T_{\epsilon}}{\epsilon^2}$ to the $i$-th eigenvalue of  $\frac{I-\mathsf H_{\epsilon}}{\epsilon^2}$ when $i$ is fixed. Moreover, since $\frac{I-\mathsf H_{\epsilon}}{\epsilon^2}$ and $\Delta$ share the same eigenfunctions, the convergence of eigenfunctions will follow from the convergence of eigenvalues. 

\begin{proposition}\label{T epsilon and Delta}
Assume that the eigenvalues of $\Delta$ are simple. Suppose $(\lambda_{i,\epsilon}, \phi_{i,\epsilon})$ is the $i$-th eigenpair of $\frac{I-T_{\epsilon}}{\epsilon^2}$ and $(\lambda_{i}, \phi_{i})$ is the $i$-th eigenpair of $-\Delta$. Assume both $\phi_{i,\epsilon}$ and $\phi_{i}$ are normalized in the $L^2$ norm. 
For $K\in \mathbb{N}$, denote 
\begin{equation}\label{defintion: mathsf Gamma K}
\mathsf \Gamma_K:=\min_{1 \leq i \leq K}\textup{dist}(\lambda_i, \sigma(-\Delta)\setminus \{\lambda_i\})\,.
\end{equation}
Suppose {\eqref{epsilon less than some constant} is satisfied and $\epsilon>0$ is sufficiently small.} Then, there are $a_i \in \{-1, 1\}$ such that for all $1 \leq i < K$, 
\begin{align}
|\lambda_{i,\epsilon}-\lambda_{i}|  \leq \epsilon^{3/2},\quad 
\|a_i\phi_{i,\epsilon}-\phi_i\|_2 \leq \epsilon^{3/2},\quad 
\|a_i\phi_{i,\epsilon}-\phi_{i}\|_{\infty}  \leq \epsilon. \, \label{Proposition main result statement} 
\end{align}
\end{proposition}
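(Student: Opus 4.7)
The plan is to pivot through the heat semigroup $\mathsf{H}_\epsilon$, which satisfies $\mathsf{H}_\epsilon\phi_i = e^{-\lambda_i\epsilon^2}\phi_i$ and differs from $T_\epsilon$ by $T_\epsilon - \mathsf{H}_\epsilon = \epsilon^2 R_\epsilon$. Since $(1-e^{-\lambda_i\epsilon^2})/\epsilon^2 = \lambda_i + O(\lambda_i^2\epsilon^2)$, converting spectral convergence of $T_\epsilon \to \mathsf{H}_\epsilon$ into the desired statement about $(I-T_\epsilon)/\epsilon^2$ and $-\Delta$ is pure bookkeeping. The two available controls on $R_\epsilon$ have complementary strengths: Lemma \ref{property 1 of R epsilon} gives a uniform norm bound $\|R_\epsilon\| \le C_3$, while Lemma \ref{property 2 of R epsilon} gives the sharper frequency-dependent bound $\|R_\epsilon \phi_i\|_2 \le C_4\epsilon^2(1 + \lambda_i^{d/2+5})$. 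I would combine them through a bootstrap based on the three parts of Lemma \ref{ratio of eigenvalues}.

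\emph{Coarse pass.} Applying Weyl's inequality for compact self-adjoint operators together with $\|T_\epsilon - \mathsf{H}_\epsilon\| \le C_3\epsilon^2$ gives $|e^{-\lambda_i\epsilon^2} - (1-\epsilon^2\lambda_{i,\epsilon})| \le C_3\epsilon^2$ for every $i$. Since the eigenvalues of $\mathsf{H}_\epsilon$ near $e^{-\lambda_i\epsilon^2}$ for $i<K$ are separated by at least $e^{-\lambda_{i-1}\epsilon^2}-e^{-\lambda_{i+1}\epsilon^2}\gtrsim \mathsf\Gamma_K\epsilon^2$, a second application of Weyl and the $\epsilon$-constraint \eqref{epsilon less than some constant} (which is tailored so that $C_3\epsilon^2 \ll \mathsf\Gamma_K\epsilon^2$) forces the spectral gap $\gamma_i(T_\epsilon)$ of $T_\epsilon$ at $1-\epsilon^2\lambda_{i,\epsilon}$ to be at least $\tfrac{1}{2}\mathsf\Gamma_K\epsilon^2$. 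Feeding $\|T_\epsilon\phi_i - e^{-\lambda_i\epsilon^2}\phi_i\|_2 = \epsilon^2\|R_\epsilon\phi_i\|_2$ into Lemma \ref{ratio of eigenvalues}(b) with this gap yields preliminary $L^2$ eigenvector convergence, which in particular gives $|(\phi_i,\phi_{i,\epsilon})|\ge 1/2$.

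\emph{Refined eigenvalue and $L^2$ eigenvector.} With the inner-product lower bound in hand, I would apply Lemma \ref{ratio of eigenvalues}(c) with $A=\mathsf{H}_\epsilon$, $B=T_\epsilon$, $u_i=\phi_{i,\epsilon}$, $w_j=\phi_i$, so that the right-hand side is $\epsilon^2\|R_\epsilon\phi_i\|_2/|(\phi_i,\phi_{i,\epsilon})|$ and Lemma \ref{property 2 of R epsilon} gives $|\lambda_{i,\epsilon}-\lambda_i|\le C\epsilon^2(1+\lambda_K^{d/2+5})$ after comparing $(1-e^{-\lambda_i\epsilon^2})/\epsilon^2$ with $\lambda_i$. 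The constraint $\epsilon\le\mathcal{K}_1(\mathsf\Gamma_K/(\mathcal{K}_2+\lambda_K^{d/2+5}))^2$ absorbs one factor of $\epsilon^{1/2}$, producing the clean rate $|\lambda_{i,\epsilon}-\lambda_i|\le\epsilon^{3/2}$. Re-entering Lemma \ref{ratio of eigenvalues}(b), the quantity $\|T_\epsilon\phi_i-(1-\epsilon^2\lambda_{i,\epsilon})\phi_i\|_2$ is bounded by $\epsilon^2\|R_\epsilon\phi_i\|_2+|e^{-\lambda_i\epsilon^2}-(1-\epsilon^2\lambda_{i,\epsilon})|\lesssim \epsilon^4(1+\lambda_K^{d/2+5})$; dividing by the spectral gap $\gtrsim\mathsf\Gamma_K\epsilon^2$ and again invoking the $\epsilon$-constraint yields $\|a_i\phi_{i,\epsilon}-\phi_i\|_2\le\epsilon^{3/2}$.

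\emph{$L^\infty$ upgrade.} Using the eigenrelations $(1-\epsilon^2\lambda_{i,\epsilon})\phi_{i,\epsilon}=T_\epsilon\phi_{i,\epsilon}$ and $e^{-\lambda_i\epsilon^2}\phi_i=\mathsf{H}_\epsilon\phi_i$, I rewrite $a_i\phi_{i,\epsilon}-\phi_i$ as $T_\epsilon(a_i\phi_{i,\epsilon}-\phi_i)/(1-\epsilon^2\lambda_{i,\epsilon})$ plus explicit remainders built from $\epsilon^2 R_\epsilon\phi_i$ and $|\lambda_{i,\epsilon}-\lambda_i|$. The pointwise expansion of Lemma \ref{properties of T epsilon}(d) controls $\|R_\epsilon\phi_i\|_\infty$ via Sobolev embedding and $\lambda_K$-powers, while the Gaussian kernel gives a smoothing $\|T_\epsilon g\|_\infty\lesssim \epsilon^{-d/2}\|g\|_2$; combining with the $L^2$ rate $\epsilon^{3/2}$ and absorbing all $\lambda_K$-polynomials through \eqref{epsilon less than some constant} produces $\|a_i\phi_{i,\epsilon}-\phi_i\|_\infty\le\epsilon$. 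The chief obstacle throughout is the bookkeeping: Lemma \ref{ratio of eigenvalues}(c) with the sharp frequency-dependent bound cannot be invoked until the coarse pass has produced $|(\phi_i,\phi_{i,\epsilon})|\gtrsim 1$ and a spectral gap for $T_\epsilon$, and every polynomial in $\lambda_K$ must be tracked so that the exact shape of \eqref{epsilon less than some constant} absorbs the excess powers of $\epsilon^{1/2}$ into the stated $\epsilon^{3/2}$ and $\epsilon$ rates.
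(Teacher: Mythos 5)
Your \emph{coarse pass} contains the decisive gap. From Lemma \ref{property 1 of R epsilon} you only get $\|T_\epsilon-\mathsf H_\epsilon\|\leq C_3\epsilon^2$ in $L^2$, and Weyl then gives $|e^{-\lambda_i\epsilon^2}-(1-\epsilon^2\lambda_{i,\epsilon})|\leq C_3\epsilon^2$. But the gaps of $\mathsf H_\epsilon$ between consecutive low eigenvalues are themselves of size $\asymp(\lambda_{i+1}-\lambda_i)\epsilon^2\asymp \mathsf\Gamma_K\epsilon^2$: the perturbation and the gap carry the \emph{same} power of $\epsilon$, so ``$C_3\epsilon^2\ll\mathsf\Gamma_K\epsilon^2$'' is a condition on the fixed constants $C_3$ and $\mathsf\Gamma_K$, not something that \eqref{epsilon less than some constant} or any smallness of $\epsilon$ can enforce (and $C_3$ depends on the manifold and $\mathsf p$ while $\mathsf\Gamma_K$ can be arbitrarily small). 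Consequently you cannot localize the $i$-th eigenvalue of $T_\epsilon$ inside the correct gap, cannot deduce $\gamma_i(T_\epsilon)\gtrsim\mathsf\Gamma_K\epsilon^2$, and the input to Lemma \ref{ratio of eigenvalues}(b) picks up the term $|e^{-\lambda_i\epsilon^2}-(1-\epsilon^2\lambda_{i,\epsilon})|/\gamma_i\sim C_3/\mathsf\Gamma_K$, which is not small, so $|(\phi_i,\phi_{i,\epsilon})|\geq 1/2$ does not follow. This is exactly the obstruction the paper flags (``we do not have convergence of $\frac{I-T_\epsilon}{\epsilon^2}$ to $\frac{I-\mathsf H_\epsilon}{\epsilon^2}$ in operator norm''), and its Step~1 is the missing idea: for an arbitrary $f$ one splits $f=f_1+f_2$ at a frequency cutoff $J\asymp\epsilon^{-3d/(2(d+10))}$, uses Lemma \ref{property 2 of R epsilon} on $f_1$ and Lemma \ref{property 1 of R epsilon} on $f_2$ while exploiting that the Rayleigh quotient of $\frac{I-\mathsf H_\epsilon}{\epsilon^2}$ on $f_2$ is at least $\tfrac12\min(\lambda_J,1/\epsilon)$, obtaining the \emph{relative} bound $|(R_\epsilon f,f)|\leq \tilde C_1\epsilon^{3/(d+10)}|(\tfrac{I-\mathsf H_\epsilon}{\epsilon^2}f,f)|$; only this extra gain of $\epsilon^{3/(d+10)}$, fed into Lemma \ref{ratio of eigenvalues}(a), beats the gap once $\epsilon^{3/(d+10)}\lesssim \mathsf\Gamma_K/\lambda_K$, which is what \eqref{epsilon less than some constant} actually encodes. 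Your refined eigenvalue/$L^2$ step (Lemma \ref{ratio of eigenvalues}(c) with the inner-product lower bound, then (b) again) matches the paper's Step~3, but it is conditional on the coarse pass you have not secured.

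A secondary problem is the $L^\infty$ upgrade. Bootstrapping through $\|T_\epsilon g\|_\infty\lesssim\epsilon^{-d/2}\|g\|_2$ applied to $g=a_i\phi_{i,\epsilon}-\phi_i$ with the $L^2$ rate $\epsilon^{3/2}$ yields only $\epsilon^{(3-d)/2}$, which fails to give $\epsilon$ for $d\geq 2$ and does not even vanish for $d\geq 3$; the loss is a negative power of $\epsilon$, so it cannot be absorbed by the $\lambda_K$-dependent constraint \eqref{epsilon less than some constant}. The paper avoids this by applying the Sobolev embedding $\|g\|_\infty\lesssim\|g\|_2+\|\Delta^d g\|_2$ directly to $a_i\phi_{i,\epsilon}-\phi_i$, using the eigen-relations to replace $(\frac{T_\epsilon-I}{\epsilon^2})^d\phi_{i,\epsilon}$ by $\lambda_{i,\epsilon}^d\phi_{i,\epsilon}$ and controlling $\|(\Delta^d-(\frac{T_\epsilon-I}{\epsilon^2})^d)\phi_{i,\epsilon}\|_2$ with the technical Lemma \ref{L^2 regularity}, so the error stays of the form $\epsilon^2$ times a polynomial in $\lambda_K$, which the constraint does absorb down to $\epsilon$.
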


The results shown in \eqref{Proposition main result statement} have been highly simplified. {A more delicate error bound depending on $\lambda_K$ and other terms} can be found in \eqref{L^2 convergence of eigenfunctions} and \eqref{Proposition:complicated L infty bound}.
Note that $\mathsf\Gamma_K=\min\{\gamma_i(\Delta)|\,i=1,\ldots,K\}$, where $\gamma_i(\Delta)$ is defined in \eqref{Proof proposition 1 Definition gamma i}. The requirement that $\epsilon\leq \mathsf\Gamma_K$ is somehow intuitive -- to recover the first few eigenvalues with small gaps, we need a small bandwidth.

We now describe the idea of proving Proposition \ref{T epsilon and Delta}. We first use Lemma \ref{ratio of eigenvalues} (a) to obtain a convergence of the eigenvalues at a possibly {\em slow} convergence rate. With the convergence of eigenvalues, we can apply Lemma \ref{ratio of eigenvalues} (b) to show that the angle between the $i$-th eigenfunction of $\frac{I-T_{\epsilon}}{\epsilon^2}$ and the $i$-th eigenfunction of $-\Delta$ in $L^2(M)$ is well bounded from above; for example, this angle is at most $\frac{\pi}{6}$. 
Then we use Lemma \ref{property 2 of R epsilon} and Lemma \ref{ratio of eigenvalues} (c) to achieve the final convergence rate of eigenvalues. With the eigenvalue convergence, we apply Lemma \ref{ratio of eigenvalues} (b) again to show the convergence rate of eigenfunctions in $L^2(M)$ through the convergence rate of the eigenvalues and Lemma \ref{ratio of eigenvalues} (b). At last, we turn the convergence in $L^2(M)$ to the convergence in $L^\infty(M)$ with a slower convergence rate by the Sobolev embedding.

We emphasize that we {\em do not} directly apply the Davis-Kahan theorem to obtain the $L^2$ control of the eigenfunctions, since we {\em do not} have the convergence of $\frac{I-T_{\epsilon}}{\epsilon^2}$ to $\frac{I-\mathsf H_{\epsilon}}{\epsilon^2}$ in the operator norm. Note that Lemma \ref{property 1 of R epsilon} does not imply the convergence, and Lemma \ref{property 2 of R epsilon} only holds for a subset of the whole spectrum.

\begin{proof}
The proof is divided into few steps.

\underline{\textbf{Step 1, control $R_{\epsilon}$: }}
{In this step, we control $
\frac{(R_{\epsilon}f, f)}{|(\frac{I-\mathsf H_{\epsilon}}{\epsilon^2}f,f)|}$.}
For any $f\in L^2(M)$ with $\|f\|_2=1$, we have $f=\sum_{i=0}^\infty a_i \phi_i$, where $ \sum_{i=0}^\infty a_i^2=1$.
Take $J=J(\epsilon)\in \mathbb{N}$, which we will fix later.
Let $f=f_1+f_2$, where $f_1=\sum_{i=0}^J a_i \phi_i $ and $f_2=\sum_{i=J+1}^\infty a_i \phi_i$.
Then, 
\begin{align}
|(R_{\epsilon}f, f)|{\leq }&\, |(R_{\epsilon}f_1, f_1)|+2|(R_{\epsilon}f_1, f_2)|+|(R_{\epsilon}f_2, f_2)| \nonumber \\
\leq &\, \|R_{\epsilon}f_1\|_2  \|f_1\|_2+2 \|R_{\epsilon}f_1\|_2  \|f_2\|_2+\|R_{\epsilon}f_2\|_2  \|f_2\|_2 \nonumber \\
\leq &\, 3 \|R_{\epsilon}f_1\|_2 +\|R_{\epsilon}f_2\|_2 \|f_2\|_2 \,.\nonumber 
\end{align}
By Lemma \ref{property 2 of R epsilon},
\begin{align}
\|R_{\epsilon}f_1\|_2&\,=\Big\|R_{\epsilon}\Big(\sum_{i=1}^J a_i \phi_i\Big)\Big\|_2 \leq \sum_{i=1}^J \|R_{\epsilon}\phi_i\|_2 \nonumber\\
&\,\leq \sum_{i=1}^J C_4 \epsilon^2 (1+\lambda_i^{d/2+5}) \leq C_4 J \epsilon^2 (1+\lambda_J^{d/2+5}). \nonumber 
\end{align}
Moreover,
\begin{align}
\Big|\Big(\frac{I-\mathsf H_{\epsilon}}{\epsilon^2}f,\,f\Big)\Big|&\,=\Big(\sum_{i=1}^{\infty} \frac{1-e^{-\epsilon^2 \lambda_i}}{\epsilon^2}a_i \phi_i, \sum_{i=1}^{\infty}a_i \phi_i\Big)\label{Bound in Repsilon control step 1}\\
&\,=\sum_{i=1}^\infty a_i^2 \frac{1-e^{-\epsilon^2 \lambda_i}}{\epsilon^2}>\frac{1}{2} \min\Big(\lambda_1,\, \frac{1}{\epsilon}\Big)=\frac{\lambda_1}{2}\,, \nonumber 
\end{align}
where we use the fact that when $0<\epsilon^2<0.1$, we have \cite[Theorem 4.3]{belkin2007convergence}
\begin{align}\label{bound for one minus expo square}
\frac{1-e^{-x\epsilon^2}}{\epsilon^2} \geq \frac{1}{2}\min\Big(x,\, \frac{1}{\epsilon}\Big)\,.
\end{align}
Therefore,
\begin{align}
\frac{3 \|R_{\epsilon}f_1\|_2}{|(\frac{I-\mathsf H_{\epsilon}}{\epsilon^2}f,f)|}\leq  \frac{6C_4 J}{\lambda_1}\big(1+\lambda_J^{d/2+5}\big) \epsilon^2. \nonumber 
\end{align}
{By Lemma \ref{laplace eigenvalue lower bound}, when $\epsilon>0$ is sufficiently small,} we can take $J \asymp \epsilon^{-\frac{3d}{2(d+10)}}$ {so that} $\epsilon^{-\frac{3}{2}} \leq \lambda_J^{d/2+5} \leq 2 \epsilon^{-\frac{3}{2}}$. {Thus,}  
\begin{align}
\frac{3 \|R_{\epsilon}f_1\|_2}{|(\frac{I-\mathsf H_{\epsilon}}{\epsilon^2}f,f)|}\leq  \frac{18 C_4}{\lambda_1}\epsilon^{\frac{d^2+21d+140}{2(d+10)}}\,. \nonumber 
\end{align}
For $f_2$, which is in general a $L^2$ function, we apply Lemma \ref{property 1 of R epsilon}:
\begin{align}
\|R_{\epsilon}f_2\|_2\|f_2\|_2 \leq C_3\|f_2\|^2_2\,. \nonumber 
\end{align}
Since $\{\phi_i\}$ forms an orthonormal basis, we have $\Big(\frac{I-\mathsf H_{\epsilon}}{\epsilon^2}f_2,\,f_1\Big)=\Big(\frac{I-\mathsf H_{\epsilon}}{\epsilon^2}f_1,\,f_2\Big)=0$. {For $f_2$, we control $(\frac{I-\mathsf H_{\epsilon}}{\epsilon^2}f,\,f)$ in a way different from \eqref{Bound in Repsilon control step 1}:}
\begin{align}
\Big|\Big(\frac{I-\mathsf H_{\epsilon}}{\epsilon^2}f,\,f\Big)\Big|=&\, \Big(\frac{I-\mathsf H_{\epsilon}}{\epsilon^2}f_1,\,f_1\Big)+\Big(\frac{I-\mathsf H_{\epsilon}}{\epsilon^2}f_2, \,f_2\Big) \geq \Big(\frac{I-\mathsf H_{\epsilon}}{\epsilon^2}f_2,\, f_2\Big) \nonumber  \\
= &\, \sum_{i=J+1}^\infty a_i^2 \frac{1-e^{-\epsilon^2 \lambda_i}}{\epsilon^2} \geq \frac{1}{2} \min\Big(\lambda_{J+1}, \frac{1}{\epsilon}\Big)\|f_2\|^2_2 \geq \frac{1}{2} \min\Big(\lambda_{J}, \frac{1}{\epsilon}\Big)\|f_2\|^2_2\,, \nonumber 
\end{align}
where we use \eqref{bound for one minus expo square} in the second last step.
Hence, {by the choice of $J$, we have}
\begin{align}
\frac{ \|R_{\epsilon}f_2\|_2\|f_2\|_2}{|(\frac{I-\mathsf H_{\epsilon}}{\epsilon^2}f,f)|}\leq  2C_3\max\Big(\epsilon, \frac{1}{\lambda_J}\Big) \leq 2C_3 \epsilon^{\frac{3}{d+10}}\,. \nonumber 
\end{align}
By putting the above together, since $\epsilon^{\frac{3}{d+10}}>\epsilon^{\frac{d^2+21d+140}{2(d+10)}}$, {by the lower bound of $\lambda_1$ stated in Lemma \ref{laplace eigenvalue lower bound} (or the Faber-Krahn lower bound of the first non-trivial eigenvalue),} there is a constant $\tilde{C}_1>0$ satisfying
\begin{align}
\frac{18 C_4}{\lambda_1}+2C_3\leq \tilde{C}_1\nonumber
\end{align}
such that
\begin{align}
\frac{(R_{\epsilon}f, f)}{|(\frac{I-\mathsf H_{\epsilon}}{\epsilon^2}f,f)|}\leq  \tilde{C}_1 \epsilon^{\frac{3}{d+10}}\,. \label{Proof proposition 1 difference ratio bound} 
\end{align}

\underline{\textbf{Step 2, bound the angle between two eigenfunctions: }}
Suppose $\lambda_{i,\epsilon}$ is the $i$-th eigenvalue of $\frac{I-T_{\epsilon}}{\epsilon^2}$. Note that the $i$-th eigenvalue of $\frac{\mathsf I-\mathsf H_{\epsilon}}{\epsilon^2}$ is $\frac{1-e^{-\epsilon^2 \lambda_i}}{\epsilon^2}$. By Lemma \ref{ratio of eigenvalues} (a), \eqref{Proof proposition 1 difference ratio bound} lead to 
\begin{align}
\Big|\lambda_{i,\epsilon}-\frac{1-e^{-\epsilon^2 \lambda_i}}{\epsilon^2}\Big|\leq \tilde{C}_1 \epsilon^{\frac{3}{d+10}}\frac{1-e^{-\epsilon^2 \lambda_i}}{\epsilon^2}\, \nonumber 
\end{align}
for all $i$.
Hence, for any $i{\in \mathbb{N}}$, we have
\begin{align}
|(1-\lambda_{i,\epsilon}\epsilon^2)-e^{-\epsilon^2 \lambda_i}|&\,\leq \tilde{C}_1 \epsilon^{\frac{3}{d+10}}(1-e^{-\epsilon^2 \lambda_i})\leq \tilde{C}_1\epsilon^{\frac{3}{d+10}+2}\lambda_i  \label{weak convergence rate of EV of T epsilon}\,,
\end{align}
{where we use the trivial bound $|1-e^{-x}|\leq x$ for $x>0$. }
Hence, trivially
\begin{align}
&\|(1-\lambda_{i,\epsilon}\epsilon^2)\phi_{i}-e^{-\epsilon^2 \lambda_i}\phi_{i}\|_2 \leq \tilde{C}_1 \epsilon^{\frac{3}{d+10}+2}\lambda_i \,. \label{Proof proposition 1 eigenfunctions difference}
\end{align}
On the other hand, by Lemma \ref{property 2 of R epsilon}, when $i < K$, we have 
\begin{align}
\|T_{\epsilon}\phi_{i}-e^{-\epsilon^2 \lambda_i} \phi_i\|_2=\|(I-\mathsf H_\epsilon)\phi_{i}-(I-T_{\epsilon}) \phi_i\|_2 \leq C_4 \epsilon^4 (1+\lambda_K^{d/2+5}). \label{Proof proposition 1 eigenfunctions difference qq}
\end{align}
Thus, if we combine \eqref{Proof proposition 1 eigenfunctions difference} and \eqref{Proof proposition 1 eigenfunctions difference qq}, when $i < K$, we have
\begin{align} \label{weak bounds 2}
\Big\|\frac{I-T_{\epsilon}}{\epsilon^2}\phi_{i}-\lambda_{i,\epsilon}\phi_{i}\Big\|_2 \leq C_4 \epsilon^2 (1+\lambda_K^{d/2+5})+\tilde{C}_1 \epsilon^{\frac{3}{d+10}} \lambda_K \,. 
\end{align}
Next, we first find the lower bounds for the gaps between $1-\lambda_{i-1,\epsilon}\epsilon^2$, $1-\lambda_{i,\epsilon}\epsilon^2$ and $1-\lambda_{i+1,\epsilon}\epsilon^2$. First, note that (\ref{weak convergence rate of EV of T epsilon}) implies that
\begin{align}
&1-\lambda_{i+1,\epsilon}\epsilon^2 \leq e^{-\epsilon^2\lambda_{i+1}}+\tilde{C}_1 \epsilon^{\frac{3}{d+10}+2}\lambda_{i+1}\,, \nonumber \\
&1-\lambda_{i,\epsilon}\epsilon^2 \geq e^{-\epsilon^2\lambda_{i}}-\tilde{C}_1 \epsilon^{\frac{3}{d+10}+2}\lambda_{i}\,. \nonumber
\end{align}
Thus,
\begin{equation}
(1-\lambda_{i,\epsilon}\epsilon^2)-(1-\lambda_{i+1,\epsilon}\epsilon^2) \geq (e^{-\epsilon^2\lambda_{i}}-e^{-\epsilon^2\lambda_{i+1}})-\tilde{C}_1 \epsilon^{\frac{3}{d+10}+2}(\lambda_i+\lambda_{i+1})\, \label{Proof proposition bound difference}.
\end{equation} 
{To bound $(1-\lambda_{i,\epsilon}\epsilon^2)-(1-\lambda_{i+1,\epsilon}\epsilon^2)$, we control $e^{-\epsilon^2\lambda_{i}}-e^{-\epsilon^2\lambda_{i+1}}$ and $\tilde{C}_1 \epsilon^{\frac{3}{d+10}+2}(\lambda_i+\lambda_{i+1})$ separately.} When 
\begin{align}
\lambda_K \epsilon^2\leq 1\,, \label{condition 1}
\end{align}
{which holds under the assumption of $\epsilon$,}
we have $\epsilon^2 (\lambda_{i+1}-\lambda_{i}) \leq \epsilon^2 \lambda_{i+1} \leq \epsilon^2 \lambda_K \leq 1$ since $i<K$. {Combined with the fact that} $e^{-\epsilon^2 \lambda_{i}}\geq e^{-1}$, we have
\begin{equation}
e^{-\epsilon^2 \lambda_{i}}-e^{-\epsilon^2 \lambda_{i+1}}=e^{-\epsilon^2 \lambda_{i}}\big[1-e^{-\epsilon^2 (\lambda_{i+1}-\lambda_{i})}\big] \geq  \frac{1}{e} \cdot \frac{1}{2} (\lambda_{i+1}-\lambda_{i})\epsilon^2 \geq \frac{1}{6} (\lambda_{i+1}-\lambda_{i})\epsilon^2\,. \label{Proof proposition bound difference1}
\end{equation}
{Next we control $\tilde{C}_1 \epsilon^{\frac{3}{d+10}+2}(\lambda_i+\lambda_{i+1})$, and} we want {to have the following bound:} 
\begin{align}
-\tilde{C}_1 \epsilon^{\frac{3}{d+10}+2}(\lambda_{i+1}+\lambda_{i}) \geq -\frac{1}{12} (\lambda_{i+1}-\lambda_{i}) \epsilon^2\,, \label{Proof proposition bound difference2}
\end{align} 
which is equivalent to {requiring}
\begin{align}
\epsilon^{\frac{3}{d+10}}\leq \frac{1}{12\tilde{C}_1} \frac{\lambda_{i+1}-\lambda_{i}}{\lambda_{i+1}+\lambda_{i}} \leq \frac{\mathsf\Gamma_K}{24\tilde{C}_1\lambda_K}.\label{condition 2}
\end{align}
By plugging \eqref{Proof proposition bound difference1} and \eqref{Proof proposition bound difference2} into \eqref{Proof proposition bound difference}, we have 
\begin{equation}
(1-\lambda_{i,\epsilon}\epsilon^2)-(1-\lambda_{i+1,\epsilon}\epsilon^2)  \geq  \frac{1}{12} (\lambda_{i+1}-\lambda_{i}) \epsilon^2\,.\nonumber
\end{equation} 
A similar argument leads to 
\begin{equation}
\gamma_i\Big(\frac{I-T_{\epsilon}}{\epsilon^2}\Big)\geq  \frac{1}{12} \min\{(\lambda_i-\lambda_{i-1}),\,(\lambda_{i+1}-\lambda_{i})\} \geq\frac{1}{12}\mathsf\Gamma_K \,,\label{Proof proposition 1 gap of the ith eigenvalue}
\end{equation}
{by the assumption of $\mathsf\Gamma_K$.}
Next, with (\ref{weak bounds 2}) and \eqref{Proof proposition 1 gap of the ith eigenvalue}, we apply Lemma \ref{ratio of eigenvalues} (b) to get 
\begin{equation}
|(\phi_i,\phi_{i,\epsilon})|\geq 1-12\frac{C_4 \epsilon^2 (1+\lambda_K^{d/2+5})+\tilde{C}_1 \epsilon^{\frac{3}{d+10}} \lambda_K}{\min\{(\lambda_i-\lambda_{i-1}),\,(\lambda_{i+1}-\lambda_{i})\}}{\geq 1-12\frac{C_4 \epsilon^2 (1+\lambda_K^{d/2+5})+\tilde{C}_1 \epsilon^{\frac{3}{d+10}} \lambda_K}{\mathsf\Gamma_K}}\,.\nonumber
\end{equation}
If 
\begin{align}
12\frac{C_4 \epsilon^2 (1+\lambda_K^{d/2+5})+\tilde{C}_1 \epsilon^{\frac{3}{d+10}} \lambda_K}{\mathsf\Gamma_K} <\frac{1}{2}\,, \nonumber
\end{align}
which holds when 
\begin{align}
& \epsilon^2<\frac{\mathsf\Gamma_K}{48 C_4  (1+\lambda_K^{d/2+5})}  \label{condition 3}
\end{align}
\and
\begin{align}
& \epsilon^{\frac{3}{d+10}}< \frac{\mathsf\Gamma_K}{48\tilde{C}_1 \lambda_K}\,, \label{condition 4}
\end{align}
then {we have}
\begin{equation}
|(\phi_{i}, \phi_{i,\epsilon})| \geq \frac{1}{2}\,.\label{Proof proposition 1 eigenfunctions angle}
\end{equation} 

\underline{\textbf{Step 3, find the final bounds: }}
With the lower bound of the angle among eigenfunctions shown in \eqref{Proof proposition 1 eigenfunctions angle}, we boost the bound of the eigenvalues and hence the eigenfunctions.
By Lemma \ref{property 2 of R epsilon}, \eqref{Proof proposition 1 eigenfunctions angle}, and Lemma \ref{ratio of eigenvalues} (c),  we have
\begin{align}
\Big|\lambda_{i,\epsilon}-\frac{1-e^{-\epsilon^2 \lambda_i}}{\epsilon^2}\Big| \leq \frac{\|R_\epsilon(\phi_i)\|}{|(\phi_i,\phi_{i,\epsilon})|}\leq 2 C_4 \epsilon^2 \big(1+\lambda_K^{d/2+5}\big)\,. \label{proof Proposition 1 enhanced eigenvalue bound}
\end{align}
Note that $-x^2\leq1-e^{-x}-x\leq 0$, {which leads to}
\begin{align}
\Big|\frac{1-e^{-\epsilon^2 \lambda_i}}{\epsilon^2}-\lambda_i\Big| \leq \lambda^2_i\epsilon^2.
\end{align}
By summing up the above two inequalities, 
\begin{align}
|\lambda_{i,\epsilon}- \lambda_i| \leq \Big(2 C_4 +2 C_4\lambda_K^{d/2+5}+\lambda^2_K\Big)\epsilon^2. \nonumber
\end{align}
Taking $\tilde{C}_2=\max\{2C_4, 1\}$, we have 
\begin{align}\label{Strong convergence of EV}
|\lambda_{i,\epsilon}- \lambda_i| \leq \tilde{C}_2 \Big(1+\lambda_K^{d/2+5}+\lambda^2_K\Big)\epsilon^2\,.
\end{align}
{Next}, by \eqref{proof Proposition 1 enhanced eigenvalue bound}, we have
\begin{align}
\Big\|\lambda_{i,\epsilon}\phi_i-\Big(\frac{1-e^{-\epsilon^2 \lambda_i}}{\epsilon^2}\Big)\phi_i\Big\|_2 \leq 2 C_4 \epsilon^2 (1+\lambda_K^{d/2+5})\,. \label{proof Proposition 1 enhanced eigenvector bound1}
\end{align}
By Lemma \ref{property 2 of R epsilon}, 
\begin{align}
\Big\|\frac{I-T_{\epsilon}}{\epsilon^2}\phi_{i}-\frac{I-\mathsf H_{\epsilon}}{\epsilon^2} \phi_i\Big\|_2=\Big\|\frac{I-T_{\epsilon}}{\epsilon^2}\phi_{i}-\Big(\frac{1-e^{-\epsilon^2 \lambda_i}}{\epsilon^2}\Big)\phi_i\Big\|_2 \leq C_4 \epsilon^2(1+\lambda_K^{d/2+5})\,. \label{proof Proposition 1 enhanced eigenvector bound2}
\end{align}
So, by combining \eqref{proof Proposition 1 enhanced eigenvector bound1} and \eqref{proof Proposition 1 enhanced eigenvector bound2}, we have for all $i\leq K$:
\begin{align}
\Big\|\frac{I-T_{\epsilon}}{\epsilon^2}\phi_{i}-\lambda_{i,\epsilon} \phi_i\Big\|_2 \leq 3C_4 \epsilon^2\Big(1+\lambda_K^{d/2+5}\Big)\,. \nonumber
\end{align}
Recall that in \eqref{Proof proposition 1 gap of the ith eigenvalue}, the eigengap for $\lambda_{i,\epsilon}$ {is controlled by} $\gamma_i\Big(\frac{I-T_{\epsilon}}{\epsilon^2}\Big) \geq \frac{1}{12}\mathsf\Gamma_K$. As a result, by Lemma \ref{ratio of eigenvalues} (b), there are $a_i \in \{-1, 1\}$ such that 
\begin{align}
\|a_i\phi_{i,\epsilon}-\phi_i\|_2 \leq  \frac{72 C_4 \epsilon^2(1+\lambda_K^{d/2+5})}{\mathsf\Gamma_K}. \nonumber
\end{align}
Let us temporarily define $C_{1,K}=\tilde{C}_2 \Big(1+\lambda_K^{d/2+5}+\lambda^2_K\Big)$ and $C_{2,K}= \frac{72 C_4 (1+\lambda_K^{d/2+5})}{\mathsf\Gamma_K}$. We have
\begin{align} 
& |\lambda_{i,\epsilon}- \lambda_i| \leq C_{1,K} \epsilon^{2}, \label{L^2 convergence of eigenvalues}\\
& \|a_i\phi_{i,\epsilon}-\phi_i\|_2 \leq  C_{2,K} \epsilon^{2}\,. \label{L^2 convergence of eigenfunctions}
\end{align}

With the above $L^2$ bound of the eigenfunction estimate, we use the Sobolev Embedding theorem \cite[Theorem 9.2]{palais1968foundations} to control the convergence rate in $L^\infty$ norm. There is a constant $\tilde{C}_3$ depending on the Ricci curvature of the manifold, such that 
\begin{align}
\|a_i\phi_{i,\epsilon}-\phi_{i}\|_{\infty} \leq &\, \tilde{C}_3(\|a_i\phi_{i,\epsilon}-\phi_{i}\|_{2}+\|\Delta^{d}(a_i\phi_{i,\epsilon}-\phi_{i})\|_{2}) \nonumber \\
= &\,  \tilde{C}_3\Big[ \|a_i\phi_{i,\epsilon}-\phi_{i}\|_{2}+ \Big\|a_i\Big(\Delta^d-\Big(\frac{T_{\epsilon}-I}{\epsilon^2}\Big)^{d}\Big)\phi_{i,\epsilon}+\Big(\frac{T_{\epsilon}-I}{\epsilon^2}\Big)^{d}a_i\phi_{i,\epsilon}-\Delta^d\phi_{i}\Big\|_{2} \Big] \nonumber \\
\leq &\, \tilde{C}_3\Big\|a_i\phi_{i,\epsilon}-\phi_{i}\Big\|_{2}+ \tilde{C}_3\Big\|\Big(\frac{T_{\epsilon}-I}{\epsilon^2}\Big)^{d}a_i\phi_{i,\epsilon}-\Delta^d\phi_{i}\Big\|_{2}\nonumber\\
&\qquad+ \tilde{C}_3 \Big\|\Big(\Delta^d-\Big(\frac{T_{\epsilon}-I}{\epsilon^2}\Big)^{d}\Big)\phi_{i,\epsilon}\Big\|_{2}.  \label{bounds of infinity norm}
\end{align}
Now we bound the right hand side term by term. {The second term of \eqref{bounds of infinity norm} is controlled by}
\begin{align}
 & \Big\|\Big(\frac{T_{\epsilon}-I}{\epsilon^2}\Big)^{d}a_i\phi_{i,\epsilon}-\Delta^d\phi_{i}\Big\|_{2}=\|\lambda^{d}_{i,\epsilon}a_i\phi_{i,\epsilon}-\lambda^{d}_{i}\phi_{i}\|_{2} \label{term 2} \\
\leq &\,\|\lambda^{d}_{i,\epsilon}a_i\phi_{i,\epsilon}-\lambda^{d}_{i}a_i\phi_{i,\epsilon}+\lambda^{d}_{i}a_i\phi_{i,\epsilon}-\lambda^{d}_{i}\phi_{i}\|_2  \nonumber \\
\leq & \,|\lambda^{d}_{i,\epsilon}-\lambda^{d}_{i}|+\lambda^{d}_{K}\|a_i\phi_{i,\epsilon}-\phi_{i}\|_{2} 
\leq  2^d d \lambda^{d-1}_{K} |\lambda_{i,\epsilon}-\lambda_{i}|+\lambda^{d}_{K}\|a_i\phi_{i,\epsilon}-\phi_{i}\|_{2} \nonumber\,,
\end{align}
{where} in the last step, we use the bound $ \lambda_{i,\epsilon} \leq \lambda_i+C_{1,K} \epsilon^{2}\leq 2\lambda_K$, which holds when
\begin{align}
\epsilon^{2} \leq \frac{\lambda_K}{C_{1,K}} \label{condition 5}\,.
\end{align}
{For the third term of \eqref{bounds of infinity norm},}  {by Lemma \ref{properties of T epsilon} (d) and a straightforward expansion,  there is a constant $ \tilde{C}_4$ depending on $d$, $\mathsf p_m$, the $C^2$ norm of $\mathsf p$, the Ricci curvature, the second fundamental form of the manifold such that 
\begin{align}
\left\|\Big(\Delta^d-\Big(\frac{T_{\epsilon}-I}{\epsilon^2}\Big)^{d}\Big)\phi_{i,\epsilon}(x)\right\|_2 \leq & \tilde{C}_4 \left\|\Delta \phi_{i,\epsilon}-\frac{T_{\epsilon}-I}{\epsilon^2}\phi_{i,\epsilon}\right\|_2 \left\|\Big(\frac{T_{\epsilon}-I}{\epsilon^2}\Big)^{d+1}\phi_{i,\epsilon}\right\|_\infty.
\end{align}
Based on Lemma \ref{L^2 regularity} in Appendix, there is a constant $C'_6$ depending on $d$, the volume, the curvature and the diameter of $M$ such that
\begin{align}
&\left\|\Big(\Delta^d-\Big(\frac{T_{\epsilon}-I}{\epsilon^2}\Big)^{d}\Big)\phi_{i,\epsilon}(x)\right\|_2\label{term 3}\\
\leq\, & \tilde{C}_4 \left\|\Delta \phi_{i,\epsilon}-\frac{T_{\epsilon}-I}{\epsilon^2}\phi_{i,\epsilon}\right\|_2 \left\|\Big(\frac{T_{\epsilon}-I}{\epsilon^2}\Big)^{d+1}\phi_{i,\epsilon}\right\|_\infty\, \nonumber\\
\leq\, & \tilde{C}_4 \lambda^{d+1}_{i,\epsilon} (1+C_{1,K}+(C'_6+2\lambda_K) C_{2,K}+4\lambda_K^2)  \|\phi_{i,\epsilon}\|_\infty  \epsilon^2 \nonumber \\
\leq \,&  \tilde{C}_4 2^{d+1} \lambda^{d+1}_{K} (1+C_{1,K}+(C'_6+2\lambda_K) C_{2,K}+4\lambda_K^2)  \|\phi_{i,\epsilon}\|_\infty  \epsilon^2 . \nonumber
\end{align}
In the last step, we use again $ \lambda_{i,\epsilon} \leq \lambda_i+C_{1,K} \epsilon^{2}\leq 2\lambda_K$, when $\epsilon^{2} \leq \frac{\lambda_K}{C_{1,K}}$. 

Therefore, if we {plug} (\ref{L^2 convergence of eigenfunctions}), (\ref{term 2}) {and} (\ref{term 3}) into (\ref{bounds of infinity norm}), we have
\begin{align}
\|a_i\phi_{i,\epsilon}-\phi_{i}\|_{\infty} \leq&\, \tilde{C}_3 (C_{2,K}(1+\lambda^{d}_{K}) +2^d d \lambda^{d-1}_{K}C_{1,K})\epsilon^{2} \label{Proposition:complicated L infty bound}  \\
&+\tilde{C}_3\tilde{C}_4 2^{d+1} \lambda^{d+1}_{K} (1+C_{1,K}+(C'_6+2\lambda_K) C_{2,K}+4\lambda_K^2)  \|\phi_{i,\epsilon}\|_\infty  \epsilon^2 \nonumber \\
\leq &\, \tilde{C}_5(C_{2,K}(1+\lambda^{d}_{K}) +\lambda^{d-1}_{K}C_{1,K})\epsilon^{2} \nonumber \\
&+\tilde{C}_5\lambda^{d+1}_{K} (1+C_{1,K}+(C'_6+2\lambda_K) C_{2,K}+4\lambda_K^2)  \|\phi_{i,\epsilon}\|_\infty  \epsilon^2, \nonumber
\end{align}
where $\tilde{C}_5:=\tilde{C}_3  \max\{2^d d, 2^{d+1}\tilde{C}_4 \}$.
If we require $\tilde{C}_5(C_{2,K}(1+\lambda^{d}_{K}) +\lambda^{d-1}_{K}C_{1,K})\epsilon^{2} \leq 1$ and $\tilde{C}_5\lambda^{d+1}_{K} (1+C_{1,K}+(C'_6+2\lambda_K) C_{2,K}+4\lambda_K^2)  \leq \frac{1}{2}$, then we have 
\begin{align}
|\|\phi_{i,\epsilon}\|_\infty-\|\phi_{i}\|_\infty| \leq \|a_i\phi_{i,\epsilon}-\phi_{i}\|_{\infty} \leq \frac{1}{2}\|\phi_{i,\epsilon}\|_\infty+1.
\end{align}
Hence, by Lemma \ref{lemma hormander}, we have
\begin{align}
\|\phi_{i,\epsilon}\|_\infty \leq 2\|\phi_{i}\|_{\infty}+2 \leq  2(C_1 \lambda_K^{\frac{d-1}{4}}+1).
\end{align}
{Thus, \eqref{Proposition:complicated L infty bound} becomes}
\begin{align}
\|a_i\phi_{i,\epsilon}-\phi_{i}\|_{\infty} \leq&\,  \tilde{C}_5(C_{2,K}(1+\lambda^{d}_{K}) +\lambda^{d-1}_{K}C_{1,K})\epsilon^{2}  \\
&+2 \tilde{C}_5\lambda^{d+1}_{K}(1+C_{1,K}+(C'_6+2\lambda_K) C_{2,K}+4\lambda_K^2)(C_1 \lambda_K^{\frac{d-1}{4}}+1) \epsilon^{2}\,. \nonumber
\end{align}
{Finally,} if we require that 
\begin{align}
& \tilde{C}_5C_{2,K}(1+\lambda^{d}_{K}) \epsilon \leq \frac{1}{4} \,, \\
& \tilde{C}_5\lambda^{d-1}_{K}C_{1,K}\epsilon \leq \frac{1}{4}\,,\\
& 2 \tilde{C}_5  \lambda^{d+1}_{K}  \big(1+C_{1,K}+4\lambda_K^2\big) (C_1 \lambda_K^{\frac{d-1}{4}}+1) \epsilon \leq \frac{1}{4}\,,\\
&  2\tilde{C}_5 \lambda^{d+1}_{K} (C'_6+2\lambda_K) C_{2,K}(C_1 \lambda_K^{\frac{d-1}{4}}+1) \epsilon \leq \frac{1}{4}\,,
\end{align}
then $\|a_i\phi_{i,\epsilon}-\phi_{i}\|_{\infty} \leq \epsilon$.
}

{\underline{\textbf{Step 4, manage the relationship among parameters: }}
Finally,} we list the relationship in Table \ref{Table:Relations} between $\epsilon$ and $\lambda_K$ to obtain the convergence of eigenvalues and eigenvectors. 
\begin{table}[ht]\label{Table:Relations}
\caption{Relationships between $\epsilon$ and $\lambda_K$ used in this paper}
\begin{tabular}{l|l} 
\hline No. & Relationship \\ 
\hline \hline 
\\ [-.85em]
(1'') & {$\lambda_K \epsilon^2\leq \frac{1}{8}$} \\
\\ [-.85em]
(2'') & $\epsilon^{\frac{3}{d+10}}\leq \frac{1}{24\tilde{C}_1}\frac{\mathsf\Gamma_K}{\lambda_K}$ \\ 
\\ [-.85em]
(3'') & $\epsilon^2\leq\frac{\mathsf\Gamma_K}{48 C_4  (1+\lambda_K^{d/2+5})}$\\ 
\\ [-.85em]
(4'') & $\epsilon^{\frac{3}{d+10}}\leq \frac{\mathsf\Gamma_K}{48\tilde{C}_1 \lambda_K}$\\ 
\\ [-.85em]
(5'') & $\epsilon^{2} \leq \frac{\lambda_K}{\tilde{C}_2 \big(1+\lambda_K^{d/2+5}+\lambda^2_K\big)}$ \\
\\ [-.85em]
(6'') & $\tilde{C}_2 \big(1+\lambda_K^{d/2+5}+\lambda^2_K\big) \epsilon^{1/2} \leq 1$ \\
\\ [-.85em]
(7'') & $\frac{72 C_4 (1+\lambda_K^{d/2+5})}{\mathsf\Gamma_K} \epsilon^{1/2} \leq 1$ \\
\\ [-.85em]
(8'') & $\tilde{C}_5(1+\lambda^{d}_{K}) \frac{72 C_4 (1+\lambda_K^{d/2+5})}{\mathsf\Gamma_K}\epsilon\leq \frac{1}{4}$ \\
\\ [-.85em]
(9'') & $\tilde{C}_5 \tilde{C}_2 \lambda^{d-1}_{K} \big(1+\lambda_K^{d/2+5}+\lambda^2_K\big) \epsilon \leq \frac{1}{4}$ \\ 
\\ [-.85em]
(10'') & $2 \tilde{C}_5\lambda^{d+1}_{K}  \Big(1+\tilde{C}_2 \big(1+\lambda_K^{d/2+5}+\lambda^2_K\big)+4\lambda_K^2\Big) (C_1 \lambda_K^{\frac{d-1}{4}}+1) \epsilon \leq \frac{1}{4}$ \\
\\ [-.85em]
(11'') & $\epsilon \leq \frac{\min(\mathsf\Gamma_K,1)}{C_1 \lambda_K^{(d-1)/4}+\epsilon+1},$ \\
\\ [-.85em]
(12'') & $C_1\lambda^{(d-1)/4}_K \epsilon^{1/4} \leq 1$ \\
\\ [-.85em]
(13'') & $\frac{C^2_1}{\mathsf{p}_m}\lambda^{(d-1)/2}_K \epsilon \leq \frac{1}{3}$ \\
\\ [-.85em]
(14'') & $144\tilde{C}_5  C_4 \lambda^{d+1}_{K} (C'_6+2\lambda_K) \frac{(1+\lambda_K^{d/2+5})}{\mathsf\Gamma_K}(C_1 \lambda_K^{\frac{d-1}{4}}+1) \epsilon \leq \frac{1}{4}$\\[+.6em]
\hline 
\end{tabular}
\end{table}
{ Note that (1'') is stronger than \eqref{condition 1}, and (2'')-(5'') come from \eqref{condition 2}, \eqref{condition 3}, \eqref{condition 4}, and \eqref{condition 5}}. If (6'')-(10'') are satisfied, then we have
$$ |\lambda_{i,\epsilon}- \lambda_i| \leq  \epsilon^{\frac{3}{2}},\quad 
\|a_i\phi_{i,\epsilon}-\phi_i\|_2 \leq   \epsilon^{\frac{3}{2}},\quad 
\|a_i\phi_{i,\epsilon}-\phi_{i}\|_{\infty} \leq \epsilon.$$
Moreover, (11''), (12'') and (13'') will be used in the proof of propositions later. Note that (2''), (4''), (5''), (6''), (7'') and (11'') are satisfied whenever we have
\begin{align}
\epsilon^{1/2} \leq \frac{\tilde{C}_6 \min(\mathsf\Gamma_K,1)}{\tilde{C}_7+\lambda_K^{d/2+5}+\lambda_K^{2}+\lambda_K^{(d+10)/6}+\lambda_K^{(d-1)/4}}. \label{relation between lambda K and epsilon 1}
\end{align}
Observe that if $\lambda_K<1$ , then $\lambda_K^{d/2+5}+\lambda_K^{2}+\lambda_K^{(d+10)/6}+\lambda_K^{(d-1)/4}<4$. If $\lambda_K \geq 1$, then $\lambda_K^{d/2+5}+\lambda_K^{2}+\lambda_K^{(d+10)/6}+\lambda_K^{(d-1)/4} \leq 4 \lambda_K^{d/2+5}$. Hence, \eqref{relation between lambda K and epsilon 1} is true whenever 
\begin{align}
\epsilon^{1/2} \leq \frac{\tilde{C}_6}{4} \frac{\min(\mathsf\Gamma_K,1)}{\tilde{C}_7+\lambda_K^{d/2+5}}. \label{relation between lambda K and epsilon 2}
\end{align}
where $\tilde{C}_7>1$. Based on the requirement \eqref{relation between lambda K and epsilon 1}, (1''), (8''), (9''),  (12'') and (13'') are satisfied when we further require $\epsilon^{2} \leq \frac{\tilde{C}_8}{\lambda_K}$, $\epsilon^{1/2}\leq \frac{\tilde{C}_8}{1+\lambda_K^d}$, $\epsilon^{1/2} \leq \frac{\tilde{C}_8}{\lambda_K^{d-1}}$, $\epsilon \leq \frac{\tilde{C}_8}{\lambda_K^{d-1}}$ and $\epsilon^{1/2} \leq \frac{\tilde{C}_8}{\lambda_K^{(d-1)/4}}$ respectively. Hence it is sufficient to require
\begin{align}
\epsilon^{1/2} \leq \frac{\tilde{C}_8}{1+\lambda_K^{d}}.  \label{relation between lambda K and epsilon 3}
\end{align} 
{Based on \eqref{relation between lambda K and epsilon 2} and \eqref{relation between lambda K and epsilon 3}, (10'') and (14'') hold when
\begin{align}
\epsilon^{1/2} \leq \frac{\tilde{C}_9}{\lambda^{d+2}_K (1+C_1\lambda_K^{\frac{d-1}{4}})}.  \label{relation between lambda K and epsilon 4}
\end{align} 
If we combine \eqref{relation between lambda K and epsilon 2} and \eqref{relation between lambda K and epsilon 4} , we {set} the relation between $\epsilon$ and $\lambda_K$ to be
\begin{align} 
\epsilon \leq \mathcal{K}_1 \min \Bigg(\bigg(\frac{\min(\mathsf\Gamma_K,1)}{\mathcal{K}_2+\lambda_K^{d/2+5}}\bigg)^2, \frac{1}{(\mathcal{K}_3+\lambda_K^{(5d+7)/4})^2}\Bigg),
\end{align}
where $\mathcal{K}_2, \mathcal{K}_3>1$. Note that $\mathcal{K}_1$, $\mathcal{K}_2$, and $\mathcal{K}_3$ depend on all the constants in Table \ref{Table:Relations}, hence they depend on $d$, $\mathsf p_m$, the $C^2$ norm of $\mathsf p$ and the volume,  the injectivity radius,  the  curvature and the second fundamental form of the manifold. 
}
\end{proof}

\subsection{Relation between the operator $T_{n,\epsilon}$ and the operator $T_{\epsilon}$.}  
We now show that the sequence of operators $T_{n,\epsilon}$ collectively compactly converge to $T_{\epsilon}$. We need to study the following spaces of functions.

{
\begin{definition}\label{4 spaces}
Take $u\in C(M)$. Define
\begin{align}
& \mathcal K_{\epsilon} =\{k_\epsilon(x, \cdot)|\,x \in M \}\,, \nonumber \\
& u \mathcal{Q}_{\epsilon}=\{u(\cdot) Q_{\epsilon}(x,\cdot) |\,x \in M \} \,, \nonumber  \\
&\mathcal{Q}_{\epsilon} \mathcal{Q}_{\epsilon}=\{Q_{\epsilon}(x,\cdot)Q_{\epsilon}(y,\cdot) |\,x , y \in M \}\,. \nonumber 
\end{align}
\end{definition}

In general, we cannot show $\sup_{f \in C(M)}|P_n f -Pf |$ goes to $0$. However, the following proposition
shows that it is true for functions in the above spaces. The similar statements can be found in several places. However, for the sake of self-containedness, we provide a proof in the Appendix.

\begin{proposition}(Gilvenko-Cantelli Class)\label{GC class}
Take $u\in C(M)$ with $\|u\|_\infty \leq 1$. Fix {$\epsilon>0$} small enough. 
Then there is a constant {$C_{gc}>0$} depending on $d$, the diameter of $M$, $\mathsf p_m$, and the $C^0$ norm of $\mathsf p$, such that then with probability greater than $1-n^{-2}$, we have 
\begin{align}
& \sup_{f \in \mathcal K_{\epsilon}}|P_nf-Pf| \leq  \frac{C_{gc}}{\sqrt{n}}\left( \sqrt{-\log\epsilon}+\sqrt{\log n}\right)\,, \\
& \sup_{f \in u\mathcal{Q}_{\epsilon}}|P_nf-Pf| \leq  \frac{C_{gc}}{\sqrt{n} \epsilon^{2d}}\left( \sqrt{-\log\epsilon}+\sqrt{\log n}\right)\,, \\
& \sup_{f \in \mathcal{Q}_{\epsilon} \mathcal{Q}_{\epsilon}}|P_nf-Pf| \leq  \frac{C_{gc}}{\sqrt{n}\epsilon^{4d}}\left( \sqrt{-\log\epsilon}+\sqrt{\log n}\right)\,.
\end{align}
\end{proposition}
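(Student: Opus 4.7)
The strategy is the classical covering plus concentration argument for empirical processes indexed by a compact parameter space. For each class $\mathcal F \in \{\mathcal K_\epsilon,\, u\mathcal Q_\epsilon,\, \mathcal Q_\epsilon\mathcal Q_\epsilon\}$, the functions are parameterized by $x \in M$ (respectively $(x,y)\in M\times M$), so the plan is to build an $\eta$-net of the parameter space, apply a Hoeffding-type concentration bound at each net point, take a union bound, and then control the residual via a Lipschitz-in-parameter estimate.

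\textbf{Step 1: Sup-norm bounds.} First I bound $\|f\|_\infty$ uniformly in $f\in\mathcal F$. Trivially $\|k_\epsilon(x,\cdot)\|_\infty\le1$. Using Lemma \ref{properties of T epsilon}(a), $d_\epsilon\ge c\,\mathsf p_m\epsilon^d$ for small $\epsilon$, hence $\|Q_\epsilon(x,\cdot)\|_\infty\le C\epsilon^{-2d}$ and $\|Q_\epsilon(x,\cdot)Q_\epsilon(y,\cdot)\|_\infty\le C\epsilon^{-4d}$, where $C$ depends on $\mathsf p_m$ and $\|\mathsf p\|_\infty$. Together with $\|u\|_\infty\le1$, this produces the $\epsilon^{-2d}$ and $\epsilon^{-4d}$ prefactors appearing in the conclusion.

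\textbf{Step 2: Covering and Lipschitz control in the parameter.} Cover $M$ by $N_\eta\le C_M\eta^{-d}$ geodesic balls of radius $\eta$ (and $M\times M$ by $N_\eta^2$ products). For parameters $x,x'$ with $\|x-x'\|\le\eta$, differentiation of the Gaussian and the chain/quotient rule give
\begin{equation*}
\|k_\epsilon(x,\cdot)-k_\epsilon(x',\cdot)\|_\infty\lesssim \tfrac{\eta}{\epsilon},\qquad
\|Q_\epsilon(x,\cdot)-Q_\epsilon(x',\cdot)\|_\infty\lesssim \tfrac{\eta}{\epsilon^{2d+1}},
\end{equation*}
and an analogous product estimate for $\mathcal Q_\epsilon\mathcal Q_\epsilon$ with denominator $\epsilon^{4d+1}$, using $|\nabla d_\epsilon|\lesssim\epsilon^{d-1}$ and the lower bound on $d_\epsilon$ from Step 1. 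Choosing $\eta=\epsilon^{\alpha}/n^{1/2}$ for an appropriate $\alpha$ makes the deterministic parameter-discretization error negligible against the target rate $n^{-1/2}$.

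\textbf{Step 3: Concentration at a fixed parameter plus union bound.} For a fixed $f\in\mathcal F$, Hoeffding's inequality gives
\begin{equation*}
\mathbb P\big(|P_nf-Pf|>t\big)\le 2\exp\!\left(-\tfrac{n t^{2}}{2\|f\|_\infty^{2}}\right).
\end{equation*}
Setting $t=\|f\|_\infty\sqrt{2\log(4N_\eta^{p}n^{2})/n}$ with $p\in\{1,2\}$ matching the dimensionality of the net, and union-bounding over the net, the failure probability is at most $n^{-2}$. Since $\log N_\eta\lesssim \log(1/\eta)=\tfrac{1}{2}\log n-\alpha\log\epsilon+O(1)$, one has $\sqrt{\log N_\eta+\log n}\lesssim \sqrt{-\log\epsilon}+\sqrt{\log n}$, which produces the claimed $(\sqrt{-\log\epsilon}+\sqrt{\log n})$ factor. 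Combining with the Lipschitz estimate of Step 2 yields the uniform bound over all of $\mathcal F$.

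\textbf{Main obstacle.} The bookkeeping for $u\mathcal Q_\epsilon$ and $\mathcal Q_\epsilon\mathcal Q_\epsilon$ is the delicate part: the $\epsilon^{-2d}$ or $\epsilon^{-4d}$ prefactor multiplies \emph{both} the Hoeffding bound and (after the Lipschitz step) the net-discretization error, so the choice of $\eta$ must absorb these negative powers of $\epsilon$ without spoiling the $(\sqrt{-\log\epsilon}+\sqrt{\log n})$ factor — this constrains $\alpha$ to be taken large enough that the parameter-discretization error remains lower order, while still keeping $\log(1/\eta)=O(\log n+\log(1/\epsilon))$. Everything else is routine; the Lipschitz computations for $Q_\epsilon$ are a straightforward quotient-rule exercise given Lemma \ref{properties of T epsilon}(a).
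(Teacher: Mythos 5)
Your proposal is correct, but it takes a somewhat different route than the paper. The paper also reduces everything to covering numbers controlled through the finite-dimensional parameterization by points of $M$: it first rescales the classes (working with $\tilde{C}_1\epsilon^{2d}u\mathcal{Q}_\epsilon$ and $\tilde{C}_1\epsilon^{4d}\mathcal{Q}_\epsilon\mathcal{Q}_\epsilon$ so that all functions have sup norm at most $1$, which is where your $\epsilon^{-2d}$ and $\epsilon^{-4d}$ prefactors reappear), bounds $\mathcal{N}(\mathcal{F},\gamma,\|\cdot\|_\infty)$ by covering the manifold — via the Nash embedding into $\mathbb{R}^{d(3d+11)/2}$ and a cube of side $2\,\mathrm{diam}(M)$, which is how the constant ends up depending only on $d$ and $\mathrm{diam}(M)$ rather than on curvature — and the same Lipschitz-in-the-base-point estimates you use in Step 2. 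The difference is the concentration step: instead of a single-scale net with pointwise Hoeffding and a union bound, the paper plugs the covering-number bound into a Dudley-type entropy-integral deviation inequality (Proposition 19 of von Luxburg–Bousquet–Belkin), i.e., symmetrization/chaining plus a bounded-difference term with $\delta=n^{-2}$, and the $(\sqrt{-\log\epsilon}+\sqrt{\log n})$ factor comes from evaluating $\int_0^2\sqrt{\log\mathcal{N}(\mathcal{F},\gamma,\|\cdot\|_\infty)}\,d\gamma$. Your argument is more elementary and self-contained, at the price of the explicit bookkeeping you flag (choice of $\eta=\epsilon^\alpha/\sqrt{n}$, Lipschitz constants of order $\epsilon^{-2d-1}$ and $\epsilon^{-4d-1}$, whose effect is only logarithmic); the paper's chaining route is more modular and delegates the probabilistic work to a cited empirical-process bound. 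One minor point: your covering bound $N_\eta\lesssim C_M\eta^{-d}$ for geodesic balls implicitly carries a constant depending on the geometry of $M$ (volume/curvature or, via an embedding argument as in the paper, the diameter), so you should state that dependence to match the claimed form of $C_{gc}$; this does not affect the validity of the proof.
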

}

In the next few Lemmas, we provide bounds so that we can apply Theorem \ref{atkinson1967}. { Recall that the operators $T_{\epsilon}$, $T_{n,\epsilon}$ and $\hat{T}_{n,\epsilon}$ {in Definition \ref{intro of operators} are} operators from $(C(M),\|\cdot\|_\infty)$ to $(C(M),\|\cdot\|_\infty)$. We provide the bounds of them in the next lemma.}

\begin{lemma}\label{bounds on operators}
Suppose $f\in C(M)$ and $\|f\|_\infty \leq 1$.
{For $\epsilon>0$ small enough,} there are constants $C_5>0$, $C_6>0$ and $C_7>0$ depending  on $\mathsf p_m$ and the $C^0$ norm of $\mathsf p$ so that:
\begin{enumerate} 
\item[(a)] For any $x \in M$, $C_5 \epsilon^d \leq d_{\epsilon}(x) \leq C_6 \epsilon^d$.
\item[(b)]  For any $x \in M$, $|P Q_{\epsilon} f(x)| \leq  \frac{C_6}{C_5^2 \epsilon^{d}}$.
\item[(c)] For any $x \in M$, $P Q_{\epsilon}(x) \geq \frac{C_5}{C_6^2 \epsilon^d}$.
\item[(d)] $\|T_{\epsilon}\| \leq \frac{C_6^3}{C_5^3}$.
\end{enumerate}
{ Fix {$\epsilon>0$} small enough. Suppose $n$ is large enough so that $\frac{1}{\sqrt{n}\epsilon^d} (\sqrt{-\log\epsilon}+\sqrt{\log n})< \mathcal{C}_1$, where $\mathcal{C}_1$ depends on $d$, the diameter of $M$, $\mathsf p_m$, and the $C^0$ norm of $\mathsf p$.} Then, with probability greater than $1-n^{-2}$, the following bounds hold. 
\begin{enumerate} 
\item[(a')] For any $x \in M$, $C_5 \epsilon^d \leq d_{n,\epsilon}(x) \leq C_6 \epsilon^d$.
\item[(b')] For any $x \in M$, $|P_n Q_{\epsilon} f(x)| \leq  \frac{C_6}{C_5^2 \epsilon^{d}}$ and $|P_n Q_{n,\epsilon} f(x)| \leq  \frac{C_6}{C_5^2 \epsilon^{d}}$.
\item[(c')] For any $x \in M$, $P_n Q_{n,\epsilon}(x) \geq \frac{C_5}{C_6^2 \epsilon^d}$.
\item[(d')] $\|T_{n,\epsilon}\| \leq \frac{C_6^3}{C_5^3}$ and $\|\hat{T}_{n,\epsilon}\| \leq \frac{C_6^3}{C_5^3}$.
\end{enumerate}
Moreover, there is a constant {$C_7>0$} depending  on  $d$, the diameter of $M$, $\mathsf p_m$, and the $C^0$ norm of $\mathsf p$, such that with probability greater than $1-n^{-2}$, 
\begin{enumerate}
\item[(e')] For any $x,y \in M$, $|Q_{n,\epsilon}(x,y)-Q_{\epsilon}(x,y)|\leq \frac{C_7}{\sqrt{n}\epsilon^{3d}}(\sqrt{-\log\epsilon}+\sqrt{\log n})$.
\end{enumerate}
\end{lemma}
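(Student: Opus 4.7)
The plan is to establish the deterministic bounds (a)--(d) directly from Lemma \ref{properties of T epsilon}(a) together with the two-sided bound $\mathsf p_m \leq \mathsf p(x) \leq \|\mathsf p\|_\infty$, and then bootstrap the empirical bounds (a')--(e') from the deterministic ones using the Glivenko--Cantelli estimates of Proposition \ref{GC class}.

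For (a), Lemma \ref{properties of T epsilon}(a) gives $d_\epsilon(x) = m_0 \mathsf p(x)\epsilon^d + O(\epsilon^{d+2})$, so for $\epsilon$ small enough we may take, for instance, $C_5 = \tfrac{1}{2}m_0 \mathsf p_m$ and $C_6 = 2 m_0 \|\mathsf p\|_\infty$. Parts (b) and (c) are immediate: for (b), factor $\tfrac{1}{d_\epsilon(x)d_\epsilon(y)}$ out of the integrand, use $|f|\leq 1$ and $\int_M k_\epsilon(x,y)\mathsf p(y)\,dV_M(y)=d_\epsilon(x)\leq C_6\epsilon^d$; for (c), bound the same quotient from below using $d_\epsilon(y)\leq C_6 \epsilon^d$ and $\int_M k_\epsilon(x,y)\mathsf p(y)\,dV_M(y) = d_\epsilon(x) \geq C_5\epsilon^d$. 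Part (d) follows immediately from (b) and (c) since $T_\epsilon f = PQ_\epsilon f / PQ_\epsilon(\cdot)$.

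For the primed statements, the first task is to observe that $d_{n,\epsilon}(x) = P_n k_\epsilon(x,\cdot)$ with $k_\epsilon(x,\cdot)\in \mathcal K_\epsilon$, so Proposition \ref{GC class} gives
\[
\sup_{x\in M} |d_{n,\epsilon}(x)-d_\epsilon(x)| \leq \frac{C_{gc}}{\sqrt n}(\sqrt{-\log\epsilon}+\sqrt{\log n})
\]
with probability at least $1-n^{-2}$. Choosing $\mathcal C_1$ so that this error is bounded by $\tfrac{1}{2}\min(C_5,\,C_6-C_5)\epsilon^d$ yields (a'). Given (a'), parts (b') and (c') follow by repeating the deterministic factorizations with $d_\epsilon\mapsto d_{n,\epsilon}$ and $P\mapsto P_n$, using $P_n k_\epsilon(x,\cdot)=d_{n,\epsilon}(x)\in[C_5\epsilon^d,C_6\epsilon^d]$. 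For the mixed object $P_nQ_\epsilon f(x)$ in (b'), note that $f(\cdot)Q_\epsilon(x,\cdot)\in u\mathcal Q_\epsilon$ with $u=f$ and $\|u\|_\infty\leq 1$, so the second bound of Proposition \ref{GC class} controls $|P_nQ_\epsilon f-PQ_\epsilon f|$, which combined with (b) gives the desired estimate. Part (d') follows from (b') and (c') exactly as (d) followed from (b) and (c).

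For (e'), expand
\[
Q_{n,\epsilon}(x,y)-Q_\epsilon(x,y) = k_\epsilon(x,y)\,\frac{d_\epsilon(x)d_\epsilon(y)-d_{n,\epsilon}(x)d_{n,\epsilon}(y)}{d_{n,\epsilon}(x)d_{n,\epsilon}(y)\,d_\epsilon(x)d_\epsilon(y)}.
\]
Use $k_\epsilon\leq 1$, bound the numerator by $2C_6\epsilon^d\sup_z |d_{n,\epsilon}(z)-d_\epsilon(z)|$ (add and subtract $d_{n,\epsilon}(x)d_\epsilon(y)$), and bound the denominator below by $C_5^4\epsilon^{4d}$ using (a) and (a'). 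Plugging in the concentration estimate above and absorbing constants produces the advertised rate $\frac{C_7}{\sqrt n\,\epsilon^{3d}}(\sqrt{-\log\epsilon}+\sqrt{\log n})$. The main obstacle is purely organizational: we must arrange a single high-probability event on which Proposition \ref{GC class} applies simultaneously to $\mathcal K_\epsilon$ and to the relevant subclass of $u\mathcal Q_\epsilon$, so that (a'), (b'), (c'), (d'), (e') all hold on the same sample with probability at least $1-n^{-2}$. A union bound over these finitely many Glivenko--Cantelli classes, with $C_{gc}$ and $\mathcal C_1$ enlarged accordingly, resolves this; some bookkeeping is required to ensure that $\mathcal C_1$ is small enough to force $d_{n,\epsilon}(x)\in[C_5\epsilon^d,C_6\epsilon^d]$ uniformly in $x$, which is the lynchpin of all subsequent estimates.
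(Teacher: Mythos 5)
Your proposal is correct and follows essentially the same route as the paper's proof: deterministic bounds on $d_\epsilon$ from Lemma \ref{properties of T epsilon}(a), transfer to $d_{n,\epsilon}$ via Proposition \ref{GC class} under the stated smallness condition $\frac{1}{\sqrt{n}\epsilon^d}(\sqrt{-\log\epsilon}+\sqrt{\log n})<\mathcal{C}_1$, the same factorizations for (b)--(d) and their primed versions, and the same add-and-subtract expansion for (e'). The one harmless deviation is the mixed term $|P_nQ_\epsilon f(x)|$ in (b'), which the paper bounds directly by $\sup_x d_{n,\epsilon}(x)/(\inf_x d_\epsilon(x))^2$ using (a'), whereas you route it through the Glivenko--Cantelli bound on $u\mathcal{Q}_\epsilon$ together with (b); this costs an extra (two-event) union bound and a slightly enlarged constant but is equally valid.
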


\begin{proof}
The proof of (a) follows from Lemma \ref{properties of T epsilon} (a), i.e. there are constants {$0<\tilde{C}_1\leq\tilde{C}_2$} such that
\begin{align}
\tilde{C}_1 \epsilon^d \leq d_{\epsilon}(x) \leq \tilde{C}_2\epsilon^d, \nonumber
\end{align}
where $\tilde{C}_1$ and $\tilde{C}_2$ depend on $\mathsf p_m$ and the $C^0$ norm of $\mathsf p$.
To prove (a'), by Proposition \ref{GC class}, we have 
\begin{align}
|d_{n,\epsilon}(x) -d_{\epsilon}(x) |\leq \frac{C_{gc}}{\sqrt{n}}\left( \sqrt{-\log\epsilon}+\sqrt{\log n}\right)\,. \nonumber
\end{align}
Hence, as long as $\frac{C_{gc}}{\sqrt{n}} (\frac{1}{\epsilon}+\sqrt{\log n})< \frac{{\tilde{C}_1}}{2}\epsilon^{d}$
, which is equivalent to  
\begin{align}
\frac{1}{\sqrt{n}\epsilon^d} \left( \sqrt{-\log\epsilon}+\sqrt{\log n}\right) < \frac{{\tilde{C}_1}}{2C_{gc}}=:\mathcal{C}_1,\label{Definition mathcal C_1}
\end{align}
{we have} 
\begin{align}
\frac{\tilde{C}_1}{2} \epsilon^d \leq d_{n,\epsilon}(x) \leq 2\tilde{C}_2 \epsilon^d. \nonumber
\end{align}
Note that $\sqrt\frac{\log n}{n}$ goes to $0$ as $n$ goes to infinity. Hence, the term $\frac{1}{\sqrt{n}\epsilon^d} ( \sqrt{-\log\epsilon}+\sqrt{\log n})$ will be smaller than the constant $\mathcal{C}_1$, when $n$ is large enough. The conclusion follows {by setting $C_5:=\frac{\tilde{C}_1}{2}$ and $C_6:=2\tilde{C}_2$}. 

For (b), by a direct expansion and the bound of $d_\epsilon$ in (a), we have
\begin{align}
|PQ_{\epsilon} f(x)| = \Big|\int_{M}\frac{k_{\epsilon}(x,y) f(y)}{d_{\epsilon}(x)d_{\epsilon}(y)} dP(y)\Big|\leq \frac{ \|f\|_\infty \sup_{x \in M} d_{\epsilon}(x)}{(\inf_{x \in M}d_{\epsilon}(x) )^2} \leq \frac{C_6}{C_5^2 \epsilon^{d}}. \nonumber
\end{align}
For (b'), with probability greater than $1-n^{-2}$, the following bounds hold:
\begin{align}
|P_n Q_{n,\epsilon} f(x)| &\,= \Big|\frac{1}{n} \sum_{i=1}^n \frac{k_{\epsilon}(x,x_i) f(x_i)}{d_{n,\epsilon}(x)d_{n,\epsilon}(x_i)} \Big|\leq \frac{ \|f\|_\infty \sup_{x \in M} d_{n,\epsilon}(x)}{(\inf_{x \in M}d_{n,\epsilon}(x) )^2} \leq \frac{C_6}{C_5^2 \epsilon^{d}} \nonumber\\
|P_n Q_{\epsilon} f(x)| &\,= \Big|\frac{1}{n} \sum_{i=1}^n \frac{k_{\epsilon}(x,x_i) f(x_i)}{d_{\epsilon}(x)d_{\epsilon}(x_i)} \Big|\leq \frac{ \|f\|_\infty \sup_{x \in M} d_{n,\epsilon}(x)}{(\inf_{x \in M}d_{\epsilon}(x) )^2} \leq \frac{C_6}{C_5^2 \epsilon^{d}}\,, \nonumber
\end{align} 
where we apply the bound for $d_{n,\epsilon}$ in (a').

For (c), again, by (a),
\begin{align}
Q_{\epsilon}(x,y) \geq \frac{k_{\epsilon}(x,y)}{(\sup_{x \in M}d_{\epsilon}(x))^2} \geq \frac{k_{\epsilon}(x,y)}{C^2_2\epsilon^{2d}}\,, \nonumber
\end{align}
and hence $PQ_{\epsilon}(x) \geq \frac{d_{\epsilon}(x)}{C^2_2\epsilon^{2d}} \geq \frac{C_5}{C_6^2 \epsilon^d}$.
(c') follows from (a') and a direct expansion: 
\begin{align}
Q_{n,\epsilon}(x,y) \geq \frac{k_{\epsilon}(x,y)}{(\sup_{x \in M}d_{n,\epsilon}(x))^2} \geq \frac{k_{\epsilon}(x,y)}{C^2_2\epsilon^{2d}}. \nonumber
\end{align}
Hence $P_nQ_{n,\epsilon}(x) \geq \frac{d_{n,\epsilon}(x)}{C^2_2\epsilon^{2d}} \geq \frac{C_5}{C_6^2 \epsilon^d}$.
Similarly, (d) follows from (b) and (c) and (d') follows from (b') and (c').

For (e'), we have 
\begin{align}
|Q_{n,\epsilon}(x,y)-Q_{\epsilon}(x,y)| & \leq  |k_{\epsilon}(x,y)|\Big|\frac{1}{d_{n,\epsilon}(x)d_{n,\epsilon}(y)}-\frac{1}{d_{\epsilon}(x)d_{\epsilon}(y)}\Big| \label{Proof Bound of Qeps and Qneps at x} \\
& \leq \Big|\frac{d_{\epsilon}(x) d_{\epsilon}(y)-d_{n,\epsilon}(x) d_{n,\epsilon}(y)}{d_{n,\epsilon}(x)d_{n,\epsilon}(y)d_{\epsilon}(x)d_{\epsilon}(y)}\Big| \nonumber \\
& \leq \frac{|d_{\epsilon}(x)-d_{n,\epsilon}(x)|}{|d_{n,\epsilon}(x)d_{n,\epsilon}(y)d_{\epsilon}(x)|}+\frac{|d_{\epsilon}(y)-d_{n,\epsilon}(y)|}{|d_{n,\epsilon}(y)d_{n,\epsilon}(y)d_{\epsilon}(x)|} \nonumber \\
& \leq \frac{C_7}{\sqrt{n}\epsilon^{3d}}\left( \sqrt{-\log\epsilon}+\sqrt{\log n}\right)\nonumber
\end{align}
{for a constant $C_7>0$,} where the second bound holds since $|k_{\epsilon}(x,y)|\leq 1$ and we use (a) and Proposition \ref{GC class} in the last step.
\end{proof}

Next, we need to bound the term $\|T_{n,\epsilon}u-T_{\epsilon}u\|_{\infty}$, {which is needed when we apply} Theorem \ref{atkinson1967}.

\begin{lemma}\label{fix u , T n espilon-T epsilon}
Fix $u\in C(M)$ with $\|u\|_{\infty} \leq 1$. { Fix any $\epsilon$ small enough. Suppose $n$ is large enough so that $\frac{1}{\sqrt{n}\epsilon^d} ( \sqrt{-\log\epsilon}+\sqrt{\log n})< \mathcal{C}_1${, where $\mathcal C_1$ is defined in \eqref{Definition mathcal C_1}}. There is a constant {$C_8>0$} depending  on  $d$, the diameter of $M$,  $\mathsf p_m$ and the $C^0$ norm of $\mathsf p$ 
such that with probability greater than $1-n^{-2}$, we have
\begin{align}
|T_{n,\epsilon}u(x)-T_{\epsilon}u(x)| \leq \frac{C_8}{\sqrt{n}\epsilon^{2d}}\left( \sqrt{-\log\epsilon}+\sqrt{\log n}\right) \nonumber
\end{align}
for any $x \in M$.}
\end{lemma}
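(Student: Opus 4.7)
The plan is to split the error via the intermediate operator $\hat T_{n,\epsilon}$ introduced in Definition \ref{intro of operators}, writing
\[
T_{n,\epsilon}u(x)-T_{\epsilon}u(x)=\bigl[T_{n,\epsilon}u(x)-\hat T_{n,\epsilon}u(x)\bigr]+\bigl[\hat T_{n,\epsilon}u(x)-T_{\epsilon}u(x)\bigr],
\]
and to control each bracket with the tools already established. Throughout I will work on the intersection of the events on which the conclusions of Proposition \ref{GC class} and Lemma \ref{bounds on operators}(a')--(e') hold; a union bound shows this intersection has probability at least $1-n^{-2}$ (after adjusting constants), since each individual event holds with probability at least $1-n^{-2}$.

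For the second bracket I would write
\[
\hat T_{n,\epsilon}u(x)-T_{\epsilon}u(x)=\frac{(P_n-P)(u\,Q_\epsilon(x,\cdot))}{PQ_\epsilon(x)},
\]
so the numerator is the $(P_n-P)$-fluctuation of a function in the class $u\mathcal{Q}_\epsilon$. Proposition \ref{GC class} bounds this numerator uniformly in $x$ by $\frac{C_{gc}}{\sqrt n\,\epsilon^{2d}}(\sqrt{-\log\epsilon}+\sqrt{\log n})$, and Lemma \ref{bounds on operators}(c) gives $PQ_\epsilon(x)\geq \frac{C_5}{C_6^2\epsilon^d}$. This yields a bound of order $\frac{1}{\sqrt n\,\epsilon^d}(\sqrt{-\log\epsilon}+\sqrt{\log n})$ — already better than what the lemma claims.

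For the first bracket I would use the standard identity for differences of ratios,
\[
T_{n,\epsilon}u(x)-\hat T_{n,\epsilon}u(x)=\frac{P_nQ_{n,\epsilon}u(x)-P_nQ_\epsilon u(x)}{P_nQ_{n,\epsilon}(x)}+P_nQ_\epsilon u(x)\cdot\frac{PQ_\epsilon(x)-P_nQ_{n,\epsilon}(x)}{P_nQ_{n,\epsilon}(x)\,PQ_\epsilon(x)}.
\]
The numerator of the first piece is bounded pointwise by $\sup_y|Q_{n,\epsilon}(x,y)-Q_\epsilon(x,y)|$, which by Lemma \ref{bounds on operators}(e') is at most $\frac{C_7}{\sqrt n\,\epsilon^{3d}}(\sqrt{-\log\epsilon}+\sqrt{\log n})$; dividing by the lower bound $P_nQ_{n,\epsilon}(x)\geq \frac{C_5}{C_6^2\epsilon^d}$ from Lemma \ref{bounds on operators}(c') gives a term of order $\frac{1}{\sqrt n\,\epsilon^{2d}}(\sqrt{-\log\epsilon}+\sqrt{\log n})$. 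For the second piece, I bound $|P_nQ_\epsilon u(x)|\leq \frac{C_6}{C_5^2\epsilon^d}$ by Lemma \ref{bounds on operators}(b'), and I decompose
\[
|PQ_\epsilon(x)-P_nQ_{n,\epsilon}(x)|\leq |(P-P_n)Q_\epsilon(x,\cdot)|+|P_n(Q_\epsilon-Q_{n,\epsilon})(x,\cdot)|,
\]
where the first summand is bounded by Proposition \ref{GC class} applied to $1\cdot\mathcal{Q}_\epsilon$ and the second by Lemma \ref{bounds on operators}(e'); the $\epsilon^{3d}$ contribution dominates. Dividing by $P_nQ_{n,\epsilon}(x)\,PQ_\epsilon(x)\geq \frac{C_5^2}{C_6^4\epsilon^{2d}}$ again produces an $\frac{1}{\sqrt n\,\epsilon^{2d}}$-size term. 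Adding the three contributions and absorbing constants into a single $C_8$ depending on $d$, $\operatorname{diam}(M)$, $\mathsf p_m$ and $\|\mathsf p\|_{C^0}$ finishes the proof.

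There is no real conceptual obstacle; the work is essentially bookkeeping of $\epsilon$-powers, since the heavy lifting (uniform control of empirical processes over the three kernel-related function classes) has already been done in Proposition \ref{GC class} and in part (e') of Lemma \ref{bounds on operators}. The only point requiring mild care is choosing the intermediate $\hat T_{n,\epsilon}$ rather than comparing $T_{n,\epsilon}u$ to $T_\epsilon u$ directly: a direct comparison would require a uniform GC bound on the class $\{Q_{n,\epsilon}(x,\cdot)u(\cdot):x\in M\}$, whose members depend on the random sample, whereas $\hat T_{n,\epsilon}$ splits the randomness in the kernel from the randomness in the empirical measure and lets us apply Proposition \ref{GC class} directly to the deterministic class $u\mathcal{Q}_\epsilon$.
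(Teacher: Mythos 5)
Your proof is correct and follows essentially the same route as the paper: a triangle-inequality splitting of the ratio difference whose three error sources are controlled exactly as in the paper, namely Proposition \ref{GC class} on the class $u\mathcal{Q}_{\epsilon}$, the pointwise bound $|Q_{n,\epsilon}-Q_{\epsilon}|$ from Lemma \ref{bounds on operators}(e'), and the upper/lower bounds (a)--(c') on the denominators; the only (cosmetic) difference is that you route through the intermediate operator $\hat T_{n,\epsilon}$, whereas the paper's intermediate term is the hybrid $P_nQ_{n,\epsilon}u(x)/PQ_{\epsilon}(x)$. Both yield the same $\frac{1}{\sqrt{n}\,\epsilon^{2d}}\bigl(\sqrt{-\log\epsilon}+\sqrt{\log n}\bigr)$ rate, so nothing further is needed.
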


\begin{proof}
By a direct expansion and triangular inequality,  
\begin{align}
|T_{n,\epsilon}u(x)-T_{\epsilon}u(x)| = &\, \bigg|\frac{P_{n}Q_{n,\epsilon}u (x)}{P_{n}Q_{n,\epsilon}(x)}-\frac{PQ_{\epsilon}u (x)}{PQ_{\epsilon}(x)}\bigg| \nonumber \\ 
\leq &\, \bigg|\frac{P_{n}Q_{n,\epsilon}u (x)}{P_{n}Q_{n,\epsilon}(x)}-\frac{P_nQ_{n,\epsilon}u (x)}{PQ_{\epsilon}(x)}\bigg| +\bigg|\frac{P_{n}Q_{n,\epsilon}u (x)}{PQ_{\epsilon}(x)}-\frac{PQ_{\epsilon}u (x)}{PQ_{\epsilon}(x)}\bigg|\,,\nonumber 
\end{align}
which is further bounded by
\begin{align}
&\, \bigg|\frac{P_{n}Q_{n,\epsilon}u (x)}{P_{n}Q_{n,\epsilon}(x)PQ_{\epsilon}(x)}\bigg| |P_{n}Q_{n,\epsilon}(x)-PQ_{\epsilon}(x)|+\bigg|\frac{P_{n}Q_{n,\epsilon}u (x)-P_nQ_{\epsilon}u (x)}{PQ_{\epsilon}(x)}\bigg| \nonumber \\
& \qquad\qquad+\bigg|\frac{P_{n}Q_{\epsilon}u (x)-PQ_{\epsilon}u (x)}{PQ_{\epsilon}(x)}\bigg| \nonumber \\
\leq &\,  \bigg(\frac{|P_{n}Q_{n,\epsilon}u (x)|}{P_{n}Q_{n,\epsilon}(x)PQ_{\epsilon}(x)}+\frac{1}{PQ_{\epsilon}(x)}\bigg) |P_{n}Q_{n,\epsilon}u (x)-P_nQ_{\epsilon}u (x)| \nonumber \\
& \qquad\qquad+ \bigg(\frac{|P_{n}Q_{n,\epsilon}u (x)|}{P_{n}Q_{n,\epsilon}(x)PQ_{\epsilon}(x)}+\frac{1}{PQ_{\epsilon}(x)}\bigg) |P_{n}Q_{\epsilon}u (x)-PQ_{\epsilon}u (x)| \nonumber\,.
\end{align}
By Lemma \ref{bounds on operators},
\begin{equation}
\frac{|P_{n}Q_{n,\epsilon}u (x)|}{P_{n}Q_{n,\epsilon}(x)PQ_{\epsilon}(x)}+\frac{1}{PQ_{\epsilon}(x)}\leq \Big(\frac{C_6^5}{C_5^4}+\frac{C_6}{C_5}\Big)\epsilon^d\,, \nonumber
\end{equation}
where $C_5$ and $C_6$ are the constants in Lemma \ref{bounds on operators}.
{By (e')} in Lemma  \ref{bounds on operators},
\begin{equation}
|P_{n}Q_{n,\epsilon}u (x)-P_nQ_{\epsilon}u (x)|\leq \frac{C_7}{\sqrt{n}\epsilon^{3d}}\left( \sqrt{-\log\epsilon}+\sqrt{\log n}\right)\,. \nonumber
\end{equation}
By Proposition \ref{GC class},
\begin{equation}
|P_{n}Q_{\epsilon}u (x)-PQ_{\epsilon}u (x)| \leq \frac{C_{gc}}{\sqrt{n} \epsilon^{2d}}\left( \sqrt{-\log\epsilon}+\sqrt{\log n}\right)\,. \nonumber
\end{equation}
Hence, by putting the above together, $|T_{n,\epsilon}u(x)-T_{\epsilon}u(x)| \leq  \frac{C_8}{\sqrt{n}\epsilon^{2d+1}},$ for some constant {$C_8>0$ as claimed}.
\end{proof}

{Next, we need to} bound the term $\|(T_{\epsilon}-T_{n,\epsilon})T_{n,\epsilon}\|$ {so that we can apply} Theorem \ref{atkinson1967}. Note that { for any fixed $\epsilon$, $\|T_{\epsilon}-T_{n,\epsilon}\|$ does not go to $0$ in general as $n$ goes to infinity}, neither does $\|T_{\epsilon}-\hat{T}_{n,\epsilon}\|$. Otherwise, the spectral convergence is straightforward and there is no need to introduce the concept of collective compact convergence. {Indeed, although we can establish the inequality shown in Lemma \ref{fix u , T n espilon-T epsilon}, the inequality only holds for a {\em single} function. To achieve $\|T_{\epsilon}-T_{n,\epsilon}\|\to 0$, we need to control all $u\in C^0$ satisfying $\|u\|_\infty\leq 1$. However, this cannot be achieved directly with the uniform bound. This bound is the critical step in controlling the convergence rate when we have a finite sampling set.} In other words, we cannot show that $\|(T_{\epsilon}-T_{n,\epsilon})T_{n,\epsilon}\|$ goes to $0$ by showing that $\|(T_{\epsilon}-T_{n,\epsilon})\|$ goes to $0$. To bypass this difficulty, we need the following intermediate lemma. 

\begin{lemma}\label{Proof Tn and Tneps relationship intermittent}
{ Fix {$\epsilon>0$} small enough. Suppose $n$ is {sufficiently} large so that $\frac{1}{\sqrt{n}\epsilon^d} ( \sqrt{-\log\epsilon}+\sqrt{\log n})< \mathcal{C}_1${, where $\mathcal C_1$ is defined in \eqref{Definition mathcal C_1}}. There are constants $C_9$ and $C_{10}$ depending  on  $d$, the diameter of $M$,  $\mathsf p_m$ and the $C^0$ norm of $\mathsf p$ such that with probability greater than $1-n^{-2}$, 
\begin{equation}
\|\hat{T}_{n,\epsilon}-T_{n,\epsilon}\|\leq \frac{C_9}{\sqrt{n}\epsilon^{2d}} \left( \sqrt{-\log\epsilon}+\sqrt{\log n}\right),\quad 
\|(T_{\epsilon}-\hat{T}_{n,\epsilon})\hat{T}_{n,\epsilon}\| \leq \frac{C_{10}}{\sqrt{n}\epsilon^{2d}}\left( \sqrt{-\log\epsilon}+\sqrt{\log n}\right)\,. \nonumber
\end{equation}}
\end{lemma}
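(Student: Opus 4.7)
Both bounds will follow from the estimates already collected in Lemma \ref{bounds on operators} and Proposition \ref{GC class}, but only the second requires a genuinely new empirical-process argument. For the first bound, I take $f\in C(M)$ with $\|f\|_\infty\le 1$ and any $x\in M$, and split
\[
\hat T_{n,\epsilon}f(x)-T_{n,\epsilon}f(x)=\frac{P_nQ_\epsilon f(x)-P_nQ_{n,\epsilon}f(x)}{PQ_\epsilon(x)}+P_nQ_{n,\epsilon}f(x)\left[\frac{1}{PQ_\epsilon(x)}-\frac{1}{P_nQ_{n,\epsilon}(x)}\right].
\]
The first numerator is at most $\sup_{x,y}|Q_\epsilon(x,y)-Q_{n,\epsilon}(x,y)|$, which Lemma \ref{bounds on operators}(e') bounds by $O\bigl(\frac{1}{\sqrt n\epsilon^{3d}}(\sqrt{-\log\epsilon}+\sqrt{\log n})\bigr)$; dividing by $PQ_\epsilon(x)\gtrsim \epsilon^{-d}$ yields the $\frac{1}{\sqrt n\epsilon^{2d}}$ rate. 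In the second term, $|P_nQ_{n,\epsilon}f(x)|$ is controlled by Lemma \ref{bounds on operators}(b'), while $|PQ_\epsilon(x)-P_nQ_{n,\epsilon}(x)|$ is estimated by adding and subtracting $P_nQ_\epsilon(x)$, handling one piece by Proposition \ref{GC class} applied to $\mathcal Q_\epsilon$ and the other by (e'); combined with the uniform lower bounds on $PQ_\epsilon$ and $P_nQ_{n,\epsilon}$ from (c) and (c'), this term also collapses to the same rate, uniformly in $x$ and in $f$ with $\|f\|_\infty\le 1$.

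For the second bound, the central identity is that, for any $f$ with $\|f\|_\infty\le 1$,
\[
(T_\epsilon-\hat T_{n,\epsilon})\hat T_{n,\epsilon}f(x)=\frac{(P-P_n)_y\bigl[Q_\epsilon(x,y)\hat T_{n,\epsilon}f(y)\bigr]}{PQ_\epsilon(x)},
\]
and expanding $\hat T_{n,\epsilon}f(y)=\tfrac{1}{n}\sum_i Q_\epsilon(y,x_i)f(x_i)/PQ_\epsilon(y)$ gives
\[
(T_\epsilon-\hat T_{n,\epsilon})\hat T_{n,\epsilon}f(x)=\frac{1}{PQ_\epsilon(x)}\cdot\frac{1}{n}\sum_{i=1}^n f(x_i)\,(P-P_n)_y F_{x,x_i}(y),
\]
where $F_{x,z}(y):=Q_\epsilon(x,y)Q_\epsilon(y,z)/PQ_\epsilon(y)$. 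Using $\|f\|_\infty\le 1$ and the trivial bound $\frac{1}{n}\sum_i|f(x_i)|\le 1$, the quantity in question is dominated by $\sup_{x,z\in M}|(P-P_n)_y F_{x,z}(y)|/PQ_\epsilon(x)$, and everything reduces to establishing a uniform concentration estimate over the two-parameter family $\{F_{x,z}:x,z\in M\}$.

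The function $F_{x,z}$ lies in the class $\mathcal Q_\epsilon\mathcal Q_\epsilon$ of Definition \ref{4 spaces} weighted by the \emph{deterministic, uniformly bounded} multiplier $1/PQ_\epsilon(\cdot)$, which is of size $O(\epsilon^d)$ by Lemma \ref{bounds on operators}(c). Applying Proposition \ref{GC class} to $\mathcal Q_\epsilon\mathcal Q_\epsilon$ and pulling the $O(\epsilon^d)$ factor out of the multiplier, I obtain, with probability at least $1-n^{-2}$,
\[
\sup_{x,z\in M}\bigl|(P-P_n)_y F_{x,z}(y)\bigr|\le \frac{C}{\sqrt n\,\epsilon^{3d}}\bigl(\sqrt{-\log\epsilon}+\sqrt{\log n}\bigr).
\]
Dividing by $PQ_\epsilon(x)\ge C_5/(C_6^2\epsilon^d)$ converts this into $\frac{C_{10}}{\sqrt n\epsilon^{2d}}(\sqrt{-\log\epsilon}+\sqrt{\log n})$, uniformly in $x$ and in $f$ in the unit ball of $C(M)$, as claimed.

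\textbf{Main obstacle.} The delicate step is the last one, since Proposition \ref{GC class} as stated treats $\mathcal Q_\epsilon\mathcal Q_\epsilon$ without the extra weight $1/PQ_\epsilon$. I would handle this either by (i) noting that the covering-number argument behind Proposition \ref{GC class} is insensitive to multiplication by a uniformly bounded, uniformly Lipschitz deterministic function, or equivalently (ii) proving concentration for $Q_\epsilon(x,y)Q_\epsilon(y,z)$ directly and then using boundedness of $1/PQ_\epsilon(y)$ to transfer the estimate. The $O(\epsilon^d)$ gain coming from that weight is precisely what converts the coarse $\mathcal Q_\epsilon\mathcal Q_\epsilon$ rate $\epsilon^{-4d}$ into the $\epsilon^{-3d}$ needed here, and it is what ultimately makes the collective compact convergence argument close with the correct rate. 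A subtlety to be careful about is that the dependence of $F_{x,x_i}$ on the random $x_i$ does not cause a problem, because the supremum is taken over deterministic parameters $(x,z)\in M\times M$ and the bound $\frac{1}{n}\sum_i|f(x_i)|\le 1$ is uniform.
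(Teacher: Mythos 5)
Your proposal is correct and follows essentially the same route as the paper's proof: the first bound uses the same add-and-subtract decomposition controlled by Lemma \ref{bounds on operators} (a)--(e') and Proposition \ref{GC class}, and the second bound uses the same Fubini exchange that reduces everything to a uniform deviation bound over the two-parameter family $Q_\epsilon(x,\cdot)Q_\epsilon(\cdot,z)$, with the weight $1/PQ_\epsilon(\cdot)$ and the prefactor $1/PQ_\epsilon(x)$ together supplying the $\epsilon^{2d}$ gain over the $\epsilon^{-4d}$ rate of $\mathcal{Q}_\epsilon\mathcal{Q}_\epsilon$. The only caveat is that literally ``transferring'' the concentration estimate through the bounded weight $1/PQ_\epsilon(y)$ (your option (ii)) is not a valid inequality, since $|(P-P_n)[w\,g]|$ is not dominated by $\|w\|_\infty\,|(P-P_n)g|$; your option (i), rerunning the covering-number argument for the weighted class, is the correct fix, and in this respect your treatment is at least as careful as the paper's, which pulls the weight out by its supremum in the same way.
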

\begin{proof}
For any $f\in C(M)$ such that $\|f\|_{\infty} \leq 1$,
\begin{align}
&\sup_{x\in M}|\hat{T}_{n,\epsilon}f(x)-T_{n,\epsilon}f(x)|\leq \sup_{x \in M}\left|\frac{P_{n}Q_{n,\epsilon}f (x)}{P_{n}Q_{n,\epsilon}(x)}-\frac{P_nQ_{\epsilon}f (x)}{PQ_{\epsilon}(x)}\right| \nonumber \\
\leq\,& \sup_{x \in M}\Big(\frac{|P_{n}Q_{n,\epsilon}f (x)|}{P_{n}Q_{n,\epsilon}(x)PQ_{\epsilon}(x)}|P_{n}Q_{n,\epsilon}(x)-PQ_{\epsilon}(x)|\Big) +\sup_{x \in M}\Big(\frac{1}{PQ_{\epsilon}(x)}|P_{n}Q_{n,\epsilon}f (x)-P_nQ_{\epsilon}f (x)|\Big)\,. \nonumber
\end{align}
{We control the last step term by term. First,}
\begin{align}
|P_{n}Q_{n,\epsilon}(x)-PQ_{\epsilon}(x)|\leq & |P_{n}Q_{n,\epsilon}(x)-P_nQ_{\epsilon}(x)|+|P_nQ_{\epsilon}(x)-PQ_{\epsilon}(x)| \nonumber \\
\leq & \left(\frac{C_7}{\sqrt{n}\epsilon^{3d}}+\frac{C_{gc}}{\sqrt{n}\epsilon^{2d}}\right)( \sqrt{-\log\epsilon}+\sqrt{\log n})\nonumber
\end{align}
by Proposition \ref{GC class} and (e') in Lemma \ref{bounds on operators}. Moreover,
\begin{align}
|P_{n}Q_{n,\epsilon}f (x)-P_nQ_{\epsilon}f (x)|\leq  \frac{C_7}{\sqrt{n}\epsilon^{3d}} \left( \sqrt{-\log\epsilon}+\sqrt{\log n}\right) \nonumber 
\end{align}
by (e') in Lemma \ref{bounds on operators}. In summary, 
\begin{align}
\sup_{x\in M}|\hat{T}_{n,\epsilon}f(x)-T_{n,\epsilon}f(x)|\leq \frac{C_9}{\sqrt{n}\epsilon^{2d}}\left( \sqrt{-\log\epsilon}+\sqrt{\log n}\right)\nonumber 
\end{align}
for some constant {$C_9>0$}, which is our first claim.

For the second claim,  {similarly, for any $f\in C(M)$ such that $\|f\|_{\infty} \leq 1$,}
\begin{align}
& \sup_{x \in M} |(T_{\epsilon}-\hat{T}_{n,\epsilon})\hat{T}_{n,\epsilon}f(x)|   \nonumber \\
=&\, \sup_{x \in M} \Big|\frac{1}{PQ_{\epsilon}(x)}\Big(\int_{M}Q_{\epsilon}(x,y)\frac{\int_{M}Q_{\epsilon}(y,z)f(z)d\mathsf P_n(z)}{PQ_{\epsilon}(y)}d\mathsf P(y)\nonumber\\
&\qquad\qquad\qquad-\int_{M}Q_{\epsilon}(x,y)\frac{\int_{M}Q_{\epsilon}(y,z)f(z)d\mathsf P_n(z)}{PQ_{\epsilon}(y)}d\mathsf P_n(y)\Big)\Big| \nonumber 
\end{align}
is bounded by
\begin{align}
&\, \frac{1}{[\inf_{x \in M} PQ_{\epsilon}(x)]^2} \sup_{x \in M}\Big|\int_{M}Q_{\epsilon}(x,y)\int_{M}Q_{\epsilon}(y,z)f(z)d\mathsf P_n(z)d\mathsf P(y)\nonumber\\
&\qquad\qquad\qquad-\int_{M}Q_{\epsilon}(x,y)\int_{M}Q_{\epsilon}(y,z)f(z)d\mathsf P_n(z)d\mathsf P_n(y)\Big| \nonumber \\
\leq &\, \frac{1}{[\inf_{x \in M} PQ_{\epsilon}(x)]^2} \sup_{x \in M}\Big|\int_{M}Q_{\epsilon}(x,y)\int_{M}Q_{\epsilon}(y,z)f(z)d\mathsf P_n(z)d\mathsf P(y)\nonumber\\
&\qquad\qquad\qquad-\int_{M}Q_{\epsilon}(x,y)\int_{M}Q_{\epsilon}(y,z)f(z)d\mathsf P_n(z)d\mathsf P_n(y)\Big|\nonumber \\
=&\, \frac{1}{[\inf_{x \in M} PQ_{\epsilon}(x)]^2} \sup_{x \in M}\Big|\int_{M}\Big[\int_{M}Q_{\epsilon}(x,y)Q_{\epsilon}(y,z)d\mathsf P(y)\nonumber\\
&\qquad\qquad\qquad-\int_{M}Q_{\epsilon}(x,y)Q_{\epsilon}(y,z)d\mathsf P_n(y)\Big]f(z)d\mathsf P_n(z)\Big| \,,\nonumber 
\end{align}
where we use Fubini's Theorem in the {last equality}. As a result, by Proposition \ref{GC class} and Lemma \ref{bounds on operators}, 
\begin{align}
\sup_{x \in M} |(T_{\epsilon}-\hat{T}_{n,\epsilon})\hat{T}_{n,\epsilon}f(x)|\leq \frac{C_6^4 \epsilon^{2d}}{C_5^2}  \frac{C_{gc}}{\sqrt{n}\epsilon^{4d}}\left( \sqrt{-\log\epsilon}+\sqrt{\log n}\right) = \frac{C_{10}}{\sqrt{n}\epsilon^{2d}}\left( \sqrt{-\log\epsilon}+\sqrt{\log n}\right),\nonumber
\end{align}
{by setting $C_{10}:=\frac{C_6^4 C_{gc}}{C_5^2}>0$}, and hence the proof follows. 
\end{proof}
\begin{lemma}\label{(T espilon- Tn epsilon)Tn epsilon}
{Fix  {$\epsilon>0$} small enough. Suppose $n$ is {sufficiently} large so that $\frac{1}{\sqrt{n}\epsilon^d} ( \sqrt{-\log\epsilon}+\sqrt{\log n})< \mathcal{C}_1${, where $\mathcal C_1$ is defined in \eqref{Definition mathcal C_1}}. There is a constant {$C_{11}>0$} depending  on  $d$, the diameter of $M$,  $\mathsf p_m$ and the $C^0$ norm of $\mathsf p$, such that with probability greater than $1-n^{-2}$, we have
\begin{align}
\|(T_{\epsilon}-T_{n,\epsilon})T_{n,\epsilon}\|\leq  \frac{C_{11}}{\sqrt{n}\epsilon^{2d}} \left( \sqrt{-\log\epsilon}+\sqrt{\log n}\right). \nonumber 
\end{align}}
\end{lemma}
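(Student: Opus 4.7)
The plan is to reduce the quantity to pieces controlled by Lemma \ref{Proof Tn and Tneps relationship intermittent} by inserting the intermediate operator $\hat T_{n,\epsilon}$. Concretely, I would use the algebraic identity
\[
(T_\epsilon - T_{n,\epsilon})T_{n,\epsilon} = (T_\epsilon - \hat T_{n,\epsilon})\hat T_{n,\epsilon} + (T_\epsilon - \hat T_{n,\epsilon})(T_{n,\epsilon}-\hat T_{n,\epsilon}) + (\hat T_{n,\epsilon} - T_{n,\epsilon})T_{n,\epsilon},
\]
and then apply the triangle inequality in operator norm.

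For the first summand, Lemma \ref{Proof Tn and Tneps relationship intermittent} directly gives the bound $C_{10}(\sqrt n \epsilon^{2d})^{-1}(\sqrt{-\log\epsilon}+\sqrt{\log n})$. For the third summand, Lemma \ref{bounds on operators}(d') yields $\|T_{n,\epsilon}\|\le C_6^3/C_5^3$ on the high-probability event, and Lemma \ref{Proof Tn and Tneps relationship intermittent} bounds $\|\hat T_{n,\epsilon}-T_{n,\epsilon}\|$ at the same rate, so the product again has the desired rate. For the middle summand I would use the crude uniform bound $\|T_\epsilon - \hat T_{n,\epsilon}\| \le \|T_\epsilon\| + \|\hat T_{n,\epsilon}\| \le 2C_6^3/C_5^3$ from Lemma \ref{bounds on operators}(d)(d'), together with the $\|T_{n,\epsilon}-\hat T_{n,\epsilon}\|$ bound of Lemma \ref{Proof Tn and Tneps relationship intermittent}; the product is a lower-order multiple of the same target rate. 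Taking the intersection of the high-probability events (which is again of probability at least $1-n^{-2}$ after adjusting constants) and collecting the three contributions into a single constant $C_{11}$ depending on $d$, $\text{diam}(M)$, $\mathsf p_m$ and the $C^0$ norm of $\mathsf p$ completes the argument.

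The key conceptual point, and the reason this detour through $\hat T_{n,\epsilon}$ is forced on us, is that one \emph{cannot} simply bound $\|(T_\epsilon - T_{n,\epsilon})T_{n,\epsilon}\|$ by $\|T_\epsilon - T_{n,\epsilon}\|\,\|T_{n,\epsilon}\|$. Lemma \ref{fix u , T n espilon-T epsilon} only controls $T_\epsilon - T_{n,\epsilon}$ pointwise in the test function $u$, and no operator-norm bound tending to zero is available; this is exactly why the collectively compact convergence framework is necessary. The critical piece is thus Lemma \ref{Proof Tn and Tneps relationship intermittent}, where the post-composition with $\hat T_{n,\epsilon}$ allows a Fubini rearrangement that reduces the randomness to a Glivenko--Cantelli statement over the class $\mathcal Q_\epsilon\mathcal Q_\epsilon$ from Proposition \ref{GC class}. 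Once that lemma is in hand, the present lemma is essentially a bookkeeping exercise in splitting the composition, which is what the above decomposition carries out.
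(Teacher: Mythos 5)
Your proof is correct and follows essentially the same route as the paper: both insert the intermediate operator $\hat T_{n,\epsilon}$, bound the piece $(T_{\epsilon}-\hat T_{n,\epsilon})\hat T_{n,\epsilon}$ and the piece $\hat T_{n,\epsilon}-T_{n,\epsilon}$ via Lemma \ref{Proof Tn and Tneps relationship intermittent}, and absorb the remaining factors with the uniform operator-norm bounds of Lemma \ref{bounds on operators}. Your three-term identity is just a regrouping of the paper's four-term telescoping sum and yields the same constant $C_{11}$ up to bookkeeping.
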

\begin{proof}
By a direct expansion and Lemma \ref{Proof Tn and Tneps relationship intermittent}, we have
\begin{align}
\|(T_{\epsilon}-T_{n,\epsilon})T_{n,\epsilon}\|\leq &\, \|T_{\epsilon}T_{n,\epsilon}-T_{\epsilon}\hat{T}_{n,\epsilon}\|+\|T_{\epsilon}\hat{T}_{n,\epsilon}-\hat{T}_{n,\epsilon}\hat{T}_{n,\epsilon}\| \nonumber \\
&+\|\hat{T}_{n,\epsilon}\hat{T}_{n,\epsilon}-\hat{T}_{n,\epsilon}T_{n,\epsilon}\|+\|\hat{T}_{n,\epsilon}T_{n,\epsilon}-T_{n,\epsilon}T_{n,\epsilon}\| \nonumber \\
\leq &\, (\|T_{\epsilon}\|+\|\hat{T}_{n,\epsilon}\|+\|T_{n,\epsilon}\|)\|\hat{T}_{n,\epsilon}-T_{n,\epsilon}\|  +\|(T_{\epsilon}-\hat{T}_{n,\epsilon})\hat{T}_{n,\epsilon}\| \nonumber \\
\leq &\, \frac{3C_6^3}{C_5^3} \frac{C_9}{\sqrt{n}\epsilon^{2d}} \left( \sqrt{-\log\epsilon}+\sqrt{\log n}\right)+\frac{C_{10}}{\sqrt{n}\epsilon^{2d}} \left( \sqrt{-\log\epsilon}+\sqrt{\log n}\right) \,.\nonumber 
\end{align}
The conclusion follows {by setting $C_{11}:=\frac{3C_6^3C_9}{C_5^3}+C_{10}$}.
\end{proof}

In the next Lemma, we show that $T_{\epsilon}$ is a compact operator.

\begin{lemma}  \label{uniform convergence of operator Q}
$T_{\epsilon}$ is a compact operator. In particular, for any function $f \in C(M)$ such that $\|f\|_\infty\leq 1$, when $\|\iota(x)-\iota(x')\|_{\mathbb{R}^D}$ is small enough, for $x,x'\in M$, we have
\begin{equation}
|T_{\epsilon}f(x)-T_{\epsilon}f(x')| \leq C_{12}  \epsilon^{-d} \|\iota(x)-\iota(x')\|_{\mathbb{R}^D}\,, \nonumber 
\end{equation}
where {$C_{12}>0$} only depends on the curvature of $M$, $\mathsf p_m$ and the $C^1$ norm of $\mathsf p$.
\end{lemma}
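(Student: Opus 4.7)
The plan is to deduce compactness from the Arzelà--Ascoli theorem, so the substantive work lies in establishing the Lipschitz estimate stated in the lemma. Combined with the uniform bound $\|T_\epsilon f\|_\infty \leq C_6^3/C_5^3$ from Lemma \ref{bounds on operators}(d), any Lipschitz bound of the form $|T_\epsilon f(x)-T_\epsilon f(x')| \leq C(\epsilon) \|\iota(x)-\iota(x')\|_{\mathbb R^D}$ for all $f$ in the unit ball of $C(M)$ will give uniform boundedness and equicontinuity of $T_\epsilon(\{f\in C(M):\|f\|_\infty\leq 1\})$, hence compactness of $T_\epsilon$.

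To bound the difference $T_\epsilon f(x)-T_\epsilon f(x')$, I would split
\[
T_\epsilon f(x)-T_\epsilon f(x')=\frac{PQ_\epsilon f(x)-PQ_\epsilon f(x')}{PQ_\epsilon(x)}+\frac{PQ_\epsilon f(x')\bigl(PQ_\epsilon(x')-PQ_\epsilon(x)\bigr)}{PQ_\epsilon(x)\,PQ_\epsilon(x')}
\]
and apply parts (b) and (c) of Lemma \ref{bounds on operators} to bound $1/PQ_\epsilon(x)\lesssim \epsilon^d$ and $|PQ_\epsilon f(x')|\lesssim \epsilon^{-d}$. This reduces matters to estimating $|PQ_\epsilon f(x)-PQ_\epsilon f(x')|$ and $|PQ_\epsilon(x)-PQ_\epsilon(x')|$. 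Using the algebraic identity
\[
Q_\epsilon(x,y)-Q_\epsilon(x',y)=\frac{1}{d_\epsilon(y)}\left[\frac{k_\epsilon(x,y)-k_\epsilon(x',y)}{d_\epsilon(x)}+\frac{k_\epsilon(x',y)\bigl(d_\epsilon(x')-d_\epsilon(x)\bigr)}{d_\epsilon(x)\,d_\epsilon(x')}\right]
\]
further reduces the problem to pointwise Lipschitz estimates for $k_\epsilon$ and $d_\epsilon$ in the first argument.

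The key observation is that $k_\epsilon$ depends on $x$ only through the Euclidean vector $\iota(x)\in\mathbb R^D$, so I can differentiate in the ambient space and apply the mean value theorem along the line segment from $\iota(x)$ to $\iota(x')$, provided the latter is short enough to stay in a tubular neighborhood of $\iota(M)$. Since $\nabla_z k_\epsilon(z,y)=-\frac{z-\iota(y)}{2\epsilon^2}k_\epsilon(z,y)$, integrating in normal coordinates at $\iota(y)$ (or using the moment computations in Remark \ref{coeffecient 1}) yields $\int_M |\nabla_z k_\epsilon(z,y)|\,\mathsf p(y)\,dV_M(y)\lesssim \epsilon^{d-1}$, whence $|d_\epsilon(x)-d_\epsilon(x')|\lesssim \epsilon^{d-1}\|\iota(x)-\iota(x')\|_{\mathbb R^D}$ and similarly for $\int|k_\epsilon(x,y)-k_\epsilon(x',y)|\,d\mathsf P(y)$. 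Plugging these Lipschitz estimates into the algebraic decomposition above, with the lower bounds $d_\epsilon\geq C_5\epsilon^d$ and the bound $\int k_\epsilon(x',y)/d_\epsilon(y)\,d\mathsf P(y)=d_\epsilon(x')PQ_\epsilon(x')=O(1)$, gives the claimed estimate after the final combination through the splitting of the quotient. The main obstacle is the careful bookkeeping of constants: the $C^1$ norm of $\mathsf p$ enters when Taylor-expanding the density in normal coordinates, and the curvature enters through the Jacobian of the exponential map controlling the volume form, while preserving the factor $\epsilon^{-d}$ at the end through the cancellation between the $\epsilon^{d-1}$ kernel-gradient moment and the $\epsilon^{-2d}$ produced by the quotient denominators.
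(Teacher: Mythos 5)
Your proposal is correct and proves the stated estimate, but it takes a finer route to the key equicontinuity bound than the paper does. The outer skeleton coincides: both you and the paper split $T_\epsilon f(x)-T_\epsilon f(x')$ into a numerator-difference term and a denominator-difference term, control $1/PQ_\epsilon(x)\lesssim \epsilon^{d}$ and $|PQ_\epsilon f|\lesssim \epsilon^{-d}$ via Lemma \ref{bounds on operators} (b), (c), and conclude compactness by Arzel\`a--Ascoli. The difference is in how Lipschitz continuity in $x$ is quantified: the paper simply asserts a pointwise-in-$y$ bound $|Q_\epsilon(x,y)-Q_\epsilon(x',y)|\leq \tilde C_1 \epsilon^{-2d} d(x,x')$ (from smoothness and compactness, plus comparability of $d(x,x')$ with $\|\iota(x)-\iota(x')\|_{\mathbb{R}^D}$ for nearby points) and integrates it against $d\mathsf P$, whereas you differentiate the Gaussian in the ambient variable, apply the mean value theorem on the segment $[\iota(x),\iota(x')]$, and integrate the gradient over $y$, getting the first-moment bound $\int_M|\nabla_z k_\epsilon(z,\cdot)|\,d\mathsf P\lesssim \epsilon^{d-1}$ and hence $|d_\epsilon(x)-d_\epsilon(x')|\lesssim \epsilon^{d-1}\|\iota(x)-\iota(x')\|_{\mathbb{R}^D}$. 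This buys you something real: carried through your decomposition, the numerator and denominator differences are $O(\epsilon^{-d-1})\|\iota(x)-\iota(x')\|_{\mathbb{R}^D}$, so after dividing by $PQ_\epsilon\gtrsim \epsilon^{-d}$ your Lipschitz constant is actually $O(\epsilon^{-1})$, at least as strong as the claimed $C_{12}\epsilon^{-d}$ and strictly stronger for $d\geq 2$; the paper's cruder pointwise bound does not exploit the concentration of the kernel (the sup over $y$ of $|\nabla_x Q_\epsilon(x,y)|$ is in fact of order $\epsilon^{-2d-1}$), so the integrated route is the one that comfortably delivers the stated rate. Two small points to tighten: (i) your closing bookkeeping ($\epsilon^{d-1}$ against $\epsilon^{-2d}$ ``preserving $\epsilon^{-d}$'') actually yields $\epsilon^{-d-1}$ before the final division by $PQ_\epsilon$, which supplies the remaining factor $\epsilon^{d}$; the end result is $\epsilon^{-1}$, so the lemma follows, but state the cancellation accurately; (ii) to integrate the mean value estimate, write $k_\epsilon(x,y)-k_\epsilon(x',y)=\int_0^1 \nabla_z k_\epsilon(z(t),y)\cdot(\iota(x)-\iota(x'))\,dt$ and use Fubini, noting the moment bound holds uniformly for $z$ on the segment (which need not lie on $\iota(M)$), so no sup--integral exchange is needed.
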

\begin{proof}
First, by (d) in Lemma \ref{bounds on operators}, we have $\|T_{\epsilon}\| \leq \frac{C_6^3}{C_5^3}$.
Next, since {$Q_{\epsilon}(\cdot,y)$} is a smooth function on $M$ and $M$ is compact, for $x, x' \in M$ with $\|\iota(x)-\iota(x')\|_{\mathbb{R}^D}$ small enough, we have  
\begin{align}
|Q_{\epsilon}(x,y)-Q_{\epsilon}(x',y)| \leq \tilde{C}_1\epsilon^{-2d} d(x,x') \leq {2\tilde{C}_1} \epsilon^{-2d} \|\iota(x)-\iota(x')\|_{\mathbb{R}^D}\,,\label{Proof Bound of Qeps from x to x'} 
\end{align}
where {$\tilde{C}_1>0$} depends on the curvature of $M$, $\mathsf p_m$ and the $C^1$ norm of $\mathsf p$.
Here $d(x,x')$ is the geodesic distance between $x$ and $x'$, and we use the fact that $d(x,x')$ can be bounded by $2\|\iota(x)-\iota(x')\|_{\mathbb{R}^D}$ whenever $\|\iota(x)-\iota(x')\|_{\mathbb{R}^D}$ is small enough. Thus, 
\begin{align}
\Big|\int_M Q_{\epsilon}(x,y) d\mathsf P(y)-\int_M Q_{\epsilon}(x',y) d\mathsf P(y)\Big| \leq &\, \int_M |Q_{\epsilon}(x,y)-Q(x',y)|d\mathsf P(y) \nonumber  \\
\leq &\, {2\tilde{C}_1} \epsilon^{-2d}\|\iota(x)-\iota(x')\|_{\mathbb{R}^D}\,,\nonumber
\end{align}
where we use the fact that $\int_M d\mathsf P(y)=1$.
Similarly, for any function $f \in C(M)$ such that $\|f\|_\infty\leq 1$,
\begin{equation}
\Big|\int_M Q_{\epsilon}(x,y) f (y) d\mathsf P(y)-\int_M Q_{\epsilon}(x',y) f (y) d\mathsf P(y)\Big| \leq {2\tilde{C}_1}\epsilon^{-2d} \|\iota(x)-\iota(x')\|_{\mathbb{R}^D}. \nonumber 
\end{equation}
As a result, 
\begin{align}
|T_{\epsilon}f(x)-T_{\epsilon}f(x')| 
{=}  \Big|\frac{\int_M Q_{\epsilon}(x,y)f(y) d\mathsf P(y)}{\int_M Q_{\epsilon}(x,y) d\mathsf P(y)}-\frac{\int_M Q_{\epsilon}(x',y)f(y) d\mathsf P(y)}{\int_M Q_{\epsilon}(x',y) d\mathsf P(y)} \Big| \nonumber 
\end{align}
is bounded by
\begin{align}
& \frac{|\int_M Q_{\epsilon}(x,y) f(y) d\mathsf P(y)-\int_M Q_{\epsilon}(x',y) f(y) d\mathsf P(y)|}{|\int_M Q_{\epsilon}(x,y) dP(y)|} \nonumber \\
& \qquad +\frac{|\int_M Q_{\epsilon}(x',y) f(y) d\mathsf P(y)||\int_M Q_{\epsilon}(x,y) d\mathsf P(y)-\int_M Q_{\epsilon}(x',y) d\mathsf P(y)|}{|\int_M Q_{\epsilon}(x,y) d\mathsf P(y)||\int_M Q_{\epsilon}(x',y) d\mathsf P(y)|} \nonumber \\
\leq &\, \frac{|\int_M Q_{\epsilon}(x,y) f(y) d\mathsf P(y)-\int_M Q_{\epsilon}(x',y) f(y) d\mathsf P(y)|}{\inf_{x \in M}|\int_M Q_{\epsilon}(x,y) d\mathsf P(y)|} \nonumber \\
& \qquad+\sup_{x \in M}\Big|\int_M Q_{\epsilon}(x,y) f(y) d\mathsf P(y)\Big| \frac{|\int_M Q_{\epsilon}(x,y) d\mathsf P(y)-\int_M Q_{\epsilon}(x',y) d\mathsf P(y)|}{(\inf_{x \in M}|\int_M Q_{\epsilon}(x,y) d\mathsf P(y)|)^2} \nonumber \\
\leq &\, C_{12} \epsilon^{-d} \|\iota(x)-\iota(x')\|_{\mathbb{R}^D}\,, \nonumber 
\end{align}
for a constant $C_{12}>0$. Hence, for a sequence of functions $\{f_i\}_{i=1}^\infty\subset C(M)$ such that $\|f_i\|_\infty\leq 1$, {$\{T_{\epsilon}f_i\}_{i=1}^\infty$} is equi-continuous. Then, $T_{\epsilon}$ is compact by the Arzel\`{a}-Ascoli theorem.
\end{proof}

In the next Lemma, we show that $T_{n,\epsilon}$ almost surely converges collectively compactly to $T_{\epsilon}$. 

{
\begin{lemma}\label{ collectively compactly covergence}
For {$\epsilon>0$ small enough,} as $n \rightarrow \infty$, $T_{n,\epsilon}$ almost surely converges collectively compactly to $T_{\epsilon}$ on the Banach space $(C(M),\,\|\cdot\|_\infty)$.
\end{lemma}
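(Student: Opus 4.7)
The plan is to verify the two defining components of collective compact convergence separately: pointwise convergence in $(C(M),\|\cdot\|_\infty)$, and collective compactness of the tail family $\{T_{n,\epsilon}-T_\epsilon\}_{n\geq N}$ on the unit ball $B\subset C(M)$.

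For pointwise convergence, I would fix $f\in C(M)$ with $\|f\|_\infty\leq 1$ and apply Lemma \ref{fix u , T n espilon-T epsilon} directly: with probability at least $1-n^{-2}$,
$$\|T_{n,\epsilon}f-T_\epsilon f\|_\infty\leq \frac{C_8}{\sqrt n\,\epsilon^{2d}}\bigl(\sqrt{-\log\epsilon}+\sqrt{\log n}\bigr).$$
Since $\sum_n n^{-2}<\infty$, the Borel--Cantelli lemma produces an almost sure event on which this estimate holds for all sufficiently large $n$. With $\epsilon>0$ fixed, the right-hand side tends to $0$, so $T_{n,\epsilon}f\to T_\epsilon f$ uniformly almost surely.

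For collective compactness, I will invoke Arzelà--Ascoli, which reduces the task to proving uniform boundedness and equicontinuity of $\{(T_{n,\epsilon}-T_\epsilon)f:\,n\geq N,\,f\in B\}$ in $C(M)$ on a full-probability event. Uniform boundedness follows from parts (d) and (d') of Lemma \ref{bounds on operators}, which give $\|T_\epsilon\|,\|T_{n,\epsilon}\|\leq C_6^3/C_5^3$ on the intersection (over $n\geq N$) of the corresponding $1-n^{-2}$ events, which has full probability by Borel--Cantelli. For equicontinuity of the $T_\epsilon$-component, Lemma \ref{uniform convergence of operator Q} already provides $|T_\epsilon f(x)-T_\epsilon f(x')|\leq C_{12}\epsilon^{-d}\|\iota(x)-\iota(x')\|_{\mathbb R^D}$. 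The crux is to prove a matching deterministic-type Lipschitz estimate for $T_{n,\epsilon}f$ that is uniform in $n$ and in $f\in B$. I would mimic the proof of Lemma \ref{uniform convergence of operator Q}: since $k_\epsilon(\cdot,y)$ is smooth with $|k_\epsilon(x,y)-k_\epsilon(x',y)|\leq C\epsilon^{-2}\|\iota(x)-\iota(x')\|_{\mathbb R^D}$, and since (a') of Lemma \ref{bounds on operators} provides the two-sided bound $C_5\epsilon^d\leq d_{n,\epsilon}\leq C_6\epsilon^d$ on the same high-probability event, a direct expansion of $Q_{n,\epsilon}(x,y)-Q_{n,\epsilon}(x',y)$ together with the quotient estimate used for $T_\epsilon$ yields
$$|T_{n,\epsilon}f(x)-T_{n,\epsilon}f(x')|\leq C'\epsilon^{-d}\|\iota(x)-\iota(x')\|_{\mathbb R^D},$$
with $C'$ independent of both $n$ and $f$. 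Intersecting the almost-sure events above delivers the required uniform equicontinuity, hence relative compactness of $\bigcup_{n\geq N}(T_{n,\epsilon}-T_\epsilon)(B)$ in $(C(M),\|\cdot\|_\infty)$, and the claim follows.

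The main obstacle is exactly this last step: controlling the modulus of continuity of $T_{n,\epsilon}f$ in $x$ \emph{uniformly in} both the sample size $n$ and the test function $f\in B$. The subtlety is that although each individual map $x\mapsto T_{n,\epsilon}f(x)$ is smooth (the $n$ summands are smooth in $x$), the natural Lipschitz constant involves the random empirical quantity $d_{n,\epsilon}$ in the denominator; collective compactness would fail if this constant were allowed to blow up as $n\to\infty$. The two-sided bound on $d_{n,\epsilon}$ from (a') is therefore essential, as it transfers the deterministic Lipschitz-in-$x$ behaviour of $k_\epsilon(\cdot,y)$ into a bound on $T_{n,\epsilon}f$ that is simultaneously uniform in $n$ and in $f$, matching the modulus derived for $T_\epsilon$ in Lemma \ref{uniform convergence of operator Q}.
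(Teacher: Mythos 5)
Your proposal is correct, and its skeleton matches the paper's: pointwise convergence comes from Lemma \ref{fix u , T n espilon-T epsilon} plus Borel--Cantelli, and collective compactness is reduced (via compactness of $T_\epsilon$ from Lemma \ref{uniform convergence of operator Q}) to an Arzel\`a--Ascoli argument for the family $\{T_{n,\epsilon}f\}$, with uniform boundedness supplied by (d') of Lemma \ref{bounds on operators}. Where you genuinely diverge is the equicontinuity step. The paper does \emph{not} prove a uniform-in-$n$ Lipschitz bound for $T_{n,\epsilon}f$; instead it writes $|Q_{n,\epsilon}(x,y)-Q_{n,\epsilon}(x',y)|\le 2\tilde C_1\epsilon^{-2d}\|\iota(x)-\iota(x')\|_{\mathbb R^D}+\tfrac{2C_7}{\sqrt n\,\epsilon^{3d}}(\sqrt{-\log\epsilon}+\sqrt{\log n})$ by comparing $Q_{n,\epsilon}$ to $Q_\epsilon$ through (e'), so the modulus carries an additive error that only vanishes as $n\to\infty$; this yields ``asymptotic'' equicontinuity and forces the subsequent subsequence-refinement/diagonal argument with $N'(\delta,\epsilon)$. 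You instead exploit that $d_{n,\epsilon}(x)=\tfrac1n\sum_i k_\epsilon(x,x_i)$ is \emph{deterministically} Lipschitz in $x$ with constant independent of $n$, and that only the two-sided bounds $C_5\epsilon^d\le d_{n,\epsilon}\le C_6\epsilon^d$ from (a') are probabilistic; on the almost-sure event where (a') holds for all large $n$, this gives an honest Lipschitz modulus for $Q_{n,\epsilon}$ and hence for $T_{n,\epsilon}f$, uniform in both $n$ and $f$ in the unit ball, so Arzel\`a--Ascoli applies directly to $\bigcup_{n\ge N}T_{n,\epsilon}(B)$ (and to the differences with $T_\epsilon$) without the refinement step. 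Your route buys a cleaner, genuinely uniform equicontinuity statement and a shorter compactness argument; the paper's route avoids re-deriving a Lipschitz bound for the empirical kernel by recycling (e'), at the cost of a more delicate $\delta$-dependent bookkeeping. Two cosmetic caveats: the exponent in your claimed bound $C'\epsilon^{-d}\|\iota(x)-\iota(x')\|_{\mathbb R^D}$ is not sharp (a direct expansion gives roughly $\epsilon^{-(2d+1)}$), which is harmless since $\epsilon$ is fixed; and, as in the paper, the almost-sure pointwise convergence is stated per fixed $f$, with the simultaneous statement implicitly obtained from separability of $C(M)$ and the uniform operator-norm bound --- the same level of detail the paper itself adopts.
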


\begin{proof}
 Denote $B_1\subset C(M)$ to be the unit ball in $(C(M),\|\cdot\|_\infty)$ centered at $0$. By Lemma \ref{fix u , T n espilon-T epsilon},  $T_{n,\epsilon}$ almost surely converges pointwisely to $T_{\epsilon}$. Hence, we only need to show that $\cup_{n=1}^\infty (T_{n,\epsilon} -T_{\epsilon})B_1$ is almost surely relatively compact. Since $T_{\epsilon}$ is a compact operator, it suffices to show that  $\cup_{n=1}^\infty T_{n,\epsilon} B_1 \subset C(M)$ is almost surely relatively compact. 
 
For any sequence $\{g_i\}_{i=1}^\infty \subset \cup_{n=1}^\infty T_{n,\epsilon} B_1$, {without loss of generality, assume} we can choose a subsequence of the form $\{T_{1,\epsilon}f_1, T_{2,\epsilon}f_2, \cdots\}$. {Clearly}, there is a ${N=}N(\epsilon)$ such that if $i>N(\epsilon)$, then $\frac{1}{\sqrt{i}\epsilon^d} (\sqrt{-\log\epsilon}+\sqrt{\log i}) < \mathcal{C}_1${, where $\mathcal C_1$ is defined in \eqref{Definition mathcal C_1}}. Hence, {by (d') in Lemma \ref{bounds on operators},} with probability greater than $1-\frac{1}{i^2}$, $\|T_{i,\epsilon}f_i\|_\infty$ is bounded by a constant. Hence,  by the Borel-Cantelli lemma, %
$\{T_{N,\epsilon}f_N, T_{N+1,\epsilon}f_{N+1}, \cdots\}$ is almost surely 
{bounded}.

Consider any element $T_{j,\epsilon}f_j${, where $j\geq N$,} in the above subsequence.  By {\eqref{Proof Bound of Qeps from x to x'}}, we know that if $\|\iota(x)-\iota(x')\|_{\mathbb{R}^D}$ is small enough,
\begin{align}
|Q_{\epsilon}(x,y)-Q_{\epsilon}(x',y)| \leq {2\tilde{C}_1}\epsilon^{-2d}\|\iota(x)-\iota(x')\|_{\mathbb{R}^D}. \nonumber 
\end{align}
Moreover, {by \eqref{Proof Bound of Qeps and Qneps at x} in the proof of} Lemma \ref{bounds on operators}, with probability greater than $1-j^{-2}$, for any $x,y \in M$,
\begin{align}
|Q_{j,\epsilon}(x,y)-Q_{\epsilon}(x,y)|\leq \frac{C_7}{\sqrt{j}\epsilon^{3d}}\left(\sqrt{-\log\epsilon}+\sqrt{\log j}\right)\,. \nonumber 
\end{align}
Hence, with probability greater than $1-j^{-2}$,
\begin{align}
|Q_{j,\epsilon}(x,y)-Q_{j,\epsilon}(x',y)| \leq {2\tilde{C}_1} \epsilon^{-2d} \|\iota(x)-\iota(x')\|_{\mathbb{R}^D}+\frac{2C_7}{\sqrt{j}\epsilon^{3d}}\left(\sqrt{-\log\epsilon}+\sqrt{\log j}\right)\,. \nonumber 
\end{align}
Since $\|f_j\|_\infty\leq 1$,
\begin{align}
 |T_{j,\epsilon}f_j(x)-T_{j,\epsilon}f_j(x')| 
= \Big|\frac{\int_M Q_{j,\epsilon}(x,y)f_j(y) d\mathsf P_j(y)}{\int_M Q_{j,\epsilon}(x,y) d\mathsf P_j(y)}-\frac{\int_M Q_{j,\epsilon}(x',y)f_j(y) d\mathsf P_j(y)}{\int_M Q_{j,\epsilon}(x',y) d\mathsf P_j(y)} \Big| \nonumber 
\end{align}
is bounded by
\begin{align}
 & \frac{|\int_M Q_{j,\epsilon}(x,y) f_j(y) d\mathsf P_j(y)-\int_M Q_{j,\epsilon}(x',y) f_j(y) d\mathsf P_j(y)|}{|\int_M Q_{\epsilon}(x,y) d\mathsf P_j(y)|} \nonumber \\
& \qquad+\frac{|\int_M Q_{j,\epsilon}(x',y) f_j(y) d\mathsf P_j(y)||\int_M Q_{j,\epsilon}(x,y) d\mathsf P_j(y)-\int_M Q_{j,\epsilon}(x',y) d\mathsf P_j(y)|}{|\int_M Q_{j,\epsilon}(x,y) d\mathsf P_j(y)||\int_M Q_{\epsilon}(x',y) d\mathsf P_j(y)|} \nonumber \\
\leq &\, \Big(\frac{1}{\inf_{x \in M}|\int_M Q_{j,\epsilon}(x,y) d\mathsf P_j(y)|}+\frac{\sup_{x \in M}|\int_M Q_{j,\epsilon}(x,y) f(y) d\mathsf P_j(y)|}{(\inf_{x \in M}|\int_M Q_{j,\epsilon}(x,y) d\mathsf P_j(y)|)^2}\Big)\nonumber\\
&\qquad\times |Q_{j,\epsilon}(x,y)-Q_{j,\epsilon}(x',y)| \nonumber \\
\leq &\, \tilde{C} \epsilon^d \left(\epsilon^{-2d} \|\iota(x)-\iota(x')\|_{\mathbb{R}^D}+\frac{1}{\sqrt{j}\epsilon^{3d}}\left(\sqrt{-\log\epsilon}+\sqrt{\log j}\right)\right)\nonumber 
\end{align}
{for $\tilde{C}>0$,} 
where we use (b') and (c') in Lemma \ref{bounds on operators}.
As a result, for any {$\delta>0$, there exists $N'(\delta,\epsilon)\in \mathbb{N}$ so that when} $j>N'(\delta,\epsilon)$, $\frac{1}{\sqrt{j}\epsilon^{3d}}(\sqrt{-\log\epsilon}+\sqrt{\log j})<\frac{\delta}{2}$. 
{Thus,} if $ \|\iota(x)-\iota(x')\|_{\mathbb{R}^D} \leq \frac{\epsilon^d\delta}{2\tilde{C}}$ and $j>N'(\delta,\epsilon)$, with probability greater than $1-\frac{1}{j^2}$, 
\[
|T_{j,\epsilon}f_j(x)-T_{j,\epsilon}f_j(x')| <\delta\,.
\] 
{As a result}, we can further refine the subsequence of $\{T_{N,\epsilon}f_N, T_{N+1,\epsilon}f_{N+1}, \cdots\}$ into a subsequence that is almost surely { bounded} and equicontinuous {by the Borel-Cantelli Lemma. It} follows from the Arzel\`a-Ascoli theorem that $\cup_{n=1}^\infty T_{n,\epsilon} B_1 \subset C(M)$ is almost surely relatively compact. 
\end{proof}
}

Next, we introduce the following lemma to bound the resolvent. The proof follows the standard approach, but we provide details for the sake of self-containedness. A similar proof can also be found in \cite{wang2015spectral}.
\begin{lemma}\label{proof Lemma: residue bound}
Let $\lambda$ be an isolated eigenvalue of an operator $T_{\epsilon}$. If $r<\frac{1}{2}\textup{dist}(\{\lambda\},\sigma(T_{\epsilon}) \setminus \{\lambda\})$, then 
\begin{equation}
\max_{z \in \Gamma_r(\lambda)} \|R_z(T_{\epsilon})\| \leq \frac{1}{r}. \nonumber 
\end{equation}
\end{lemma}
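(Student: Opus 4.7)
The plan is to exploit the spectral calculus for the compact self-adjoint operator $T_\epsilon$. By Lemma \ref{properties of T epsilon}(b), $T_\epsilon$ is self-adjoint on $L^2(M)$, and by Lemma \ref{uniform convergence of operator Q} it is compact; combined with the smoothing in Lemma \ref{properties of T epsilon}(c), the spectral theorem yields an orthonormal basis $\{\phi_{i,\epsilon}\}_{i=0}^\infty$ of $L^2(M)$ consisting of smooth eigenfunctions of $T_\epsilon$ with real eigenvalues $\{\lambda_{i,\epsilon}\}$ accumulating only at $0$.

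The first step is a purely geometric observation: for every $z \in \Gamma_r(\lambda)$ one has $\textup{dist}(z, \sigma(T_\epsilon)) \geq r$. Indeed, $|z-\lambda|=r$ by definition, while for any other $\mu \in \sigma(T_\epsilon)\setminus\{\lambda\}$ the reverse triangle inequality, together with the hypothesis $2r < \textup{dist}(\lambda, \sigma(T_\epsilon)\setminus\{\lambda\})$, gives
\[
|z-\mu| \;\geq\; |\lambda-\mu| - |z-\lambda| \;>\; 2r - r \;=\; r.
\]

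The second step is the spectral representation of the resolvent. For $f \in L^2(M)$ with expansion $f = \sum_i a_i \phi_{i,\epsilon}$ (convergence in $L^2$), one checks directly that $(zI-T_\epsilon)\sum_i \tfrac{a_i}{z-\lambda_{i,\epsilon}}\phi_{i,\epsilon} = f$, so that
\[
R_z(T_\epsilon) f \;=\; \sum_i \frac{a_i}{z-\lambda_{i,\epsilon}}\phi_{i,\epsilon}.
\]
Combining this with Parseval's identity and the bound $|z-\lambda_{i,\epsilon}| \geq r$ from step one yields $\|R_z(T_\epsilon)f\|_2 \leq \|f\|_2/r$, i.e. $\|R_z(T_\epsilon)\| \leq 1/r$, and taking the maximum over $z \in \Gamma_r(\lambda)$ gives the claim.

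The main subtlety — and the step I expect to require the most care — is that Theorem \ref{atkinson1967} applies $R_z$ in the Banach space $(C(M),\|\cdot\|_\infty)$, whereas the spectral calculus above naturally produces the $L^2$ bound. To bridge this, I would invoke the identity $R_z = \tfrac{1}{z}I + \tfrac{1}{z}T_\epsilon R_z$ together with the fact that $T_\epsilon$ maps $L^2(M)$ into $C(M)$ with controlled norm (an immediate consequence of the bound on the kernel $Q_\epsilon$ in Lemma \ref{bounds on operators}(a)--(c)): this shows that the resolvent defined in $L^2$ automatically sends $C(M)$ into $C(M)$, and the spectra of $T_\epsilon$ on the two spaces coincide because every $L^2$-eigenfunction is smooth. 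The sup-norm bound $1/r$ then follows by writing $R_z = \frac{P_\lambda}{z-\lambda} + \widetilde R_z$, where $P_\lambda$ is the finite-rank Riesz projection onto the $\lambda$-eigenspace (whose range lies in $C(M)$) and $\widetilde R_z$ is the resolvent of $T_\epsilon|_{\ker P_\lambda}$ whose spectrum is bounded away from $z$ by $r$, and applying the spectral argument of steps one and two on each piece.
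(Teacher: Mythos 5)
Your Steps 1--3 are correct and are essentially the paper's own argument in different clothing: the paper fixes $z\in\Gamma_r(\lambda)$, uses that the operator norm of $R_z(T_\epsilon)$ equals the modulus of its largest eigenvalue (the spectral-radius identity, valid because $T_\epsilon$ is compact and self-adjoint on the Hilbert space $L^2(M)$), notes that every eigenvalue of $R_z(T_\epsilon)$ is of the form $\frac{1}{z-\mu}$ with $\mu\in\sigma(T_\epsilon)$, and concludes from $\textup{dist}(z,\sigma(T_\epsilon))=r$ that $\|R_z(T_\epsilon)\|\leq 1/r$. Your Parseval computation with $|z-\lambda_{i,\epsilon}|\geq r$ is the same spectral-theorem argument written out in coordinates, so up to that point the two proofs coincide.

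The gap is in your Step 4, the attempted transfer to $(C(M),\|\cdot\|_\infty)$. The identity $R_z=\frac{1}{z}I+\frac{1}{z}T_\epsilon R_z$ does show that $R_z$ maps $C(M)$ into $C(M)$, but it does not yield the constant $1/r$: it gives $\|R_zf\|_\infty\leq\frac{1}{|z|}\|f\|_\infty+\frac{1}{|z|}\,\|T_\epsilon\|_{L^2\to C(M)}\,\frac{\|f\|_2}{r}$, and $\|T_\epsilon\|_{L^2\to C(M)}$ blows up as a negative power of $\epsilon$ (from the kernel bounds in Lemma \ref{bounds on operators}). The decomposition $R_z=\frac{P_\lambda}{z-\lambda}+\widetilde R_z$ does not repair this, because "the spectral argument of steps one and two" is intrinsically a Hilbert-space argument: in $(C(M),\|\cdot\|_\infty)$ the operator norm is not the spectral radius. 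Concretely, for a simple eigenvalue the Riesz projection is $P_\lambda f=(f,\phi_\lambda)\phi_\lambda$, whose $\infty\to\infty$ norm is of order $\|\phi_\lambda\|_\infty\|\phi_\lambda\|_{L^1(M)}$, which by Lemma \ref{lemma hormander} can be of size $\lambda^{(d-1)/4}$ and is in general larger than $1$, so $\big\|\frac{P_\lambda}{z-\lambda}\big\|_{\infty\to\infty}$ already exceeds $1/r$; likewise no distance-to-spectrum argument bounds $\|\widetilde R_z\|_{\infty\to\infty}$ by $1/r$. So Step 4 as written does not establish the sup-norm version of the bound --- and note that the paper does not attempt this either: its proof is exactly the Hilbert-space spectral-radius computation of your Steps 1--3, with the $L^2$-versus-$L^\infty$ issue left implicit.
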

\begin{proof}
Fix any $z \in \Gamma_r(\lambda)$. {Set} $|\lambda_z| =\|R_z(T_{\epsilon})\| $, where $\lambda_z$ is the eigenvalue of $R_z(T_{\epsilon})$ with the largest magnitude. If $v$ is the corresponding eigenvector, we have $(zI-T_{\epsilon})^{-1}v=\lambda_z v$. Therefore, $(zI-T_{\epsilon})v=\frac{1}{\lambda_z} v$. We conclude that $T_{\epsilon}v=\big(z-\frac{1}{\lambda_z}\big) v$; that is, $z-\frac{1}{\lambda_z}$ is an eigenvalue of $T_{\epsilon}$. Since $r<\frac{1}{2}\textup{dist}(\{\lambda\},\sigma(T_{\epsilon}) \setminus \{\lambda\})$, we have $\textup{dist}(z, \sigma(T_{\epsilon}))=r \leq |z-(z-\frac{1}{\lambda_z})|$. We conclude that $|\lambda_z| \leq \frac{1}{r}$.
\end{proof}

Suppose $\lambda_{i,\epsilon}$ is the $i$-th smallest eigenvalue of $\frac{I-T_{\epsilon}}{\epsilon^2}$, then $\bar{\lambda}_{i,\epsilon}=1-\lambda_{i,\epsilon}\epsilon^2$ is the $i$-th largest eigenvalue of $T_{\epsilon}$. In other words, we have $\cdots\leq \bar{\lambda}_{2,\epsilon}\leq\bar{\lambda}_{1,\epsilon} \leq \bar{\lambda}_{0,\epsilon}$ for $T_{\epsilon}$. Note that $\frac{I-T_{\epsilon}}{\epsilon^2}$ and $T_{\epsilon}$ share the same eigenfunctions; hence, we denote $\phi_{i,\epsilon}$ to be the eigenfunction corresponding to $\lambda_{i,\epsilon}$ (hence to $\bar{\lambda}_{i,\epsilon}$). Similarly, let $\lambda_{i,n,\epsilon}$ be the $i$-th smallest eigenvalue of $\frac{I-T_{n,\epsilon}}{\epsilon^2}$, and $\bar{\lambda}_{i,n,\epsilon}=1-\lambda_{i,n,\epsilon}\epsilon^2$ be the $i$-th largest eigenvalue of $T_{n,\epsilon}$. Denote $\phi_{i, n, \epsilon}$ to be the eigenfunction corresponding to $\lambda_{i,n, \epsilon}$ (hence to $\bar{\lambda}_{i,n,\epsilon}$). We assume that both $\phi_{i,\epsilon}$ and $\phi_{i, n, \epsilon}$ are normalized in the $L^2$ norm. The following proposition describes the spectral convergence of the operator $T_{n,\epsilon}$ to the operator $T_{\epsilon}$. 
Since we assume that  the eigenvalues of $\Delta$ are simple, due to the convergence of the eigenvalues, we will have that for any $K\in \mathbb{N}$, there exists $\epsilon>0$ sufficiently small so that the first $K$ eigenvalues of $T_{\epsilon}$ are simple. Therefore, even though we study the spectral convergence from $T_{n,\epsilon}$ to $T_{\epsilon}$ in the next proposition, to be consistent, we still only assume that  the eigenvalues of $\Delta$ are simple rather than making the assumption on $T_{\epsilon}$.

\begin{proposition}\label{T epsilon and T n espilon 1}
Fix $K\in \mathbb{N}$. Assume that the eigenvalues of $\Delta$ are simple. {Suppose $\epsilon>0$ is sufficiently small so that \eqref{epsilon less than some constant} is satisfied. If  $n$ is large enough so that $\frac{1}{\sqrt{n}\epsilon^d} ( \sqrt{-\log\epsilon}+\sqrt{\log n})< \mathcal{C}_1$, {where $\mathcal C_1>0$ is defined in \eqref{Definition mathcal C_1}},  then there are $a_i \in \{1,-1\}$} such that with probability greater than $1-n^{-2}$,  for all $1 \leq i < K$, we have
\begin{align}
& |\lambda_{i,\epsilon}-\lambda_{i, n,\epsilon}|\leq  \frac{C_{13}}{\sqrt{n}\epsilon^{2d+5}}\left( \sqrt{-\log\epsilon}+\sqrt{\log n}\right) , \nonumber  \\
& \|a_i\phi_{i,n,\epsilon}-\phi_{i,\epsilon}\|_{\infty} \leq \frac{C_{13}}{\sqrt{n}\epsilon^{2d+3}}\left( \sqrt{-\log\epsilon}+\sqrt{\log n}\right) . \nonumber 
\end{align}
where {$C_{13}>0$} is a constant depending on { $d$, the diameter of $M$, $\mathsf p_m$ and the $C^0$ norm of $\mathsf p$.}
\end{proposition}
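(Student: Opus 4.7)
The plan is to apply Theorem \ref{atkinson1967} to the collectively compactly convergent sequence $T_{n,\epsilon}\to T_\epsilon$ in $(C(M),\|\cdot\|_\infty)$ and then translate the resulting spectral estimates for $T_\epsilon$ into estimates for $\frac{I-T_\epsilon}{\epsilon^2}$ via $\bar\lambda_{i,\epsilon}=1-\lambda_{i,\epsilon}\epsilon^2$. To set the stage, Proposition \ref{T epsilon and Delta} tells us that under \eqref{epsilon less than some constant} the first $K$ eigenvalues of $\frac{I-T_\epsilon}{\epsilon^2}$ are simple and separated by gaps of order $\mathsf\Gamma_K$, and that $\|\phi_{i,\epsilon}\|_\infty\leq \|\phi_i\|_\infty+\epsilon$ is bounded above, and also bounded below as a unit $L^2$ vector, by a constant depending only on $K$; equivalently, $\{\bar\lambda_{i,\epsilon}\}_{i<K}$ are simple, separated by gaps of order $\mathsf\Gamma_K\epsilon^2$, and uniformly bounded away from $0$. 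Lemma \ref{ collectively compactly covergence} provides the collectively compact convergence, and Lemma \ref{proof Lemma: residue bound} provides the resolvent bound $\|R_z(T_\epsilon)\|\leq 1/r$ on any circle of radius $r$ smaller than half the gap.

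For each $1\leq i<K$, I would pick $r_i\asymp \mathsf\Gamma_K\epsilon^2$ small enough that $\Gamma_{r_i}(\bar\lambda_{i,\epsilon})$ encloses $\bar\lambda_{i,\epsilon}$ alone and $\min_{z\in\Gamma_{r_i}}|z|\geq 1/2$. On the intersection of the high-probability events of Lemmas \ref{fix u , T n espilon-T epsilon}, \ref{Proof Tn and Tneps relationship intermittent}, and \ref{(T espilon- Tn epsilon)Tn epsilon}, which still has probability at least $1-n^{-2}$ after absorbing constants, Theorem \ref{atkinson1967} produces a unique eigenvalue $\bar\lambda_{i,n,\epsilon}$ of $T_{n,\epsilon}$ in $\Gamma_{r_i}(\bar\lambda_{i,\epsilon})$ and delivers
\[
\|\phi_{i,\epsilon}-\textup{Pr}_{\bar\lambda_{i,n,\epsilon}}\phi_{i,\epsilon}\|_\infty \leq \frac{2r_i\|R_z(T_\epsilon)\|}{\min_{z\in\Gamma_{r_i}}|z|}\Bigl(\|(T_{n,\epsilon}-T_\epsilon)\phi_{i,\epsilon}\|_\infty + \|R_z(T_\epsilon)\phi_{i,\epsilon}\|_\infty\,\|(T_\epsilon-T_{n,\epsilon})T_{n,\epsilon}\|\Bigr).
\]
Substituting $\|R_z(T_\epsilon)\|\leq 1/r_i$ so that the prefactor is $O(1)$, using $\|R_z(T_\epsilon)\phi_{i,\epsilon}\|_\infty\leq \|\phi_{i,\epsilon}\|_\infty/r_i\lesssim \epsilon^{-2}$, and plugging in Lemma \ref{fix u , T n espilon-T epsilon} scaled by $\|\phi_{i,\epsilon}\|_\infty$ together with Lemma \ref{(T espilon- Tn epsilon)Tn epsilon}, the right-hand side becomes of order $\frac{1}{\sqrt n\,\epsilon^{2d+3}}(\sqrt{-\log\epsilon}+\sqrt{\log n})$ after absorbing $K$- and geometry-dependent factors into $C_{13}$.

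Since $\bar\lambda_{i,n,\epsilon}$ is simple, $\textup{Pr}_{\bar\lambda_{i,n,\epsilon}}\phi_{i,\epsilon}$ is a scalar multiple of $\phi_{i,n,\epsilon}$; the preceding $L^\infty$ estimate also keeps $\|\phi_{i,n,\epsilon}\|_\infty$ close to $\|\phi_{i,\epsilon}\|_\infty$, hence uniformly bounded above and below. A renormalized version of Lemma \ref{projection lemma} then yields $a_i\in\{-1,+1\}$ with $\|a_i\phi_{i,n,\epsilon}-\phi_{i,\epsilon}\|_\infty$ obeying the same $\epsilon^{-2d-3}$ rate, which is the advertised eigenvector bound. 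For the eigenvalue I would rewrite $T_{n,\epsilon}\phi_{i,n,\epsilon}=\bar\lambda_{i,n,\epsilon}\phi_{i,n,\epsilon}$ as
\[
(\bar\lambda_{i,n,\epsilon}-\bar\lambda_{i,\epsilon})\phi_{i,n,\epsilon}=(T_{n,\epsilon}-T_\epsilon)\phi_{i,n,\epsilon}+(T_\epsilon-\bar\lambda_{i,\epsilon}I)(\phi_{i,n,\epsilon}-a_i\phi_{i,\epsilon}),
\]
take $L^\infty$ norms on both sides, divide by the lower bound on $\|\phi_{i,n,\epsilon}\|_\infty$, and finally invoke $\lambda_{i,n,\epsilon}-\lambda_{i,\epsilon}=\epsilon^{-2}(\bar\lambda_{i,\epsilon}-\bar\lambda_{i,n,\epsilon})$; the extra factor $\epsilon^{-2}$ turns the $\epsilon^{-2d-3}$ rate into the claimed $\epsilon^{-2d-5}$ rate.

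The main obstacle is careful bookkeeping of the $\epsilon$-powers, because the $\epsilon^2$ eigengap of $T_\epsilon$ enters the Atkinson estimate through $1/r_i$ and the translation $\bar\lambda\mapsto\lambda$ through another $1/\epsilon^2$, and one must verify that the working regime $\frac{1}{\sqrt n\,\epsilon^d}(\sqrt{-\log\epsilon}+\sqrt{\log n})<\mathcal{C}_1$ of \eqref{Definition mathcal C_1} keeps every perturbation strictly inside $\Gamma_{r_i}$. A secondary subtlety is that the spectral projection is not orthogonal in $L^\infty$, so Lemma \ref{projection lemma} must be invoked with $L^\infty$-normalized representatives, or equivalently through direct one-dimensional scalar bookkeeping on the eigenline of $\phi_{i,n,\epsilon}$, in order not to inflate the error when the sign $a_i$ is chosen.
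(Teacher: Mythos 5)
Your proposal follows essentially the same route as the paper: Atkinson's theorem (Theorem \ref{atkinson1967}) applied on $(C(M),\|\cdot\|_\infty)$ with the collectively compact convergence of Lemma \ref{ collectively compactly covergence}, the resolvent bound of Lemma \ref{proof Lemma: residue bound} on a circle of radius $r\asymp \mathsf\Gamma_K\epsilon^2$ around $\bar\lambda_{i,\epsilon}$, the perturbation bounds of Lemmas \ref{fix u , T n espilon-T epsilon} and \ref{(T espilon- Tn epsilon)Tn epsilon}, the projection Lemma \ref{projection lemma} for the sign $a_i$, and a final rearrangement plus the factor $\epsilon^{-2}$ for the eigenvalues; this is exactly the paper's argument.

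Two bookkeeping points deserve attention. First, the statement asserts that $C_{13}$ depends only on $d$, the diameter of $M$, $\mathsf p_m$ and the $C^0$ norm of $\mathsf p$ — in particular not on $K$. If you ``absorb the $K$-dependent factors into $C_{13}$'' as written, you would obtain the better exponent $\epsilon^{-(2d+2)}$ but with a constant depending on $\mathsf\Gamma_K$ and $\lambda_K$, which contradicts the claimed form; the paper instead invokes \eqref{relation epsilon and lambda prop 2} (equivalently (11'') in Table \ref{Table:Relations}, guaranteed by \eqref{epsilon less than some constant}) to bound $\|\phi_{i,\epsilon}\|_\infty$, $\|\phi_{i,\epsilon}\|_\infty/\mathsf\Gamma_K$ and $1/\mathsf\Gamma_K$ all by $1/\epsilon$, and this single extra factor of $\epsilon^{-1}$ is precisely where the exponents $2d+3$ and $2d+5$ come from with a $K$-free constant. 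Second, in your eigenvalue identity the term $(T_{n,\epsilon}-T_\epsilon)\phi_{i,n,\epsilon}$ involves the random eigenfunction $\phi_{i,n,\epsilon}$, so Lemma \ref{fix u , T n espilon-T epsilon} (which is stated for a fixed deterministic $u$) cannot be applied to it directly; you need one more triangle inequality through $a_i\phi_{i,\epsilon}$, controlling $(T_{n,\epsilon}-T_\epsilon)(\phi_{i,n,\epsilon}-a_i\phi_{i,\epsilon})$ via the operator norm bounds of Lemma \ref{bounds on operators} and the already-proved eigenvector estimate, which is how the paper handles this step. With these two fixes your outline matches the paper's proof.
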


\begin{proof}
Suppose $\lambda_i$, $i=1,\ldots$, are the eigenvalues of $-\Delta$ so that $0<\lambda_1 < \lambda_2 < \ldots$. 
Recall the definition \eqref{defintion: mathsf Gamma K}: 
\begin{equation}
\mathsf\Gamma_K=\min_{1 \leq i \leq K}\textup{dist}(\lambda_i, \sigma(-\Delta)\setminus \{\lambda_i\})\,. \nonumber
\end{equation}
Consider an $i < K$. Based on the choice of $\epsilon$, we have shown in \eqref{Proof proposition 1 gap of the ith eigenvalue} in the proof of Proposition \ref {T epsilon and Delta} that 
\begin{equation}
\gamma_i\Big(\frac{I-T_{\epsilon}}{\epsilon^2}\Big) \geq\frac{1}{12}\mathsf\Gamma_K \,.
\end{equation}
Hence, 
\begin{equation}
\textup{dist}(\{\bar{\lambda}_{i,\epsilon}\}, \sigma(T_{\epsilon})\setminus \{\bar{\lambda}_{i,\epsilon}\})>\frac{\mathsf\Gamma_K}{12} \epsilon^2. 
\end{equation}
Choose $r= \frac{\mathsf\Gamma_K}{24} \epsilon^2$. 
By Lemma \ref{proof Lemma: residue bound}, we then have 
\begin{equation}
\max_{z \in \Gamma_r(\tilde{\lambda}_{i,\epsilon})}\|R_z(T_{\epsilon})\| \leq \frac{1}{r}=\frac{24}{\mathsf \Gamma_K \epsilon^2}\,.\label{proof RzTeps bound by GammaKeps2}
\end{equation} 
Moreover, if $\epsilon$ is small enough, $|z|>\frac{1}{2}$ for $z\in \Gamma_r(\tilde{\lambda}_{i,\epsilon})$.

Let $\textup{Pr}_{\phi_{i, n,\epsilon}}$ be the projection onto {$\texttt{span}\{\phi_{i, n,\epsilon}\}$}. { By Theorem \ref{atkinson1967}, we have }
\begin{align}
&\|\phi_{i, \epsilon}-\textup{Pr}_{\phi_{i, n,\epsilon}} \phi_{i, \epsilon}\|_{\infty} \nonumber \\
\leq &\,  \max_{z \in \Gamma_{r}} \frac{2r\|R_z(T_{\epsilon})\|}{\bar{\lambda}_{i,\epsilon}-r}(\|(T_{n,\epsilon}-T_{\epsilon}) \phi_{i, \epsilon}\|_{\infty} +\|R_z(T_{\epsilon}) \phi_{i, \epsilon}\|_{\infty}\|(T_{\epsilon}-T_{n,\epsilon})T_{n,\epsilon}\|) \nonumber \,,
\end{align}
where we use the fact that since $T_{\epsilon}$ and $T_{n,\epsilon}$ are compact self-adjoint operators, the eigenvalues are all real, {and the fact that} $\min_{z\in \Gamma_r(\bar \lambda_{i,\epsilon})}=\bar{\lambda}_{i,\epsilon}-r$.  By Proposition \ref {T epsilon and Delta}, we have $\lambda_{i,\epsilon} \leq \lambda_{i}+\epsilon^{\frac{3}{2}}$, and hence 
\begin{align}
\bar{\lambda}_{i,\epsilon}-r&\,=1-\lambda_{i,\epsilon}\epsilon^2- \frac{\mathsf\Gamma_K}{24} \epsilon^2 \geq 1-(\lambda_{i}+\epsilon^{\frac{3}{2}})\epsilon^2- \frac{\mathsf\Gamma_K}{24} \epsilon^2\nonumber\\
&\,\geq 1-(2 \lambda_{K}+\epsilon^{\frac{3}{2}})\epsilon^2 \geq \frac{1}{2}\,,
\end{align}
where in the last step, we use  (11') in Table \ref{Table:Relations}. By Lemma \ref{proof Lemma: residue bound}, $r\|R_z(T_{\epsilon})\|\leq 1$, $\|R_z(T_{\epsilon}) \phi_{i, \epsilon}\|_{\infty}\leq \|R_z(T_{\epsilon})\| \|\phi_{i, \epsilon}\|_{\infty}$ {and \eqref{proof RzTeps bound by GammaKeps2}, we have}
\begin{align}
\|\phi_{i, \epsilon}-\textup{Pr}_{\phi_{i, n,\epsilon}} \phi_{i, \epsilon}\|_{\infty} \leq 4\left( \left\|(T_{n,\epsilon}-T_{\epsilon})\frac{ \phi_{i,\epsilon}}{\|\phi_{i,\epsilon}\|_{\infty}}\right\|_{\infty} +\frac{24}{\mathsf\Gamma_K \epsilon^2}\|(T_{\epsilon}-
T_{n,\epsilon})T_{n,\epsilon}\| \right) \|\phi_{i,\epsilon}\|_{\infty}\nonumber\,.
\end{align}
{Next,} by Lemma \ref {fix u , T n espilon-T epsilon} and Lemma \ref{(T espilon- Tn epsilon)Tn epsilon}, {suppose $n$ is large enough so that $\frac{1}{\sqrt{n}\epsilon^d} ( \sqrt{-\log\epsilon}+\sqrt{\log n})< \mathcal{C}_1$}, with probability greater than $1-n^{-2}$, {we further have} 
\begin{align}
\|\phi_{i, \epsilon}-\textup{Pr}_{\phi_{i, n,\epsilon}} \phi_{i, \epsilon}\|_{\infty} \leq  \left(\frac{4C_8}{\sqrt{n}\epsilon^{2d}}+\frac{96C_{11}}{\mathsf\Gamma_K}\frac{1}{\sqrt{n}\epsilon^{2d+2}}\right) \left( \sqrt{-\log\epsilon}+\sqrt{\log n}\right) \|\phi_{i,\epsilon}\|_{\infty}. \label{proof phiieps and projection difference} 
\end{align}
By {\eqref{proof phiieps and projection difference} and} Lemma \ref{projection lemma}, there are $a_i \in \{1,-1\}$ such that 
\begin{align}
\|a_i\phi_{i,n,\epsilon}-\phi_{i,\epsilon}\|_{\infty} &\,\leq 2\|\phi_{i,\epsilon}-\textup{Pr}_{\phi_{i,n,\epsilon}} \phi_{i,\epsilon}\|_{\infty} \nonumber \\
&\,\leq  \left(\frac{8C_8}{\sqrt{n}\epsilon^{2d}}+\frac{192 C_{11}}{\mathsf\Gamma_K}\frac{1}{\sqrt{n}\epsilon^{2d+2}}\right) \left( \sqrt{-\log\epsilon}+\sqrt{\log n}\right) \|\phi_{i,\epsilon}\|_{\infty}\,.\label{convergence of eigenfunctions prop 2}
\end{align}

Next, we discuss the eigenvalues. We have 
\begin{align}
&\, |\bar{\lambda}_{i, \epsilon}-\bar{\lambda}_{i,n,\epsilon}|\|\phi_{i,\epsilon}\|_{\infty} 
= \|\bar{\lambda}_{i,\epsilon}\phi_{i,\epsilon}-\bar{\lambda}_{i, n,\epsilon}\phi_{i,\epsilon}\|_{\infty} \nonumber \\
\leq &\, \|\bar{\lambda}_{i, \epsilon}\phi_{i,\epsilon}-\bar{\lambda}_{i, n,\epsilon} a_i \phi_{i,n,\epsilon}\|_{\infty}+\|\bar{\lambda}_{i, n,\epsilon} a_i \phi_{i,n,\epsilon}-\bar{\lambda}_{i, n,\epsilon}\phi_{i,\epsilon}\|_{\infty} \nonumber\\
\leq &\, \|T_{\epsilon}\phi_{i,\epsilon}-T_{n,\epsilon} a_i \phi_{i,n,\epsilon}\|_{\infty}+|\bar{\lambda}_{i, n,\epsilon}|\| a_i \phi_{i,n,\epsilon}-\phi_{i,\epsilon}\|_{\infty}\,, \nonumber
\end{align}
where we control the difference by taking $\phi_{i,n,\epsilon}$ into account. The right hand side is further bounded by
\begin{align}
 &\, \|T_{\epsilon}\phi_{i,\epsilon}-T_{n,\epsilon}  \phi_{i,\epsilon}\|_{\infty}+\|T_{n,\epsilon}  \phi_{i,\epsilon}-T_{n,\epsilon} a_i \phi_{i,n,\epsilon}\|_{\infty}+|\bar{\lambda}_{i, n,\epsilon}|\| a_i \phi_{i,n,\epsilon}-\phi_{i,\epsilon}\|_{\infty} \nonumber \\
\leq &\,  \Big\|(T_{\epsilon}-T_{n,\epsilon})\frac{\phi_{i,\epsilon}}{\|\phi_{i,\epsilon}\|_{\infty}}\Big\|_{\infty}\|\phi_{i,\epsilon}\|_{\infty}+2\|T_{n,\epsilon}\|\|  \phi_{i,\epsilon}- a_i \phi_{i,n,\epsilon}\|_{\infty} \nonumber \\
\leq &\, \left[\frac{C_{8}}{\sqrt{n}\epsilon^{2d}}+\frac{2C_6^3}{C_5^3}\Big(\frac{8C_8}{\sqrt{n}\epsilon^{2d}}+\frac{{192}C_{11}}{\mathsf\Gamma_K}\frac{1}{\sqrt{n}\epsilon^{2d+2}}\Big) \right]\left( \sqrt{-\log\epsilon}+\sqrt{\log n}\right) \|\phi_{i,\epsilon}\|_{\infty} \nonumber\,, 
\end{align}
where we use Lemma \ref{bounds on operators} and Lemma \ref{fix u , T n espilon-T epsilon} in the last step.
Hence, we can cancel $\|\phi_{i,\epsilon}\|_{\infty} $ on both sides, and get
\begin{equation}\label{convergence of eigenvalues prop 2}
|\bar{\lambda}_{{i},\epsilon}-\bar{\lambda}_{{i},n,\epsilon}| \leq \left[\frac{C_{8}}{\sqrt{n}\epsilon^{2d}}+\frac{2C_6^3}{C_5^3}\Big(\frac{8C_8}{\sqrt{n}\epsilon^{2d}}+\frac{192 C_{11}}{\mathsf\Gamma_K}\frac{1}{\sqrt{n}\epsilon^{2d+2}}\Big) \right]\left( \sqrt{-\log\epsilon}+\sqrt{\log n}\right)\, .
\nonumber 
\end{equation}

Next, we simplify  \eqref{convergence of eigenfunctions prop 2}.
By  Lemma \ref{lemma hormander}, $\|\phi_{i}\|_\infty \leq C_1 \lambda_K^{\frac{d-1}{4}}$. Hence, by Proposition \ref {T epsilon and Delta}, we have $\|\phi_{i,\epsilon}\|_\infty \leq C_1 \lambda_K^{\frac{d-1}{4}}+\epsilon\leq 2C_1 \lambda_K^{\frac{d-1}{4}}$ when $\epsilon$ is sufficiently small. Therefore, if $\frac{2C_1 \lambda_K^{\frac{d-1}{4}}}{\min(\mathsf\Gamma_K,1)} \leq \frac{1}{\epsilon}$, which is sufficient when
\begin{align}\label{relation epsilon and lambda prop 2}
\epsilon \leq \frac{\min(\mathsf\Gamma_K,1)}{2C_1 \lambda_K^{\frac{d-1}{4}}+1},
\end{align}
then $\|\phi_{i,\epsilon}\|_\infty$, $\frac{\|\phi_{i,\epsilon}\|_\infty}{\mathsf\Gamma_K}$ and $\frac{1}{\mathsf\Gamma_K}$ are all bounded above by $\frac{1}{\epsilon}$. Note that \eqref{relation epsilon and lambda prop 2} is (11'') in Table \ref{Table:Relations}, and it is sufficient to require $\epsilon \leq \mathcal{K}_1 \min \Bigg(\bigg(\frac{\min(\mathsf\Gamma_K,1)}{\mathcal{K}_2+\lambda_K^{d/2+5}}\bigg)^2, \frac{1}{(\mathcal{K}_3+\lambda_K^{(5d+7)/4})^2}\Bigg)$.

In conclusion, we have
\begin{equation}
\|a_i\phi_{i,n,\epsilon}-\phi_{i,\epsilon}\|_{\infty} \leq \frac{C_{13}}{\sqrt{n}\epsilon^{2d+3}}\left( \sqrt{-\log\epsilon}+\sqrt{\log n}\right)\nonumber
\end{equation}
and
\begin{equation}
|\lambda_{{i},\epsilon}-\lambda_{{i}, n,\epsilon}|\leq \frac{|\bar{\lambda}_{i,\epsilon}-\bar{\lambda}_{i,n,\epsilon|}}{\epsilon^2} \leq \frac{C_{13}}{\sqrt{n}\epsilon^{2d+5}}\left( \sqrt{-\log\epsilon}+\sqrt{\log n}\right) \,, \nonumber 
\end{equation}
where {$C_{13}>0$} is a constant depending on the constants from $C_5$ to $C_{11}$, and hence it depends on $d$, the diameter of $M$, $\mathsf p_m$ and the $C^0$ norm of $\mathsf p$.
\end{proof}

{
\begin{remark}
We explain the reason why the eigenvalue's convergence rate is slower than that of the eigenfunction that is suggested in the previous proposition. Note that we can write
\begin{align}
&\, |\lambda_{i, \epsilon}-\lambda_{i,n,\epsilon}|\|\phi_{i,\epsilon}\|_{\infty} \nonumber\\
\leq & \|\frac{I-T_{\epsilon}}{\epsilon^2}\phi_{i,\epsilon}-\frac{I-T_{n,\epsilon}}{\epsilon^2}  \phi_{i,\epsilon}\|_{\infty}+\|\frac{I-T_{n,\epsilon}}{\epsilon^2}   \phi_{i,\epsilon}-\frac{I-T_{n,\epsilon}}{\epsilon^2} a_i \phi_{i,n,\epsilon}\|_{\infty}+|\lambda_{i, n,\epsilon}|\| a_i \phi_{i,n,\epsilon}-\phi_{i,\epsilon}\|_{\infty} \nonumber \\
= & \|\frac{T_{\epsilon}-T_{n,\epsilon}}{\epsilon^2}\phi_{i,\epsilon}\|_{\infty}+\|\frac{I-T_{n,\epsilon}}{\epsilon^2}  (\phi_{i,\epsilon}- a_i \phi_{i,n,\epsilon})\|_{\infty}+|\lambda_{i, n,\epsilon}|\| a_i \phi_{i,n,\epsilon}-\phi_{i,\epsilon}\|_{\infty} \nonumber 
\end{align}
We have a good control of the first term on the right hand side so that it matches the convergence rate of the eigenfunctions. However, we do not have a good control of the operator norm of $\frac{I-T_{n,\epsilon}}{\epsilon^2}$ in $L^\infty(M)$ since we use a trivial bound. Such trivial bound leads to an extra $\frac{1}{\epsilon^2}$, and hence the convergence rate of eigenvalues is slower.
\end{remark}

In the previous proposition, we assume $\epsilon$ is fixed and small enough and $n$ is large enough, then we characterize the spectral convergence rate of $\frac{I-T_{n,\epsilon}}{\epsilon^2}$ to $\frac{I-T_{\epsilon}}{\epsilon^2}$ in terms of $n$ and $\epsilon$. Next, we impose a relation between $n$ and $\epsilon$ to further simplify the above proposition. 

{\begin{corollary}\label{T epsilon and T n espilon 2}
Fix $K\in \mathbb{N}$. Assume that the eigenvalues of $\Delta$ are simple. {Suppose  \eqref{epsilon less than some constant} holds and $n$ is sufficiently large so that $\epsilon=\epsilon(n) \geq (\frac{\log n}{n})^{\frac{1}{4d+13}} $.} Then with probability greater than $1-n^{-2}$,  for all $1 \leq i < K$, we have
\begin{align}
|\lambda_{i,\epsilon}-\lambda_{i, n,\epsilon}|\leq  2C_{13} \epsilon^{\frac{3}{2}}.\nonumber  
\end{align}
{Suppose  \eqref{epsilon less than some constant} holds  and $n$ is sufficiently large so that $\epsilon=\epsilon(n) \geq (\frac{\log n}{n})^{\frac{1}{4d+8}}$.} Then there are $a_i \in \{1,-1\}$ such that with probability greater than $1-n^{-2}$,  for all $1 \leq i < K$, 
we have
\begin{align}
\|a_i\phi_{i,n,\epsilon}-\phi_{i,\epsilon}\|_{\infty} \leq 2C_{13} \epsilon. \nonumber 
\end{align}
The constant {$C_{13}>0$ is defined in Proposition \ref{T epsilon and T n espilon 1}.} 
\end{corollary}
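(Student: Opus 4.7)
The plan is to derive this corollary as a direct consequence of Proposition \ref{T epsilon and T n espilon 1} by substituting the prescribed asymptotic relations between $\epsilon$ and $n$ into its bounds. Proposition \ref{T epsilon and T n espilon 1} already gives
\begin{align}
|\lambda_{i,\epsilon}-\lambda_{i,n,\epsilon}| &\leq \frac{C_{13}}{\sqrt{n}\,\epsilon^{2d+5}}\bigl(\sqrt{-\log\epsilon}+\sqrt{\log n}\bigr),\nonumber\\
\|a_i\phi_{i,n,\epsilon}-\phi_{i,\epsilon}\|_\infty &\leq \frac{C_{13}}{\sqrt{n}\,\epsilon^{2d+3}}\bigl(\sqrt{-\log\epsilon}+\sqrt{\log n}\bigr),\nonumber
\end{align}
so the task reduces to bookkeeping: show the hypothesis $\frac{1}{\sqrt n\,\epsilon^d}(\sqrt{-\log\epsilon}+\sqrt{\log n})<\mathcal C_1$ of that proposition is automatic, and verify the powers match.

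First I would handle the log term. Since the two scenarios require $\epsilon\geq \bigl(\tfrac{\log n}{n}\bigr)^{1/(4d+13)}$ or $\epsilon\geq \bigl(\tfrac{\log n}{n}\bigr)^{1/(4d+8)}$, in either case $\epsilon^{-1}\leq (n/\log n)^{1/(4d+8)}$ for $n$ large, so $-\log\epsilon \leq \frac{1}{4d+8}\log(n/\log n)\leq \log n$. Hence $\sqrt{-\log\epsilon}+\sqrt{\log n}\leq 2\sqrt{\log n}$, which will absorb the awkward $\sqrt{-\log\epsilon}$ term into the constant (producing the factor $2$ on the right-hand side). Next, the hypothesis $\frac{1}{\sqrt n\,\epsilon^d}(\sqrt{-\log\epsilon}+\sqrt{\log n})<\mathcal C_1$ is seen to be satisfied for all sufficiently large $n$ because under either lower bound on $\epsilon$ the quantity $\frac{\sqrt{\log n}}{\sqrt n\,\epsilon^d}$ is bounded by $n^{d/(4d+8)-1/2}(\log n)^{1/2-d/(4d+8)}$, which tends to $0$.

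For the eigenvalue claim I would then observe that requiring
\begin{equation}
\frac{2C_{13}\sqrt{\log n}}{\sqrt n\,\epsilon^{2d+5}}\leq 2C_{13}\,\epsilon^{3/2}\nonumber
\end{equation}
is equivalent to $\epsilon^{4d+13}\geq \frac{\log n}{n}$, i.e. exactly the imposed relation $\epsilon\geq\bigl(\tfrac{\log n}{n}\bigr)^{1/(4d+13)}$. For the eigenfunction claim, the analogous computation
\begin{equation}
\frac{2C_{13}\sqrt{\log n}}{\sqrt n\,\epsilon^{2d+3}}\leq 2C_{13}\,\epsilon\nonumber
\end{equation}
becomes $\epsilon^{4d+8}\geq \frac{\log n}{n}$, which matches $\epsilon\geq\bigl(\tfrac{\log n}{n}\bigr)^{1/(4d+8)}$.

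There is no real obstacle here; the corollary is a packaging of Proposition \ref{T epsilon and T n espilon 1}, and the only care needed is in pinning down the logarithmic factors and confirming that the lower bound on $\epsilon$ is strong enough to also validate the probability-$1-n^{-2}$ hypothesis of Proposition \ref{T epsilon and T n espilon 1}. Both holds at the $1-n^{-2}$ confidence level from that proposition, and the same event supports the eigenvalue and eigenfunction bounds simultaneously (the eigenvalue bound uses the weaker of the two $\epsilon$-$n$ regimes, so it is implied whenever the eigenfunction regime holds).
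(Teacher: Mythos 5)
Your proposal is correct and follows essentially the same route as the paper's proof: substitute the imposed $\epsilon$--$n$ relations into the bounds of Proposition \ref{T epsilon and T n espilon 1}, absorb $\sqrt{-\log\epsilon}$ into $\sqrt{\log n}$ (producing the factor $2$), and verify that $\frac{1}{\sqrt{n}\epsilon^d}\left(\sqrt{-\log\epsilon}+\sqrt{\log n}\right)<\mathcal{C}_1$ holds for $n$ sufficiently large. The only slip is your closing aside: since $\left(\frac{\log n}{n}\right)^{1/(4d+13)}\geq\left(\frac{\log n}{n}\right)^{1/(4d+8)}$, the eigenvalue regime is the \emph{more} restrictive one (it implies the eigenfunction regime, not conversely), but this is immaterial since the two claims of the corollary are proved under their separate hypotheses.
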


\begin{proof}
When $(\frac{\log n}{n})^{\frac{1}{4d+13}} \leq \epsilon$, we have 
\begin{align}
\frac{\sqrt{\log{n}}}{\sqrt{n}}\frac{1}{\epsilon^{2d+5}}  \leq \epsilon^{\frac{3}{2}} \quad \hbox{and} \quad -\log\epsilon \leq \frac{\log n}{4d+13} \,.\nonumber
\end{align}
When $(\frac{\log n}{n})^{\frac{1}{4d+8}}\leq \epsilon$, we have 
\begin{align}
\frac{\sqrt{\log{n}}}{\sqrt{n}}\frac{1}{\epsilon^{2d+3}} \leq \epsilon  \quad \hbox{and} \quad -\log\epsilon \leq \frac{\log n}{4d+8} \,.\nonumber
\end{align}
Hence, we can check that  the condition $\frac{1}{\sqrt{n}\epsilon^d} ( \sqrt{-\log\epsilon}+\sqrt{\log n})< \mathcal{C}_1$ is satisfied in both cases when $n$ is sufficiently large, where $\mathcal{C}_1$ is defined in Proposition \ref{T epsilon and T n espilon 1}. The conclusion follows by substituting the above inequality into the bounds into Proposition \ref{T epsilon and T n espilon 1}.
\end{proof}
}
}

\subsection{Relation between $I-A$ and the operator $I-T_{n,\epsilon}$.} The following proposition relates the eigenfunctions of $I-T_{n,\epsilon}$ and the eigenvectors of the {kernel normalized} GL matrix $I-A$ { by using the Nystr{\"o}m extension}.

\begin{proposition}\label{relation between W and Q}
There is a bijection between the eigenpairs of $I-A$ and the eigenpairs of $I-T_{n,\epsilon}$ in the following sense: 
\begin{enumerate}
\item If $(\lambda, f)$ is an eigenpair of $I-T_{n,\epsilon}$, where $f \in L^\infty(M)$, then $(\lambda, v)$ is an eigenpair of $I-A$, where $v \in \mathbb{R}^n$ and $v(i)=f(x_i)$ for any $i$.
\item If $(\lambda, v)$ is an eigenpair of $I-A$, where $v \in \mathbb{R}^n$, let 
\begin{align}
f(x):=\frac{1}{\lambda} \frac{\frac{1}{n}\sum_{i=1}^n \frac{k_{\epsilon}(x,x_i)}{\frac{1}{n}q_{\epsilon}(x)\frac{1}{n} q_{\epsilon}(x_i)}v(i)}{\frac{1}{n}\sum_{i=1}^n \frac{k_{\epsilon}(x,x_i)}{\frac{1}{n}q_{\epsilon}(x) \frac{1}{n}q_{\epsilon}(x_i)}}, \nonumber
\end{align}
then $(\lambda, f)$ is an  eigenpair of $I-T_{n,\epsilon}$. Moreover $f \in C^\infty(M)$ and $v(i)=f(x_i)$ for any $i$.
\end{enumerate}
\end{proposition}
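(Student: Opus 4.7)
The plan is to exploit the fact that, although $T_{n,\epsilon}$ is defined on $C(M)$, its action on any function $g$ reduces at a data point $x_k$ to the action of the matrix $A$ on the restriction $g|_{\mathcal X}$. Specifically, I would first establish the cancellation identity: since $d_{n,\epsilon}(x_k)=q_\epsilon(x_k)/n$, one has $Q_{n,\epsilon}(x_k,x_i)=n^2 W_{ki}$, so the common factor $n^2$ in the numerator $P_n Q_{n,\epsilon}g(x_k)$ and denominator $P_n Q_{n,\epsilon}(x_k)$ of $T_{n,\epsilon}g(x_k)$ cancels, leaving
\[
T_{n,\epsilon}g(x_k)=\frac{\sum_i W_{ki}\,g(x_i)}{\sum_i W_{ki}}=(D^{-1}W\, g|_{\mathcal X})_k=(A\,g|_{\mathcal X})_k.
\]
This single identity will drive both directions of the bijection.

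For part (1), assume $(I-T_{n,\epsilon})f=\lambda f$, i.e., $T_{n,\epsilon}f=(1-\lambda)f$, and evaluate at $x=x_k$: by the identity above, the left side is $(A\,f|_{\mathcal X})_k$ while the right side is $(1-\lambda)f(x_k)$. Setting $v:=f|_{\mathcal X}$ therefore gives $Av=(1-\lambda)v$, equivalently $(I-A)v=\lambda v$.

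For part (2), given $(I-A)v=\lambda v$ (so $Av=(1-\lambda)v$), I would regard $v$ as a function on $\mathcal X$ and define the candidate eigenfunction $f$ via the Nystr{\"o}m extension in the statement. By the same cancellation identity at $x=x_k$, the ratio $P_n Q_{n,\epsilon}v(x_k)/P_n Q_{n,\epsilon}(x_k)$ equals $(Av)_k=(1-\lambda)v(k)$, and the scalar prefactor in front of the ratio is precisely the one that makes $f(x_k)=v(k)$ for every $k$. With $f|_{\mathcal X}=v$ in hand, I then apply $T_{n,\epsilon}$ to the globally defined $f$: since $T_{n,\epsilon}f(x)=\frac{(1/n)\sum_i Q_{n,\epsilon}(x,x_i)f(x_i)}{(1/n)\sum_i Q_{n,\epsilon}(x,x_i)}$ depends only on the values $f(x_i)=v(i)$, substituting gives exactly $T_{n,\epsilon}f(x)=(1-\lambda)f(x)$ for all $x\in M$, which is the desired eigen-relation $(I-T_{n,\epsilon})f=\lambda f$.

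Smoothness of $f$ on $M$ comes for free: both $P_n Q_{n,\epsilon}v(x)$ and $P_n Q_{n,\epsilon}(x)$ are finite linear combinations of the $C^\infty$ kernels $k_\epsilon(x,x_i)$ divided by the smooth strictly positive function $q_\epsilon(x)=\sum_i k_\epsilon(x,x_i)$ (positivity of Gaussians gives a uniform lower bound away from zero), so the ratio is smooth on $M$. The only delicate point is pinning down the scalar normalization in the definition of $f$ so that simultaneously $f|_{\mathcal X}=v$ and the eigenfunction identity holds; once the cancellation identity is written out, this normalization is automatic. No spectral perturbation or concentration machinery is required here, so I do not anticipate any obstacle beyond bookkeeping the factors of $n$.
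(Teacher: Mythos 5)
Your proposal is correct and takes essentially the same route as the paper's proof: the single cancellation identity $T_{n,\epsilon}g(x_k)=(A\,g|_{\mathcal X})_k$ for any $g\in C(M)$ (a Nystr\"om-extension argument), from which both directions and the smoothness of $f$ follow by direct substitution. The only details the paper adds are the remark that $W$ is positive definite by Bochner's theorem, which guarantees the eigenvalue of $A$ one divides by in the extension formula is nonzero, and the (implicit) reading of the $1/\lambda$ prefactor as the reciprocal of the corresponding eigenvalue of $A$ rather than of $I-A$ --- exactly the normalization you identified as the one forcing $f|_{\mathcal X}=v$.
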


\begin{proof}
It is sufficient to prove the theorem for $T_{n,\epsilon}$ and $A$. 
For any $g \in C(M)$, denote $\vec{g}=(g(x_1), \cdots, g(x_n))^\top$. Fix any $x_k \in\{x_i\}_{i=1}^n$. By the definition of $A$, we have
\begin{align}
A\vec{g}^\top(k)=& \sum_{i=1}^n A_{ki}g(x_i)=\sum_{i=1}^n \frac{W_{ki}g(x_i)}{D_{kk}}= \frac{\sum_{i=1}^nW_{ki}g(x_i)}{\sum_{i=1}^n W_{ki}} \nonumber  \\
=&  \frac{\frac{1}{n}\sum_{i=1}^nW_{ki}g(x_i)}{\frac{1}{n}\sum_{i=1}^n W_{ki}}= \frac{\frac{1}{n}\sum_{i=1}^n \frac{k_{\epsilon}(x_k,x_i)}{q_{\epsilon}(x_k) q_{\epsilon}(x_i)}g(x_i)}{\frac{1}{n}\sum_{i=1}^n \frac{k_{\epsilon}(x_k,x_i)}{q_{\epsilon}(x_k) q_{\epsilon}(x_i)}}= \frac{\frac{1}{n}\sum_{i=1}^n \frac{k_{\epsilon}(x_k,x_i)}{\frac{1}{n}q_{\epsilon}(x_k)\frac{1}{n} q_{\epsilon}(x_i)}g(x_i)}{\frac{1}{n}\sum_{i=1}^n \frac{k_{\epsilon}(x_k,x_i)}{\frac{1}{n}q_{\epsilon}(x_k) \frac{1}{n}q_{\epsilon}(x_i)}} \nonumber \\
=& T_{n,\epsilon}g(x_k). \nonumber
\end{align}
Hence, if $\lambda$ is an eigenvalue of $T_{n,\epsilon}$ and $f(x)$ is the corresponding eigenfunction, then for any $x_k$, we have $T_{n,\epsilon}f(x_k)=\lambda f(x_k)$. Therefore, the vector $v \in \mathbb{R}^n$ with $v(k)= f(x_k)$ satisfies $Av=\lambda v$.

For the second statement, observe that 
\begin{align}
f(x_j)=\frac{1}{\lambda} \frac{\frac{1}{n}\sum_{i=1}^n \frac{k_{\epsilon}(x_j,x_i)}{\frac{1}{n}q_{\epsilon}(x_j)\frac{1}{n} q_{\epsilon}(x_i)}v(i)}{\frac{1}{n}\sum_{i=1}^n \frac{k_{\epsilon}(x_j,x_i)}{\frac{1}{n}q_{\epsilon}(x_j) \frac{1}{n}q_{\epsilon}(x_i)}}=\frac{Av}{\lambda}(j)=v(j). \nonumber
\end{align}
Next,
\begin{align}
T_{n,\epsilon}f(x)=\frac{\frac{1}{n}\sum_{i=1}^n \frac{k_{\epsilon}(x,x_i)}{\frac{1}{n}q_{\epsilon}(x)\frac{1}{n} q_{\epsilon}(x_i)}f(x_i)}{\frac{1}{n}\sum_{i=1}^n \frac{k_{\epsilon}(x,x_i)}{\frac{1}{n}q_{\epsilon}(x) \frac{1}{n}q_{\epsilon}(x_i)}}=\frac{\frac{1}{n}\sum_{i=1}^n \frac{k_{\epsilon}(x,x_i)}{\frac{1}{n}q_{\epsilon}(x)\frac{1}{n} q_{\epsilon}(x_i)}v(i)}{\frac{1}{n}\sum_{i=1}^n \frac{k_{\epsilon}(x,x_i)}{\frac{1}{n}q_{\epsilon}(x) \frac{1}{n}q_{\epsilon}(x_i)}}=\lambda f(x). \nonumber
\end{align}
At last, the smoothness of $f$ implies that it is a finite summation and quotient of nonzero smooth functions. {Finally, note that $W$ is positively definite by the Bochner theorem. We thus finish the proof.}
\end{proof}

\subsection{Renormalization of the eigenvectors of $\frac{I-A}{\epsilon^2}$.}  

Denote $\mu_{i,n,\epsilon}$ to be the $i$-th eigenvalue of $\frac{I-A}{\epsilon^2}$ with the associated eigenvector $\tilde{v}_{i,n,\epsilon}$ normalized in the $l^2$ norm. {By} Proposition \ref{relation between W and Q},  
\begin{align}
\tilde{\phi}_{i,n,\epsilon}(x):=\frac{1}{1-\mu_{i,n,\epsilon}\epsilon^2} \frac{\frac{1}{n}\sum_{j=1}^n \frac{k_{\epsilon}(x,x_j)}{\frac{1}{n}q_{\epsilon}(x)\frac{1}{n} q_{\epsilon}(x_j)}\tilde{v}_{i,n,\epsilon}(j)}{\frac{1}{n}\sum_{j=1}^n \frac{k_{\epsilon}(x,x_j)}{\frac{1}{n}q_{\epsilon}(x) \frac{1}{n}q_{\epsilon}(x_j)}}
\end{align}
is the $i$-th eigenfunction of $\frac{I-T_{n,\epsilon}}{\epsilon^2}$ with $\tilde{\phi}_{i,n,\epsilon}(x_j)=\tilde{v}_{i,n,\epsilon}(j)$. Note that by Corollary \ref{T epsilon and T n espilon 2} and Proposition\ref{relation between W and Q}, it is intuitive to expect that $\tilde{\phi}_{i,n,\epsilon}$ is close to a discretization of an eigenfunction of $T_\epsilon$. In the previous propositions, we compare the eigenfunctions of operators $\frac{I-T_{n,\epsilon}}{\epsilon^2}$, $\frac{I-T_{\epsilon}}{\epsilon^2}$ and $\Delta$ normalized in $L^2(M)$, we would expect to ``normalize'' $\tilde{v}_{i,n,\epsilon}$ in the $L^2(M)$ sense. In this subsection, we will make this intuition rigorous.
We have the following lemma by using the convergence of $\frac{\tilde{\phi}_{i,n,\epsilon}(x_j)}{\|\tilde{\phi}_{i,n,\epsilon}\|_{2}}$ to $\phi_i(x_j)$.

\begin{lemma}\label{2 norm of eigenvectors}
Fix $K\in \mathbb{N}$. Suppose  \eqref{epsilon less than some constant} holds  and $n$ is sufficiently large so that $\epsilon=\epsilon(n) \geq (\frac{\log n}{n})^{\frac{1}{4d+8}}$. For all $1 \leq i < K$, with probability greater than $1-n^{-2}$,  we have 
\begin{align}
& \max_{x_j}\left| \frac{\tilde{\phi}_{i,n,\epsilon}(x_j)}{\|\tilde{\phi}_{i,n,\epsilon}\|_{2}}\right| \epsilon^{\frac{1}{4}} \leq 2
\end{align}
and
\begin{align}
&\left|\frac{1}{n}\sum_{j=1}^n \frac{\tilde{\phi}^2_{i,n,\epsilon}(x_j)}{\mathsf p(x_j)}-\frac{1}{n}\sum_{j=1}^n \frac{\|\tilde{\phi}_{i,n,\epsilon}\|^2_{2} \phi^2_{i}(x_j)}{\mathsf p(x_j)}\right| \leq \frac{4(C_{13}+1)  \|\tilde{\phi}_{i,n,\epsilon}\|^2_{2} }{\mathsf{p}_m}\epsilon^{\frac{3}{4}}\,, \nonumber 
\end{align}
{where $C_{13}>0$ is defined in Proposition \ref{T epsilon and T n espilon 1} and it depends on $d$, the diameter of $M$, $\mathsf p_m$ and the $C^0$ norm of $\mathsf p$.}
\end{lemma}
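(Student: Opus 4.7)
The plan is to reduce both claims to the $L^\infty$ convergence $\phi_{i,n,\epsilon}\to\phi_i$ already packaged in Corollary \ref{T epsilon and T n espilon 2} and Proposition \ref{T epsilon and Delta}, plus elementary pointwise manipulations. First I would note that, because each eigenvalue of $-\Delta$ is simple and $\epsilon$ satisfies \eqref{epsilon less than some constant}, the $i$-th eigenvalue of $\frac{I-T_{n,\epsilon}}{\epsilon^2}$ is simple (for all $i<K$, with high probability on the sampling), so its eigenspace is one-dimensional. Consequently $\tilde\phi_{i,n,\epsilon}/\|\tilde\phi_{i,n,\epsilon}\|_2=\pm\,\phi_{i,n,\epsilon}$, where $\phi_{i,n,\epsilon}$ is the $L^2(M)$-normalized eigenfunction appearing in Corollary \ref{T epsilon and T n espilon 2}. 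Combining $\|a_i\phi_{i,n,\epsilon}-\phi_{i,\epsilon}\|_\infty\le 2C_{13}\epsilon$ from that corollary with $\|b_i\phi_{i,\epsilon}-\phi_i\|_\infty\le\epsilon$ from Proposition \ref{T epsilon and Delta} yields, for a suitable sign $\alpha_i\in\{\pm 1\}$,
$$\|\alpha_i\phi_{i,n,\epsilon}-\phi_i\|_\infty\le(2C_{13}+1)\epsilon.$$

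For the first estimate, I would use Lemma \ref{lemma hormander} to get $\|\phi_i\|_\infty\le C_1\lambda_K^{(d-1)/4}$, and then invoke relation (12'') in Table \ref{Table:Relations} (which holds under \eqref{epsilon less than some constant}) to conclude $C_1\lambda_K^{(d-1)/4}\le\epsilon^{-1/4}$, hence $\|\phi_i\|_\infty\le\epsilon^{-1/4}$. Combining with the previous $L^\infty$ comparison,
$$\|\phi_{i,n,\epsilon}\|_\infty\;\le\;\epsilon^{-1/4}+(2C_{13}+1)\epsilon,$$
so $\epsilon^{1/4}\|\phi_{i,n,\epsilon}\|_\infty\le 1+(2C_{13}+1)\epsilon^{5/4}\le 2$ once $\epsilon$ is small enough. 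Since $\tilde\phi_{i,n,\epsilon}(x_j)/\|\tilde\phi_{i,n,\epsilon}\|_2=\pm\phi_{i,n,\epsilon}(x_j)$, this gives the first bound.

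For the second estimate, I would factor pointwise using $\tilde\phi_{i,n,\epsilon}^2=\|\tilde\phi_{i,n,\epsilon}\|_2^2\,\phi_{i,n,\epsilon}^2$ and the difference of squares:
$$\tilde\phi_{i,n,\epsilon}^2(x_j)-\|\tilde\phi_{i,n,\epsilon}\|_2^2\,\phi_i^2(x_j)=\|\tilde\phi_{i,n,\epsilon}\|_2^2\bigl(\alpha_i\phi_{i,n,\epsilon}(x_j)-\phi_i(x_j)\bigr)\bigl(\alpha_i\phi_{i,n,\epsilon}(x_j)+\phi_i(x_j)\bigr).$$
The first factor is controlled by $(2C_{13}+1)\epsilon$ from the $L^\infty$ comparison, and the second by $\|\phi_{i,n,\epsilon}\|_\infty+\|\phi_i\|_\infty\le 3\epsilon^{-1/4}$, using the first estimate of this lemma together with (12''). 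Dividing by $\mathsf p(x_j)\ge \mathsf p_m$ and averaging over $j$ then produces a bound of the form $\tfrac{\text{const}\cdot(C_{13}+1)}{\mathsf p_m}\|\tilde\phi_{i,n,\epsilon}\|_2^2\,\epsilon^{3/4}$, which (after absorbing the absolute constant into the $4$) gives the claim.

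Nothing here is conceptually hard: the heavy lifting was already done in Proposition \ref{T epsilon and Delta} and Corollary \ref{T epsilon and T n espilon 2}. The only delicate point is consistent bookkeeping of the relations between $\epsilon$ and $\lambda_K$ in Table \ref{Table:Relations} — in particular (12'') is what turns the intrinsically $\lambda_K$-dependent $L^\infty$ bound on $\phi_i$ into the $\epsilon^{-1/4}$ bound that makes the final exponents $\epsilon^{1/4}$ and $\epsilon^{3/4}$ come out clean.
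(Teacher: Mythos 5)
Your proposal is correct and follows essentially the same route as the paper: identify $\tilde\phi_{i,n,\epsilon}/\|\tilde\phi_{i,n,\epsilon}\|_2$ with $\pm\phi_{i,n,\epsilon}$, combine Proposition \ref{T epsilon and Delta} and Corollary \ref{T epsilon and T n espilon 2} to get the $(2C_{13}+1)\epsilon$ sup-norm comparison with $\phi_i$, use Lemma \ref{lemma hormander} together with (12'') to get $\|\phi_i\|_\infty\epsilon^{1/4}\le 1$, and finish the second bound by a difference of squares and the trivial bound $\mathsf p(x_j)\ge\mathsf p_m$. The only nitpick is constant bookkeeping: bounding the sum factor by $3\epsilon^{-1/4}$ yields $3(2C_{13}+1)=6C_{13}+3$, which is not literally $\le 4(C_{13}+1)$, so to recover the stated constant you should instead bound $|\alpha_i\phi_{i,n,\epsilon}+\phi_i|\le 2\|\phi_i\|_\infty+(2C_{13}+1)\epsilon$ as the paper does and absorb the resulting $\epsilon^2$ term for small $\epsilon$.
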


\begin{proof}
By Proposition \ref{T epsilon and Delta} and Corollary \ref{T epsilon and T n espilon 2}, there are $a_i\in \{1,-1\}$ such that with probability greater than $1-n^{-2}$,  
\begin{align} 
\max_{x_j}\bigg|a_i \frac{\tilde{\phi}_{i,n,\epsilon}(x_j)}{\|\tilde{\phi}_{i,n,\epsilon}\|_{2}}-\phi_{i}(x_j)\bigg|\leq {2C_{13}} \epsilon+\epsilon \leq (2C_{13}+1) \epsilon\,,
\end{align}
{where the last bound holds when $\epsilon$ is sufficiently small.}
By the  triangular inequality, we have
\begin{align}
\max_{x_j}\left| \frac{\tilde{\phi}_{i,n,\epsilon}(x_j)}{\|\tilde{\phi}_{i,n,\epsilon}\|_{2}}\right| \leq \|\phi_i\|_\infty + (2C_{13}+1) \epsilon. \nonumber 
\end{align}
Therefore, 
\begin{align}
\max_{x_j}\left| \frac{\tilde{\phi}_{i,n,\epsilon}(x_j)}{\|\tilde{\phi}_{i,n,\epsilon}\|_{2}}\right| \epsilon^{\frac{1}{4}} \leq \|\phi_i\|_\infty \epsilon^{\frac{1}{4}}+  (2C_{13}+1)  \epsilon^{\frac{5}{4}}. \nonumber 
\end{align}
By Lemma \ref{lemma hormander}, $\|\phi_i\|_\infty \epsilon^{\frac{1}{4}}\leq C_1\lambda^{\frac{d-1}{4}}_K \epsilon^{\frac{1}{4}}$. 
Note that $C_1\lambda^{\frac{d-1}{4}}_K \epsilon^{\frac{1}{4}} \leq 1$ which is (12'') in Table \ref{Table:Relations}. 
Hence,
\begin{align}
\|\phi_i\|_\infty \epsilon^{\frac{1}{4}}\leq 1 \nonumber
\end{align}
{when \eqref{epsilon less than some constant} is satisfied.} 
Therefore, we conclude that 
\begin{align}
\max_{x_j}\Big| \frac{\tilde{\phi}_{i,n,\epsilon}(x_j)}{\|\tilde{\phi}_{i,n,\epsilon}\|_{2}}\Big| \epsilon^{\frac{1}{4}} \leq {2}\,, \nonumber 
\end{align}
{where the last bound holds when $\epsilon$ is sufficiently small.}
By the  triangular inequality again, we also have
\begin{align}
\max_{x_j}\Big|a_i \frac{\tilde{\phi}_{i,n,\epsilon}(x_j)}{\|\tilde{\phi}_{i,n,\epsilon}\|_{2}}+\phi_{i}(x_j)\Big|&\,\leq \max_{x_j}\bigg|a_i \frac{\tilde{\phi}_{i,n,\epsilon}(x_j)}{\|\tilde{\phi}_{i,n,\epsilon}\|_{2}}-\phi_{i}(x_j)\bigg|+2 \|\phi_i\|_\infty\leq 2\|\phi_i\|_\infty +   (2C_{13}+1)   \epsilon. \nonumber 
\end{align}
Therefore, we conclude that
\begin{align}
\max_{x_j}\left|\tilde{\phi}^2_{i,n,\epsilon}(x_j)-\|\tilde{\phi}_{i,n,\epsilon}\|^2_{2} \phi^2_{i}(x_j)\right| \leq \|\tilde{\phi}_{i,n,\epsilon}\|^2_{2} \Big(2(C_{13}+1)  \|\phi_i\|_\infty  \epsilon+ 2(C_{13}+1) ^2 \epsilon^2 \Big)\,, \nonumber 
\end{align}
and hence the claim {follows from}
\begin{align}
&\left|\frac{1}{n}\sum_{j=1}^n \frac{\tilde{\phi}^2_{i,n,\epsilon}(x_j)}{\mathsf p(x_j)}-\frac{1}{n}\sum_{j=1}^n \frac{\|\tilde{\phi}_{i,n,\epsilon}\|^2_{2} \phi^2_{i}(x_j)}{\mathsf p(x_j)}\right|\nonumber\\
\leq &\, \left( \frac{1}{n}  \sum_{j=1}^n\frac{1}{\mathsf p(x_j)} \right)\|\tilde{\phi}_{i,n,\epsilon}\|^2_{2} \Big(2(C_{13}+1)  \|\phi_i\|_\infty  \epsilon+ 2(C_{13}+1) ^2 \epsilon^2 \Big) \nonumber \\
\leq&\, \frac{\|\tilde{\phi}_{i,n,\epsilon}\|^2_{2}}{\mathsf{p}_m}\Big(2(C_{13}+1)  \|\phi_i\|_\infty  \epsilon+ 2(C_{13}+1) ^2 \epsilon^2 \Big) \,,\nonumber 
\end{align}
where we use the trivial bound $ \frac{1}{n}  \sum_{j}\frac{1}{\mathsf p(x_j)}\leq 1/\mathsf{p}_m$. By using $\|\phi_i\|_\infty \epsilon^{\frac{1}{4}}\leq 1$ again, the conclusion follows when $\epsilon$ is small enough.
\end{proof}

Let $\mathbb{N}(j):=|B^{\mathbb{R}^p}_{\epsilon}(\iota(x_j)) \cap \{\iota(x_1), \cdots, \iota(x_n)\}|.$
By usual kernel density estimation (Lemma B.5 and Lemma E.1 in \cite{wu2018think}), we have the following lemma.
\begin{lemma}\label{KDE}
When $n$ is large enough, we have with probability greater than $1-n^{-2}$ that for all $j=1,\ldots,n$ 
{\begin{equation}
\hat{\mathsf p}(x_j):=\frac{d\mathbb{N}(j)}{|S^{d-1}|n\epsilon^d} =\mathsf p(x_j) +O(\epsilon^2)+ O\Big(\frac{\sqrt{\log (n)}}{n^{1/2}\epsilon^{d/2}}\Big)\,,\nonumber
\end{equation}
}
where $|S^{d-1}|$ is the volume of $S^{d-1}$ and the {implied} constant in $O(\epsilon^2)$ depends on the $C^2$ norm of $\mathsf p$.
\end{lemma}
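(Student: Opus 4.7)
The plan is to prove Lemma \ref{KDE} by the standard bias-variance decomposition for kernel density estimation on a manifold, using the indicator kernel of a Euclidean ball. Fix $x_j$ and observe that
$$\hat{\mathsf p}(x_j) = \frac{d}{|S^{d-1}|\epsilon^d}\cdot\frac{1}{n}\sum_{i=1}^n \mathbf{1}_{B^{\mathbb{R}^D}_\epsilon(\iota(x_j))}(\iota(x_i)),$$
so $n\hat{\mathsf p}(x_j)$ is (up to the normalizing constant) a sum of $n$ Bernoulli random variables, one of which is deterministically $1$ (namely $i=j$), and the remaining $n-1$ are i.i.d. under $\mathsf P$. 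Writing $\mathsf P_j:=\mathbb{E}\bigl[\mathbf{1}_{B^{\mathbb{R}^D}_\epsilon(\iota(x_j))}(\iota(Y))\bigr]$ for $Y\sim \mathsf p\, dV_M$, we split the proof into a bias step and a concentration step.

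First I would handle the bias. Since $M$ is isometrically embedded, the relation $\|\iota(x)-\iota(y)\|_{\mathbb{R}^D}^2 = d(x,y)^2 + O(d(x,y)^4)$ (with the implied constant controlled by the second fundamental form) shows that $B^{\mathbb{R}^D}_\epsilon(\iota(x_j))\cap \iota(M) = \iota\bigl(B^M_{\epsilon(1+O(\epsilon^2))}(x_j)\bigr)$. Working in normal coordinates centered at $x_j$, the Riemannian volume form has the expansion $dV_M = (1 + O(r^2))\,dr$ and
$$\mathsf p(\exp_{x_j}(u)) = \mathsf p(x_j) + \nabla \mathsf p(x_j)\cdot u + O(\|u\|^2),$$
where the implied constant depends on the $C^2$ norm of $\mathsf p$. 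Integrating over the (almost) geodesic ball of radius $\epsilon$, the linear term vanishes by antisymmetry, and the quadratic terms contribute $O(\epsilon^2)\cdot \frac{|S^{d-1}|}{d}\epsilon^d$. Together with the volume expansion $|B^M_\epsilon(x_j)| = \frac{|S^{d-1}|}{d}\epsilon^d(1+O(\epsilon^2))$ this yields
$$\mathsf P_j = \frac{|S^{d-1}|}{d}\epsilon^d\bigl(\mathsf p(x_j) + O(\epsilon^2)\bigr).$$

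Second, I would handle the concentration. Each indicator is bounded by $1$ and has variance at most $\mathsf P_j = O(\epsilon^d)$, so Bernstein's inequality gives
$$\mathbb{P}\!\left(\Bigl|\tfrac{1}{n}\sum_{i} \mathbf{1}_{B^{\mathbb{R}^D}_\epsilon(\iota(x_j))}(\iota(x_i)) - \mathsf P_j\Bigr| > t\right) \leq 2\exp\!\left(-\frac{nt^2}{2(C\epsilon^d + t/3)}\right)$$
conditional on $x_j$. Choosing $t = C'\sqrt{\log n/n}\cdot \epsilon^{d/2}$ makes the right side at most $n^{-3}$, and the deterministic contribution of the term $i=j$ is absorbed into the $O(1/n)$ error, which is dominated by $t$. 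Dividing through by $\frac{|S^{d-1}|}{d}\epsilon^d$ converts this bound into an error of order $\sqrt{\log n}/(n^{1/2}\epsilon^{d/2})$ on $\hat{\mathsf p}(x_j)$, and a union bound over $j=1,\ldots,n$ preserves the probability estimate $1-n^{-2}$ for $n$ large.

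The only nontrivial point is making the bias constant explicit and ensuring that the deviation of the Euclidean ball from the geodesic ball does not contaminate the leading-order $\mathsf p(x_j)$ term, both of which follow from the standard parametrix-type expansion of $\|\iota(x)-\iota(y)\|_{\mathbb{R}^D}$ in terms of $d(x,y)$. Since the result is already stated in \cite{wu2018think} and the argument is completely standard, no new technical obstacle appears.
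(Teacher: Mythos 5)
Your proof is correct and follows the standard route: compare the Euclidean ball with the geodesic ball via the embedding expansion, Taylor-expand $\mathsf p$ and the volume form to get the $O(\epsilon^2)$ bias, then apply Bernstein's inequality with a union bound over the $n$ points. The paper itself does not prove this lemma but cites Lemmas B.5 and E.1 of \cite{wu2018think}, whose argument is essentially the same as yours, so there is nothing substantive to add.
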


Since $\phi_i$ is normalized in $L^2(M)$, in next lemma, we show that we can construct an approximation of the constant $1$, and hence an approximation of $\|\tilde{\phi}_{i,n,\epsilon}\|_2$ by using $\phi_{i}$.

\begin{lemma}\label{deviation in L2 norm}
Fix $K\in \mathbb{N}$. 
Suppose  \eqref{epsilon less than some constant} holds  and $n$ is sufficiently large so that $\epsilon=\epsilon(n) \geq (\frac{\log n}{n})^{\frac{1}{4d+8}}$. For all $0 \leq i <K$, we have with probability greater than $1-n^{-2}$ that
\begin{equation}
\Big|\frac{1}{n}\sum_{j=1}^n \frac{\phi^2_{i}(x_j)}{\mathsf p(x_j)}-1\Big|< \epsilon. \nonumber 
\end{equation}
\end{lemma}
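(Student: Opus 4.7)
The target is to show that $\frac{1}{n}\sum_{j=1}^n \frac{\phi_i^2(x_j)}{\mathsf p(x_j)}$ concentrates around $1$. The plan is to recognize this sum as an empirical average of a bounded random variable with the correct mean, apply a standard concentration inequality, and then convert the resulting Monte--Carlo rate into a bound of size $\epsilon$ using the lower bound $\epsilon\geq (\log n/n)^{1/(4d+8)}$.

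First, since $\mathcal{X}=\{x_j\}_{j=1}^n$ is i.i.d.\ from $\mathsf p\,dV_M$ and $\phi_i$ is $L^2(M)$-normalized, the random variables $Y_j^{(i)}:=\phi_i^2(x_j)/\mathsf p(x_j)$ are i.i.d.\ with
\begin{equation}
\mathbb{E}[Y_j^{(i)}]=\int_M \frac{\phi_i^2(y)}{\mathsf p(y)}\mathsf p(y)\,dV_M(y)=\|\phi_i\|_2^2=1. \nonumber
\end{equation}
By Lemma \ref{lemma hormander} and the lower bound $\mathsf p \geq \mathsf p_m$ in Assumption \ref{assumption DM}, we have the uniform bound $0\leq Y_j^{(i)} \leq \frac{C_1^2\lambda_K^{(d-1)/2}}{\mathsf p_m}$ for every $0\leq i<K$.

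Next, I would apply Hoeffding's inequality to each $i$: for any $t>0$,
\begin{equation}
\Pr\!\left(\Big|\tfrac{1}{n}\sum_{j=1}^n Y_j^{(i)} - 1\Big| \geq t\right) \leq 2\exp\!\left(-\frac{2 n t^2 \mathsf p_m^2}{C_1^4\lambda_K^{d-1}}\right). \nonumber
\end{equation}
Choosing $t$ of order $\frac{C_1^2\lambda_K^{(d-1)/2}}{\mathsf p_m}\sqrt{\frac{(\log n+\log K)}{n}}$ and taking a union bound over $0\leq i<K$ yields, with probability at least $1-n^{-2}$,
\begin{equation}
\max_{0\leq i<K}\Big|\tfrac{1}{n}\sum_{j=1}^n \tfrac{\phi_i^2(x_j)}{\mathsf p(x_j)}-1\Big| \leq \tilde C\,\tfrac{\lambda_K^{(d-1)/2}}{\mathsf p_m}\sqrt{\tfrac{\log n}{n}}, \nonumber
\end{equation}
for a universal constant $\tilde C>0$ (absorbing $\log K$ into the implied $n$-dependence, which is harmless for $K$ fixed).

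Finally, I would convert this bound into $<\epsilon$. The hypothesis $\epsilon\geq (\log n/n)^{1/(4d+8)}$ gives $\sqrt{\log n/n}\leq \epsilon^{2d+4}$, so the right-hand side is controlled by $\tilde C\frac{C_1^2\lambda_K^{(d-1)/2}}{\mathsf p_m}\epsilon^{2d+4}$. Using relation (13'') from Table \ref{Table:Relations}, which states $\frac{C_1^2}{\mathsf p_m}\lambda_K^{(d-1)/2}\epsilon\leq \frac{1}{3}$ (and which is already guaranteed by the assumed upper bound \eqref{epsilon less than some constant} on $\epsilon$), this is dominated by $\tfrac{\tilde C}{3}\epsilon^{2d+3}<\epsilon$ for $\epsilon$ sufficiently small.

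There is no substantial obstacle here; the argument is essentially a one-shot application of Hoeffding plus a union bound, with the only bookkeeping being the dependence on $\lambda_K$ through $\|\phi_i\|_\infty$, which is already absorbed by the standing hypothesis on $\epsilon$ via relation (13'').
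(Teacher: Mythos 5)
Your proposal is correct and takes essentially the same route as the paper: the paper likewise regards $\phi_i^2(x_j)/\mathsf p(x_j)$ as i.i.d.\ samples with mean $\|\phi_i\|_2^2=1$, bounds them uniformly by $C_1^2\lambda_K^{(d-1)/2}/\mathsf p_m$ via Lemma \ref{lemma hormander}, and uses relation (13'') together with $\epsilon\geq(\tfrac{\log n}{n})^{\frac{1}{4d+8}}$ to close the argument. The only (immaterial) difference is that the paper applies Bernstein's inequality directly at deviation level $\beta=\epsilon$, obtaining an exponent of order $n\epsilon^3$ and checking $e^{-n\epsilon^3}\leq n^{-2}$, whereas you apply Hoeffding at confidence level $n^{-2}$ and then verify that the resulting Monte--Carlo rate is below $\epsilon$.
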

\begin{proof}
Suppose $\mathsf p$ is the p.d.f. of the random variable $X$ with the range $M$. Define a random variable $F:=\frac{\phi_i(X)^2}{\mathsf p(X)}$. Then, $\mathbb{E}[F]=\|\phi_i\|^2_2=1$ and ${F_j:}=\frac{\phi^2_{i}(x_j)}{\mathsf p(x_j)}$ can be regarded as i.i.d. samples from $F$. {By Lemma \ref{lemma hormander}, we} have
\begin{align}
b:=\Big\|\frac{\phi_i^2}{\mathsf p}\Big\|_{\infty} \leq  \frac{C^2_1}{\mathsf{p}_m}\lambda^{(d-1)/2}_K, \nonumber 
\end{align}
where $C_1$ is defined in Lemma \ref{lemma hormander}, and 
\begin{align}
\mathbb{E}[F^2] =& \int_{M} \frac{\phi_i(x)^4}{\mathsf p^2(x)} \mathsf p(x) dx \leq \Big\|\frac{\phi_i^2}{\mathsf p}\Big\|_{\infty} \int_{M} \phi_i(x)^2 dx = \Big\|\frac{\phi_i^2}{\mathsf p}\Big\|_{\infty}\|\phi_i\|^2_2 \leq b. \nonumber
\end{align}
Hence, 
\begin{align}
\sigma^2:=\text{Var}(F)  \leq b-1\,. \nonumber 
\end{align} 
We apply Bernstein's inequality to provide a large deviation bound.  Recall that for $\beta>0$, the Bernstein's inequality is 
\begin{equation}
\Pr \left\{\Big|\frac{1}{n}\sum_{j=1}^n F_j- \mathbb{E}[F]\Big| > \beta\right\} \leq e^{-\frac{n\beta^2}{2\sigma^2 + \frac{2}{3}b\beta}}\,.\nonumber
\end{equation}
For $\beta=\epsilon<1$,
\begin{equation}
\frac{n\epsilon^2}{2\sigma^2 + \frac{2}{3}b\beta} \geq \frac{n\epsilon^2}{3b}.
\end{equation} 
{Note that} $\frac{n\epsilon^2}{2\sigma^2 + \frac{2}{3}b\beta} \geq n \epsilon^3$ when $3b\epsilon \leq 1${, which} is satisfied when 
\begin{align}
\frac{C^2_1}{\mathsf{p}_m}\lambda^{(d-1)/2}_K \epsilon \leq \frac{1}{3}, \nonumber
\end{align}
which is (13'') in Table \ref{Table:Relations}. Note that (13'') in Table \ref{Table:Relations} is satisfied when {\eqref{epsilon less than some constant} is satisfied.}
Therefore, we have
\begin{equation}
\Pr \left\{\Big|\frac{1}{n}\sum_{j=1}^n F_j- \mathbb{E}[F]\Big| > \epsilon \right\} \leq e^{- n \epsilon^3}\,.\nonumber
\end{equation}
Note that $e^{- n \epsilon^3} \leq \frac{1}{n^2}$ when $\epsilon^3 \geq \frac{2\log n}{n}$, {which is satisfied by the assumption $(\frac{\log n}{n})^{\frac{1}{4d+8}} \leq \epsilon$.}
\end{proof}

If we combine the above three lemmas, we have the following proposition.

\begin{proposition}\label{Convergence of normalized eigenvectors}
Fix $K\in \mathbb{N}$. 
Suppose  \eqref{epsilon less than some constant} holds  and $n$ is sufficiently large so that $\epsilon=\epsilon(n) \geq (\frac{\log n}{n})^{\frac{1}{4d+8}}$. Then, with probability greater than $1-n^{-2}$,  for all $1 \leq i < K$, 
we have
\begin{align}
\max_{x_j} \Big|\frac{\tilde{v}_{i,n,\epsilon}(j)}{\|\tilde{v}_{i,n,\epsilon}\|_{l^2(1/\hat{\mathsf p})}}-\frac{\tilde{\phi}_{i,n,\epsilon}(x_j)}{\|\tilde{\phi}_{i,n,\epsilon}\|_{2}}\Big| \leq {\mathcal{K}_4} \epsilon^{1/2} \,,\nonumber 
\end{align}
where the constant ${\mathcal{K}_4}>0$ is a constant depending on {$d$, the diameter of $M$, $\mathsf p_m$, and the $C^2$ norm of $\mathsf p$.}
\end{proposition}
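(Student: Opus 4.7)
The plan is to reduce the claim to a comparison of two normalization constants. By Proposition \ref{relation between W and Q}, the Nystr\"om extension gives $\tilde{v}_{i,n,\epsilon}(j)=\tilde{\phi}_{i,n,\epsilon}(x_j)$ for every $x_j\in\mathcal X$. Hence, writing $N_1:=\|\tilde v_{i,n,\epsilon}\|_{l^2(1/\hat{\mathsf p})}$ and $N_2:=\|\tilde\phi_{i,n,\epsilon}\|_2$, the difference at $x_j$ is
\[
\frac{\tilde v_{i,n,\epsilon}(j)}{N_1}-\frac{\tilde\phi_{i,n,\epsilon}(x_j)}{N_2}=\frac{\tilde\phi_{i,n,\epsilon}(x_j)}{N_2}\cdot\frac{N_2-N_1}{N_1}\,,
\]
so it suffices to bound the relative error $|N_2-N_1|/N_1$ and combine with the $L^\infty$ control of $\tilde\phi_{i,n,\epsilon}/N_2$ provided by Lemma \ref{2 norm of eigenvectors}.

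The first step is to rewrite $N_1^2$ as a Monte Carlo sum. Using Lemma \ref{KDE} one has $\hat{\mathsf p}(x_j)=\frac{d\,\mathbb{N}(j)}{|S^{d-1}|n\epsilon^d}$, so that $N_1^2=\frac{1}{n}\sum_{j=1}^n \tilde\phi^2_{i,n,\epsilon}(x_j)/\hat{\mathsf p}(x_j)$. Under the hypothesis $\epsilon\geq (\log n/n)^{1/(4d+8)}$, the error $O(\epsilon^2)+O(\sqrt{\log n}/(\sqrt n\epsilon^{d/2}))$ in Lemma \ref{KDE} is negligible compared to $\epsilon^{3/4}$, so with probability at least $1-n^{-2}$, $N_1^2=\frac{1}{n}\sum_{j=1}^n \tilde\phi^2_{i,n,\epsilon}(x_j)/\mathsf p(x_j)$ up to a multiplicative $(1+O(\epsilon^2))$ factor times $N_2^2$. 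The second step then applies Lemma \ref{2 norm of eigenvectors} to replace $\tilde\phi^2_{i,n,\epsilon}$ by $N_2^2\phi_i^2$, introducing an additive error of order $N_2^2\epsilon^{3/4}/\mathsf p_m$, and the third step applies Lemma \ref{deviation in L2 norm} to replace $\frac{1}{n}\sum_j \phi_i^2(x_j)/\mathsf p(x_j)$ by $1$ up to an additive $\epsilon$ error. Chaining these bounds yields $|N_1^2-N_2^2|\leq C\,N_2^2\,\epsilon^{3/4}$ for a constant $C$ depending on $d$, $\mathsf p_m$ and the $C^2$ norm of $\mathsf p$.

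Taking square roots and using the identity $|N_1-N_2|=|N_1^2-N_2^2|/(N_1+N_2)$ gives $|N_2-N_1|/N_2=O(\epsilon^{3/4})$, and hence also $|N_2-N_1|/N_1=O(\epsilon^{3/4})$ once $\epsilon$ is small enough that $N_1\geq N_2/2$. Finally, Lemma \ref{2 norm of eigenvectors} provides the bound $|\tilde\phi_{i,n,\epsilon}(x_j)|/N_2\leq 2\epsilon^{-1/4}$ uniformly in $j$. Multiplying the two estimates yields
\[
\max_{x_j}\left|\frac{\tilde v_{i,n,\epsilon}(j)}{N_1}-\frac{\tilde\phi_{i,n,\epsilon}(x_j)}{N_2}\right|\leq 2\epsilon^{-1/4}\cdot O(\epsilon^{3/4})=\mathcal K_4\,\epsilon^{1/2}\,,
\]
as claimed, after collecting constants depending on $d$, $\mathsf p_m$, the $C^2$ norm of $\mathsf p$ and the diameter of $M$.

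The main obstacle is precisely the interplay between the $\epsilon^{-1/4}$ possible growth of the pointwise values $|\tilde\phi_{i,n,\epsilon}(x_j)|/N_2$ and the $\epsilon^{3/4}$ relative error in the normalization — neither factor can be improved in isolation by the tools at hand, but their product gives the right $\epsilon^{1/2}$ rate. The rest is bookkeeping of the various probabilistic error terms and checking that the hypothesis $\epsilon\geq (\log n/n)^{1/(4d+8)}$, together with the smallness conditions coming from \eqref{epsilon less than some constant}, is enough to make the auxiliary lemmas apply simultaneously on a single event of probability at least $1-n^{-2}$ (via a union bound).
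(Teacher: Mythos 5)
Your proposal is correct and follows essentially the same route as the paper's proof: identify $\tilde v_{i,n,\epsilon}(j)=\tilde\phi_{i,n,\epsilon}(x_j)$ via the Nystr\"om extension, compare the two normalization constants by chaining Lemma \ref{KDE}, Lemma \ref{2 norm of eigenvectors} and Lemma \ref{deviation in L2 norm} to get a relative error of order $\epsilon^{3/4}$, and then multiply by the pointwise bound $|\tilde\phi_{i,n,\epsilon}(x_j)|/\|\tilde\phi_{i,n,\epsilon}\|_2\leq 2\epsilon^{-1/4}$ to obtain the $\epsilon^{1/2}$ rate. The decomposition, the key lemmas, and the balancing of the $\epsilon^{-1/4}$ and $\epsilon^{3/4}$ factors all match the paper's argument.
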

\begin{proof}By Lemma \ref{KDE}, with probability greater than $1-n^{-2}$,
\begin{align}
\frac{|S^{d-1}|\epsilon^d}{d} \sum_{j=1}^n \frac{\tilde{v}^2_{i,n,\epsilon}(j)}{\mathbb{N}(j)}= \frac{1}{n}\sum_{j=1}^n \frac{\tilde{v}^2_{i,n,\epsilon}(j)}{\frac{d\mathbb{N}(j)}{|S^{d-1}|n\epsilon^d}}= \frac{1}{n}\sum_{j=1}^n \frac{\tilde{v}^2_{i,n,\epsilon}(j)}{\mathsf p(x_j)}\Big[1+O(\epsilon^2)+ O\Big(\frac{\sqrt{\log (n)}}{n^{1/2}\epsilon^{d/2}}\Big)\Big]\,, \nonumber 
\end{align}
{where the constant in $O(\epsilon^2)$ depends on the $C^2$ norm of $\mathsf p$.}
Hence, 
\begin{align}\label{nbhd and pdf}
\frac{|S^{d-1}|\epsilon^d}{d} \sum_{j=1}^n \frac{\tilde{v}^2_{i,n,\epsilon}(j)}{\mathbb{N}(j)}\Big[1+O(\epsilon^2)+ O\Big(\frac{\sqrt{\log (n)}}{n^{1/2}\epsilon^{d/2}}\Big)\Big]= \frac{1}{n}\sum_{i=1}^n \frac{\tilde{v}^2_{i,n,\epsilon}(j)}{\mathsf p(x_j)}.
\end{align}
By the assumption $ (\frac{\log n}{n})^{\frac{1}{4d+8}} {=}\epsilon$, $\frac{\sqrt{\log (n)}}{n^{1/2}\epsilon^{d/2}}{\leq}\epsilon^2$. Therefore, we have
\begin{align}
\frac{|S^{d-1}|\epsilon^d}{d} \sum_{j=1}^n \frac{\tilde{v}^2_{i,n,\epsilon}(j)}{\mathbb{N}(j)}(1+O(\epsilon^2))= \frac{1}{n}\sum_{j=1}^n \frac{\tilde{v}^2_{i,n,\epsilon}(j)}{\mathsf p(x_j)}=\frac{1}{n}\sum_{i=1}^n \frac{\tilde{\phi}^2_{i,n,\epsilon}(x_j)}{\mathsf p(x_j)}, \nonumber 
\end{align}
{where the constant in $O(\epsilon^2)$ depends on $d$, the diameter of $M$, $\mathsf p_m$, and the $C^2$ norm of $\mathsf p$.}
By Lemma \ref{deviation in L2 norm},  with probability greater than $1-n^{-2}$,
\begin{align}
 \left|\frac{1}{n}\sum_{j=1}^n \frac{\|\tilde{\phi}_{i,n,\epsilon}\|^2_{2}\phi^2_{i}(x_j)}{\mathsf p(x_j)}-\|\tilde{\phi}_{i,n,\epsilon}\|^2_{2}\right|< \epsilon \|\tilde{\phi}_{i,n,\epsilon}\|^2_{2}\,. \label{proof tildephiphi2/p -tilde phi22} 
\end{align}
Hence, {by \eqref{proof tildephiphi2/p -tilde phi22} and} Lemma \ref{2 norm of eigenvectors}, {when $\epsilon$ is sufficiently small, we have}
\begin{align}
\left|\frac{1}{n}\sum_{j=1}^n \frac{\tilde{\phi}^2_{i,n,\epsilon}(x_j)}{\mathsf p(x_j)}-\|\tilde{\phi}_{i,n,\epsilon}\|^2_{2}\right| \leq  \frac{8(C_{13}+1)}{\mathsf{p}_m} \|\tilde{\phi}_{i,n,\epsilon}\|^2_{2} \epsilon^{\frac{3}{4}}\,, \nonumber 
\end{align}
{where $C_{13}$ depends on $d$, the diameter of $M$, $\mathsf p_m$ and the $C^0$ norm of $\mathsf p$.}
Hence,
\begin{align}
\bigg|\frac{|S^{d-1}|\epsilon^d}{d} \sum_{j=1}^n \frac{\tilde{v}^2_{i,n,\epsilon}(j)}{\mathbb{N}(j)}(1+O(\epsilon^2))-\|\tilde{\phi}_{i,n,\epsilon}\|^2_{2}\bigg| \leq    \frac{8(C_{13}+1)}{\mathsf{p}_m} \|\tilde{\phi}_{i,n,\epsilon}\|^2_{2} \epsilon^{\frac{3}{4}},\, \nonumber 
\end{align}
{where the constant in $O(\epsilon^2)$ depends on $d$, the diameter of $M$, $\mathsf p_m$, and the $C^2$ norm of $\mathsf p$.}
A straightforward expansion shows that, 
\begin{align}
\bigg|\frac{|S^{d-1}|\epsilon^d}{d} \sum_{j=1}^n \frac{\tilde{v}^2_{i,n,\epsilon}(j)}{\mathbb{N}(j)}-\|\tilde{\phi}_{i,n,\epsilon}\|^2_{2}\bigg| \leq \tilde{C}_{1} \epsilon^{\frac{3}{4}}\|\tilde{\phi}_{i,n,\epsilon}\|^2_{2}\,, \nonumber 
\end{align}
for { a constant $\tilde{C}_{1}$ depending on $d$, the diameter of $M$, $\mathsf p_m$, and the $C^2$ norm of $\mathsf p$}. Simply expand the absolute value and take the square root, we have 
\begin{align}
(1-{\tilde{C}_1}\epsilon^{\frac{3}{4}})\|\tilde{\phi}_{i,n,\epsilon}\|_{2} \leq \|\tilde{v}_{i,n,\epsilon}\|_{l^2({1/\hat{\mathsf{p}}})} \leq (1+{\tilde{C}_1}\epsilon^{\frac{3}{4}})\|\tilde{\phi}_{i,n,\epsilon}\|_{2} \,.
\end{align}

{At last, {by the fact that $\tilde{\phi}_{i,n,\epsilon}(x_j)=\tilde{v}_{i,n,\epsilon}(j)$, when $\epsilon$ is sufficiently small,} we have
\begin{align}
&\bigg|\frac{\tilde{v}_{i,n,\epsilon}(j)}{\|\tilde{v}_{i,n,\epsilon}\|_{l^2(1/\hat{\mathsf p})}}-\frac{\tilde{\phi}_{i,n,\epsilon}(x_j)}{\|\tilde{\phi}_{i,n,\epsilon}\|_{2}}\bigg| \leq \frac{{\tilde{C}_1}}{1+{\tilde{C}_1} \epsilon^{\frac{3}{4}}} \epsilon^{\frac{3}{4}} \frac{|{\tilde{\phi}_{i,n,\epsilon}(x_j)}|}{\|\tilde{\phi}_{i,n,\epsilon}\|_{2}}\nonumber\\
 \leq&\, \frac{{2\tilde{C}_1}}{1{+\tilde{C}_1} \epsilon^{\frac{3}{4}}} \epsilon^{1/2}<2{\tilde{C}_1}\epsilon^{1/2}=\mathcal{K}_4 \epsilon^{1/2} \,, \nonumber 
\end{align}
where we apply Lemma \ref{2 norm of eigenvectors} again in the second step, {and $\mathcal{K}_4=2{\tilde{C}_1}>0$} is a constant depending on $d$, the diameter of $M$, $\mathsf p_m$, and the $C^2$ norm of $\mathsf p$.}
\end{proof}

\subsection{Finish the Proof of Theorem \ref{spectral convergence of L on closed manifold}}
With the above preparation, we are finally ready to prove Theorem \ref{spectral convergence of L on closed manifold}.

\begin{proof}
Combining Propositions \ref{T epsilon and Delta}, {\ref{relation between W and Q}} and Corollary \ref{T epsilon and T n espilon 2}, {when $(\frac{\log n}{n})^{\frac{1}{4d+13}}\leq \epsilon$,} we have for each $1 \leq i <K$, 
\begin{align}
|\mu_{i,n,\epsilon}-\lambda_{i}|\leq \epsilon^{3/2}+{2C_{13}}\epsilon^{3/2}\,. \nonumber 
\end{align}
{Further with Proposition \ref{Convergence of normalized eigenvectors}, when $(\frac{\log n}{n})^{\frac{1}{4d+8}} \leq \epsilon$,}  we know that there are  $a_i \in \{1,-1\}$ such that for all $1 \leq i< K$, with probability greater than $1-
n^{-2}$,   
\begin{align}
\max_{x_j}|a_i v_{i,n,\epsilon}(j)-\phi_{i}(x_j)|\leq {2C_{13}} \epsilon + \epsilon+ {\mathcal{K}_4} \epsilon^{1/2}. \nonumber 
\end{align}

In summary, we have 
\begin{align}
& |\mu_{i,n,\epsilon}-\lambda_{i}|\leq \Omega_1 \epsilon^{3/2}, \nonumber \\
& |1-\epsilon^2\mu_{i,n,\epsilon}-e^{-\epsilon^2\lambda_{i}}|\leq \Omega_1 \epsilon, \nonumber \\
& \max_{x_j}|a_i v_{i,n,\epsilon}(j)-\phi_{i}(x_j)|\leq  \Omega_2 \epsilon^{1/2}, \nonumber 
\end{align}
where $\Omega_1={2C_{13}}+1$ and $ \Omega_2={2C_{13}+\mathcal{K}_4}+1$. Hence $\Omega_1$ and $\Omega_2$ depend on {$d$, the diameter of $M$, $\mathsf p_m$, and the $C^2$ norm of $\mathsf p$.}.

Note that Lemma \ref{lemma hormander} does not include $\lambda_0=0$ case, {so we need to further prove the case when $i=0$}.  Recall that $\mu_{0,n,\epsilon}$ is the smallest eigenvalue of $\frac{I-A}{\epsilon^2}$. The eigenvalues of $A$ are between $[0,1]$ and $1$ is an eigenvalue of $A$ corresponding to the constant eigenvector. Therefore $\mu_{0,n,\epsilon}=0$. And $ |\mu_{0,n,\epsilon}-\lambda_{0}|\leq \Omega_1 \epsilon^{3/2}$ holds trivially. If $\tilde{v}_{0,n,\epsilon}$ is the eigenvector corresponding to $\mu_{0,n,\epsilon}$ normalized in $l^2$, then $\tilde{v}_{0,n,\epsilon}=[\frac{1}{\sqrt{n}}, \cdots, \frac{1}{\sqrt{n}}]$ or $[-\frac{1}{\sqrt{n}}, \cdots, -\frac{1}{\sqrt{n}}]$. We know that $\phi_0=\frac{1}{\sqrt{\texttt{Vol}(M)}}$, hence by the same argument as Lemma \ref{deviation in L2 norm}, we have with probability greater than $1-n^{-2}$ that
\begin{equation}
\Big|\frac{1}{n}\sum_{j=1}^n \frac{\frac{1}{\texttt{Vol}(M)}}{\mathsf p(x_j)}-1\Big|< \epsilon. \label{proof: estimate the volume by finite p sum} 
\end{equation}
By Lemma \ref{KDE}, with probability greater than $1-n^{-2}$,
\begin{align}
\frac{|S^{d-1}|\epsilon^d}{d} \sum_{j=1}^n \frac{\tilde{v}^2_{0,n,\epsilon}(j)}{\mathbb{N}(j)}= \frac{1}{n^2}\sum_{j=1}^n \frac{1}{\frac{d\mathbb{N}(j)}{|S^{d-1}|n\epsilon^d}}= \frac{1}{n^2}\sum_{j=1}^n \frac{1}{\mathsf p(x_j)}\Big[1+O(\epsilon)+ O\Big(\frac{\sqrt{\log (n)}}{n^{1/2}\epsilon^{d/2}}\Big)\Big]\,. \nonumber 
\end{align}
{When $ (\frac{\log n}{n})^{\frac{1}{4d+8}} \leq\epsilon$, {with \eqref{proof: estimate the volume by finite p sum},} the above equation implies that
\begin{align}
\frac{|S^{d-1}|\epsilon^d}{d} \sum_{j=1}^n \frac{\tilde{v}^2_{0,n,\epsilon}(j)}{\mathbb{N}(j)}= \frac{\texttt{Vol}(M)}{n}(1+O(\epsilon))\,. \nonumber 
\end{align}
Therefore,  $v_{0,n,\epsilon}(j)=\frac{1}{\sqrt{\texttt{Vol}(M)}}(1+O(\epsilon))$, where the {implied constant depends on the} volume of $M$, and the $C^2$ norm of $\mathsf p$.
In this case,
\begin{align}\label{0 case in the proof}
\max_{x_j}|a_n v_{0,n,\epsilon}(j)-\phi_{0}(x_j)|\leq  \Omega_2 \epsilon, 
\end{align}
where $\Omega_2$ depends on the volume of $M$, and the $C^2$ norm of $\mathsf p$.} With the $i>0$ cases, the conclusion follows.
\end{proof}

\section{Proof of Theorem \ref{heat kernel reconstruction 1}}\label{proof of theorem 3}
Note that 
\begin{align}
&\left|[\mathsf H^{(K)}_{\epsilon,t}]_{ij}-\mathsf{H}(x_i,x_j,t)\right| \nonumber \\
\leq & \bigg|\sum_{l=0}^{K-1} \big(e^{-\mu_{l,n,\epsilon} t} v_{l,n,\epsilon}(i) v^\top_{l,n,\epsilon}(j)-e^{-\lambda_l t} \phi_l(x_i)\phi_l(x_j)\big)\bigg| +
\bigg|\sum_{l=K}^\infty e^{-\lambda_l t} \phi_l(x_i)\phi_l(x_j) \bigg|. \nonumber
\end{align}
 {
Observe that by the Cauchy-Schwartz inequality, we have
\begin{align}
\bigg|\sum_{l=K}^\infty e^{-\lambda_l t} \phi_l(x)\phi_l(y) \bigg| \leq \sup_{x \in M} \sum_{l=K}^\infty e^{-\lambda_l t} \phi_l(x)^2.
\end{align}
By  \cite[Page 393]{berard1994embedding}, 
\begin{align}
\sup_{x \in M} \sum_{l=K}^\infty e^{-\lambda_l t} \phi_l(x)^2 \leq \tilde{c}_1 t^{-\frac{d}{2}}\int_{\lambda_K t}^\infty s^{\frac{d}{2}}e^{-s}ds,
\end{align}
where $\tilde{c}_1$ depends on $d$, the Ricci curvature and the diameter of $M$.
By Lemma \ref{laplace eigenvalue lower bound},
\begin{align}
\sup_{x \in M} \sum_{l=K}^\infty e^{-\lambda_l t} \phi_l(x)^2 \leq \tilde{c}_1 t^{-\frac{d}{2}}\int_{C_2 K^{\frac{2}{d}}t}^\infty s^{\frac{d}{2}}e^{-s}ds,
\end{align}
where $C_2$ depends on $d$, the Ricci curvature and the diameter of $M$.
Note that the Gamma function with $z>0$,
\begin{align}
\int_{z}^\infty s^{\alpha}e^{-s}ds \leq z^{\alpha+1}e^{-z},
\end{align}
when $z-\alpha \geq 1$. Hence, if $C_2 K^{\frac{2}{d}}t \geq \frac{d}{2}+1$, i.e. $\frac{1+\frac{d}{2}}{C_2} K^{-\frac{2}{d}} \leq t$,  then 
\begin{align}
\bigg|\sum_{l=K}^\infty e^{-\lambda_l t} \phi_l(x)\phi_l(y) \bigg| \leq & \sup_{x \in M} \sum_{l=K}^\infty e^{-\lambda_l t} \phi_l(x)^2 \leq \tilde{c}_1 t^{-\frac{d}{2}}\int_{C_2 K^{\frac{2}{d}}t}^\infty s^{\frac{d}{2}}e^{-s}ds \label{proof heat kernel approximation high frequency part}\\
\leq & \tilde{c}_1 t^{-\frac{d}{2}} (C_2 K^{\frac{2}{d}}t)^{\frac{d}{2}+1}e^{-C_2 K^{\frac{2}{d}}t}=\tilde{c}_1 C_2^{\frac{d}{2}+1} K^{\frac{d+1}{d}}t e^{-C_2 K^{\frac{2}{d}}t} \nonumber \\
\leq&  2 \tilde{c}_1 C_2^{\frac{d}{2}}K \frac{C_2}{2} K^{\frac{2}{d}}t e^{-C_2 K^{\frac{2}{d}}t}.\nonumber 
\end{align}
Set $A:=\frac{C_2}{2} K^{\frac{2}{d}}t$. {The right hand side of \eqref{proof heat kernel approximation high frequency part} becomes $2\tilde{c}_1C_2^{d/2}KAe^{-2A}$. Note that the bound} $KAe^{-2A} \leq \frac{1}{K}e^{-A}$ is equivalent to $A-\log A \geq 2 \log K$. {Recall the trivial bound} $x-\log x \geq \frac{x}{2}$ {for any $x\geq 0$. Thus, to have $A-\log A \geq 2 \log K$,} it is sufficient to require $A \geq 4 \log K$, i.e. $\frac{8}{C_2}\frac{\log K}{K^{\frac{2}{d}}} \leq t$.
{Therefore}, if ${\frac{8}{C_2}} \frac{\log K}{K^{\frac{2}{d}}} \leq t$, 
\begin{align}
\bigg|\sum_{l=K}^\infty e^{-\lambda_l t} \phi_l(x)\phi_l(y) \bigg| \leq  {2\tilde{c}_1 C_2^{\frac{d}{2}}} e^{-\frac{C_2}{2} K^{\frac{2}{d}}t} \leq {2\tilde{c}_1 C_2^{\frac{d}{2}}} e^{-\frac{C_2}{2} t},\nonumber 
\end{align}
where we use $K \geq 1$ in the last step.}

Next, we control the term with $l<K$. By the triangle inequality, we have
\begin{align}
& \Big|\sum_{l=0}^{K-1} \big(e^{-\mu_{l,n,\epsilon} t} v_{l,n,\epsilon}(i) v^\top_{l,n,\epsilon}(j)-e^{-\lambda_l t} \phi_l(x_i)\phi_l(x_j)\big)\Big| \nonumber \\
\leq & \sum_{l=0}^{K-1} \Big(|e^{-\mu_{l,n,\epsilon} t}-e^{-\lambda_l t}||\phi_l(x_i)\phi_l(x_j)|+|v_{l,n,\epsilon}(i) v^\top_{l,n,\epsilon}(j)-\phi_l(x_i)\phi_l(x_j)||e^{-\mu_{l,n,\epsilon} t}|\Big). \nonumber
\end{align}
By Theorem \ref{spectral convergence of L on closed manifold} { and Remark \ref{consistent relation}}, with probability greater than $1-n^{-2}$, for all $l < K$, 
\begin{align}
& |\mu_{l,n,\epsilon}-\lambda_{l}|\leq  \Omega_1\epsilon^{\frac{3}{2}}, \nonumber  \\
& \max_{x_i}|a_n v_{l,n,\epsilon}(i)-\phi_{l}(x_i)|\leq  \Omega_2 \epsilon^{1/2}. \nonumber 
\end{align}
 {
Note that $\mu_{0,n,\epsilon}=\lambda_0=0$. Hence 
$$\sum_{l=0}^{K-1} |e^{-\mu_{l,n,\epsilon} t}-e^{-\lambda_l t}||\phi_l(x_i)\phi_l(x_j)|=\sum_{l=1}^{K-1}|e^{-\mu_{l,n,\epsilon} t}-e^{-\lambda_l t}||\phi_l(x_i)\phi_l(x_j)|.$$ 
For $l \geq 1$, by Lemma \ref{laplace eigenvalue lower bound},
\begin{align}
|e^{-\mu_{l,n,\epsilon} t}-e^{-\lambda_l t}| \leq e^{-\lambda_l t} \Omega_1\epsilon^{\frac{3}{2}} t \leq \Omega_1\epsilon^{\frac{3}{2}} t e^{-\lambda_1 t} \leq  \Omega_1\epsilon^{\frac{3}{2}} t e^{-\frac{C_2}{2}t}. 
\end{align}
Moreover, since $1 \leq l$, 
\begin{align}
|\phi_l(x_i)\phi_l(x_j)| \leq C^2_1 \lambda_l^{(d-1)/2} \leq  C^2_1 \lambda_K^{(d-1)/2}\,. \nonumber 
\end{align}
Therefore,
\begin{align}
\sum_{l=0}^{K-1} |e^{-\mu_{l,n,\epsilon} t}-e^{-\lambda_l t}||\phi_l(x_i)\phi_l(x_j)| \leq \Omega_1 C^2_1 K\lambda_K^{(d-1)/2} \epsilon^{\frac{3}{2}} t e^{-\frac{C_2}{2}t}  \,. \nonumber  
\end{align}
}
Next, we bound the term $|v_{l,n,\epsilon}(i) v^\top_{l,n,\epsilon}(j)-\phi_l(x_i)\phi_l(x_j)||e^{-\mu_{l,n,\epsilon} t}|$. Note that for $l \geq 1$
\begin{align}
|v_{l,n,\epsilon}(i) v^\top_{l,n,\epsilon}(j)-\phi_l(x_i)\phi_l(x_j)||e^{-\mu_{l,n,\epsilon} t}| \leq & |v_{l,n,\epsilon}(i) v^\top_{l,n,\epsilon}(j)-\phi_l(x_i)\phi_l(x_j)| e^{-\lambda_1 t} \\
\leq& |v_{l,n,\epsilon}(i) v^\top_{l,n,\epsilon}(j)-\phi_l(x_i)\phi_l(x_j)| e^{- C_2t} . \nonumber 
\end{align}
Also, for $1 \leq l < K$,
\begin{align}
& |v_{l,n,\epsilon}(i) v^\top_{l,n,\epsilon}(j)-\phi_l(x_i)\phi_l(x_j)| \nonumber \\
\leq &\, \max_{x_i}|a_nv_{l,n,\epsilon}(i)-\phi_l(x_i)|(\max_{x_i}|\phi_l(x_i)|+\max_{x_i}|v_{l,n,\epsilon}(i)|) \nonumber \\
\leq &\, (2 \max_{x_i}|\phi_l(x_i)| + \Omega_2 \epsilon^{1/2})\max_{x_i}|a_nv_{l,n,\epsilon}(i)-\phi_l(x_i)| \nonumber \\
\leq &\, (2C_1 \lambda_l^{\frac{d-1}{4}} +\Omega_2 \epsilon^{1/2})\Omega_2 \epsilon^{1/2} \leq  2C_1 \lambda_K^{\frac{d-1}{4}} \Omega_2 \epsilon^{1/2} +\Omega^2_2 \epsilon \,,\nonumber
\end{align}
where the last two bounds come from Theorem \ref{spectral convergence of L on closed manifold}.

{When $l=0$, similarly, by \eqref{0 case in the proof}, we have 
\begin{align}
&|v_{0,n,\epsilon}(i) v^\top_{0,n,\epsilon}(j)-\phi_0(x_i)\phi_0(x_j)||e^{-\mu_{0,n,\epsilon} t}|\nonumber\\
=& |v_{0,n,\epsilon}(i) v^\top_{0,n,\epsilon}(j)-\phi_0(x_i)\phi_0(x_j)|
\leq  \left(\frac{2}{\sqrt{\texttt{Vol}(M)}}+\tilde{c}_2 \epsilon\right)\tilde{c}_2 \epsilon \leq \tilde{c}_3 \epsilon,  \nonumber 
\end{align}
for $\tilde{c}_3>0$ depending on the volume of $M$, and the $C^2$ norm of $\mathsf p$.
Hence, 
\begin{align}
& \sum_{l=0}^{K-1} \Big(|v_{l,n,\epsilon}(i) v^\top_{l,n,\epsilon}(j)-\phi_l(x_i)\phi_l(x_j)||e^{-\mu_{l,n,\epsilon} t}|\Big) \\
\leq &   K (2C_1 \lambda_K^{\frac{d-1}{4}} \Omega_2 \epsilon^{1/2} +\Omega^2_2 \epsilon)e^{- C_2t}+\tilde{c}_3 \epsilon. \nonumber 
\end{align}
If we sum the above terms together, we have 
\begin{align}
& |[\mathsf H^{(K)}_{\epsilon,t}]_{ij}-\mathsf{H}(x_i,x_j,t)| \nonumber \\
\leq & \frac{\tilde{c}_1 C^{\frac{d}{2}}_2}{K}e^{-\frac{C_2}{2} t}+K\left(\frac{2\Omega_1 C^2_1}{e C_2}K\lambda_K^{(d-1)/2} \epsilon^{\frac{3}{2}}  t e^{-\frac{C_2}{2}t} +2C_1 \lambda_K^{\frac{d-1}{4}} \Omega_2 \epsilon^{1/2}e^{- C_2t} +\Omega^2_2 \epsilon e^{- C_2t}\right)+\tilde{c}_3 \epsilon  \nonumber \\
\leq  & \left[ \frac{\tilde{c}_1 C^{d/2}_2}{K}+\frac{2\Omega_1 C^2_1}{e} C_2^{\frac{d-3}{2}} K^2 \left(\frac{\lambda_K}{C_2}\right)^{\frac{d-1}{2}} \epsilon^{\frac{3}{2}}+2C_1 C_2^{\frac{d-1}{4}} \Omega_2 K \left(\frac{\lambda_K}{C_2}\right)^{\frac{d-1}{4}}  \epsilon^{1/2} +\Omega^2_2 \epsilon \right]\nonumber\\
&\quad\times(t+1)e^{-\frac{C_2}{2}t} +\tilde{c}_3 \epsilon. \nonumber
\end{align}

By Lemma \ref{laplace eigenvalue lower bound}, $\frac{\lambda_K}{C_2} \geq 1$. Hence, if $\epsilon<\frac{C_2^{(d-1)/2}}{K^4 \lambda_K^{(d-1)/2}}= \frac{\mathcal{K}}{K^4 \lambda_K^{(d-1)/2}}$, where $\mathcal{K}$  depends on $d$, the Ricci curvature and the diameter of $M$, $K^2 (\frac{\lambda_K}{C_2})^{(d-1)/2} \epsilon^{\frac{3}{2}}$, $ K (\frac{\lambda_K}{C_2})^{\frac{d-1}{4}}  \epsilon^{1/2}$, and $\epsilon$  are all bounded by $\frac{1}{K}$. In conclusion, we have 
\begin{align}
& |[\mathsf H^{(K)}_{\epsilon,t}]_{ij}-\mathsf{H}(x_i,x_j,t)| \nonumber \\
\leq & \left(\tilde{c}_1 C^{\frac{d}{2}}_2+\frac{2\Omega_1 C^2_1}{e} C_2^{\frac{d-3}{2}} +2C_1 C_2^{\frac{d-1}{4}}\Omega_2  +\Omega^2_2 \right) \frac{1}{K} (t+1)e^{-\frac{C_2}{2}t} +\tilde{c}_3 \epsilon \nonumber \\
= & \frac{\Omega_3}{K}(t+1)e^{-\Omega_4 t} +\Omega_5 \epsilon, \nonumber
\end{align}
where $\Omega_3:=\tilde{c}_1 C^{\frac{d}{2}}_2+\frac{2\Omega_1 C^2_1}{e} C_2^{\frac{d-3}{2}} +2C_1 C_2^{\frac{d-1}{4}}\Omega_2  +\Omega^2_2$. Observe that $\Omega_3$ depends on $d$, $\mathsf p_m$, the $C^2$ norm of $\mathsf p$, the injectivity radius, the diameter, the volume and the curvature of $M$.  $\Omega_4=\frac{C_2}{2}$ depends on $d$, the Ricci curvature and the diameter of $M$. $\Omega_5=\tilde{c}_3$ depends on the volume of $M$, and the $C^2$ norm of $\mathsf p$. 
}

\section{Discussion}

The heat kernel approximation we propose in this paper relies on the series representation of the heat kernel in terms of the eigenpairs of the Laplace-Beltrami operator.  Since the {kernel normalized} GL associated with a point cloud sampled from a manifold can recover the Laplace-Beltrami operator regardless of the distribution of the data points on the manifold, we propose to approximate the heat kernel based on the eigenpairs of the {kernel normalized} GL. To this end, we study the spectral convergence of the eigenvalues and eigenvectors of the {kernel normalized} GL to the eigenvalues and eigenfunctions of the Laplace-Beltrami operator, and provide the convergence rate. With this result, the proposed approximation to the heat kernel is quantified with {a} convergence rate. 

As an alternative to the GL, one can potentially leverage on other algorithms that produce estimates of the Laplace-Beltrami operator in order to obtain different approximations to the heat kernel. As in practice the data can have arbitrary non-uniform distributions over unknown manifolds, it is important for the Laplace-Beltrami operator to be efficiently and consistently estimated regardless of the distribution of the data. It {has been} shown that the well-known unsupervised learning algorithm, Locally Linear Embeddings (LLEs), provides another choice in this regard. Indeed, it has been shown in \cite{wu2018think} that if the regularization is properly chosen, the LLE asymptotically approximates the Laplace-Beltrami operator even if the sampling scheme is non-uniform. In other words, the {KDE} step commonly applied in machine learning to handle the non-uniform sampling is {implicitly} carried out in the LLE.

The methods applied in this work serve for the convergence rate of the eigenvalues and the $L^\infty$ convergence rate of the eigenvectors of the {kernel normalized} GL to the eigenvalues and eigenfunctions of the Laplace-Beltrami operator, {even if} the sampling on the manifold is non-uniform. In order to relate the discrete eigenvectors to the continuous eigenfunctions, an interpolation is necessary (e.g. we introduce  the operator $T_{n,\epsilon}$ in the definition \ref{intro of operators}). Since we do not know any relationship between the position of the sampling and the geometry of {the} manifold, the analysis of the interpolation in Lemma \ref{fix u , T n espilon-T epsilon} is based on the generic case. This step seems to be the main bottleneck {of improving the convergence rate} in our analysis. While we provide a convergence rate, it does not fully match the numerical findings in practice. In \cite{trillos2018error,calder2019improved}, the authors claim a sharper eigenvalue convergence rate and an eigenvector convergence rate in the $L^2$ sense when the $1$-normalization step (kernel density estimation) is not applied. Besides an independent application of a similar eigenvalue control idea in Lemma \ref{ratio of eigenvalues} to improve the estimate, the techniques applied in \cite{calder2019improved} are completely different from ours. Since the asymptotical operator considered in \cite{calder2019improved} is not the Laplace-Beltrami operator when the sampling is non-uniform, and obtaining the optimal convergence rate is out of the scope of this paper, we refer readers with interest to \cite{calder2019improved} for details. We will explore whether their approaches can be developed to obtain a sharper $L^\infty$ convergence rate in our future work.

There are many other interesting directions motivated by this work. The most important direction is its practical application in statistics.  For example, we can develop an algorithm to handle the Gaussian process regression problem on an unknown embedded manifold in a Euclidean space by using the heat kernel of the manifold as the covariance kernel in the Gaussian process. It is not straightforward to define a valid covariance kernel for the Gaussian process regression when the predictors are on a {nonlinear} manifold \cite{castillo2014thomas,lin2018extrinsic}{, which might have non-trivial topology}. Although the heat kernel provides a natural and canonical choice theoretically, it is analytically intractable to directly evaluate, and current Bayesian methods rely on computationally demanding Monte Carlo approximations \cite{niu2019intrinsic}. With an efficient and accurate approximation of the heat kernel relying on the diffusion property of the GL, we may have a diffusion-based Gaussian process modeling framework for the regression purpose. This development has immediate applications to various fields, including biomedical signals and ecological datasets. Next, it is interesting to develop broader classes of covariance kernels on unknown manifolds beyond the heat kernel; for example, including anisotropy and non-stationarity inspired by the background knowledge, so that the regression can be carried out in a more adaptive way. Additional important directions include approximating the heat kernel under when there exist unbounded measurement errors and more intricate structure in the data, such as disconnected manifolds. Indeed, while we provide a preliminary analysis of the error-in-variable result, the analysis might be further improved with more suitable analysis tools. Another natural question to ask is if we could achieve a similar result for the connection Laplacian associated with a principal bundle via the graph connection Laplacian \cite{singer2012vector,singer2016spectral}, so that more topological information could be obtained. 

\section*{Acknowledgement}

Hau-Tieng Wu thanks the valuable discussion with Professor Jeff Calder and Professor Nicolas Garcia Trillos. {The authors would like to thank the anonymous reviewers for their constructive comments that helped improve this paper.}
 
\bibliographystyle{plain}
\bibliography{heatkernel}

\appendix

{
\section{Proof of a technical lemma}
\begin{lemma}\label{L^2 regularity}
Suppose $(\lambda_{k}, \phi_{k})$ is the $k$-th eigenpair of $-\Delta$ with $\|\phi_{k}\|_2=1$ and $(\lambda_{k,\epsilon}, \phi_{k,\epsilon})$ the $k$-th eigenpair of $\frac{I-T_\epsilon}{\epsilon^2}$ with $\|\phi_{k,\epsilon}\|_2=1$, {where we assume $\epsilon>0$} is small enough.  {Take $K\geq k$ so that} $\lambda_k \leq \lambda_K$ and {assume that $\lambda_K$ and $\epsilon$} satisfy (6'') in Table \ref{Table:Relations}. {Then, when $\epsilon>0$ is sufficiently small,} 
$$\left\|\frac{T_\epsilon-I}{\epsilon^2}\phi_{k,\epsilon}-\Delta \phi_{k,\epsilon}\right\|_2  \leq  2 \left(1+C_1(\lambda_K)+(C'_6+2\lambda_K) C_2(\lambda_K)+4\lambda_k^2\right) \epsilon^2\,,$$
{where $C_1(\lambda_K)>0$ and $C_2(\lambda_K)>0$ are constants depending on $\lambda_K$ but independent of $\epsilon$ and $C'_6>0$} depends on $d$ and the volume, the curvature and the diameter of $M$.
\end{lemma}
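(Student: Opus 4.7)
The overall plan is to reduce the regularity estimate for the $T_\epsilon$-eigenfunction $\phi_{k,\epsilon}$ to the known regularity of the Laplace--Beltrami eigenfunction $\phi_k$, by exploiting the $L^2$ closeness of the two established in Proposition~\ref{T epsilon and Delta}. First, I would choose $a_k\in\{-1,+1\}$ as in Proposition~\ref{T epsilon and Delta}, so that $|\lambda_k-\lambda_{k,\epsilon}|\le C_1(\lambda_K)\epsilon^2$ and $\|\phi_{k,\epsilon}-a_k\phi_k\|_2\le C_2(\lambda_K)\epsilon^2$ (these are the constants appearing in \eqref{L^2 convergence of eigenvalues} and \eqref{L^2 convergence of eigenfunctions}), and set $\psi:=\phi_{k,\epsilon}-a_k\phi_k$. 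Using the two eigen-equations $\frac{T_\epsilon-I}{\epsilon^2}\phi_{k,\epsilon}=-\lambda_{k,\epsilon}\phi_{k,\epsilon}$ and $\Delta\phi_k=-\lambda_k\phi_k$, a straightforward substitution yields the identity
\[
\frac{T_\epsilon-I}{\epsilon^2}\phi_{k,\epsilon}-\Delta\phi_{k,\epsilon}
=a_k(\lambda_k-\lambda_{k,\epsilon})\phi_k-\lambda_{k,\epsilon}\psi-\Delta\psi,
\]
whose $L^2$ norm is at most $C_1(\lambda_K)\epsilon^2+2\lambda_K C_2(\lambda_K)\epsilon^2+\|\Delta\psi\|_2$. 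The lemma therefore reduces to bounding $\|\Delta\psi\|_2$ by a constant multiple of $\epsilon^2$ with the correct dependence on $C'_6$, $\lambda_K$ and $\lambda_k$.

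To control $\|\Delta\psi\|_2$ I would compare $(T_\epsilon-I)/\epsilon^2$ with the heat semigroup via the operator identity $(T_\epsilon-I)/\epsilon^2=R_\epsilon+(H_\epsilon-I)/\epsilon^2$ with $R_\epsilon$ from Section~\ref{Section:basic quantities for proof}, and split $\Delta\psi$ into four pieces using $\phi_{k,\epsilon}=a_k\phi_k+\psi$. For the $R_\epsilon$ contributions I would combine Lemma~\ref{property 2 of R epsilon} on $\phi_k$ (which gives $C_4(1+\lambda_k^{d/2+5})\epsilon^2$) with Lemma~\ref{property 1 of R epsilon} on $\psi$ (which gives $C_3 C_2(\lambda_K)\epsilon^2$), so that $\|R_\epsilon\phi_{k,\epsilon}\|_2=O(\epsilon^2)$. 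For the heat-semigroup piece applied to $\phi_k$, the direct calculation
$\|(\frac{H_\epsilon-I}{\epsilon^2}-\Delta)\phi_k\|_2
=\bigl|(1-e^{-\epsilon^2\lambda_k})/\epsilon^2-\lambda_k\bigr|
\le\tfrac12\lambda_k^2\epsilon^2$
from a Taylor expansion of $e^{-x}$ is the source of the $4\lambda_k^2\epsilon^2$ contribution (after the factor of $2$ from the final triangle inequality).

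The hard part will be the remaining piece $(\frac{H_\epsilon-I}{\epsilon^2}-\Delta)\psi$, because the bound $\|\psi\|_2=O(\epsilon^2)$ alone does not control any higher Sobolev norm of $\psi$. The plan is to expand $\phi_{k,\epsilon}=\sum_j c_j\phi_j$ in the $-\Delta$ eigenbasis and split the resulting sum at $\lambda_j=2\lambda_K$: for low frequencies one uses the trivial bound $\sum_{j\ne k,\,\lambda_j\le 2\lambda_K}c_j^2\lambda_j^2\le 4\lambda_K^2\|\psi\|_2^2$, while for high frequencies one exploits the coefficient identity
\[
c_j\bigl[(1-\lambda_{k,\epsilon}\epsilon^2)-e^{-\epsilon^2\lambda_j}\bigr]
=\epsilon^2\langle R_\epsilon\phi_{k,\epsilon},\phi_j\rangle,
\]
obtained by testing $H_\epsilon\phi_{k,\epsilon}=T_\epsilon\phi_{k,\epsilon}-\epsilon^2R_\epsilon\phi_{k,\epsilon}$ against $\phi_j$. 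Smallness of $\lambda_K\epsilon^2$ from condition~(6'') in Table~\ref{Table:Relations} forces the bracket in the denominator to be bounded below in magnitude (either by $c\epsilon^2\lambda_j$ or by a positive constant, depending on whether $\epsilon^2\lambda_j\lesssim 1$ or not), so by Parseval the high-frequency tail of $\sum c_j^2\lambda_j^2$ is dominated by a constant multiple of $\|R_\epsilon\phi_{k,\epsilon}\|_2^2=O(\epsilon^4)$. The geometric constants emerging from this spectral-cutoff argument are packaged into $C'_6$. Collecting all contributions and absorbing a factor of $2$ from the final triangle inequality then yields the stated bound.
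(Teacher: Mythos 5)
Your reduction to bounding $\|\Delta\psi\|_2$ with $\psi=\phi_{k,\epsilon}-a_k\phi_k$ is correct, and your treatment of the low-frequency part ($\lambda_j\le 2\lambda_K$, giving the $2\lambda_K C_2(\lambda_K)\epsilon^2$ contribution) and of the coefficient identity $c_j\bigl[(1-\lambda_{k,\epsilon}\epsilon^2)-e^{-\epsilon^2\lambda_j}\bigr]=\epsilon^2\langle R_\epsilon\phi_{k,\epsilon},\phi_j\rangle$ are fine. The gap is in the very high frequency regime $\epsilon^2\lambda_j\gtrsim 1$. There the bracket is only bounded below by a positive \emph{constant}, not by $c\,\epsilon^2\lambda_j$, so the identity gives $|c_j|\lambda_j\lesssim \epsilon^2\lambda_j\,|\langle R_\epsilon\phi_{k,\epsilon},\phi_j\rangle|$, and the factor $\epsilon^2\lambda_j$ is unbounded over $j$ for fixed $\epsilon$. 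Parseval then bounds that part of $\sum_j c_j^2\lambda_j^2$ by $\epsilon^4\sum_j\lambda_j^2|\langle R_\epsilon\phi_{k,\epsilon},\phi_j\rangle|^2$, i.e.\ by $\epsilon^4\|\Delta R_\epsilon\phi_{k,\epsilon}\|_2^2$, not by $\|R_\epsilon\phi_{k,\epsilon}\|_2^2$. Lemmas \ref{property 1 of R epsilon} and \ref{property 2 of R epsilon} are purely $L^2$ bounds on $R_\epsilon$ and say nothing about $\Delta R_\epsilon$, so your plan as stated cannot close this regime; nothing forces the $L^2$ mass of $\phi_{k,\epsilon}$ (or of $R_\epsilon\phi_{k,\epsilon}$) at frequencies $\lambda_j\gtrsim\epsilon^{-2}$ to be as small as needed, uniformly in $\epsilon$.

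What is missing is precisely the content of the paper's proof: a smoothing estimate of the form $\|\Delta(T_\epsilon-\mathsf H_\epsilon)f\|_2\le C\|f\|_2$ with $C$ independent of $\epsilon$. The paper obtains it by writing $\Delta\phi_{k,\epsilon}=\frac{\Delta T_\epsilon\phi_{k,\epsilon}}{1-\epsilon^2\lambda_{k,\epsilon}}$ and comparing the kernels at the level of $\Delta_x$: the kernel $\mathbb K_\epsilon$ of $T_\epsilon$ is expanded as $\mathsf H(x,y,\epsilon^2)\bigl(w_1+\epsilon^2 w_2\bigr)$ (parametrix expansion), and the pointwise heat-kernel derivative bounds of Lemma \ref{bounds on heat kernel} show $|\Delta_x\mathbb K_\epsilon(x,y)-\Delta_x\mathsf H(x,y,\epsilon^2)|$ is dominated by heat kernels at times $2\epsilon^2$, $4\epsilon^2$, whose integral operators are $L^2$-contractions; this is exactly where the constant $C'_6$ comes from. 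Your spectral/Parseval argument avoids this kernel-level cancellation, but the place where it is needed resurfaces as the uncontrolled tail $\lambda_j\gtrsim\epsilon^{-2}$. To repair the proposal you would either have to prove such a uniform bound on $\Delta(T_\epsilon-\mathsf H_\epsilon)$ (essentially reproducing the paper's argument) or find another mechanism giving quantitative, $\epsilon$-uniform decay of the coefficients $c_j$ or $\langle R_\epsilon\phi_{k,\epsilon},\phi_j\rangle$ beyond the scale $\lambda_j\sim\epsilon^{-2}$; the tools currently cited in your outline do not provide either.
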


\begin{proof}
{First of all, we have the $L^2$ convergence $|\lambda_{k}-\lambda_{k,\epsilon}| \leq C_1(\lambda_K) \epsilon^2$ and $\|\phi_{k}- a \phi_{k,\epsilon}\|_2 \leq C_2(\lambda_K) \epsilon^2$, where $a \in \{1,-1\}$, according to \eqref{L^2 convergence of eigenvalues} and \eqref{L^2 convergence of eigenfunctions}.}
Recall that $\mathsf H_{\epsilon}$ is the integral operator associated with the heat kernel on the manifold $M$ with the diffusion time $\epsilon^2$, in other words, we have
\begin{equation}
\mathsf H_{\epsilon}f(x):=\int_M \mathsf{H}(x,y,\epsilon^2)f(y)dV_M(y)\,.
\end{equation} 
{Thus, when $\epsilon>0$ is sufficiently small, a trivial bound leads to}
\begin{align}
&\left\|\frac{T_\epsilon-I}{\epsilon^2}\phi_{k,\epsilon}-\Delta \phi_{k,\epsilon}\right\|_2=\|-\lambda_{k, \epsilon} \phi_{k,\epsilon}-\Delta \phi_{k,\epsilon}\|_2\nonumber \\
=&\,\left\|-\lambda_{k, \epsilon}\phi_{k,\epsilon}-\frac{\Delta \mathsf  H_{\epsilon}\phi_{k,\epsilon}}{1-\epsilon^2\lambda_{k,\epsilon}}+\frac{\Delta \mathsf  H_{\epsilon}\phi_{k,\epsilon}}{1-\epsilon^2\lambda_{k,\epsilon}}-\frac{\Delta T_{\epsilon}\phi_{k,\epsilon}}{1-\epsilon^2\lambda_{k,\epsilon}} \right\|_2 \label{Appendix A eq 1} \\
\leq &\,\left\|-\lambda_{k, \epsilon}\phi_{k,\epsilon}-\frac{\Delta \mathsf  H_{\epsilon}\phi_{k,\epsilon}}{1-\epsilon^2\lambda_{k,\epsilon}}\right\|_2+\left\|\frac{\Delta T_{\epsilon}\phi_{k,\epsilon}}{1-\epsilon^2\lambda_{k,\epsilon}} -\frac{\Delta \mathsf  H_{\epsilon}\phi_{k,\epsilon}}{1-\epsilon^2\lambda_{k,\epsilon}}\right\|_2 \nonumber \\
\leq &\, 2 \left\|-\lambda_{k, \epsilon}(1-\epsilon^2\lambda_{k,\epsilon})\phi_{k,\epsilon}-\Delta \mathsf  H_{\epsilon}\phi_{k,\epsilon}\right\|_2+2\left\|\Delta T_{\epsilon}\phi_{k,\epsilon} -\Delta \mathsf  H_{\epsilon}\phi_{k,\epsilon}\right\|_2 \nonumber
\end{align}
We first bound the term $\|\Delta T_{\epsilon}\phi_{k,\epsilon} -\Delta \mathsf  H_{\epsilon}\phi_{k,\epsilon}\|_2$ in \eqref{Appendix A eq 1}.
{Recall that} $\{\phi_i\}_{i=0}^\infty$ form an orthonormal basis of $L^2(M)$. We thus have 
$$\phi_{k,\epsilon}=\sum_{i=0}^\infty b_i \phi_i=:b_k\phi_k+f\,,
$$ 
{where since} $\|\phi_{k}- \phi_{k,\epsilon}\|_2 \leq C_2(\lambda_K) \epsilon^2$, we have $|b_k| \leq 1$ and$\|f\|_2 \leq C_2(\lambda_K) \epsilon^2$ by {the} triangle inequality. {Therefore,}
\begin{align}\label{Appendix A eq 2}
\|\Delta T_{\epsilon}\phi_{k,\epsilon} -\Delta \mathsf  H_{\epsilon}\phi_{k,\epsilon}\|_2 \leq \|\Delta T_{\epsilon}\phi_{k} -\Delta \mathsf  H_{\epsilon}\phi_{k}\|_2+\|\Delta T_{\epsilon}f -\Delta \mathsf  H_{\epsilon}f\|_2.
\end{align}
By Lemma \ref{property 2 of R epsilon} and integration by parts, 
\begin{align}\label{Appendix A eq 3}
\|\Delta T_{\epsilon}\phi_{k} -\Delta \mathsf  H_{\epsilon}\phi_{k}\|_2= \lambda_k \| T_{\epsilon}\phi_{k} -\mathsf  H_{\epsilon}\phi_{k}\|_2\leq  C_4 \epsilon^4  \lambda_k \big(1+\lambda_k^{d/2+5}\big)\,.
\end{align}
{For} the term $\|\Delta T_{\epsilon}f -\Delta \mathsf  H_{\epsilon}f\|_2$, {we need a further exploration of $T_\epsilon$ and hence a comparison with $\mathsf  H_{\epsilon}$.} 
Recall that $k_{\epsilon}(x,y)=\exp\Big(-\frac{\|\iota(x)-\iota(y)\|^2_{\mathbb{R}^D}}{4\epsilon^2}\Big)$, and
based on the definition of $T_{\epsilon}$, we have 
\begin{align}
T_{\epsilon}f(x)=& \frac{\int_M \frac{k_{\epsilon}(x,y)}{d_{\epsilon}(x)d_{\epsilon}(y) } f(y) \mathsf p(y) dV_M(y)}{\int_M \frac{k_{\epsilon}(x,y)}{d_{\epsilon}(x)d_{\epsilon}(y) } \mathsf p(y) dV_M(y)} = \frac{\int_M \frac{\mathsf p(y)}{d_{\epsilon}(y) }k_{\epsilon}(x,y) f(y)  dV_M(y)}{\int_M \frac{\mathsf p(y) }{d_{\epsilon}(y) }k_{\epsilon}(x,y) dV_M(y)} \\
=& \int_M \mathbb{K}_{\epsilon}(x,y) f(y)  dV_M(y), \nonumber
\end{align}
where
$$
\mathbb{K}_{\epsilon}(x,y):=\frac{ \frac{\mathsf p(y)}{d_{\epsilon}(y) }k_{\epsilon}(x,y)}{\int_M \frac{\mathsf p(y) }{d_{\epsilon}(y) }k_{\epsilon}(x,y) dV_M(y)}\,.
$$
{We now compare $\mathbb{K}_{\epsilon}(x,y)$ with $\mathsf{H}(x,y,t)$ when $t=\epsilon^2$.}
By Lemma \ref{bounds on heat kernel}, we have the following facts about $\mathsf{H}(x,y,t)$ and its space and time derivatives. 
{There is} $\gamma>0$ and $B_\gamma(x)$ is the geodesic ball in $M$ around $x$ {with the radius $\gamma$}, {so that when} $\epsilon>0$ {is} small enough, we have
\begin{align}
&\mathsf{H}(x,y, 2\epsilon^2) \geq C'_1 \epsilon^{-d} e^{-\frac{d(x,y)^2}{8 \epsilon^2}} & \quad  \hbox{if $y \in B_{\gamma}(x)$,} \\
&|\nabla_x \mathsf{H}(x,y,\epsilon^2)| \leq C_H \epsilon^{-d-1} e^{-\frac{d(x,y)^2}{8 \epsilon^2}}  &\quad \hbox{if $y \in B_{\gamma}(x)$,} \\
&|\Delta_x \mathsf{H}(x,y,\epsilon^2)|=|\partial_t \mathsf{H}(x,y,\epsilon^2)| \leq C_H \epsilon^{-d-2} e^{-\frac{d(x,y)^2}{8\epsilon^2}} & \quad  \hbox{if $y \in B_{\gamma}(x)$,} \\
&|\Delta_x \mathsf{H}(x,y,\epsilon^2)|=|\partial_t \mathsf{H}(x,y,\epsilon^2)| \leq C_H \epsilon^2  &\quad \hbox{if $y \in B^c_{\gamma}(x)$\,,} \label{appendix A bound 0}
\end{align} 
{where} $C'_1{>0}$ depends on $d$ and the Ricci curvature of $M$ {and} $C_H{>0}$ depends on $d$, the volume, the Ricci curvature and the diameter of $M$. In other words, 
\begin{align}
&|\nabla_x \mathsf{H}(x,y,\epsilon^2)| \leq \frac{C_H}{C'_1 \epsilon} \mathsf{H}(x,y, 2\epsilon^2)  &\quad \hbox{if $y \in B_{\gamma}(x)$,} \label{appendix A bound 1}\\
&|\Delta_x \mathsf{H}(x,y,\epsilon^2)| \leq \frac{C_H}{C'_1 \epsilon^2} \mathsf{H}(x,y, 2\epsilon^2)& \quad  \hbox{if $y \in B_{\gamma}(x)$.}\label{appendix A bound 2} \\
&|\Delta_x \mathsf{H}(x,y,\epsilon^2)|d(x,y)^2 \leq \frac{8 C_H}{C'_1} \mathsf{H}(x,y, 4\epsilon^2)& \quad  \hbox{if $y \in B_{\gamma}(x)$.}\label{appendix A bound 2.5} 
\end{align} 
By a straightforward expansion of $\mathbb{K}_{\epsilon}(x,y)$ as in Lemma 7 and Lemma 8 in \cite{coifman2006diffusion} and by Lemma \ref{bounds on heat kernel}, we have the following facts. {There is} $\gamma>0$ {so that when} $\epsilon>0$ {is} small enough, we have
\begin{align}
&\mathbb{K}_{\epsilon}(x,y)=\mathsf{H}(x,y,\epsilon^2)(w_1(x,y)+w_2(x,y,\epsilon^2)\epsilon^2)  & \quad  \hbox{if $y \in B_{\gamma}(x)$,} \label{appendix A bound 3}\\
&|\Delta_x \mathbb{K}_{\epsilon}(x,y)|\leq C'_2 \epsilon^2  &\quad \hbox{if $y \in B^c_{\gamma}(x)$\,,} \label{appendix A bound 6}
\end{align}
{where $C'_2>0$ depends on $d$ and the curvature of the manifold and $w$ is a smooth function of $x,y$ satisfying}
\begin{align}
& |w_1(x,y)-1| \leq C'_3 d(x,y)^2  & \quad\hbox{for all $x$, $y \in B_{\gamma}(x)$,}  \label{appendix A bound 4.5} \\
& |w_2(x,y,\epsilon^2)| \leq C'_4 & \quad\hbox{for all $x$, $y \in B_{\gamma}(x)$,} \label{appendix A bound 4}\\
&\|w_2(x,y,\epsilon^2)\|_{C^2(B_{\gamma}(x))} \leq C'_5 &\quad\hbox{for all $x$, $y \in B_{\gamma}(x)$,} \label{appendix A bound 5}
\end{align}
{where $C'_3, C'_4>0$ depend} on $d$ and the curvature of the manifold {and} $C'_5>0$ depends on $d$ and the second order covariant derivatives of the curvature tensor. In order to bound $\|\Delta T_{\epsilon}f -\Delta \mathsf  H_{\epsilon}f\|_2$, we first bound $|\Delta T_{\epsilon}f(x) -\Delta \mathsf  H_{\epsilon}f(x)|$. {By a straightforward bound, }
\begin{align}
& \left|\int_{M}\Delta_x \mathbb{K}_{\epsilon}(x,y)f(y) dV_M(y)- \int_{M} \Delta_x \mathsf{H}(x,y,\epsilon^2)f(y) dV_M(y)\right| \nonumber\\
\leq & \left|\int_{ B_{\gamma}(x)}\big[ \Delta_x \mathbb{K}_{\epsilon}(x,y)-\Delta_x \mathsf{H}(x,y,\epsilon^2) \big]f(y) dV_M(y)\right| +\int_{ B^c_{\gamma}(x)}|\Delta_x \mathbb{K}_{\epsilon}(x,y)| |f(y)| dV_M(y)  \nonumber \\
  &+\left|\int_{ B^c_{\gamma}(x)} \Delta_x \mathsf{H}(x,y,\epsilon^2) f(y) dV_M(y) \right|\,, \nonumber 
  \end{align}
{where 
\begin{align}
&\int_{ B^c_{\gamma}(x)}|\Delta_x \mathbb{K}_{\epsilon}(x,y)| |f(y)| dV_M(y)  +\left|\int_{ B^c_{\gamma}(x)} \Delta_x \mathsf{H}(x,y,\epsilon^2) f(y) dV_M(y) \right|\nonumber\\ 
\leq &\,(C'_2+C_H) \sqrt{\texttt{Vol}(M)} \epsilon^2 \|f\|_2\leq \sqrt{\texttt{Vol}(M)} \|f\|_2\nonumber
\end{align} 
when $\epsilon$ is sufficiently small due to \eqref{appendix A bound 0} and \eqref{appendix A bound 6}. It remains to bound the main term over $B_{\gamma}(x)$. {By \eqref{appendix A bound 3}, we have 
\begin{align}
&\left|\int_{ B_{\gamma}(x)}\big[ \Delta_x \mathbb{K}_{\epsilon}(x,y)-\Delta_x \mathsf{H}(x,y,\epsilon^2) \big]f(y) dV_M(y)\right|  \nonumber \\
=& \left| \int_{ B_{\gamma}(x)} \Delta_x\mathsf{H}(x,y,\epsilon^2)\Big[w_1(x,y)-1\Big]f(y) dV_M(y)\right|+\left| \int_{ B_{\gamma}(x)} \Delta_x\Big[ \mathsf{H}(x,y,\epsilon^2)w_2(x,y,\epsilon^2)\epsilon^2 \Big] f(y) dV_M(y)\right|\,. \nonumber
\end{align}
For the first term on the right hand side,
\begin{align}
& \left| \int_{ B_{\gamma}(x)} \Delta_x\mathsf{H}(x,y,\epsilon^2)\Big[w_1(x,y)-1\Big]f(y) dV_M(y)\right| \nonumber \\
\leq &  \int_{ B_{\gamma}(x)} |\Delta_x\mathsf{H}(x,y,\epsilon^2)| |w_1(x,y)-1| |f(y)| dV_M(y) \leq  \int_{ B_{\gamma}(x)}  C'_3|\Delta_x\mathsf{H}(x,y,\epsilon^2)| d(x,y)^2 |f(y)| dV_M(y) \nonumber \\
\leq & \int_{ B_{\gamma}(x)}  C'_3|\Delta_x\mathsf{H}(x,y,\epsilon^2)| d(x,y)^2 |f(y)| dV_M(y) \leq  \int_{ B_{\gamma}(x)}  \frac{8 C_HC'_3}{C'_1} \mathsf{H}(x,y, 4\epsilon^2) |f(y)| dV_M(y) \nonumber \\
\leq & \int_{M}  \frac{8 C_HC'_3}{C'_1} \mathsf{H}(x,y, 4\epsilon^2) |f(y)| dV_M(y). \nonumber
\end{align}
Note that we use \eqref{appendix A bound 4.5}  and \eqref{appendix A bound 2.5}  in the third and fourth step. }
Again by a direct expansion, $\left| \int_{ B_{\gamma}(x)} \Delta_x\Big[ \mathsf{H}(x,y,\epsilon^2)w_2(x,y,\epsilon^2)\epsilon^2 \Big] f(y) dV_M(y)\right|$ is bounded by}
\begin{align}
 & \quad\epsilon^2 \int_{ B_{\gamma}(x)} |\Delta_x \mathsf{H}(x,y,\epsilon^2)| |w_2(x,y,\epsilon^2)| |f(y)| dV_M(y) \nonumber\\
 &+2 \epsilon^2 \int_{ B_{\gamma}(x)} |\nabla_x \mathsf{H}(x,y,\epsilon^2)| |\nabla_x w_2(x,y,\epsilon^2)| |f(y)| dV_M(y) \nonumber \\
& +\epsilon^2 \int_{ B_{\gamma}(x)} \mathsf{H}(x,y,\epsilon^2) |\Delta_x w_2(x,y,\epsilon^2)| |f(y)| dV_M(y)\nonumber \\
\leq & \frac{C_HC'_4}{C'_1} \int_{ B_{\gamma}(x)}  \mathsf{H}(x,y, 2\epsilon^2) |f(y)| dV_M(y) +2 \epsilon \frac{C_H  C'_5}{C'_1} \int_{ B_{\gamma}(x)} \mathsf{H}(x,y, 2\epsilon^2) |f(y)| dV_M(y)\nonumber \\
& +\epsilon^2  C'_5 \int_{ B_{\gamma}(x)} \mathsf{H}(x,y,\epsilon^2)  |f(y)| dV_M(y)\nonumber \\
\leq & \left(\frac{C_HC'_4}{C'_1}+1\right) \int_{ B_{\gamma}(x)}  \mathsf{H}(x,y, 2\epsilon^2) |f(y)| dV_M(y) + \int_{ B_{\gamma}(x)} \mathsf{H}(x,y,\epsilon^2)  |f(y)| dV_M(y). \nonumber
\end{align}
Note that we use \eqref{appendix A bound 1}, \eqref{appendix A bound 2}, \eqref{appendix A bound 4}  and \eqref{appendix A bound 5} in the {first bound, and the second bound holds when $\epsilon$ is sufficiently small}. 
Denote 
\[
u(x, t)=\int_{M} \mathsf{H}(x,y, t)  |f(y)| dV_M(y)\,. 
\]
{Obviously we have} 
\[
\int_{ B_{\gamma}(x)}  \mathsf{H}(x,y, 2\epsilon^2) |f(y)| dV_M(y) \leq u(x, 2\epsilon^2)\quad\mbox{and}\quad\int_{ B_{\gamma}(x)} \mathsf{H}(x,y,\epsilon^2)  |f(y)| dV_M(y) \leq u(x,\epsilon^2).
\]
{Hence, we have the bound}
\begin{align}
& |\int_{M}\Delta_x \mathbb{K}_{\epsilon}(x,y)f(y) dV_M(y)- \int_{M} \Delta_x \mathsf{H}(x,y,\epsilon^2)f(y) dV_M(y)|\nonumber  \\
\leq &  \frac{8 C_HC'_3}{C'_1}u(x, 4\epsilon^2) +\left(\frac{C_HC'_4}{C'_1}+1\right) u(x, 2\epsilon^2)+u(x,\epsilon^2)+\sqrt{\texttt{Vol}(M)} \|f\|_2. \nonumber
\end{align}
{Finally, since} $u(x, t)$ is the solution to the heat equation with initial condition $|f|$, {we have } $\|u(x,t)\|_2 \leq \|f\|_2$  for any $t \geq 0$. Hence, we {have shown} that
\begin{align}\label{Appendix A eq 4}
\|\Delta T_{\epsilon}f -\Delta \mathsf  H_{\epsilon}f\|_2 \leq \left(\frac{8 C_HC'_3}{C'_1}+\frac{C_HC'_4}{C'_1}+2+\texttt{Vol}(M)\right) \|f\|_2.
\end{align}
Substitute \eqref{Appendix A eq 3} and \eqref{Appendix A eq 4} into \eqref{Appendix A eq 2}, we have
\begin{align}\label{Appendix A eq 5}
\|\Delta T_{\epsilon}\phi_{k,\epsilon} -\Delta \mathsf  H_{\epsilon}\phi_{k,\epsilon}\|_2 \leq &  C_4 \epsilon^4  \lambda_k \left(1+\lambda_k^{d/2+5}\right)+\left(\frac{8 C_HC'_3}{C'_1}+\frac{C_HC'_4}{C'_1}+2+\texttt{Vol}(M)\right)\|f\|_2 \nonumber \\
\leq &\left (1+\left(\frac{8 C_HC'_3}{C'_1}+\frac{C_HC'_4}{C'_1}+2+\texttt{Vol}(M)\right) C_2(\lambda_K)\right) \epsilon^2\,,
\end{align}
where  we use relation (6'') in table \ref{Table:Relations} in the last step.

{Next}, we bound $\|-\lambda_{k,\epsilon}(1-\epsilon^2\lambda_{k,\epsilon})\phi_{k,\epsilon}-\Delta \mathsf  H_{\epsilon}\phi_{k,\epsilon}\|_2$ in \eqref{Appendix A eq 1}. Recall that $\phi_{k,\epsilon}=\sum_{i=0}^\infty b_i \phi_i=b_k\phi_k+f$. Hence,
\begin{align}
& \|-\lambda_{k,\epsilon}(1-\epsilon^2\lambda_{k,\epsilon})\phi_{k,\epsilon}-\Delta \mathsf  H_{\epsilon}\phi_{k,\epsilon}\|_2 \nonumber\\
\leq & \|-\lambda_k b_k\phi_{k}-\Delta \mathsf  H_{\epsilon}\phi_{k,\epsilon}\|_2 +\epsilon^2(\lambda^2_{k,\epsilon}+C_1(\lambda_K))+\lambda_{k,\epsilon}\|f\|_2 \,.\nonumber 
\end{align}
By using the similar method as above, we can show that $\|-\lambda_k b_k\phi_{k}-\Delta \mathsf  H_{\epsilon}\phi_{k,\epsilon}\|_2 \leq 2\epsilon^2\lambda^2_{k}+C_2(\lambda_K) \epsilon^2$. Hence,
\begin{align}\label{Appendix A eq 6}
& \|-\lambda_k(1-\epsilon^2\lambda_{k,\epsilon})\phi_{k,\epsilon}-\Delta \mathsf  H_{\epsilon}\phi_{k,\epsilon}\|_2 \\
\leq &2\epsilon^2\lambda^2_{k}+C_2(\lambda_K) \epsilon^2+\epsilon^2(\lambda^2_{k,\epsilon}+C_1(\lambda_K))+\lambda_{k,\epsilon}\|f\|_2 \nonumber\\
\leq & ( C_2(\lambda_K)+4\lambda_k^2+2\lambda_kC_2(\lambda_K)+C_1(\lambda_K))\epsilon^2. \nonumber
\end{align}
{Finally, if} we substitute \eqref{Appendix A eq 5} and \eqref{Appendix A eq 6} into \eqref{Appendix A eq 1}, we have 
\begin{align}
&\left\|\frac{T_\epsilon-I}{\epsilon^2}\phi_{k,\epsilon}-\Delta \phi_{k,\epsilon}\right\|_2\nonumber \\
\leq & 2 \left(1+C_1(\lambda_K)+\left(\left(\frac{8 C_HC'_3}{C'_1}+\frac{C_HC'_4}{C'_1}+2+\texttt{Vol}(M)\right)+1+2\lambda_k\right) C_2(\lambda_K)+4\lambda_k^2\right) \epsilon^2\,. \nonumber
\end{align}
The statement of the lemma follows.
\end{proof}

\section{Proof of Proposition \ref{GC class}}

Let $\mathcal{F} \subset C(M)$. The covering number $\mathcal{N}(\mathcal{F}, \gamma, \|\cdot\|_N)$ is the smallest number of balls with radius $\gamma$ in the norm $\|\cdot\|_N$ that cover $\mathcal{F}$. In this proof, we consider {three spaces,} $\mathcal K_{\epsilon}$, $u\mathcal{Q}_{\epsilon}$ and $\mathcal{Q}_{\epsilon}\mathcal{Q}_{\epsilon}$.  The norms that we consider are $\|\cdot\|_N=\|\cdot\|_\infty$ and $\|\cdot\|_N=\|\cdot\|_{L^2(P_n)}$. Observe that the cover number satisfies the basic property: if $\gamma_1 \leq \gamma_2$, then 
$$\mathcal{N}(\mathcal{F}, \gamma_1, \|\cdot\|_N) \geq \mathcal{N}(\mathcal{F}, \gamma_2, \|\cdot\|_N).$$

Recall that for the point cloud $\mathcal{X}:=\{x_i\}_{i=1}^n$ i.i.d. sampled from the random vector with the density function $\mathsf p$ supported on the manifold $M$, {for} $f \in C(M)$, {we have}
$$\|f\|_{L^2(P_n)}=\sqrt{\frac{1}{n}\sum_{i=1}^n f(x_i)^2}.$$
{Clearly} we have for $f \in C(M)$, $\|f\|_{L^2(P_n)} \leq \|f\|_{\infty}$. Hence, 
\begin{align}
\mathcal{N}(\mathcal{F}, \gamma, \|\cdot\|_{L^2(P_n)}) \leq  \mathcal{N}(\mathcal{F}, \gamma, \|\cdot\|_{\infty}).
\end{align}

In the next Lemma, we bound the covering number $\mathcal{N}(\mathcal K_{\epsilon}, \gamma, \|\cdot\|_{\infty})$.  Then, we will use $\mathcal{N}(\mathcal K_{\epsilon}, \gamma, \|\cdot\|_{\infty})$ to bound the corresponding covering number for the spaces $\mathcal K_{\epsilon}$, $u\mathcal{Q}_{\epsilon}$ and $\mathcal{Q}_{\epsilon}\mathcal{Q}_{\epsilon}$.

\begin{lemma}\label{covering number K epsilon}
{For $\gamma>0$ and $\epsilon>0$ sufficiently small, we have}
$$
\mathcal{N}(\mathcal K_{\epsilon}, \gamma, \|\cdot\|_{\infty}) \leq \left(\frac{4  \texttt{diam}(M)}{\epsilon^2 \gamma}\right)^{\frac{d(3d+11)}{2}}$$
\end{lemma}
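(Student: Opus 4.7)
The strategy is to reduce a covering of $\mathcal K_\epsilon$ in the $\|\cdot\|_\infty$ metric to a covering of $M$ in the ambient Euclidean metric via Lipschitz continuity of the map $x \mapsto k_\epsilon(x,\cdot)$. First I would estimate the Lipschitz constant of this map. Writing $f(t)=e^{-t/(4\epsilon^2)}$, which satisfies $|f'(t)|\leq \frac{1}{4\epsilon^2}$, and using the elementary identity $\bigl|\|x-y\|^2-\|x'-y\|^2\bigr|\leq \|x-x'\|(\|x-y\|+\|x'-y\|)$, I obtain
\[
|k_\epsilon(x,y)-k_\epsilon(x',y)|\leq \frac{\mathrm{diam}(\iota(M))}{2\epsilon^2}\,\|\iota(x)-\iota(x')\|_{\mathbb R^D}
\]
uniformly in $y\in M$, hence $\|k_\epsilon(x,\cdot)-k_\epsilon(x',\cdot)\|_\infty\leq \frac{\mathrm{diam}(M)}{2\epsilon^2}\|\iota(x)-\iota(x')\|_{\mathbb R^D}$ (noting the embedding constant can be absorbed since $\mathrm{diam}(\iota(M))\leq \mathrm{diam}(M)$ up to a factor when the embedding is isometric).

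Second, I would convert this into a covering of $\mathcal K_\epsilon$. If $\{x_1,\dots,x_N\}\subset M$ is such that $\{\iota(x_j)\}$ is a $\delta$-net of $\iota(M)$ in $\mathbb R^D$ with $\delta:=\frac{2\epsilon^2\gamma}{\mathrm{diam}(M)}$, then the Lipschitz bound gives $\|k_\epsilon(x,\cdot)-k_\epsilon(x_j,\cdot)\|_\infty\leq \gamma$ for the nearest $x_j$, so
\[
\mathcal N(\mathcal K_\epsilon,\gamma,\|\cdot\|_\infty)\leq \mathcal N\!\left(\iota(M),\,\tfrac{2\epsilon^2\gamma}{\mathrm{diam}(M)},\,\|\cdot\|_{\mathbb R^D}\right).
\]
Third, I would bound the ambient covering number of the $d$-dimensional compact submanifold $\iota(M)$. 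A standard volume/doubling argument (using an upper bound on the second fundamental form so that sufficiently small Euclidean balls intersect $\iota(M)$ in sets comparable to $d$-dimensional discs) yields $\mathcal N(\iota(M),r,\|\cdot\|_{\mathbb R^D})\leq C_M (\mathrm{diam}(M)/r)^d$ when $r$ is small. Substituting and absorbing the geometric prefactor into a slightly inflated exponent (using that for $x\geq 1$ one has $C_M x^d\leq x^{d'}$ for any $d'$ large enough once $x$ exceeds a threshold determined by $C_M$) gives a bound of the claimed power-law form $\bigl(\frac{4\,\mathrm{diam}(M)}{\epsilon^2\gamma}\bigr)^{\alpha}$ for a suitable exponent $\alpha$.

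The main obstacle will be arriving at the specific exponent $d(3d+11)/2$: the Lipschitz plus volume-cover scheme above is naturally of order $d$, so to reach $d(3d+11)/2$ I would need either (i) to inflate the exponent in order to absorb all the geometric constants (diameter, curvature bound, embedding dimension $D$, and the threshold where the doubling estimate kicks in) into the clean dimensionless base $4\,\mathrm{diam}(M)/(\epsilon^2\gamma)$, or (ii) to cover the ambient $\mathbb R^D$ in a tubular neighbourhood of $\iota(M)$, which replaces $d$ by a quantity involving $D$, and then use a bound like $D\leq d(d+3)/2$ from Whitney/Nash-type embeddings together with the factor of roughly $d+$ const coming from the Lipschitz reduction. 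Either way, the key technical work is the bookkeeping that turns the qualitatively correct bound $O\bigl((\mathrm{diam}(M)/(\epsilon^2\gamma))^{\text{const}\cdot d}\bigr)$ into the clean form stated in the lemma; I do not expect the exponent to be sharp.
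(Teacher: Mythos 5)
There is a genuine gap, and you have in fact identified it yourself: your scheme does not produce the stated exponent $\frac{d(3d+11)}{2}$ or the clean base $\frac{4\,\texttt{diam}(M)}{\epsilon^2\gamma}$. The missing idea is the Nash isometric embedding theorem. The paper first takes an isometric embedding $E:M\to\mathbb{R}^{s'}$ with $s'\leq s:=\frac{d(3d+11)}{2}$ (a dimension depending only on $d$, \emph{not} on the given ambient dimension $D$), then covers the cube of side $2\,\texttt{diam}(M)$ in $\mathbb{R}^{s}$ containing $E(M)$ by $\left(\frac{4\,\texttt{diam}(M)}{\epsilon_1}\right)^{s}$ balls of radius $\epsilon_1/2$, adjusts the centers onto $E(M)$, and transfers the net to $\iota(M)$ using the two-sided comparison $\|E(x)-E(y)\|\leq d(x,y)\leq 2\|E(x)-E(y)\|$ for nearby points. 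This crude cube-covering in the Nash target space is exactly where both the exponent $s=\frac{d(3d+11)}{2}$ and the constant $4\,\texttt{diam}(M)$ come from, and no curvature- or volume-dependent constants enter. Your Lipschitz reduction (step one and two) is essentially the same as the paper's mean-value-theorem step with $\epsilon_1=2\epsilon^2\gamma$, so that part is fine.

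Your two proposed repairs do not close the gap as stated. Route (ii) is closest in spirit, but you cannot bound the given $D$ by $\frac{d(d+3)}{2}$: $D$ is part of the data and can be arbitrarily large; one must instead \emph{introduce} a new isometric embedding via Nash and do the covering there, which is what forces the exponent $\frac{d(3d+11)}{2}$ rather than a Whitney-type dimension. Route (i), an intrinsic $d$-dimensional cover $\mathcal{N}(\iota(M),r)\leq C_M(\texttt{diam}(M)/r)^d$ with $C_M$ depending on curvature and the second fundamental form, gives a qualitatively correct bound and would in fact suffice for the downstream entropy-integral estimate in Proposition \ref{GC class} (indeed with a better exponent), but absorbing $C_M$ into the exponent only works once the base exceeds a threshold depending on $C_M$, so it does not yield the lemma as written, whose constant involves only the diameter. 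So the proposal is a reasonable alternative strategy for the application, but as a proof of this lemma it is incomplete precisely at the step you flagged.
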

\begin{proof}
{Recall that $M$ is isometrically embedded in $\mathbb{R}^D$ through $\iota$. By the Nash embedding theorem \cite{nash1954c1}, there is an isometric embedding $E:M \rightarrow \mathbb{R}^{s'}$, where $s'\leq s:=\frac{d(3d+11)}{2}\leq D$. We claim that, for any $\epsilon_1>0$,  the covering number $\mathcal{N}(E(M), \epsilon_1, \|\cdot\|_{\mathbb{R}^{s}})$ is bounded by $\left(\frac{4\texttt{diam}(M)}{\epsilon_1}\right)^{s}$. 
Choose any point $p \in E(M)$. Clearly, $E(M)$ is contained in the $s$ dimensional cube in $\mathbb{R}^s$ centered at $p$ with the side length $2\texttt{diam}(M)$. 
The cube can be covered by $(\frac{4 \texttt{diam}(M)}{\epsilon_1})^{s}$ Euclidean balls of radius $\frac{\epsilon_1}{2}$, denoted as $\{B^{\mathbb{R}^{s}}_{\epsilon_1/2}(z_i)\}$, where $z_i \in \mathbb{R}^{s}$. Note that $\{z_i\}$ may not be on $E(M)$. 

Next, we apply the following argument. If $B^{\mathbb{R}^{s}}_{\epsilon_1/2}(z_i)$ intersects $E(M)$, we could take $z'_i \in B^{\mathbb{R}^{s}}_{\epsilon_1/2}(z_i)\cap E(M)$, where $B^{\mathbb{R}^{s}}_{\epsilon_1/2}(z_i) \subset B^{\mathbb{R}^{s}}_{\epsilon_1}(z'_i)$. Otherwise, we let $z'_i=z_i$. Obviously, we have
$$
E(M) \subset \bigcup_{B^{\mathbb{R}^{s}}_{\epsilon_1/2}(z_i) \cap E(M) \neq\emptyset } B^{\mathbb{R}^{s}}_{\epsilon_1/2}(z_i) \subset \bigcup_{B^{\mathbb{R}^{s}}_{\epsilon_1/2}(z_i) \cap E(M) \neq\emptyset } B^{\mathbb{R}^{s}}_{\epsilon_1}(z'_i) \,.
$$
Then, we claim that the number of $z'_i$ in $E(M)$ such that $B_{\epsilon_1/2}(z_i) \cap E(M) \neq\emptyset$ is bounded by the total number of $\{z_i\}$, which is $(\frac{4 \texttt{diam}(M)}{\epsilon_1})^{s}$. Indeed, for any $x,y \in M$ so that $\|E(x)-E(y)\|_{\mathbb{R}^{s}}$ is small enough, it is well known that
$$
\|E(x)-E(y)\|_{\mathbb{R}^s} \leq d(x,y) \leq 2 \|E(x)-E(y)\|_{\mathbb{R}^{s}}.
$$ 
See, for example, \cite[Lemma B.3]{wu2018think}.
Hence, if $\epsilon_1$ is small enough and $\{B^{\mathbb{R}^{s}}_{\epsilon_1}(z'_i)\}$ with $z'_i \in E(M)$ is a cover of $E(M)$, then the above inequality implies that the balls $\{B^{\mathbb{R}^{s}}_{\epsilon_1}(w_i)\}$ with $w_i=\iota(E^{-1}(z'_i)) \in \mathbb{R}^{D}$ cover $\iota(M)$. Hence, when $\epsilon_1$ is small enough, the covering number $\mathcal{N}(\iota(M), \epsilon_1, \|\cdot\|_{\mathbb{R}^D}) \leq \left(\frac{8 \texttt{diam}(M)}{\epsilon_1}\right)^{s}$. 

For $\gamma>0$, fix $\epsilon$ small enough so that $\epsilon_1=2 \epsilon^2 \gamma$ is sufficiently small; in other words, $\gamma=\frac{\epsilon_1}{2 \epsilon^2}$.} For $x, x_i \in M$, if $\|E(x)-E(x_i)\|_{\mathbb{R}^s} \leq \epsilon_1$, by {the} mean value theorem, 
\begin{align}
\max_{y}|k_{\epsilon}(x,y)-k_{\epsilon}(x_i,y)| \leq \frac{\epsilon_1}{2 \epsilon^2}.
\end{align}
Hence, if the Euclidean balls centered at $\{w_i\} \subset E(M)$ of radius  $\epsilon_1=2\epsilon^2\gamma$  cover $E(M)$, then the $L^\infty$ balls centered at $k_{\epsilon}(\iota^{-1}(w_i), \cdot)$ of radius $\gamma$ can cover the space $K_{\epsilon}$.  We substitute $\epsilon_1=2 \epsilon^2 \gamma$ into the bound of $\mathcal{N}(\iota(M), \epsilon_1, \|\cdot\|_{\mathbb{R}^D}) $, then we prove the lemma. 
\end{proof}
 
\begin{lemma} \label{covering number Q epsilon}
Take $u\in C(M)$ so that $\|u\|_\infty \leq 1$. There is a constant $\tilde{C}_1>0$ depending on $\mathsf p_m$ and the $C^0$ norm of $\mathsf p$ such that if $\epsilon$ is small enough, for all $f \in \tilde{C}_1 \epsilon^{2d}u\mathcal{Q}_{\epsilon}$ and $f \in \tilde{C}_1 \epsilon^{4d} \mathcal{Q}_{\epsilon} \mathcal{Q}_{\epsilon}$, we have $\|f\|_\infty \leq 1$. Moreover, 
\begin{align}
& \mathcal{N}(\tilde{C}_1 \epsilon^{2d} u\mathcal{Q}_{\epsilon}, \gamma, \|\cdot\|_{\infty}) \leq \left(\frac{\tilde{C}_2}{\epsilon^{d+2} \gamma}\right)^{\frac{d(3d+11)}{2}}, \nonumber \\
& \mathcal{N}(\tilde{C}_1 \epsilon^{4d} \mathcal{Q}_{\epsilon}\mathcal{Q}_{\epsilon}, \gamma, \|\cdot\|_{\infty}) \leq \left(\frac{\tilde{C}_2}{\epsilon^{d+2} \gamma}\right)^{\frac{d(3d+11)}{2}}, \nonumber 
\end{align}
where $\tilde{C}_2>0$ is a constant depending on $d$, the diameter of $M$, $\mathsf p_m$ and the $C^0$ norm of $\mathsf p$.
\end{lemma}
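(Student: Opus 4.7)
My plan is to separate the two assertions: the uniform bound $\|f\|_\infty \leq 1$ and the covering number estimates themselves. The former is a direct consequence of Lemma \ref{bounds on operators}(a), which gives $C_5 \epsilon^d \leq d_\epsilon(x) \leq C_6 \epsilon^d$ and therefore $Q_\epsilon(x,y) \leq k_\epsilon(x,y)/(C_5^2 \epsilon^{2d}) \leq 1/(C_5^2 \epsilon^{2d})$. Choosing $\tilde{C}_1 := \min(C_5^2, C_5^4)$ and using $\|u\|_\infty \leq 1$ immediately yields $\|f\|_\infty \leq 1$ for both $f \in \tilde{C}_1 \epsilon^{2d} u\mathcal{Q}_\epsilon$ and $f \in \tilde{C}_1 \epsilon^{4d} \mathcal{Q}_\epsilon\mathcal{Q}_\epsilon$. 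Since $C_5$ depends only on $\mathsf p_m$ and the $C^0$ norm of $\mathsf p$, so does $\tilde{C}_1$.

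For the covering bounds, I would mimic the reduction in Lemma \ref{covering number K epsilon}: cover the parameter space $\iota(M) \subset \mathbb{R}^D$ (respectively $\iota(M) \times \iota(M)$) at a Euclidean scale $\epsilon_1$ chosen so that the induced $\|\cdot\|_\infty$-fluctuation of the parametrized functions is at most $\gamma$. The quantitative input is a Lipschitz bound on $x \mapsto u(\cdot) Q_\epsilon(x,\cdot)$ and on $(x,y) \mapsto Q_\epsilon(x,\cdot) Q_\epsilon(y,\cdot)$ in $L^\infty$. Writing $Q_\epsilon = k_\epsilon/(d_\epsilon(x) d_\epsilon(y))$ and using the same pointwise bound $|\nabla_x k_\epsilon(x,y)| \leq C/\epsilon^2$ employed inside the proof of Lemma \ref{covering number K epsilon} (also passed under the integral defining $d_\epsilon$), a quotient-rule computation combined with $d_\epsilon \geq C_5 \epsilon^d$ delivers $|\nabla_x Q_\epsilon(x,y)| \leq C'/\epsilon^{3d+2}$ uniformly in $y$. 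Consequently $\|u \cdot Q_\epsilon(x,\cdot) - u\cdot Q_\epsilon(x',\cdot)\|_\infty \leq (C'/\epsilon^{3d+2})\, \|\iota(x)-\iota(x')\|_{\mathbb{R}^D}$, and a similar estimate of order $\epsilon^{-5d-2}$ times $\|\iota(x)-\iota(x')\|_{\mathbb{R}^D} + \|\iota(y)-\iota(y')\|_{\mathbb{R}^D}$ follows for the product class by combining this with the upper bound $Q_\epsilon \leq 1/(C_5^2 \epsilon^{2d})$ via the product rule.

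Multiplying by the prefactors $\tilde{C}_1 \epsilon^{2d}$ and $\tilde{C}_1 \epsilon^{4d}$ reduces the effective Lipschitz constant to $C''/\epsilon^{d+2}$ in both cases. Requiring the variation to be at most $\gamma$ therefore forces the Euclidean net on $\iota(M)$ (respectively on the product) to have scale $\epsilon_1 \asymp \gamma\epsilon^{d+2}$. Feeding this into the covering estimate derived inside the proof of Lemma \ref{covering number K epsilon} via the Nash embedding — namely $\mathcal{N}(\iota(M),\epsilon_1,\|\cdot\|_{\mathbb{R}^D}) \leq (8\,\texttt{diam}(M)/\epsilon_1)^{d(3d+11)/2}$ — produces the claimed bound $(\tilde{C}_2/(\epsilon^{d+2}\gamma))^{d(3d+11)/2}$ after absorbing the numerical constants into $\tilde{C}_2$. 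I do not anticipate a genuine obstacle; the main care is in the $\epsilon$-bookkeeping so that the $\epsilon^{d+2}$ denominator (rather than $\epsilon^{d+1}$) emerges from pairing the loose pointwise Lipschitz bound on $k_\epsilon$ against the $\epsilon^d$-scale normalization by $d_\epsilon$, and in handling the parameter space $M\times M$ either via a product $\epsilon_1$-net or a Nash embedding of $M\times M$.
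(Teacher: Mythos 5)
Your proposal is correct and follows essentially the paper's own argument: the sup bound $Q_\epsilon \leq C/\epsilon^{2d}$ from the two-sided estimate on $d_\epsilon$ gives the normalization $\tilde{C}_1$, and the covering bound comes from a Lipschitz-in-base-point estimate of effective size $C''/\epsilon^{d+2}$ fed into the Nash-embedding cover of Lemma \ref{covering number K epsilon} at scale $\epsilon_1\asymp \gamma\epsilon^{d+2}$; the only cosmetic difference is that the paper channels the variation through $\frac{2\tilde C_1}{C_5^3\epsilon^{d}}\|k_\epsilon(x,\cdot)-k_\epsilon(y,\cdot)\|_\infty$ and reuses $\mathcal{N}(\mathcal K_\epsilon,\cdot,\|\cdot\|_\infty)$, whereas you differentiate $Q_\epsilon$ directly by the quotient rule, which is the same computation. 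One small remark: for the product class your product-net (or $M\times M$ embedding) route actually yields the exponent $d(3d+11)$ rather than $\frac{d(3d+11)}{2}$, but the paper's ``similarly'' glosses over the same point, and since Proposition \ref{GC class} only uses $\sqrt{\log\mathcal N}$ this factor is absorbed into the constants and is immaterial.
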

\begin{proof}
First, it follows from (a) in Lemma \ref{properties of T epsilon} that if $\epsilon$ is small enough, { for any $x,y \in M$, $|Q_{\epsilon}(x,y)| \leq \frac{\tilde{C}}{\epsilon^{2d}}$, where $\tilde{C}$ depends on $\mathsf p_m$ and the $C^0$ norm of $\mathsf p$. Take $\tilde{C}_1=\min(\frac{1}{\tilde{C}}, \frac{1}{\tilde{C}^2})$, then for all $f \in \tilde{C}_1 \epsilon^{2d} u \mathcal{Q}_{\epsilon}$ and $f \in \tilde{C}_1 \epsilon^{4d} \mathcal{Q}_{\epsilon} \mathcal{Q}_{\epsilon}$, we have $\|f\|_\infty \leq 1$.}

\begin{align}
&| \tilde{C}_1 \epsilon^{2d}u(z)Q_\epsilon(x,z)- \tilde{C}_1 \epsilon^{2d} u(z)Q_\epsilon(y,z)| \\
\leq & \tilde{C}_1 \epsilon^{2d} |\frac{u(z)}{d_\epsilon(z)}|\Big(|k_\epsilon(x,z)| \frac{|d_\epsilon(x)-d_\epsilon(y)|}{|d_\epsilon(x)d_\epsilon(y)|}+\frac{1}{d_\epsilon(y)}|k_\epsilon(x,z)-k_\epsilon(y,z)|\Big). \nonumber
\end{align}
By Lemma \ref{bounds on operators}, for any $x \in M$,  $C_5 \epsilon^d \leq d_{\epsilon}(x) \leq C_6 \epsilon^d$, where $C_5$ and $C_6$ depend on $\mathsf p_m$ and the $C^0$ norm of $\mathsf p$. Moreover, 
\begin{align}
|d_\epsilon(x)-d_\epsilon(y)|= |\int_M (k_\epsilon(x,z')-k_\epsilon(y,z')dP(z')| \leq \|k_\epsilon(x,\cdot)-k_\epsilon(y,\cdot)\|_\infty. 
\end{align}
Hence,
\begin{align}
\| \tilde{C}_1 \epsilon^{2d} u(\cdot)Q_\epsilon(x,\cdot)-  \tilde{C}_1 \epsilon^{2d} u(\cdot)Q_\epsilon(y,\cdot)\|_\infty \leq \frac{2 \tilde{C}_1}{C_5^3\epsilon^{d}}\|k_\epsilon(x,\cdot)-k_\epsilon(y,\cdot)\|_\infty.
\end{align}
Since $\mathcal{N}(K_{\epsilon}, \gamma, \|\cdot\|_{\infty}) \leq (\frac{4 diam(M)}{\epsilon^2 \gamma})^{\frac{d(3d+11)}{2}}$, we have $\mathcal{N}(u\mathcal{Q}_{\epsilon}, \gamma, \|\cdot\|_{\infty}) \leq (\frac{8 \tilde{C}_1 diam(M)}{C_5^3\epsilon^{d+2} \gamma})^{\frac{d(3d+11)}{2}}$. Note that $\frac{8 \tilde{C}_1 diam(M)}{C_5^3}$ depends on $d$, the diameter of $M$, $\mathsf p_m$ and the $C^0$ norm of $\mathsf p$.
 Similarly, we can prove the bound for $\mathcal{N}( \tilde{C}_1 \epsilon^{4d}\mathcal{Q}_{\epsilon}\mathcal{Q}_{\epsilon}, \gamma, \|\cdot\|_{\infty})$.
\end{proof}

Now we are ready to prove Proposition \ref{GC class}.

\begin{proof} (Proof of Proposition \ref{GC class})

Let $\mathcal{F} \subset C(M)$ and $\|f\|_\infty \leq 1$ for all $f \in \mathcal{F}$.  Based on the entropy bound in Proposition 19 in \cite{von2008consistency}, there is a constant $c_1>0$ such that for all $n$, with probability greater than $1-\delta$,
\begin{align}
\sup_{f \in \mathcal{F}}|P_nf-Pf| \leq \frac{c_1}{\sqrt{n}} \int_{0}^\infty \sqrt{\log \mathcal{N}(\mathcal{F}, \gamma, \|\cdot\|_{L^2(P_n)})} d\gamma+\sqrt{\frac{1}{2n}\log \frac{2}{\delta}}.
\end{align}
Since $\mathcal{N}(\mathcal{F}, \gamma, \|\cdot\|_{L^2(P_n)}) \leq  \mathcal{N}(\mathcal{F}, \gamma, \|\cdot\|_{\infty})$, 
\begin{align}
\sup_{f \in \mathcal{F}}|P_nf-Pf| \leq & \frac{c_1}{\sqrt{n}} \int_{0}^\infty \sqrt{\log \mathcal{N}(\mathcal{F}, \gamma, \|\cdot\|_{\infty})} d\gamma+\sqrt{\frac{1}{2n}\log \frac{2}{\delta}} \\
{=} & \frac{c_1}{\sqrt{n}} \int_{0}^2 \sqrt{\log \mathcal{N}(\mathcal{F}, \gamma, \|\cdot\|_{\infty})} d\gamma+\sqrt{\frac{1}{2n}\log \frac{2}{\delta}}, \nonumber
\end{align}
where we use the fact that the diameter of the set $\mathcal{F}$ is bounded by $2$ in the last step. Note that by Lemma \ref{covering number Q epsilon}, for any $f\in {\mathcal K_\epsilon \cup \tilde{C}_1 \epsilon^{2d} u\mathcal{Q}_{\epsilon}\cup \tilde{C}_1 \epsilon^{4d} \mathcal{Q}_{\epsilon} \mathcal{Q}_{\epsilon}}$, we have $\|f\|_\infty \leq 1$. 

We first discuss the case $\mathcal{F}=K_\epsilon$. By Lemma \ref{covering number K epsilon}, $\log \mathcal{N}(K_\epsilon, \gamma, \|\cdot\|_{\infty})=\frac{d(3d+11)}{2}\log (\frac{4 \texttt{diam}(M)}{\epsilon^2 \gamma})$. Let $\gamma^*=\frac{\epsilon^2}{4 \texttt{diam}(M)}$. If $\gamma \leq \gamma^*$, then 
\begin{align}
\log \mathcal{N}(\mathcal K_\epsilon, \gamma, \|\cdot\|_{\infty})=& \frac{d(3d+11)}{2}\log \left(\frac{4 \texttt{diam}(M)}{\epsilon^2 \gamma}\right) =  \frac{d(3d+11)}{2} \log \left(\frac{1}{\gamma^* \gamma}\right) \nonumber \\
\leq & d(3d+11) \log\left(\frac{1}{\gamma}\right) \leq d(3d+11) \left(\log\left(\frac{1}{\gamma}\right)\right)^2. \nonumber 
\end{align}
If $\gamma > \gamma^*$, 
{\begin{align}
&\log \mathcal{N}(\mathcal K_\epsilon, \gamma, \|\cdot\|_{\infty})  \leq  \log \mathcal{N}(\mathcal K_\epsilon, \gamma^*, \|\cdot\|_{\infty}) 
=2d(3d+11) \log\left(\frac{2 \sqrt{\texttt{diam}(M)}}{\epsilon}\right)\, .\nonumber
\end{align}
Hence, 
\begin{align}
& \int_{0}^2 \sqrt{\log \mathcal{N}(K_\epsilon, \gamma, \|\cdot\|_{\infty})} d\gamma\nonumber \\
\leq & \int_{0}^{\gamma^*} \sqrt{d(3d+11)} \log\left(\frac{1}{\gamma}\right) d\gamma+(2-\gamma^*) \sqrt{2d(3d+11)} \sqrt{ \log\left(\frac{2 \sqrt{\texttt{diam}(M)}}{\epsilon}\right)} \nonumber \\
\leq &   \sqrt{d(3d+11)} \gamma^* \log\left(\frac{1}{\gamma^*}\right) + 2 \sqrt{2d(3d+11)} \sqrt{ \log\left(\frac{2 \sqrt{\texttt{diam}(M)}}{\epsilon}\right)}\nonumber \\
\leq & \sqrt{d(3d+11)} + 2 \sqrt{2d(3d+11)} \sqrt{ \log\left(2 \sqrt{\texttt{diam}(M)}\right)}+ 2 \sqrt{2d(3d+11)} \sqrt{\log\left(\frac{1}{\epsilon}\right)} \nonumber \\
\leq & c_2 \sqrt{-\log\epsilon}, \nonumber 
\end{align}}
where we use {the trivial bound} $ \gamma^* \log(\frac{1}{\gamma^*}) <1$ {since} $\gamma^*$ {is of order $\epsilon^2$} and the fact that $\epsilon$ is small enough in the last step, and $c_2{>0}$ is a constant depending on $d$ and the diameter of $M$. Therefore, if we choose $\delta=\frac{1}{n^2}$,
\begin{align}
\sup_{f \in \mathcal K_\epsilon}|P_nf-Pf| \leq  \frac{c_1 c_2 \sqrt{-\log\epsilon}}{\sqrt{n}} +\sqrt{\frac{\log 2n^2}{2n}} \leq \frac{c_3}{\sqrt{n}}\left( \sqrt{-\log\epsilon}+\sqrt{\log n}\right)\,,
\end{align}
where $c_3{>0}$ is a constant depending on $d$ and the diameter of $M$. Similarly, by using Lemma \ref{covering number Q epsilon}, we can show that 
\begin{align}
& \sup_{f \in \epsilon^{2d} u\mathcal{Q}_{\epsilon}}|P_nf-Pf| \leq  \frac{c_4}{\sqrt{n}}\left( \sqrt{-\log\epsilon}+\sqrt{\log n}\right), \\
& \sup_{f \in \epsilon^{4d} \mathcal{Q}_{\epsilon} \mathcal{Q}_{\epsilon}}|P_nf-Pf| \leq  \frac{c_5}{\sqrt{n}}\left( \sqrt{-\log\epsilon}+\sqrt{\log n}\right),
\end{align}
where $c_4{>0}$ and $c_5{>0}$ are constants depending on $d$, the diameter of $M$, $\mathsf p_m$, and the $C^0$ norm of $\mathsf p$. Hence, we have 
\begin{align}
& \sup_{f \in u\mathcal{Q}_{\epsilon}}|P_nf-Pf| \leq  \frac{c_4}{\sqrt{n} \epsilon^{2d}}\left( \sqrt{-\log\epsilon}+\sqrt{\log n}\right), \\
& \sup_{f \in \mathcal{Q}_{\epsilon} \mathcal{Q}_{\epsilon}}|P_nf-Pf| \leq  \frac{c_5}{\sqrt{n}\epsilon^{4d}}\left( \sqrt{-\log\epsilon}+\sqrt{\log n}\right)\,.
\end{align}
The conclusion follows by taking $C_{gc}:=\max\{c_3,c_4,c_5\}$.
\end{proof}

\section{proof of Theorem \ref{spectral convergence of un L on closed manifold}}\label{proof of un GL rate}

{In addition to notations used in the proof of Theorem \ref{spectral convergence of L on closed manifold} that are summarized in Section \ref{Section:basic quantities for proof}, we need the following new notations.}
\begin{definition}
For any function $f(x) \in C(M) $, we define two operators from $C(M)$ to $C(M)$ with the $L^\infty$ norm:
$$T^{un}_{n,\epsilon}f(x)= \frac{P_{n}k_{\epsilon}f (x)}{d_{n,\epsilon}(x)},\quad 
T^{un}_{\epsilon}f(x)= \frac{Pk_{\epsilon}f (x)}{d_{\epsilon}(x)}.$$
\end{definition}

Based on the same argument {as that for} Lemma \ref{properties of T epsilon}, we have {the following lemma, which proof is omitted.}
\begin{lemma}\label{operator un T epsilon}
\begin{enumerate}
\item
For any $f \in L^2(M)$, ${T^{un}_{\epsilon}f(x)}=\frac{\int_{M}k_{\epsilon}(x,y)f(y)dV_M}{\int_{M}k_{\epsilon}(x,y)dV_M}$ is a smooth function on $M$. In particular, the eigenfunctions of {$T^{un}_{\epsilon}$} are smooth.
\item
For any function $f \in C^4(M)$, 
\begin{align}
\frac{T^{un}_{\epsilon}f(x)-f(x)}{\epsilon^2}=\Delta f(x)+O(\epsilon^2)\,, \nonumber 
\end{align}
where the implied constant in $O(\epsilon^2)$ depends on the $C^4$ norms of $f$ and {the volume} and the Ricci curvature of the manifold. 
\end{enumerate}
\end{lemma}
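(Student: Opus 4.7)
The plan is to run the same normal-coordinate Taylor expansion that underlies Lemma \ref{properties of T epsilon}, with the simplification that the sampling density $\mathsf p$ is constant (under Assumption \ref{assumption DM} for uniform sampling, $\mathsf p = 1/\texttt{Vol}(M)$). Under this simplification, $Pk_\epsilon f(x) = \frac{1}{\texttt{Vol}(M)}\int_M k_\epsilon(x,y)f(y)\,dV_M(y)$ and $d_\epsilon(x) = \frac{1}{\texttt{Vol}(M)}\int_M k_\epsilon(x,y)\,dV_M(y)$, and the prefactor $\texttt{Vol}(M)^{-1}$ cancels in the quotient. This gives the closed-form expression $T^{un}_\epsilon f(x) = \int_M k_\epsilon(x,y)f(y)\,dV_M(y) \big/ \int_M k_\epsilon(x,y)\,dV_M(y)$ claimed in (1).

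For the smoothness assertion in (1), I would observe that for each fixed $y\in M$ the map $x \mapsto k_\epsilon(x,y) = \exp(-\|\iota(x)-\iota(y)\|^2_{\mathbb{R}^D}/(4\epsilon^2))$ is smooth in $x$, with all derivatives uniformly bounded on the compact set $M\times M$. Since $M$ is compact and $f\in L^2(M)$ (hence integrable against the bounded kernel), differentiation under the integral sign is justified, showing that both $x\mapsto \int_M k_\epsilon(x,y)f(y)\,dV_M(y)$ and $x\mapsto \int_M k_\epsilon(x,y)\,dV_M(y)$ are in $C^\infty(M)$. Since $k_\epsilon>0$, the denominator is bounded below by a positive constant on the compact $M$, so the quotient $T^{un}_\epsilon f$ is smooth. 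The smoothness of eigenfunctions then follows by a standard bootstrap: if $T^{un}_\epsilon f = \lambda f$ with $\lambda\neq 0$, then $f = \lambda^{-1} T^{un}_\epsilon f \in C^\infty(M)$; the case $\lambda=0$ is irrelevant since $T^{un}_\epsilon$ has positive integral kernel.

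For the pointwise expansion in (2), I would work in normal coordinates $u\in \mathbb{R}^d$ centered at $x$ and exploit the exponential decay of $k_\epsilon$ to restrict the integration to a geodesic ball $B_r(x)$ of fixed radius $r>0$ (the complement contributes at most $O(e^{-c/\epsilon^2})$, which is absorbed). In these coordinates one has the standard expansions $\|\iota(x)-\iota(\exp_x u)\|^2_{\mathbb{R}^D} = \|u\|^2 + O(\|u\|^4)$ with the implied constant controlled by the second fundamental form, $dV_M = (1 + O(\|u\|^2))\,du$ with the implied constant controlled by the Ricci curvature, and $f(\exp_x u) = f(x) + u^i\partial_i f(x) + \tfrac{1}{2} u^i u^j \partial_i\partial_j f(x) + \tfrac{1}{6}u^iu^ju^k\partial_i\partial_j\partial_k f(x) + O(\|u\|^4\|f\|_{C^4})$. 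Substituting into the numerator and denominator, using that the odd moments of the Gaussian vanish and that the second moment matrix of $e^{-\|u\|^2/(4\epsilon^2)}$ on $\mathbb{R}^d$ is $2\epsilon^2 I$, a direct Gaussian moment computation (identical to the one recalled in Remark \ref{coeffecient 1}) yields
\begin{align*}
\int_M k_\epsilon(x,y)\,dV_M(y) &= (4\pi\epsilon^2)^{d/2}\bigl(1 + \epsilon^2 A(x) + O(\epsilon^4)\bigr),\\
\int_M k_\epsilon(x,y)f(y)\,dV_M(y) &= (4\pi\epsilon^2)^{d/2}\bigl(f(x) + \epsilon^2\bigl[\Delta f(x) + A(x) f(x)\bigr] + O(\epsilon^4)\bigr),
\end{align*}
where $A(x)$ is a curvature correction independent of $f$. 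Forming the ratio, the $A(x) f(x)$ term cancels against $f(x)\cdot \epsilon^2 A(x)$ coming from expanding $1/(1+\epsilon^2 A(x)+\cdots)$, and we obtain $T^{un}_\epsilon f(x) = f(x) + \epsilon^2 \Delta f(x) + O(\epsilon^4)$, which is exactly the claim after dividing by $\epsilon^2$.

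The main (and only real) obstacle is bookkeeping the $O(\epsilon^4)$ remainder so that the implied constant depends only on $\|f\|_{C^4}$, the volume, and the Ricci curvature: one has to check that the $O(\|u\|^4)$ remainders in the Taylor expansions of $f$, the volume element, and the distance function all pair with $\|u\|^k e^{-\|u\|^2/(4\epsilon^2)}$ integrands whose moments are $O(\epsilon^{d+4})$, and that the denominator cancellation is performed uniformly. All of this is identical to the derivation in Lemma \ref{properties of T epsilon}(d), with the single simplification that no factor of $\mathsf p(y)/d_\epsilon(y)$ appears (since $\mathsf p$ is constant), so no $C^2$-norm of $\mathsf p$ enters the final bound.
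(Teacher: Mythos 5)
Your proposal is correct and takes essentially the same route the paper intends: the paper omits this proof, stating it follows the same argument as Lemma \ref{properties of T epsilon} (the normal-coordinate/Gaussian-moment expansion of \cite{coifman2006diffusion}), which is exactly what you carry out, with the additional correct observation that under uniform sampling the constant density cancels in the quotient so no $\mathsf p$-dependent factors appear. The only cosmetic remark is that, as you yourself note, the chordal-versus-geodesic distance expansion also involves the second fundamental form, so the remainder constant really depends on it as well (consistent with Lemma \ref{properties of T epsilon}(d)), even though the lemma's stated dependence mentions only the volume and the Ricci curvature.
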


Suppose $(\lambda^{un}_{i,n,\epsilon}, \phi^{un}_{i,n,\epsilon})$ is the $i$-th eigenpair of $\frac{I-T^{un}_{n,\epsilon}}{\epsilon^2}$, $(\lambda^{un}_{i,\epsilon}, \phi^{un}_{i,\epsilon})$ is the $i$-th eigenpair of $\frac{I-T^{un}_{\epsilon}}{\epsilon^2}$, and $(\lambda_{i}, \phi_{i})$ is the $i$-th eigenpair of $-\Delta$. Assume $\phi^{un}_{i,n,\epsilon}$, $\phi^{un}_{i,\epsilon}$ and $\phi_{i}$ are normalized in the $L^2$ norm. Hence, we have the following proposition about the spectral convergence rate from $\frac{I-T^{un}_{\epsilon}}{\epsilon^2}$ to $-\Delta$. The proof follows by combining Lemma \ref{operator un T epsilon} and the same method as in Lemma \ref{property 1 of R epsilon}, Lemma \ref{property 2 of R epsilon} and Proposition \ref{T epsilon and Delta}.

\begin{proposition}\label{un T epsilon and Delta  of un GL}
Assume that the eigenvalues of $\Delta$ are simple. Let $\mathsf \Gamma_K:=\min_{1 \leq i \leq K}\textup{dist}(\lambda_i, \sigma(-\Delta)\setminus \{\lambda_i\})$. Suppose $\epsilon$ is small enough. Then, there are $a_i \in \{-1, 1\}$ such that for all $1 \leq i < K$, 
\begin{align}
|\lambda^{un}_{i,\epsilon}-\lambda_{i}|  \leq C_1 \epsilon^{2},\quad 
\|a_i\phi^{un}_{i,\epsilon}-\phi_{i}\|_{\infty}  \leq C_1 \epsilon^2, \,
\end{align}
where $C_1$ is a constant depending on $\mathsf \Gamma_K$, $\lambda_K$,  and the volume, the injectivity radius, the sectional curvature and the second fundamental form of the manifold. 
\end{proposition}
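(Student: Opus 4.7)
The plan is to follow the four-step scaffold of Proposition \ref{T epsilon and Delta} with $T^{un}_{\epsilon}$ in place of $T_{\epsilon}$, exploiting the clean formula $T^{un}_{\epsilon}f(x)=\frac{\int_M k_\epsilon(x,y)f(y)\,dV_M}{\int_M k_\epsilon(x,y)\,dV_M}$ from Lemma \ref{operator un T epsilon}(1) and the pointwise Taylor identity $(T^{un}_{\epsilon}-I)/\epsilon^2=\Delta+O(\epsilon^2)$ on $C^4$ from Lemma \ref{operator un T epsilon}(2). Because no density factor $\mathsf p$ is carried along, the constants below depend only on geometric quantities of $M$ (volume, injectivity radius, curvature, second fundamental form), $\mathsf\Gamma_K$ and $\lambda_K$, as in the statement.

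First I would introduce the analogue $R^{un}_\epsilon:=(I-\mathsf H_\epsilon)/\epsilon^2-(I-T^{un}_\epsilon)/\epsilon^2$ and establish two bounds paralleling Lemmas \ref{property 1 of R epsilon} and \ref{property 2 of R epsilon}. The uniform $L^2$ bound $\|R^{un}_\epsilon f\|_2\leq C\|f\|_2$ follows by writing $T^{un}_\epsilon f$ as an integral against the Gaussian kernel (Lemma \ref{operator un T epsilon}(1)) and comparing to $\mathsf H_\epsilon f$ via the parametrix expansion of Lemma \ref{bounds on heat kernel}(a) together with the Taylor expansion relating $\|\iota(x)-\iota(y)\|^2_{\mathbb{R}^D}$ and $d(x,y)^2$; the argument is exactly as in Lemma \ref{property 1 of R epsilon} but simpler since no $1/\mathsf p$ factors appear. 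The refined bound $\|R^{un}_\epsilon\phi_K\|_2\leq C\epsilon^2(1+\lambda_K^{d/2+5})$ on any Laplace eigenfunction comes from Lemma \ref{operator un T epsilon}(2) plus the Sobolev embedding $\|\phi_K\|_{C^4}\lesssim 1+\lambda_K^{d/2+5}$, exactly as in Lemma \ref{property 2 of R epsilon}.

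Next I would execute the four-step machinery of Proposition \ref{T epsilon and Delta}. In Step 1, for $f\in L^2(M)$ with $\|f\|_2=1$, split $f=f_1+f_2$ with $f_1$ the projection on $\{\phi_0,\dots,\phi_J\}$, apply the refined bound to $f_1$ and the uniform bound to $f_2$, and balance $J\asymp\epsilon^{-3d/(2(d+10))}$ against the denominator estimate $\bigl|(\frac{I-\mathsf H_\epsilon}{\epsilon^2}f,f)\bigr|\gtrsim\min(\lambda_J,1/\epsilon)\|f_2\|_2^2+\lambda_1$ coming from \eqref{bound for one minus expo square}; Lemma \ref{ratio of eigenvalues}(a) then yields a weak eigenvalue bound $|\lambda^{un}_{i,\epsilon}-\lambda_i|\lesssim\epsilon^{3/(d+10)+2}\lambda_i$ for $i<K$. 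In Step 2, convert this into a uniform lower bound $\gamma_i((I-T^{un}_\epsilon)/\epsilon^2)\geq\mathsf\Gamma_K/12$ on the eigengaps, and apply Lemma \ref{ratio of eigenvalues}(b) with the refined $R^{un}_\epsilon$ bound to show $|(\phi_i,\phi^{un}_{i,\epsilon})|\geq 1/2$. In Step 3, use Lemma \ref{ratio of eigenvalues}(c) with $\|R^{un}_\epsilon\phi_i\|_2\lesssim\epsilon^2(1+\lambda_K^{d/2+5})$ to boost the eigenvalue bound to $|\lambda^{un}_{i,\epsilon}-\lambda_i|\leq C\epsilon^2$, and feed this back into Lemma \ref{ratio of eigenvalues}(b) to obtain $\|a_i\phi^{un}_{i,\epsilon}-\phi_i\|_2\leq C\epsilon^2$. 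In Step 4, upgrade the $L^2$ bound to $L^\infty$ via the Sobolev embedding $\|g\|_\infty\lesssim\|g\|_2+\|\Delta^d g\|_2$, exactly along the lines of \eqref{bounds of infinity norm}–\eqref{term 3}, writing
\[
\Delta^d(a_i\phi^{un}_{i,\epsilon}-\phi_i)=\Bigl(\Delta^d-\bigl(\tfrac{T^{un}_\epsilon-I}{\epsilon^2}\bigr)^{d}\Bigr)a_i\phi^{un}_{i,\epsilon}+\bigl(\tfrac{T^{un}_\epsilon-I}{\epsilon^2}\bigr)^{d}a_i\phi^{un}_{i,\epsilon}-\Delta^d\phi_i,
\]
bounding the first term by a $T^{un}_\epsilon$-version of Lemma \ref{L^2 regularity} and the second by $|(\lambda^{un}_{i,\epsilon})^d-\lambda_i^d|+\lambda_K^d\|a_i\phi^{un}_{i,\epsilon}-\phi_i\|_2$.

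The main obstacle will be Step 4, specifically proving the $T^{un}_\epsilon$-analogue of Lemma \ref{L^2 regularity}, which requires a parametrix-style expansion $\mathbb K^{un}_\epsilon(x,y)=\mathsf H(x,y,\epsilon^2)(w_1(x,y)+\epsilon^2 w_2(x,y,\epsilon^2))$ on a geodesic ball (with off-diagonal $C_H\epsilon^2$ tails) together with heat-kernel derivative bounds from Lemma \ref{bounds on heat kernel}(b), so that the operator $\Delta T^{un}_\epsilon-\Delta\mathsf H_\epsilon$ can be absorbed into the $L^2\to L^2$ semigroup bound on $|f|\mapsto u(\cdot,t)$. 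Once this analogue is in hand, collecting the constants from all four steps gives the claimed $C_1=C_1(\mathsf\Gamma_K,\lambda_K,\mathrm{geometry})$ and concludes the proof.
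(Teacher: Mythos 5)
Your proposal matches the paper's own argument: the paper proves this proposition exactly by combining Lemma \ref{operator un T epsilon} with the same machinery as Lemmas \ref{property 1 of R epsilon}--\ref{property 2 of R epsilon} and the four-step proof of Proposition \ref{T epsilon and Delta}, with the density factors absent under uniform sampling so that the $\epsilon^2$ rate survives with $\lambda_K$- and $\mathsf\Gamma_K$-dependent constants. Your identification of the $T^{un}_{\epsilon}$-analogue of Lemma \ref{L^2 regularity} (parametrix expansion of the unnormalized kernel) as the key ingredient for the $L^\infty$ upgrade is precisely what the paper's sketched proof implicitly relies on.
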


By using the similar argument as Proposition \ref{T epsilon and T n espilon 1}, we can show the convergence rate of $\lambda^{un}_{i, n,\epsilon}$ to $\lambda^{un}_{i,\epsilon}$ and  $\phi^{un}_{i,n,\epsilon}$ to $\phi^{un}_{i,\epsilon}$.

\begin{proposition}
Assume that the eigenvalues of $\Delta$ are simple. Let $\mathsf \Gamma_K:=\min_{1 \leq i \leq K}\textup{dist}(\lambda_i, \sigma(-\Delta)\setminus \{\lambda_i\})$. Fix $\epsilon$ small enough. If  $n$ is large enough so that $\frac{1}{\sqrt{n}\epsilon^d} ( \sqrt{-\log\epsilon}+\sqrt{\log n})< \mathcal{C}_1$, where $\mathcal{C}_1$ depends on $d$ {and the volume and} the diameter of $M$, {then there are $a_i \in \{1,-1\}$} such that with probability greater than $1-n^{-2}$,  for all $1 \leq i < K$, 
we have
{\begin{align}
& |\lambda^{un}_{i,\epsilon}-\lambda^{un}_{i, n,\epsilon}|\leq   \frac{C_{2}}{\sqrt{n}\epsilon^{d+4}}\left( \sqrt{-\log\epsilon}+\sqrt{\log n}\right) , \nonumber  \\
& \|a_i\phi^{un}_{i,n,\epsilon}-\phi^{un}_{i,\epsilon}\|_{\infty} \leq  \frac{C_{2}}{\sqrt{n}\epsilon^{d+2}}\left( \sqrt{-\log\epsilon}+\sqrt{\log n}\right) , \nonumber 
\end{align}}
where $C_{2}>0$ is a constant depending on $\mathsf \Gamma_K$, $\lambda_K$,  $d$ {and the volume and} the diameter of $M$.
\end{proposition}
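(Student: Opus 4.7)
The plan is to mirror the structure of the proof of Theorem \ref{spectral convergence of L on closed manifold} but work in the unnormalized setting, exploiting the fact that under uniform sampling the density estimation step and the $\alpha$-normalization factor drop out. The appendix already provides the two main building blocks: Proposition \ref{un T epsilon and Delta of un GL} gives the $\epsilon^2$ spectral rate from $\frac{I-T^{un}_{\epsilon}}{\epsilon^2}$ to $-\Delta$, and the subsequent proposition gives the stochastic rate from $\frac{I-T^{un}_{n,\epsilon}}{\epsilon^2}$ to $\frac{I-T^{un}_{\epsilon}}{\epsilon^2}$, of order $\frac{1}{\sqrt{n}\epsilon^{d+4}}(\sqrt{-\log\epsilon}+\sqrt{\log n})$ for eigenvalues and $\frac{1}{\sqrt{n}\epsilon^{d+2}}(\sqrt{-\log\epsilon}+\sqrt{\log n})$ for eigenvectors in $L^\infty$. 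The first step is simply to chain these two estimates by the triangle inequality and then choose $\epsilon(n)$ so that the stochastic term is dominated by the deterministic bias term $\epsilon^2$.

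For the eigenvalue statement, balancing $\frac{\sqrt{\log n}}{\sqrt{n}\,\epsilon^{d+4}}\lesssim \epsilon^2$ forces $\epsilon\geq(\log n/n)^{1/(2d+12)}$, and then both contributions are $O(\epsilon^2)$; the implied constant absorbs $\mathsf\Gamma_K$, $\lambda_K$, and the geometric constants from Proposition \ref{un T epsilon and Delta of un GL}, giving $\Omega^{un}_1$. For the eigenvector statement, the balance $\frac{\sqrt{\log n}}{\sqrt{n}\,\epsilon^{d+2}}\lesssim \epsilon^2$ forces $\epsilon\geq(\log n/n)^{1/(2d+8)}$. The sign ambiguity $a_i\in\{\pm1\}$ is handled by composing the $\pm 1$ choices from Proposition \ref{un T epsilon and Delta of un GL} and from the second appendix proposition (which themselves come from Lemma \ref{projection lemma}), noting that the product still lies in $\{\pm 1\}$.

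The one nontrivial additional step is passing from $\phi^{un}_{i,n,\epsilon}(x_j)$, the Nyström extension evaluated at the samples, to the normalized coordinate $v^{un}_{i,n,\epsilon}(j)$ of the discrete eigenvector. As in Proposition \ref{relation between W and Q}, the map $\tilde v^{un}_{i,n,\epsilon}(j)\leftrightarrow \tilde\phi^{un}_{i,n,\epsilon}(x_j)$ is an identification up to a common scalar, so the task reduces to showing $\|\tilde v^{un}_{i,n,\epsilon}\|_{l^2(1/\hat{\mathsf p})}\approx \|\tilde\phi^{un}_{i,n,\epsilon}\|_2$ up to an error of the required order. Under uniform sampling $\mathsf p\equiv 1/\texttt{Vol}(M)$ is constant, and Lemma \ref{KDE} then gives $\hat{\mathsf p}(x_j)=\mathsf p(x_j)+O(\epsilon^2)+O(\sqrt{\log n}/(\sqrt n \epsilon^{d/2}))$. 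Replacing $\mathsf p$ by the constant and invoking Bernstein's inequality (as in Lemma \ref{deviation in L2 norm}) for the sum $\tfrac{1}{n}\sum_j \tilde\phi^{un}_{i,n,\epsilon}(x_j)^2/\mathsf p(x_j)$ yields $\big|\|\tilde v^{un}_{i,n,\epsilon}\|_{l^2(1/\hat{\mathsf p})}^2-\|\tilde\phi^{un}_{i,n,\epsilon}\|_2^2\big|\leq O(\epsilon^2)\|\tilde\phi^{un}_{i,n,\epsilon}\|_2^2$, which after taking square roots and dividing gives a pointwise renormalization error of order $\epsilon^2$, consistent with the $\epsilon^2$ rate already obtained.

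The expected main obstacle is not any single step but bookkeeping: in the uniform case there is no KDE-induced $\alpha$-normalization, so several error terms that were $O(\epsilon^{1/2})$ in the proof of Theorem \ref{spectral convergence of L on closed manifold} now shrink to $O(\epsilon^2)$, and the argument must be rewritten to avoid the conservative bounds used there (in particular, the Bernstein step for the renormalization benefits from $\|\phi_i^2\|_\infty\leq C_1^2 \lambda_K^{(d-1)/2}$ being treated as a constant). The $i=0$ case is handled separately as in Theorem \ref{spectral convergence of L on closed manifold}: $\mu^{un}_{0,n,\epsilon}=0=\lambda_0$, the zeroth eigenvector of $A^{un}$ in the uniform case can be identified with the constant function after density-weighted renormalization, and an application of Lemma \ref{KDE} and Bernstein's inequality gives $\max_j |a_0 v^{un}_{0,n,\epsilon}(j)-\phi_0(x_j)|\lesssim \epsilon^2$.
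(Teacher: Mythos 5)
Your proposal does not actually prove the stated proposition; it proves something else while assuming the proposition. The statement in question is exactly the ``stochastic'' comparison between the eigenpairs $(\lambda^{un}_{i,\epsilon},\phi^{un}_{i,\epsilon})$ of $\frac{I-T^{un}_{\epsilon}}{\epsilon^2}$ and the eigenpairs $(\lambda^{un}_{i,n,\epsilon},\phi^{un}_{i,n,\epsilon})$ of $\frac{I-T^{un}_{n,\epsilon}}{\epsilon^2}$, with rates $\frac{1}{\sqrt{n}\epsilon^{d+4}}(\sqrt{-\log\epsilon}+\sqrt{\log n})$ and $\frac{1}{\sqrt{n}\epsilon^{d+2}}(\sqrt{-\log\epsilon}+\sqrt{\log n})$ --- precisely the estimate you cite as ``already provided'' by the appendix and then chain with the bias estimate. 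That makes your argument circular with respect to the target: everything you develop (balancing $\epsilon(n)$ against $\epsilon^2$, the renormalization of $\tilde v^{un}_{i,n,\epsilon}$ by $\|\cdot\|_{l^2(1/\hat{\mathsf p})}$, Bernstein's inequality as in Lemma \ref{deviation in L2 norm}, the $i=0$ case) belongs to the later corollary and to the assembly of Theorem \ref{spectral convergence of un L on closed manifold}, not to this proposition, which involves neither the discrete normalized eigenvectors $v^{un}_{i,n,\epsilon}$ nor the Laplacian eigenpairs $(\lambda_i,\phi_i)$ except through the constants $\mathsf\Gamma_K,\lambda_K$.

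What is actually required is the unnormalized analogue of Proposition \ref{T epsilon and T n espilon 1}, and none of its ingredients appear in your write-up: (i) Glivenko--Cantelli bounds (Proposition \ref{GC class}) for the function classes generated by the unnormalized kernel; since $T^{un}_{n,\epsilon}f(x)=P_n(k_\epsilon f)(x)/d_{n,\epsilon}(x)$ involves only $k_\epsilon/d_{n,\epsilon}\asymp\epsilon^{-d}$ rather than $Q_\epsilon\asymp\epsilon^{-2d}$, the uniform deviations improve, and this is exactly where the exponents $d+2$ and $d+4$ (instead of $2d+3$ and $2d+5$) come from; (ii) the operator bounds of Lemma \ref{bounds on operators} type, a pointwise bound $|T^{un}_{n,\epsilon}u-T^{un}_{\epsilon}u|\lesssim\frac{1}{\sqrt{n}\epsilon^{d}}(\sqrt{-\log\epsilon}+\sqrt{\log n})$, and a bound on $\|(T^{un}_{\epsilon}-T^{un}_{n,\epsilon})T^{un}_{n,\epsilon}\|$ via an intermediate operator and Fubini --- this composition bound is essential because $\|T^{un}_{\epsilon}-T^{un}_{n,\epsilon}\|$ itself does not vanish, which is why collective compact convergence and Theorem \ref{atkinson1967} are invoked at all; (iii) the resolvent estimate of Lemma \ref{proof Lemma: residue bound} with radius $r\asymp\mathsf\Gamma_K\epsilon^2$ (justified by the eigengap control established in the proof of the bias proposition), whose $\epsilon^{-2}$ factor converts the $\frac{1}{\sqrt{n}\epsilon^{d}}$ operator bound into the $\frac{1}{\sqrt{n}\epsilon^{d+2}}$ eigenvector rate, followed by Lemma \ref{projection lemma} to produce $a_i$; and (iv) the eigenvalue bound, obtained by comparing $\bar\lambda^{un}_{i,\epsilon}\phi^{un}_{i,\epsilon}$ with $T^{un}_{n,\epsilon}a_i\phi^{un}_{i,n,\epsilon}$ and dividing by $\epsilon^2$, which is where the additional factor $\epsilon^{-2}$ and hence the $\frac{1}{\sqrt{n}\epsilon^{d+4}}$ rate arise, together with the Hörmander bound $\|\phi_i\|_\infty\leq C_1\lambda_K^{(d-1)/4}$ to absorb $\|\phi^{un}_{i,\epsilon}\|_\infty$ into the constant $C_2$. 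To repair the proposal you would need to carry out this chain for $T^{un}_{\epsilon}$ and $T^{un}_{n,\epsilon}$, tracking how each $\epsilon^{-2d}$ in the normalized argument becomes $\epsilon^{-d}$; the material you wrote can then be kept, but only as the proof of the subsequent corollary and of Theorem \ref{spectral convergence of un L on closed manifold}.
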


Next, if we impose the relation between $\epsilon$ and $n$ we have the following corollary.

\begin{corollary}\label{un T epsilon and un T n espilon  of un GL}
Assume that the eigenvalues of $\Delta$ are simple. Fix $\epsilon$ small enough. Let $\mathsf \Gamma_K:=\min_{1 \leq i \leq K}\textup{dist}(\lambda_i, \sigma(-\Delta)\setminus \{\lambda_i\})$. {For a sufficiently small $\epsilon>0$,  if $n$ is sufficiently large so that $\epsilon=\epsilon(n)\geq\big(\frac{\log n}{n}\big)^{\frac{1}{2d+12}}$,  then} with probability greater than $1-n^{-2}$,  for all $1 \leq i < K$, 
we have
\begin{align}
& |\lambda^{un}_{i,\epsilon}-\lambda^{un}_{i, n,\epsilon}|\leq  2C_{2} \epsilon^2. 
\end{align} 
For a sufficiently small $\epsilon>0$,  if $n$ is sufficiently large {so that $\epsilon=\epsilon(n)\geq\big(\frac{\log n}{n}\big)^{\frac{1}{2d+8}}$,  then} there are $a_i \in \{1,-1\}$ such that with probability greater than $1-n^{-2}$,  for all $1 \leq i < K$, 
\begin{align}
\|a_i\phi^{un}_{i,n,\epsilon}-\phi^{un}_{i,\epsilon}\|_{\infty} \leq 2C_{2} \epsilon^2 .\nonumber 
\end{align}
$C_{2}>0$ is a constant depending on  $\mathsf \Gamma_K$, $\lambda_K$,  $d$ {and the volume and} the diameter of $M$.
\end{corollary}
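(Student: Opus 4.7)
The plan is to deduce this corollary directly from the preceding proposition by substituting the prescribed scaling $\epsilon=\epsilon(n)$ into the explicit bounds and showing that the stochastic error terms collapse into $\epsilon^2$. Structurally the argument will mirror Corollary \ref{T epsilon and T n espilon 2}, which plays the analogous role for the kernel normalized GL.

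First I would verify that the standing hypothesis of the proposition, namely $\frac{1}{\sqrt{n}\epsilon^{d}}(\sqrt{-\log\epsilon}+\sqrt{\log n})<\mathcal{C}_{1}$, is met. Under $\epsilon\geq(\log n/n)^{1/(2d+12)}$ one computes $\sqrt{n}\epsilon^{d}\gtrsim n^{6/(2d+12)}(\log n)^{d/(2d+12)}$, which grows polynomially in $n$, while $\sqrt{-\log\epsilon}\leq\sqrt{\log n/(2d+12)}\leq\sqrt{\log n}$ for $n$ large, so the prerequisite holds for all $n$ sufficiently large; the same computation works under the weaker constraint $\epsilon\geq(\log n/n)^{1/(2d+8)}$. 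Simultaneously this bound gives $\sqrt{-\log\epsilon}+\sqrt{\log n}\leq 2\sqrt{\log n}$, so the error terms from the proposition become at most $\frac{2C_{2}\sqrt{\log n}}{\sqrt{n}\,\epsilon^{d+4}}$ for the eigenvalue and $\frac{2C_{2}\sqrt{\log n}}{\sqrt{n}\,\epsilon^{d+2}}$ for the eigenvector.

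Next I would carry out the two substitutions. For the eigenvalues, raising $\epsilon\geq(\log n/n)^{1/(2d+12)}$ to the power $2d+12$ gives $n\epsilon^{2d+12}\geq\log n$, which rearranges to $\sqrt{\log n/n}\cdot\epsilon^{-(d+4)}\leq\epsilon^{2}$; inserting this into the eigenvalue bound yields $|\lambda^{un}_{i,\epsilon}-\lambda^{un}_{i,n,\epsilon}|\leq 2C_{2}\epsilon^{2}$. For the eigenvectors, $\epsilon\geq(\log n/n)^{1/(2d+8)}$ gives $n\epsilon^{2d+8}\geq\log n$, i.e.\ $\sqrt{\log n/n}\cdot\epsilon^{-(d+2)}\leq\epsilon^{2}$, producing $\|a_{i}\phi^{un}_{i,n,\epsilon}-\phi^{un}_{i,\epsilon}\|_{\infty}\leq 2C_{2}\epsilon^{2}$. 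Both statements inherit the probability lower bound $1-n^{-2}$ and the range $1\leq i<K$ from the proposition.

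There is no genuine obstacle here; this is a routine ``plug-in'' step whose purpose is merely to convert the joint-parameter bound in the proposition into a cleaner single-parameter bound in $\epsilon$, and to record two distinct scalings, one optimized for the eigenvalue and one for the eigenvector. The only small points to be careful about are (i) checking that $\sqrt{-\log\epsilon}$ is dominated by $\sqrt{\log n}$ under the imposed scaling so that the $(\sqrt{-\log\epsilon}+\sqrt{\log n})$ factor can be absorbed, and (ii) confirming that the threshold $\mathcal{C}_{1}$ in the proposition is respected for all sufficiently large $n$. Both are handled by the elementary estimates above.
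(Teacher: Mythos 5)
Your proposal is correct and follows essentially the same route as the paper: just as in the proof of Corollary \ref{T epsilon and T n espilon 2}, one checks that the scaling $\epsilon\geq(\log n/n)^{1/(2d+12)}$ (resp.\ $(\log n/n)^{1/(2d+8)}$) forces $\frac{1}{\sqrt{n}\epsilon^{d}}(\sqrt{-\log\epsilon}+\sqrt{\log n})<\mathcal{C}_1$ and absorbs $\frac{\sqrt{-\log\epsilon}+\sqrt{\log n}}{\sqrt{n}\,\epsilon^{d+4}}$ (resp.\ $\epsilon^{-(d+2)}$) into $2\epsilon^{2}$, and your exponent arithmetic for both substitutions is accurate. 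No gaps.
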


{Finally, we need to control the} error in renormalizing the eigenvector $\tilde{v}_{i,n,\epsilon}$ by $\|\cdot\|_{l^2(1/\hat{\mathsf p})}$ norm in the following proposition. The proof is the same as the proof of Proposition \ref{relation between W and Q} and Proposition \ref{Convergence of normalized eigenvectors}.

\begin{proposition}\label{Convergence of normalized eigenvectors of un GL}
Let $\mathsf \Gamma_K:=\min_{1 \leq i \leq K}\textup{dist}(\lambda_i, \sigma(-\Delta)\setminus \{\lambda_i\})$. For a sufficiently small $\epsilon>0$,  if $n$ is sufficiently large {so that $\epsilon=\epsilon(n)\geq\big(\frac{\log n}{n}\big)^{\frac{1}{2d+8}}$,  then} with probability greater than $1-n^{-2}$,  for all $1 \leq i < K$, 
we have
\begin{align}
\max_{x_j} \Big|\frac{\tilde{v}_{i,n,\epsilon}(j)}{\|\tilde{v}_{i,n,\epsilon}\|_{l^2(1/\hat{\mathsf p})}}-\phi^{un}_{i,n,\epsilon}(x_j)\Big| \leq C_3 \epsilon^2 \,,\nonumber 
\end{align}
where the constant $C_3>0$ is a constant depending on $\mathsf \Gamma_K$, $\lambda_K$, $d$ {and the volume and} the diameter of $M$.
\end{proposition}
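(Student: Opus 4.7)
The plan is to mirror the four-step strategy used in the proof of Theorem \ref{spectral convergence of L on closed manifold}, exploiting two simplifications that make the argument considerably lighter: (i) the uniform sampling assumption removes the need for $\alpha$-normalization, so the only Glivenko-Cantelli class needed from Proposition \ref{GC class} is $\mathcal{K}_\epsilon$ itself rather than $u\mathcal{Q}_\epsilon$ or $\mathcal{Q}_\epsilon\mathcal{Q}_\epsilon$, eliminating the costly $\epsilon^{-2d}$ and $\epsilon^{-4d}$ factors; and (ii) the theorem statement absorbs $\lambda_K$ and $\mathsf\Gamma_K$ into the implicit constants, so the intricate bookkeeping in Table \ref{Table:Relations} and the $J=J(\epsilon)$ splitting in Step 1 of the proof of Proposition \ref{T epsilon and Delta} collapse to a single direct estimate.

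First I would establish operator-level spectral convergence $(I - T^{un}_\epsilon)/\epsilon^2 \to -\Delta$ at rate $\epsilon^2$ on the first $K$ eigenpairs, exactly as done implicitly in Proposition \ref{un T epsilon and Delta  of un GL}. By Lemma \ref{operator un T epsilon}(b) together with Sobolev embedding, the analog of Lemma \ref{property 2 of R epsilon} gives $\|R^{un}_\epsilon \phi_i\|_2 \lesssim \epsilon^2$ for $i<K$. Then Lemma \ref{ratio of eigenvalues}(a) delivers a crude preliminary eigenvalue bound, Lemma \ref{ratio of eigenvalues}(b) boosts it to an angle control between $\phi_i$ and $\phi^{un}_{i,\epsilon}$, and Lemma \ref{ratio of eigenvalues}(c) bootstraps to $|\lambda^{un}_{i,\epsilon} - \lambda_i| \lesssim \epsilon^2$ and $\|a_i\phi^{un}_{i,\epsilon} - \phi_i\|_2 \lesssim \epsilon^2$. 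The $L^2$-to-$L^\infty$ lift follows the Sobolev-plus-Laplacian-powers argument of Step 3 of Proposition \ref{T epsilon and Delta} and preserves the $\epsilon^2$ rate, since $\lambda_K$ is now a constant.

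Next I would handle the finite-sample convergence $T^{un}_{n,\epsilon} \to T^{un}_\epsilon$. Expanding the quotient and using only the $\mathcal{K}_\epsilon$ bound from Proposition \ref{GC class} together with the uniform estimate $d_\epsilon, d_{n,\epsilon} \asymp \epsilon^d$ (via the analog of Lemma \ref{bounds on operators}) yields $\|T^{un}_{n,\epsilon} u - T^{un}_\epsilon u\|_\infty \lesssim \frac{1}{\sqrt{n}\epsilon^d}(\sqrt{-\log\epsilon}+\sqrt{\log n})$ for unit $u \in C(M)$. The collective compact convergence argument of Lemma \ref{ collectively compactly covergence} transfers verbatim. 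Applying Theorem \ref{atkinson1967} together with the resolvent bound of Lemma \ref{proof Lemma: residue bound} then produces the eigenvector bound $\|a_i\phi^{un}_{i,n,\epsilon} - \phi^{un}_{i,\epsilon}\|_\infty \lesssim \frac{1}{\sqrt{n}\epsilon^{d+2}}(\sqrt{-\log\epsilon}+\sqrt{\log n})$ (the extra $\epsilon^{-2}$ coming from the $r$-scale in the spectral projection), and an additional $\epsilon^{-2}$ loss in the eigenvalue estimate as in Proposition \ref{T epsilon and T n espilon 1}. Choosing $\epsilon \geq (\log n/n)^{1/(2d+12)}$ reduces the eigenvalue bound to $\epsilon^2$, and $\epsilon \geq (\log n/n)^{1/(2d+8)}$ does the same for the eigenvectors. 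Finally, Proposition \ref{relation between W and Q} applies verbatim to pass from the eigenfunctions of $T^{un}_{n,\epsilon}$ to the eigenvectors of $A^{un}$ (it is a statement about Nystr\"om extension only, insensitive to the normalization power), and the $\|\cdot\|_{l^2(1/\hat{\mathsf p})}$ renormalization is controlled as in Proposition \ref{Convergence of normalized eigenvectors of un GL}, with the uniform $\mathsf p \equiv 1/\texttt{Vol}(M)$ making the analog of Lemma \ref{deviation in L2 norm} essentially immediate.

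The main technical obstacle will be to verify rigorously that the absence of the KDE step truly upgrades the $\epsilon^{1/2}$ eigenvector rate of Theorem \ref{spectral convergence of L on closed manifold} to an $\epsilon^2$ rate here. In the kernel-normalized case the slow rate is traced to the chained Glivenko-Cantelli bounds over the larger function classes $u\mathcal{Q}_\epsilon$ and $\mathcal{Q}_\epsilon\mathcal{Q}_\epsilon$, whose $\epsilon^{-2d},\epsilon^{-4d}$ factors propagate through Lemma \ref{Proof Tn and Tneps relationship intermittent} and Lemma \ref{(T espilon- Tn epsilon)Tn epsilon}. Retracing those two lemmas in the unnormalized setting, one must check that the operator $(T^{un}_\epsilon - T^{un}_{n,\epsilon})T^{un}_{n,\epsilon}$ admits a Fubini-style rewriting whose kernel sits in a class controlled by just the $\mathcal{K}_\epsilon$ bound (possibly composed with a single $k_\epsilon$ factor), thereby avoiding the $\epsilon^{-4d}$ blow-up. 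Once this accounting is verified, the rest of the argument is a direct specialization of the proof of Theorem \ref{spectral convergence of L on closed manifold}.
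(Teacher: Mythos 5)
Your overall pipeline is the one the paper uses: its proof of this proposition is literally the declaration that it is ``the same as'' Propositions \ref{relation between W and Q} and \ref{Convergence of normalized eigenvectors}, i.e.\ the Nystr\"om extension plus the comparison of $\|\cdot\|_{l^2(1/\hat{\mathsf p})}$ with $\|\cdot\|_{2}$ built on Lemmas \ref{KDE}, \ref{2 norm of eigenvectors} and \ref{deviation in L2 norm}, taking as input the unnormalized $L^\infty$ spectral convergence at rate $\epsilon^2$ (Proposition \ref{un T epsilon and Delta  of un GL} and Corollary \ref{un T epsilon and un T n espilon  of un GL}); your first two paragraphs supply exactly that input. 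However, for the statement actually at issue your treatment has a gap: you dispose of the renormalization in one sentence (``controlled as in Proposition \ref{Convergence of normalized eigenvectors of un GL}'' --- note this cites the very proposition you are proving; presumably you mean Proposition \ref{Convergence of normalized eigenvectors}), but a verbatim run of that argument only yields an $\epsilon^{1/2}$ bound, whereas the claim here is $C_3\epsilon^{2}$. Moreover, the $\epsilon^{1/2}$ in Proposition \ref{Convergence of normalized eigenvectors} does not come from the Glivenko--Cantelli classes $u\mathcal Q_\epsilon$, $\mathcal Q_\epsilon\mathcal Q_\epsilon$ that you single out as the ``main technical obstacle'' (those only govern the admissible $n$--$\epsilon$ relation, exponent $4d+8$ versus $2d+8$); it comes from Lemma \ref{2 norm of eigenvectors} and the last step of Proposition \ref{Convergence of normalized eigenvectors}, where powers $\epsilon^{1/4}$ and $\epsilon^{3/4}$ are deliberately sacrificed to absorb $\|\phi_i\|_\infty\le C_1\lambda_K^{(d-1)/4}$ uniformly in $K$ (conditions (12'')--(13'')), and from Lemma \ref{deviation in L2 norm} being stated only at accuracy $\epsilon$.

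To actually obtain the claimed $\epsilon^2$ you need to redo that bookkeeping: since $\lambda_K$ and $\mathsf\Gamma_K$ are now absorbed into constants, keep $\|\phi_i\|_\infty$ and $\max_j|\tilde{\phi}_{i,n,\epsilon}(x_j)|/\|\tilde{\phi}_{i,n,\epsilon}\|_{2}$ as $O(1)$ quantities instead of trading them against fractional powers of $\epsilon$, so that the analogue of Lemma \ref{2 norm of eigenvectors} gives an error $O(\epsilon^2)\|\tilde{\phi}_{i,n,\epsilon}\|_2^2$ driven by the $\epsilon^2$ $L^\infty$ eigenvector rate; strengthen Lemma \ref{deviation in L2 norm} from accuracy $\epsilon$ to $\epsilon^2$, which Bernstein with $\beta=\epsilon^2$ permits because $n\epsilon^4\ge n\epsilon^{2d+8}\gtrsim\log n$ under the assumed relation (your ``essentially immediate'' remark still requires this concentration step even for uniform $\mathsf p$); and check that the kernel density estimate in Lemma \ref{KDE} contributes $O(\epsilon^2)+O\big(\sqrt{\log n}/(\sqrt n\,\epsilon^{d/2})\big)\le O(\epsilon^2)$ since $\sqrt{\log n/n}\le\epsilon^{d+4}$. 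With these adjustments the chain of Proposition \ref{Convergence of normalized eigenvectors} closes at rate $\epsilon^2$ and the proposition follows; as written, your proposal only certifies an $\epsilon^{1/2}$ bound for this step.
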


Finally, in Theorem \ref{spectral convergence of un L on closed manifold}, the proof of the cases $1 \leq i <K$ follows by combining Proposition \ref{un T epsilon and Delta  of un GL}, Corollary \ref{un T epsilon and un T n espilon  of un GL} and Proposition \ref{Convergence of normalized eigenvectors of un GL}. For the case $i=0$, the proof is the same as the case $i=0$ in Theorem \ref{spectral convergence of L on closed manifold}.
}

\end{document}